\definecolor{bgreen}{rgb}{0,1,0.5}
\definecolor{amber}{rgb}{1.0, 0.75, 0.0}
\numberwithin{equation}{section}
\def\paragraph{\@startsection{paragraph}{4}	\z@\z@{-\fontdimen2\font} {\normalfont\itshape\bfseries}}
\setlist[enumerate]{leftmargin=\parindent, itemsep=4pt, topsep=4pt, parsep=0pt, labelsep=0.2em}
\setlist[itemize]{leftmargin=\parindent, itemsep=4pt, topsep=4pt, parsep=0pt}
\g@addto@macro \normalsize{%
	\setlength\abovedisplayskip{8pt plus 2pt minus 2pt}%
	\setlength\belowdisplayskip{8pt plus 2pt minus 2pt}}%
\def\mbf #1{\mathbf{#1}}
\def\Cal #1{\mathcal{#1}}
\def\mfr #1{\mathfrak{#1}}
\newcommand*\bigcdot{\mathpalette\bigcdot@{.5}}
\newcommand*\bigcdot@[2]{\mathbin{\vcenter{\hbox{\scalebox{#2}{$\m@th#1\bullet$}}}}}
\newcommand{\mfrac}[2]{\tfrac{#1\rule[-0.3em]{0pt}{1em}}{#2\rule[-0.3em]{0pt}{1em}}}
\renewcommand{\i}{\mkern.5mu\mathrm{i}\mkern1mu}
\renewcommand{\d}{\mkern.5mu\mathrm{d}\mkern0.5mu}
\newcommand{\e}{\mkern1mu\ensuremath{\mathrm{e}}}
\def\sX{\mathsf{X}}
\def\sP{\mathsf{P}}
\def\sT{\mathsf{T}}
\def\sZ{\mathsf{Z}}
\def\sW{\mathsf{W}}
\def\sD{\mathsf{D}}
\def\sC{\mathsf{C}}
\def\sE{\mathsf{E}}
\newcommand{\C}{\mathbb C}
\newcommand{\Z}{\mathbb Z}
\newcommand{\Q}{\mathbb Q}
\newcommand{\R}{\mathbb R}
\newcommand{\K}{\mathbb K}
\renewcommand{\H}{\mathbb H}
\newcommand{\CP}{\mathbb{CP}}
\renewcommand{\sl}{\mathfrak{sl}}
\newcommand{\GL}{\mbf{GL}}
\newcommand{\SL}{\mbf{SL}}
\newcommand{\bT}{\mbf{T}}
\newcommand{\bB}{\mbf{B}}
\newcommand{\bN}{\mbf{N}}
\newcommand{\bU}{\mbf{U}}
\newcommand\rotT{\rotatebox[origin=c]{180}{$\mbf{T}$}}
\newcommand{\Nabla}{\nabla}
\newcommand{\tr}{\operatorname{tr}}
\newcommand{\id}{\operatorname{id}}
\newcommand{\res}{\operatorname{Res}}
\newcommand{\ord}{\operatorname{ord}}
\newcommand{\diag}{\operatorname{Diag}}
\newcommand{\Rep}{\operatorname{Rep}}
\newcommand{\proj}{\operatorname{proj}}
\newcommand{\Sing}{\mathsf{Sing}}
\newcommand{\Equilib}{\mathsf{Equilib}}
\newcommand{\Crit}{\mathsf{Crit}}
\newcommand{\Saddle}{\mathsf{Saddle}}
\newcommand{\hot}{\operatorname{h.o.t.}}
\newcommand{\cont}{\operatorname{cont}}
\newcommand{\const}{\operatorname{const}}
\newcommand{\Disc}{\operatorname{Disc}}
\newcommand{\Diff}{\operatorname{Diff}}
\newcommand{\Aut}{\operatorname{Aut}}
\newcommand{\holom}{\operatorname{hol}}
\newcommand{\Mon}{\mfr{M}}
\newcommand{\Char}{\mathcal{M}}
\newcommand{\jet}{\mathrm{j}}
\newcommand{\nf}{\mathrm{nf}}
\newcommand{\out}{\mathrm{out}}
\newcommand{\conf}{\mathrm{con}}
\newcommand{\cluster}{\mathrm{clu}}
\newcommand{\dd}[1]{{\mfrac{\partial}{\partial #1}}}
\newcommand{\tdd}[1]{{\tfrac{\d}{\d #1}}}
\newcommand{\Qnf}{Q_{\mathrm{nf}}}
\newcommand{\bt}{\mathbf{t}}
\newcommand{\filledstar}{\star}
\theoremstyle{plain}
\newtheorem{lemma}{Lemma}[section]
\newtheorem{proposition}[lemma]{Proposition}
\newtheorem{theorem}[lemma]{Theorem}
\newtheorem{corollary}[lemma]{Corollary}
\theoremstyle{definition}
\newtheorem{definition}[lemma]{Definition}
\newtheorem{example}[lemma]{Example}
\newtheorem{remark}[lemma]{Remark}
\theoremstyle{remark}
\newtheorem{notation}[lemma]{Notation}
\author{Martin Klime\v{s}}
\title[Deformations of singularities of $\sl_2(\C)$-connections]{Deformations of singularities of meromorphic $\sl_2(\C)$-connections and meromorphic quadratic differentials}
\begin{document}


\date{\today}

\begin{abstract} 
This paper contributes to the theory of singularities of meromorphic linear ODEs in traceless $2\times 2$ cases, focusing on their deformations and confluences. It is divided into two parts:
 
The first part addresses individual singularities without imposing restrictions on their type or degeneracy. The main result establishes a correspondence between local formal invariants and jets of meromorphic quadratic differentials. 
This result is then utilized to describe the parameter space of universal isomonodromic deformation of meromorphic $\sl_2(\C)$-connections over Riemann surfaces.
 
The second part examines the confluence of singularities in a fully general setting, accommodating all forms of degeneracies. 
It explores the relationship between the geometry of the unfolded Stokes phenomenon and the horizontal and vertical foliations of parametric families of quadratic differentials. The local moduli space is naturally identified with a specific space of local monodromy and Stokes data, presented as a space of representations of certain fundamental groupoids associated with the foliations. 
This is then used for studying degenerations of isomonodromic deformations in parametric families.

%
%
\end{abstract}

\maketitle

\section{Introduction}

Let $\Nabla(x)$ be a meromorphic connection on a complex vector bundle $E\to \sX$ over a Riemann surface $\sX$.
In local coordinates $(x,y)$ on $E$, at a singular point the connection takes the form
\begin{equation*}
\Nabla(x)=\d_x-A(x)\tfrac{\d x}{x^{k+1}},\qquad A(0)\neq 0.	
\end{equation*}
The local theory of singularities is now well established, drawing on works of
Birkhoff (1913), Hukuhara (1937), Turittin (1955), Levelt (1961), Sibuya (1967), Malgrange (1971), Balser, Jurkat, Lutz (1979), Ramis (1985), 
and many other authors (see e.g.\cite{Babbitt-Varadarajan, Balser, Balser-Jurkat-Lutz1, Balser-Jurkat-Lutz12, Ilashenko-Yakovenko, Loday-Richaud, Martinet-Ramis, Sibuya} for general presentation). 
Their analytic classification typically proceeds in two steps:
I - formal classification, II - description of a Stokes phenomenon at irregular singularities, which express the additional analytic obstructions to convergence of formal transformations. 
Even though a fully general theory exists, it is common to focus on non-resonant cases since it significantly simplifies the presentation in both steps.
However, in the $\sl_2(\C)$ situation, when $A(x)$ is a traceless $2\!\times\!2$ matrix, everything becomes significantly simpler. 
This allows for the treatment of degenerate cases in a unified manner alongside generic ones, without requiring non-resonance assumptions. Moreover, the analytic theory extends naturally to parametric deformations. 
This is the basic premise of the article.

\bigskip
In the first part, we review the formal theory of individual singularities of meromorphic $\sl_2(\C)$-connections.
These can be locally gauge-transformed into the form
\[\nabla(x)=\d_x -\begin{psmallmatrix}0&1\\[4pt]Q(x)&0\end{psmallmatrix}\tfrac{\d x}{x^{k+1}}\]
corresponding to a second-order liner ODE
\[\left(x^{k+1}\tdd{x}\right)^2y_1-Q(x)y_1=0.\]
An advantage of this form is that now the formal invariant can be identified  with a certain jet of the associated meromorphic quadratic differential (Theorems~\ref{theorem:formalgaugeequivalence} and~\ref{theorem:formalgaugecoordinateequivalence})
\[Q(x)\left(\tfrac{\d x}{x^{k+1}}\right)^2.\]

This is used in \S\,\ref{sec:isomonodromy} in description of the parameter space of universal isomonodromic deformation under a generic condition on the singular divisor.
An isomonodromic family  $\nabla_t(x)=\d_x -A(x,t)\frac{\d x}{x^{k+1}}$ is one that comes from a single flat (integrable) connection of the form
		\[\nabla(x,t)=\d_{x,t} -A(x,t)\tfrac{\d x}{x^{k+1}}-\sum_i B_i(x,t)\tfrac{\d t_i}{x^{k}}.\]
Based on the paper \cite{Heu} of V.~Heu, each such family is locally analytic gauge--coordinate equivalent to a trivial one of the form 
\[\tilde\nabla(x,t)=\d_{x,t} -\begin{psmallmatrix}0&1\\[4pt]Q(x)&0\end{psmallmatrix}\tfrac{\d x}{x^{k+1}}.\]
Consequently, the deformation parameter space is identified with a certain jet space of meromorphic quadratic differentials with fixed residue (Theorem~\ref{theorem:isomonodromic}).

The second part of the paper focuses on unfolding deformations of irregular singularities:
\[\nabla_\epsilon(x)=\d_x -A(x,\epsilon)\tfrac{\d x}{P(x,\epsilon)}, \qquad P(x,0)=x^{k+1}.\]
The content in this section is largely independent of the first part.

There are two particular phenomena that can be witnessed in such parametric families:
\begin{itemize}[leftmargin=2\parindent]
	\item[(i)] \emph{Confluence of singularities}: The limit divisor $\{x^{k+1}=0\}$ at $\epsilon=0$ splits to several branches $\{P(x,\epsilon)=0\}$ for $\epsilon\neq 0$.
	\item[(ii))] \emph{Confluence of eigenvalues}: While the divisor remains unchanged, $P(x,\epsilon)=x^{k+1}$, the vanishing order of $\det A(x,\epsilon)$ at the divisor jumps at the limit,
	i.e. the order up to which the two eigenvalues of $A(x,\epsilon)$ agree changes. In particular a singularity that is non-resonant  for $\epsilon\neq0$ can become resonant at $\epsilon=0$.
\end{itemize}	
Both phenomena can occur simultaneously.
We built upon the works of C.~Rousseau \cite{Hurtubise-Lambert-Rousseau, Hurtubise-Rousseau, LR1, Lambert-Rousseau, Rousseau, RT} on confluence, in particular on the paper \cite{Hurtubise-Lambert-Rousseau} with J.~Hurtubise and C.~Lambert,
as well as on our research \cite{Klimes-Rousseau, Klimes-Rousseau2, Klimes-Rousseau3} and \cite{Klimes1, Klimes3, Klimes6, KlimesWildMonodromy, Klimes-Stolovitch}.

The basic approach is to bring the family to the form 
\[\nabla_\epsilon(x)=\d_{x} -\begin{psmallmatrix}0&1\\[4pt]Q(x,\epsilon)&0\end{psmallmatrix}\tfrac{\d x}{P(x,\epsilon)},\]
and to connect its Stokes phenomen to the geometry of horizontal foliations in the rotating family of quadratic differentials
\[\e^{-2\i \vartheta}Q(x,\epsilon)\left(\tfrac{\d x}{P(x,\epsilon)}\right)^2,\qquad \vartheta\in \ ]\!-\tfrac{\pi}{2},\tfrac{\pi}{2}[\, .\]
In \S\,\ref{sec:foliation} we review basic properties of such foliations. These are used in \S\,\ref{sec:flagsplitting} in the exposition of the confluent Stokes phenomena of the family.

For individual singularities there are two basic types of approaches to the Stokes phenomenon, as noted by P.~Boalch \cite{Boalch21}:
\begin{itemize}[leftmargin=2\parindent]
	\item[(A)] construction of normalizing transformations and of canonical fundamental solutions on large sectors containing the Stokes rays (a.k.a. oscillation or separation directions),
	\item[(B)] construction of filtrations / flag structures on the solution space on small sectors around the anti-Stokes rays (a.k.a. singular or steepest descent directions).
\end{itemize}	
The relation between them is simple: a transverse pair of flags (B) determines a splitting of the solution space (A),
and vice-versa a splitting gives rise to a pair of filtrations on solutions by their growth rate.
These two approaches will be used also in the confluent setting but the form of the domains on which they live is considerably different.
We shall work with both approaches at the same time.

Following the works of J.-P.~Ramis \cite{Ramis, Martinet-Ramis, Deligne-Malgrange-Ramis}, in particular the recent papers with E.~Paul \cite{Paul-Ramis,Paul-Ramis1},
we interpret in \S\,\ref{sec:wildmonodromy} the Stokes phenomenon and the Stokes connection matrices as wild monodromy representations of certain fundamental groupoids. In our situation they are determined by the quadratic differential.
The usual wild (irregular) Riemann--Hilbert correspondence, which identifies the analytic moduli space of connections (with fixed formal data) with a moduli space of wild monodromy representations (with fixed formal data): the wild character variety \cite{Babbitt-Varadarajan, Boalch14, Put-Saito},
can be also adapted to the confluent setting.
We prove (Theorem~\ref{theorem:unfoldedclassification}) that the confluent wild monodromy representation determines the analytic invariants of the family, i.e. invariants with respect to transformation analytic in $(x,\epsilon)$.

In \S\,\ref{sec:cluster}, we show how a cluster of wild monodromies at individual singularities needs to be modified to a single confluent wild monodromy whose outer part converges to the wild monodromy of the limit singularity.  
This change of presentation leads to a different, but birationally equivalent, wild character variety.

Finally, in \S\,\ref{sec:confluentisomonodromy} we explore confluent families of isomonodromic deformations
		\[\nabla_{\epsilon,t}(x)=\d_{x} -A(x,t,\epsilon)\tfrac{\d x}{P(x,t,\epsilon)},\]
deriving from parametric families of flat connections
		\[\nabla_\epsilon(x,t)=\d_{x,t} -A(x,t,\epsilon)\tfrac{\d x}{P(x,t,\epsilon)}-\sum_iB_i(x,t,\epsilon)\tfrac{\d t_i}{P(x,t,\epsilon)}.\]
We show that, under generic conditions on the singular divisor, these families are locally conjugated to trivial ones (Theorem~\ref{theorem:confluentisomonodromy})
\[\tilde\nabla_\epsilon(x,t)=\d_{x,t} -\begin{psmallmatrix}0&1\\[4pt]Q(x,\epsilon)&0\end{psmallmatrix}\tfrac{\d x}{\tilde P(x,\epsilon)}.\]
This in particular implies that the confluent wild monodromy stays constant in the deformation parameter $t$, generalizing the well-known result about constancy of Stokes matrices along isomonodromy \cite{Jimbo-Miwa-Ueno, Malgrange-Iso}.  

The main motivation for this paper stems from Painlev\'e equations and their associated isomonodromy problems, which both degenerate following the diagram \cite{Ohyama-Okumura}:
\newlength{\arrowlength}\settowidth{\arrowlength}{$\searrow$}
\[\begin{matrix}
	& & & & P_{III}^{D_6} &\hskip-9pt\to\hskip-9pt & P_{III}^{D_7} & \hskip-9pt\to\hskip-9pt & P_{III}^{D_8}\\[-2pt]
	& & &\hskip-9pt\nearrow\hskip-9pt & &\hskip-9pt\nearrow\hskip-\arrowlength\searrow\hskip-9pt & &\hskip-9pt\searrow\hskip-9pt &   \\[-2pt]
	P_{VI}& \hskip-9pt\to\hskip-9pt & P_V & \hskip-6pt\to\hskip-6pt & P_V^{deg} & & P_{II}^{JM} & \hskip-6pt\to\hskip-6pt & \hskip-6pt P_I\\[-2pt]
	& & &\hskip-9pt\searrow\hskip-9pt & &\hskip-9pt\searrow\hskip-\arrowlength\nearrow\hskip-9pt & &\hskip-8pt\nearrow\hskip-9pt&\\[-2pt]
	& & & & P_{IV}& \hskip-9pt\to\hskip-9pt & P_{II}^{FN} & &
\end{matrix}
\]
This is schematically pictured in Figure~\ref{figure:Painleve}, due to L.~Chekhov, M.~Mazzocco and V.~Rubtsov \cite{Chekhov-Mazzocco-Rubtsov},
	\begin{figure}
		\centering
		\scalebox{1}[0.66]{\includegraphics[width=0.8\textwidth]{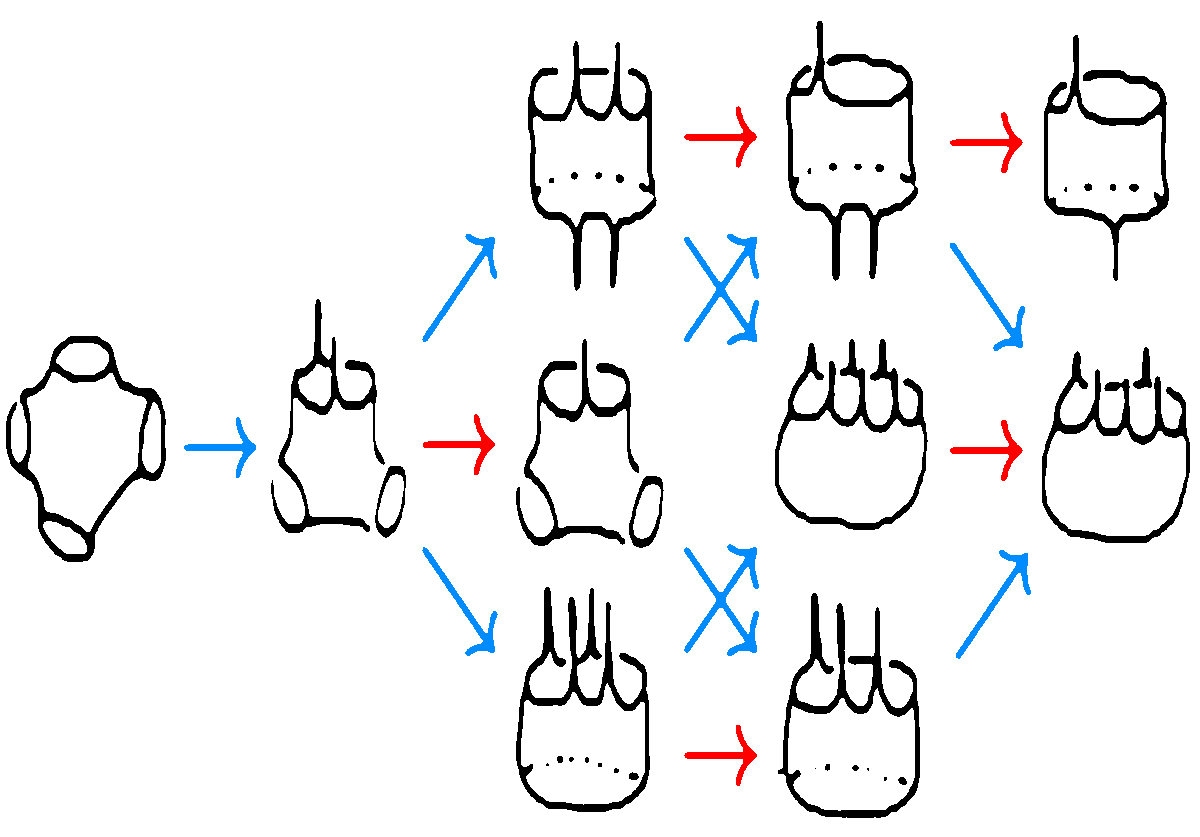}}	
		\caption{Degeneration of the isomonodromy problems corresponding to Painlev\'e equations according to \cite{Chekhov-Mazzocco-Rubtsov}.
		Blue arrows: confluence of singularities, red arrows: confluence of eigenvalues.}
		\label{figure:Painleve}
	\end{figure}	
where each surface corresponds to a class of isomonodromic deformations problems on $\CP^1$, each bordered hole in the surface represents a singularity, and each cusp on the border a Stokes direction.
Two kinds of confluent procedures (chewing gum operations) were described in \cite{Chekhov-Mazzocco, Chekhov-Mazzocco-Rubtsov}, see Figure~\ref{figure:chewinggum}:
\begin{itemize}[leftmargin=2\parindent]
	\item[(i)] Confluence of singularities (hole-hooking): As two holes come together, the strip between them becomes thinner and thinner until it snaps at the limit creating two new cusps.
	\item[(ii)] Confluence of eigenvalues (cusps removal): As two cusps come together, they merge into a single cusp with two prongs that disappear at the limit.
\end{itemize}
We describe a similar picture (see Figures~\protect\ref{figure:lagoons2} and \protect\ref{figure:lagoons}) in which the cusped holes represent domains (``lagoons'') over which the solution space acquires a natural flag structure, thus breaking the structure group $\SL_2(\C)$ of the vector bundle to its Borel subgroup of upper triangular matrices. 
	\begin{figure}
		\centering
		\includegraphics[width=0.7\textwidth]{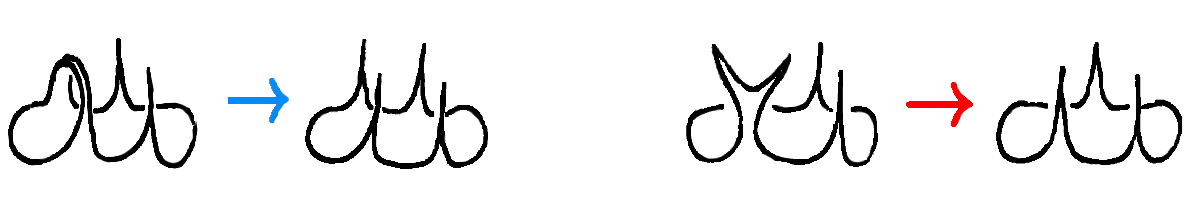}	
					\caption{Chewing gum operations.}
			\label{figure:chewinggum}
\end{figure}

\section{Individual singularities}\label{sec:1}

We consider germs of meromorphic connections having a singularity at $x=0$
\begin{equation}\label{eq:connection}
	\Nabla(x)=\d-A(x)\tfrac{\d x}{x^{k+1}},\qquad A(0)\neq 0,	
\end{equation}
with $A(x)\in\sl_2\big(\Cal O(\C,0)\big)$ a traceless $2\!\times\!2$ matrix with analytic coefficients.
They correspond to systems of linear ODEs
\begin{equation}\label{eq:system}
	x^{k+1}\!\tdd{x} y=A(x)y.
\end{equation}

There are two kinds of local changes of coordinates on the underlying vector bundle:
\begin{enumerate}[label=\arabic*.]	
	\item \emph{Analytic gauge transformations:} 
	\[(x,y)\mapsto \big(x,\ T(x)y\big),\quad 
	\text{with}\quad T(x)\in\GL(\Cal O(\C,0))\]
	a germ of analytic invertible matrix function, $\det T(0)\neq 0$.
	
	\item \emph{Analytic gauge--coordinate transformations:}
	\[(x,y)\mapsto \big(\phi(x),\ T(x)y\big),\quad 	\text{with}\quad \phi(x)\in\Diff(\C,0),\quad T(x)\in\GL(\Cal O(\C,0)),\]
	germs of an analytic diffeomorphism and of an analytic invertible matrix function, $\phi(0)=0$, $\tfrac{\d \phi}{\d x}(0)\neq 0$, $\det T(0)\neq 0$.
\end{enumerate}
The word \emph{analytic} is important: we don't want to modify the vector bundle.

One considers also their formal analogues $(x,y)\mapsto \big(\hat\phi(x),\ \hat T(x)y\big)$, with $\hat{\phi}(x),\ \hat T(x)$ formal power series.

	For a system \eqref{eq:system}, let us denote
	\[m=\ord_0 \det A(x)\]
	the vanishing order of $\det A(x)$ at the singularity.
	The number 
	\[\nu=\max\Big\{0,k-\tfrac{m}{2}\Big\} \] 
	is called the \emph{Katz rank} of the singularity, while $k\geq0$ is the \emph{Poincar\'e rank}.
	According to $k$ and $m$, the singularity is: 
	\begin{itemize}
		\item[] \emph{Irregular} if $0\leq m< 2k$, i.e. if $\nu>0$. 
		\begin{itemize}
			\item 	It is \emph{non-ramified} if $m$ is odd, and \emph{ramified} if $m$ is even. 
			\item 	It is \emph{non-resonant irregular} if $m=0$.  
		\end{itemize}
		
		\item[] \emph{Regular} if $m\geq 2k$, i.e. if $\nu=0$. 
		\begin{itemize}
			\item 	It is \emph{Fuchsian} if $k=0$. 
			\item 	It is \emph{resonant regular}\footnote{Our definition of resonance in the regular case is slightly more general than e.g. \cite{Ilashenko-Yakovenko} by including also the case $\sqrt{4\mu+k^2}=0$.} if $\sqrt{4\mu+k^2}\in\Z$ where $\mu=-\lim_{x\to0}x^{-2k}\det A(x)$.
		\end{itemize}
	\end{itemize}

\begin{notation}
	\begin{enumerate}[label=(\roman*)]
		\item For a germ of function $f(x)=\sum_{j=0}^{+\infty}f^{(j)}x^j$ we denote $\jet^nf(x)=\sum_{j=0}^{n}f^{(j)}x^j$ its $n$-jet.
		\item $\hot$ stands for \emph{higher order terms}. 
	\end{enumerate}
\end{notation}

\subsection{Point transformations of companion systems}

\begin{lemma}\label{lemma:systemQ}
	For any system \eqref{eq:system} there exists an analytic gauge transformation $y'=T(x)y$ which brings it to the form
\begin{equation}\label{eq:systemQ'}
x^{k+1}\!\tdd{x} y'=\begin{psmallmatrix}0 & 1\\[4pt] Q(x) & 0 \end{psmallmatrix}y',
\end{equation}	
where $Q(x)$ is given by \eqref{eq:Q}.
\end{lemma}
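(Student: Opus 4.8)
The plan is to produce the gauge transformation by a \emph{cyclic-covector} construction: pick a new first coordinate $y_1'=\phi(x)\,y$ (a linear functional of the solution) so that it satisfies a scalar second-order equation, and declare the second coordinate to be $y_2'=\sD y_1'$, where $\sD:=x^{k+1}\tdd{x}$. The one feature of the traceless case that makes this clean right at the singular point is that, since $A(0)$ is traceless and $A(0)\neq 0$, it is \emph{not} a scalar matrix (the only traceless scalar matrix is $0$). Hence there is a \emph{constant} row covector $\phi_0=(p,q)$ for which $\phi_0$ and $\phi_0 A(0)$ are linearly independent; writing $A=\begin{psmallmatrix}a&b\\ c&-a\end{psmallmatrix}$ this is the condition $\delta(0)\neq 0$ for the analytic function $\delta:=p^2 b-2pq\,a-q^2 c=\det\begin{psmallmatrix}\phi_0\\ \phi_0 A\end{psmallmatrix}$.

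First I would record the gauge law: under $y'=T(x)y$ the system \eqref{eq:system} becomes $\sD y'=A'y'$ with $A'=TAT^{-1}+x^{k+1}T'T^{-1}$, so by Jacobi's formula $\tr A'=\tr A+x^{k+1}(\log\det T)'=x^{k+1}\tfrac{(\det T)'}{\det T}$. Taking $T_0=\begin{psmallmatrix}\phi_0\\ \phi_0 A\end{psmallmatrix}$ with $\phi_0$ as above is a genuine analytic gauge transformation, $T_0\in\GL(\Cal O(\C,0))$, precisely because $\det T_0(0)=\delta(0)\neq 0$ (this is the only place where non-scalarness of $A(0)$ is used). In the new frame $y_1'=\phi_0 y$, and since $\phi_0$ is constant, $\sD y_1'=\phi_0\,\sD y=\phi_0 A y=y_2'$; hence the first row of $A'$ is automatically $(0,1)$. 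Expanding $\sD y_2'=\sD(\phi_0 A y)$ and re-expressing the result in the frame $\{\phi_0,\phi_0 A\}$ (possible since $\det T_0$ is a unit) yields $\sD y_2'=Q_0 y_1'+R_0 y_2'$, i.e. $A'=\begin{psmallmatrix}0&1\\ Q_0&R_0\end{psmallmatrix}$ with $R_0=\tr A'=x^{k+1}\delta'/\delta$.

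It remains to remove $R_0$. A diagonal post-composition cannot do this without spoiling the upper-right entry, so instead I would build the normalization into the covector: replace $\phi_0$ by $\phi:=\delta^{-1/2}\phi_0$, which is analytic because $\delta$ is a unit. Then $\det\begin{psmallmatrix}\phi\\ \sD\phi+\phi A\end{psmallmatrix}=(\delta^{-1/2})^2\det T_0=1$ (the $\sD$-contribution only adds a multiple of the first row to the second, so the determinant is unaffected), whence the corresponding $T$ has $\det T\equiv 1$, forcing $\tr A'=0$ and $R=0$. This is exactly the companion form \eqref{eq:systemQ'}, and reading off the bottom-left entry gives the explicit $Q(x)$, to be matched against \eqref{eq:Q}. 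The only real obstacle is analyticity and invertibility of the transformation \emph{at} $x=0$, not merely on a punctured neighborhood: the general cyclic-vector theorem produces only a meromorphic frame, whereas here the automatic non-scalarness of the traceless matrix $A(0)$ lets one choose a constant cyclic covector and thus stay inside $\GL(\Cal O(\C,0))$.
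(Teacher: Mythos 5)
Your construction is correct and essentially reproduces the paper's proof: taking the constant covector $\phi_0=(1,0)$ (the paper arranges $a_{12}(0)\neq 0$ by a preliminary constant gauge change, which is its version of your non-scalarness observation, so that $\delta=a_{12}$), the transformation you build from $\delta^{-1/2}\phi_0$ is exactly the matrix \eqref{eq:renormalization}. The only difference is presentational: you derive the $\delta^{-1/2}$ normalization from the Jacobi trace--determinant argument, whereas the paper simply exhibits the matrix and asserts the resulting form of $Q$.
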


\begin{proof}
Let $A(x)=\left(a_{ij}(x)\right)$ be the matrix of the system. Since $0\neq A(0)\in\sl_2(\C)$, one can always arrange that $a_{12}(0)\neq 0$. Then the change of coordinate
\begin{equation}\label{eq:renormalization}
y'=\tfrac{1}{\sqrt{a_{12}(x)}}\begin{psmallmatrix} 1&0\\[4pt] a_{11}(x)-\frac12 x^{k+1}\!\frac{\d}{\d x}\log a_{12}(x) &\  a_{12}(x) \end{psmallmatrix}y
\end{equation}
is analytic and analytically invertible, and transforms \eqref{eq:system} to 	
\eqref{eq:systemQ'} 
with
\begin{equation}\label{eq:Q}
	\begin{split} Q(x)= &-\det A(x)+x^{k+1}\!\tdd{x} a_{11}(x)-a_{11}(x)x^{k+1}\!\tdd{x}\log a_{12}(x)\\ &-\tfrac{1}{2}\big(x^{k+1}\!\tdd{x}\big)^2\log a_{12}(x)+\tfrac{1}{4}\left(x^{k+1}\!\tdd{x}\log a_{12}(x)\right)^2.\end{split}
\end{equation}
\end{proof}

A system of the form
\begin{equation}\label{eq:systemQ}
x^{k+1}\!\tdd{x} y=\begin{psmallmatrix}0 & 1\\[4pt] Q(x) & 0 \end{psmallmatrix}y,
\end{equation}
with $y=\begin{psmallmatrix}y_1\\y_2\end{psmallmatrix}$ is a \emph{companion system} to a second order linear ODE 
\begin{equation}\label{eq:LDE}
\big(x^{k+1}\!\tdd{x}\big)^2y_1-Q(x)y_1=0.
\end{equation}

While coordinate transformations $x\mapsto \phi(x)$ alone do not preserve the class of companion systems \eqref{eq:systemQ},
their post-composition with the gauge transformation \eqref{eq:renormalization} do. 
These transformations have the from
	\begin{equation}\label{eq:coordinatechange}	
	(x,y)\mapsto (x',y')=\big(\phi(x),\ G_\phi(x)y\big), 
	\qquad G_\phi(x)=\left(\begin{smallmatrix} \psi(x)^{\frac12} & 0\\ \tfrac{x^{k+1}\!\tdd{x} \psi(x)}{2\psi(x)^{\frac32}} & \psi(x)^{-\frac12}\end{smallmatrix}\right),	
	\end{equation}
where 
\[ \psi=\frac{x^{k+1}\!\tdd{x}\phi}{\phi^{k+1}},\quad \text{is such that }\  x'^{k+1}\!\tdd{x'}=\psi(x)^{-1} x^{k+1}\!\tdd{x}.\]
They transform  companion system \eqref{eq:systemQ} with $Q(x)$ to one with $Q'(x')$ where
\begin{equation}\label{eq:coordinateQ}
	Q(x)=\psi^2\cdot Q'\circ\phi-\tfrac12\Big[x^{k+1}\!\tdd{x}\Big(\tfrac{x^{k+1}\!\tdd{x}\psi}{\psi}\Big)-\tfrac{1}{2}\Big(\tfrac{x^{k+1}\!\tdd{x}\psi}{\psi}\Big)^2\Big].
\end{equation}

\begin{definition}
	A \emph{formal, resp. analytic, point transformation} of a companion system \eqref{eq:systemQ} is a map \eqref{eq:coordinatechange} 
	where $\phi(x)$ is a formal, resp. analytic, germ of diffeomorphism of $(\C,0)$.	
\end{definition}

The point transformation \eqref{eq:coordinatechange} is the prolongation to $y=\left(\begin{smallmatrix} y_1\\ x^{k+1}\!\tdd{x} y_1\end{smallmatrix}\right)$ of the point transformation	
	\begin{equation}\label{eq:pointchange}	
	x\mapsto x'=\phi(x),\qquad y_1\mapsto y_1'=\psi(x)^{\frac12}y_1,
	\end{equation}
of the ODE \eqref{eq:LDE}.
As observed by Hawley \& Schiffer \cite{Hawley-Schiffer}, the solutions $y_1(x)$ of \eqref{eq:LDE} 
transform under \eqref{eq:pointchange} in the way of the $(-\tfrac12)$-differentials
\[y_1(x)\big(\tfrac{\d x}{x^{k+1}}\big)^{-\frac{1}{2}}=y_1'(x')\big(\tfrac{\d x'}{x'^{k+1}}\big)^{-\frac{1}{2}}.\]
Consequently, a composition of two point transformations \eqref{eq:coordinatechange} associated to diffeomorphisms $\phi_1$ and $\phi_2$ is the point transformation associated to $\phi_1\circ\phi_2$.

	If $Y(x)=\big(y_{ij}(x)\big)_{i,j}$ is a fundamental solution matrix for the system \eqref{eq:systemQ}, and $f(x)=\frac{y_{11}(x)}{y_{12}(x)}$, then $Q(x)=-\tfrac12\Cal S_{x^{k+1}\partial_{x}}(f)$, where 
	\begin{equation}\label{eq:Schwarzian}
		\Cal S_{x^{k+1}\partial_{x}}(f)=x^{k+1}\!\tdd{x}\bigg(\tfrac{\big(x^{k+1}\!\tdd{x}\big)^2f}{x^{k+1}\!\tdd{x} f}\bigg)-\tfrac{1}{2}\bigg(\tfrac{\big(x^{k+1}\!\tdd{x}\big)^2f}{x^{k+1}\!\tdd{x} f}\bigg)^2 
	\end{equation}
	is the \emph{``Schwarzian derivative''} associated to $x^{k+1}\!\tdd{x}$. The transformation rule \eqref{eq:coordinateQ} is the transformation rule for $\Cal S_{x^{k+1}\partial_{x}}$, see \cite{Klimes6}.
	
	The relation $Q(x)=\frac{\big(x^{k+1}\!\tdd{x}\big)^2y_{11}(x)}{y_{11}(x)}$ also implies that
	\begin{equation}\label{eq:Schwarziandifferential}
		Q(x)=-\tfrac12\Cal S_{x^{k+1}\partial_{x}}\Big(\int \tfrac{\d x}{y_{11}(x)^2x^{k+1}}\Big),
	\end{equation}
	cf. \eqref{eq:y1}.

\begin{lemma}\label{lemma:equivalenceofy11}
	Let $y_{11}(x)$, resp. $y_{11}'(x')$, be a particular non-trivial solution to \eqref{eq:LDE} with $Q(x)$, resp. $Q'(x')$.
	If $x'=\phi(x)$ is a coordinate transformation which pullbacks the differential form $\frac{\d x'}{y'_{11}(x')^2x'^{k+1}}$ to $\frac{\d x}{y_{11}(x)^2x^{k+1}}$, then
	the point transformation associated to $\phi$ transforms between the two companion systems \eqref{eq:systemQ}.
\end{lemma}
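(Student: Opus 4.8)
The plan is to reduce everything to the ``Schwarzian'' identity \eqref{eq:Schwarziandifferential} and the transformation rule \eqref{eq:coordinateQ}, both of which are already available. Write
\[w(x)=\int\tfrac{\d x}{y_{11}(x)^2x^{k+1}},\qquad w'(x')=\int\tfrac{\d x'}{y_{11}'(x')^2x'^{k+1}}\]
for the (local, possibly multivalued) primitives of the two one-forms appearing in the statement. By \eqref{eq:Schwarziandifferential} one has $Q=-\tfrac12\Cal S_{x^{k+1}\partial_{x}}(w)$ and likewise $Q'=-\tfrac12\Cal S_{x'^{k+1}\partial_{x'}}(w')$; indeed a reduction-of-order computation shows that $y_{11}\cdot w$ solves \eqref{eq:LDE}, so that $w$ is a quotient of two independent solutions and $\Cal S_{x^{k+1}\partial_{x}}(w)$ returns $-2Q$. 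Since, by the remark following it, the transformation rule \eqref{eq:coordinateQ} is precisely the transformation law of $\Cal S_{x^{k+1}\partial_{x}}$ under the point change \eqref{eq:coordinatechange}, the assertion ``the point transformation associated to $\phi$ carries the companion system with $Q$ to the one with $Q'$'' is \emph{equivalent} to the single scalar identity \eqref{eq:coordinateQ}. Thus the whole lemma is reduced to checking \eqref{eq:coordinateQ}.

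Next I would translate the pullback hypothesis. Pulling back a one-form under $x'=\phi(x)$ sends $\d w'$ to $\d(w'\circ\phi)$, so the assumption $\phi^*\big(\tfrac{\d x'}{y_{11}'(x')^2x'^{k+1}}\big)=\tfrac{\d x}{y_{11}(x)^2x^{k+1}}$ reads $\d(w'\circ\phi)=\d w$, i.e.
\[w'\circ\phi=w+c\]
for some constant $c$. The key observation is that $\Cal S_{x^{k+1}\partial_{x}}$ depends on its argument $g$ only through the logarithmic combination $\tfrac{(x^{k+1}\tdd{x})^2 g}{x^{k+1}\tdd{x} g}$, hence is invariant under adding a constant to $g$; therefore $\Cal S_{x^{k+1}\partial_{x}}(w)=\Cal S_{x^{k+1}\partial_{x}}(w'\circ\phi)$.

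Finally I would apply the cocycle (chain-rule) form of the transformation law to $g=w'$:
\[\Cal S_{x^{k+1}\partial_{x}}(w'\circ\phi)=\psi^2\cdot\big(\Cal S_{x'^{k+1}\partial_{x'}}(w')\big)\circ\phi+\Big[x^{k+1}\!\tdd{x}\Big(\tfrac{x^{k+1}\!\tdd{x}\psi}{\psi}\Big)-\tfrac12\Big(\tfrac{x^{k+1}\!\tdd{x}\psi}{\psi}\Big)^2\Big],\]
with $\psi=\tfrac{x^{k+1}\tdd{x}\phi}{\phi^{k+1}}$. Combining this with $\Cal S(w)=-2Q$, $\Cal S(w')=-2Q'$ and the invariance from the previous step yields exactly \eqref{eq:coordinateQ}, which completes the argument. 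The main obstacle is the bookkeeping around the generalized Schwarzian: one must be sure that its cocycle law holds in precisely the form encoded by \eqref{eq:coordinateQ} (this is the content cited from \cite{Klimes6}) and that the multivaluedness of the primitives $w,w'$ causes no trouble — which it does not, since the entire argument takes place at the level of the differentials $\d w=\phi^*\d w'$, and the additive constant $c$ is annihilated by $\Cal S_{x^{k+1}\partial_{x}}$.
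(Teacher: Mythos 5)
Your proof is correct and follows essentially the same route as the paper, whose entire proof is the single line ``Follows from \eqref{eq:Schwarziandifferential}''; you have simply made explicit the steps that the paper leaves implicit (expressing $Q$ and $Q'$ as generalized Schwarzians of the primitives, using the pullback hypothesis to get $w'\circ\phi=w+\mathrm{const}$, and invoking the cocycle law encoded by \eqref{eq:coordinateQ}). No gaps.
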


\begin{proof}
Follows from \eqref{eq:Schwarziandifferential}.
\end{proof}

\begin{remark}[$\SL_2(\C)$-opers]
Let $\sX$ be an open neighborhood of $0$ on which the system \eqref{eq:systemQ} is defined, $E=\sX\times\C^2$ a rank 2 vector bundle on $\sX$ and $\Cal E$ its sheaf of sections.
Let $\Sing=\{x^{k+1}=0\}$ be the polar divisor in $\sX$, and denote $\Omega^1_\sX(\Sing)=\Cal O_\sX\cdot \tfrac{\d x}{x^{k+1}}$ the sheaf of meromorphic 1-forms with poles bounded by $\Sing$ (no worse than order $k+1$ at $0$).

And let $\Nabla=\d-\begin{psmallmatrix}0 & 1\\ Q(x) & 0 \end{psmallmatrix}\tfrac{\d x}{x^{k+1}}$ the traceless meromorphic connection associated with the system \eqref{eq:systemQ} in the $(x,y)$-coordinate, $\Nabla:\Cal E\to \Cal E\otimes\Omega^1_\sX(\Sing)$.
Let $E_1\subset E$ be the sub-bundle whose sheaf of sections $\Cal E_1$ is defined by $y_2(x)=x^{k+1}\tdd{x}y_1(x)$.
Then $\Nabla\Cal E_1\cap \Cal E_1\otimes\Omega^1_\sX(\Sing)=\{0\}$, which means that $\Nabla$ induces an isomorphism 
\[\Cal E_1\xrightarrow{\simeq}(\Cal E/\Cal E_1)\otimes \Omega^1_\sX(\Sing).\]
Namely, if $\iota:\Cal O_\sX\xrightarrow{\simeq}\Cal E_1$ is the isomorphism $\iota\big(y_1(x)\big)=\begin{psmallmatrix} y_1(x)\\x^{k+1}\tdd{x}y_1(x)\end{psmallmatrix}$, and
$\pi_2:\Cal E\to\Cal O_\sX$ the projection $\pi_2\big(y(x)\big)=y_2(x)$, then
\[\pi_2\circ\Nabla\circ\iota:\Cal O_\sX\xrightarrow{\simeq}\Omega^1_\sX(\Sing),\qquad y_1(x)\mapsto \Big[\big(x^{k+1}\tdd{x}\big)^2y_1(x)-Q(x)y_1(x)\Big]\tfrac{\d x}{x^{k+1}}.\]
The structure $\big(E,E_1,\Nabla\big)$ is a meromorphic $\SL_2(\C)$-oper on $\sX$.
The analytic $\SL_2(\C)$-gauge--coordinate transformations that preserve the $\SL_2(\C)$-oper structure are compositions of point transformations with the flip $y\mapsto \pm y$, which corresponds to the liberty of choice of the root $\pm\psi^{\frac12}$ in \eqref{eq:pointchange}.
\end{remark}

\subsection{Formal theory}

There are three natural groups of local formal/analytic transformations acting on companion systems \eqref{eq:systemQ}:
\begin{itemize}[leftmargin=2\parindent]
	\item  gauge transformations, 
	\item gauge--coordinate transformations, and 
	\item point transformations,
\end{itemize}
and each of them contains a normal subgroup of tangent-to-identity transformations.
We shall investigate the formal invariants with respect to each of these groups.

In the \emph{non-resonant} regular and irregular cases the formal gauge invariants are simply polar parts of the eigenvalues of the matrix form 
$\begin{psmallmatrix}0 & 1\\ Q(x) & 0 \end{psmallmatrix}\tfrac{\d x}{x^{k+1}}$,
\[\pm\jet^k\sqrt{Q(x)}\tfrac{\d x}{x^{k+1}}.\]
As we shall show this has a natural generalization in taking a certain jet of the meromorphic quadratic differential 
\[Q(x)\left(\tfrac{\d x}{x^{k+1}}\right)^2.\]
Let
\[m=\ord_0 Q(x).\]

\begin{definition}
	Let $Q(x)\big(\tfrac{\d x}{x^{k+1}}\big)^2$ be a meromorphic quadratic differential.
	Its \emph{square residue} is defined as
\begin{equation}\label{eq:mu}
	\mu=\res_0^2 Q(x)\big(\tfrac{\d x}{x^{k+1}}\big)^2:=\left(\res_{x=0}\sqrt{Q(x)}\,\tfrac{\d x}{x^{k+1}}\right)^{\! 2}.
\end{equation}
	If $m$ is either odd or $m>2k$, then $\res_0^2 Q(x)\big(\tfrac{\d x}{x^{k+1}}\big)^2=0$. \\
	If $m\leq 2k$ is even, then $\res_0^2 Q(x)\big(\tfrac{\d x}{x^{k+1}}\big)^2$ is determined by the jet $\jet^{k+\frac{m}{2}}Q(x)$.
\end{definition}

Let us recall (see \cite[Theorems 6.1, 6.3, 6.4]{Strebel}) that the analytic class of a meromorphic quadratic differential $Q(x)\big(\tfrac{\d x}{x^{k+1}}\big)^2$ with respect to coordinate transformations $x\mapsto\phi(x)$ is completely determined by the number $2k-m$ and the quadratic residue $\mu$ \eqref{eq:mu}. 
Namely, it can be analytically transformed to one of the following normal forms:
\begin{itemize}[leftmargin=2\parindent]
	\item $m<2k$: \ $x^{m}\big(1+2\sqrt\mu\,  x^{k-\frac{m}{2}}\big)\big(\tfrac{\d x}{x^{k+1}}\big)^2$,\quad where $\mu=0$ for $m$ odd,
	\item $m=2k$:  \ $\mu\, \big(\tfrac{\d x}{x}\big)^2$,
	\item $m>2k$: \ $x^{m-2k}\big(\tfrac{\d x}{x}\big)^2$.
\end{itemize}

\begin{theorem}[Formal gauge classification]\label{theorem:formalgaugeequivalence}
Two such systems \eqref{eq:systemQ} of the same Poincar\'e rank $k\geq 0$ are formally gauge equivalent if and only if
\begin{equation}\label{eq:jetequality}
	\jet^{k+N}Q(x)=\jet^{k+N}Q'(x),\qquad 
N=\begin{cases} 0 & \text{if }\ m=0,\\ m-1 & \text{if }\ 0<m\leq 2k, \\  2k & \text{if }\ 2k<m, \end{cases}
\end{equation}
unless it is resonant 
\begin{equation}\label{eq:strongresonance}
m=2k,\quad \text{and}\quad  4\mu+k^2=l^2 \quad\text{for some }\ l\in\Z,\ l\geq \max\{1,k\}
\end{equation}
where $Q(x)=\mu x^{2k}+\hot$,
in which case additionally the following needs to be satisfied
\[\rho=0\ \Leftrightarrow\ \rho'=0,\] 
where $\rho,\rho'$ are defined by \eqref{eq:rho} below.

They are  formally tangent-to-identity gauge equivalent
if and only if 
\begin{equation}\label{eq:jetequality1}
	\jet^{k+\tilde N}Q(x)=\jet^{k+\tilde N}Q'(x),\qquad \tilde N=\min\{m,\, 2k\},
\end{equation}
except when \eqref{eq:strongresonance} in which case additionally the following needs to be satisfied
\[\rho=\rho'.\] 
In particular, each system \eqref{eq:systemQ} is formally tangent-to-identity gauge equivalent to a unique one in a formal normal form
\[\Qnf(x)=\begin{cases}
	\jet^{k+\tilde N} Q(x),&\\
	\jet^{k+\tilde N} Q(x)+a x^{l+2k}, \quad a\in\C,&\text{in the case \eqref{eq:strongresonance}}.
\end{cases}\]
\end{theorem}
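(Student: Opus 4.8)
The plan is to prove the classification by an explicit formal normalization, reducing $Q$ to the stated normal form $\Qnf$ by successively eliminating higher-order coefficients through holomorphic formal gauge transformations, and then by showing that no further reduction is possible. The key object is the infinitesimal action: writing a tangent-to-identity gauge as $\hat T = I + S$ with $S = O(x)$ and $M_Q = \begin{psmallmatrix} 0 & 1 \\ Q & 0\end{psmallmatrix}$, the matrix of the transformed $\theta$-system ($\theta = x^{k+1}\tdd{x}$) changes to first order by $\delta M = [M_Q, S] - \theta S$. First I would fix a grading in which $\theta$ raises the $x$-degree by $k$, record that $M_Q = \begin{psmallmatrix} 0 & 1 \\ 0 & 0\end{psmallmatrix} + \hot$ has nilpotent leading term whenever $m>0$, and analyze the homological equation $\theta S - [M_Q, S] = \delta M$ degree by degree. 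By Lemma~\ref{lemma:systemQ} every transformed system may be recompanionized, so it suffices to control the component of $\delta M$ in the one-dimensional companion slice, i.e. the induced change $\delta Q$.

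The reduction step is a Newton-polygon analysis of this homological equation. The competition between the degree shift of $\theta$ and the nilpotent commutator $[\begin{psmallmatrix}0&1\\0&0\end{psmallmatrix}, \cdot]$ determines a threshold: for every target coefficient of $Q$ in degree strictly above $k + \tilde N$, with $\tilde N = \min\{m, 2k\}$, the equation is solvable with a holomorphic $S = O(x)$, so that coefficient can be removed; this yields tangent-to-identity equivalence of any $Q$ with $\jet^{k+\tilde N}Q$, proving the normal-form existence and the ``if'' direction of \eqref{eq:jetequality1}. The single exception is the resonant configuration \eqref{eq:strongresonance}: when $m = 2k$ and $\sqrt{4\mu+k^2} = l \in \Z$ the homological operator acquires a one-dimensional cokernel in degree $l + 2k$, so exactly one further coefficient survives as a genuine modulus; this is the parameter $a$ (equivalently $\rho$) in $\Qnf = \jet^{k+\tilde N}Q + a\, x^{l+2k}$.

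For the invariance (``only if'') direction I would prove that the normal form is unique, i.e. that a tangent-to-identity gauge carrying one normal form to another must preserve all surviving coefficients. The cleanest lower bound separates two layers of invariants. The meromorphic layer is the polar part together with the residue of the eigenvalue $1$-form $\pm\sqrt{Q}\,\tfrac{\d x}{x^{k+1}}$, equivalently $\jet^k\sqrt{Q}$ and the square residue \eqref{eq:mu}; this already pins down $\jet^{k+\lfloor m/2\rfloor}Q$ and is invariant even under meromorphic gauge, using e.g. the Schwarzian expression \eqref{eq:Schwarziandifferential}. The finer, holomorphic layer---the coefficients up to degree $k + \tilde N$ that are invisible to the eigenvalue data---is controlled by running the same homological equation in reverse: any holomorphic $S$ producing a $\delta Q$ supported strictly below the threshold must vanish to the requisite order, so these coefficients cannot be altered by tangent-to-identity holomorphic gauge. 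In the resonant case the one-dimensional obstruction shows that $\rho$ itself is a tangent-to-identity invariant, giving the condition $\rho = \rho'$.

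Finally, the passage from tangent-to-identity to general gauge is governed by the finite-dimensional freedom in the constant part $\hat T(0) \in \GL_2(\C)$. Conjugation by $\hat T(0)$, which stabilizes the nilpotent leading term only up to its centralizer, acts by a nonzero scaling on the single top coefficient $Q^{(k+m)}$ (the one lying just beyond the eigenvalue threshold), allowing it to be normalized away; this lowers the invariant jet order from $\tilde N = m$ to $N = m-1$ in the range $0 < m \le 2k$ and matches $N$ with $\tilde N$ in the remaining ranges, proving \eqref{eq:jetequality}. In the resonant case the same torus acts on the modulus by $\rho \mapsto c\,\rho$, so that only the vanishing $\rho = 0 \Leftrightarrow \rho' = 0$ is a general-gauge invariant. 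I expect the main obstacle to be the Newton-polygon bookkeeping that pins down the exact threshold $\tilde N$ and the precise resonant degree $l + 2k$, and in particular proving that the obstruction space there is exactly one-dimensional; a secondary subtlety is to keep the gauge strictly holomorphic (lattice-preserving) rather than meromorphic throughout, since it is precisely this constraint that turns the coefficients between degrees $k + \lfloor m/2\rfloor$ and $k+\tilde N$ into genuine invariants and that brings the integer resonance $4\mu + k^2 = l^2$ into play.
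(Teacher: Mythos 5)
Your overall strategy --- eliminate coefficients of $Q$ degree by degree via the linearized gauge action and read the invariants off the cokernel --- is viable, and for the \emph{necessity} direction it essentially coincides with the paper: the decomposition \eqref{eq:T} of $T$ in a basis adapted to the companion matrix and the resulting scalar system \eqref{eq:ab} is precisely your homological operator written out explicitly, and the thresholds $k+\tilde N$, the resonant degree $l+2k$ and the condition $4\mu+k^2=l^2$ all drop out of it as you predict. For \emph{sufficiency}, though, the paper takes a genuinely different route: it never solves the matrix homological equation, but instead builds a tangent-to-identity \emph{point} transformation $\phi(x)=x+O(x^{l+1})$ by an induction on elementary diffeomorphisms $x\mapsto x+a_lx^{l+1}$ (Lemma~\ref{lemma:sufficiency}) and then converts the coordinate change back into a pure gauge transformation via Lemma~\ref{lemma:Heu}. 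That route trades your matrix recursion for a scalar one and feeds directly into Theorems~\ref{theorem:formalgaugecoordinateequivalence} and~\ref{theorem:pointclassification}; yours avoids the detour through coordinate changes but must actually carry out the Newton-polygon bookkeeping you defer.

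Two concrete problems remain. First, your mechanism for lowering $\tilde N=m$ to $N=m-1$ is wrong: you claim $\hat T(0)$ acts on the coefficient of $x^{k+m}$ in $Q$ by a \emph{nonzero scaling} and that this lets you ``normalize it away''. A scaling has orbits $\{0\}$ and $\C^*$, so it cannot send a nonzero coefficient to zero, and it would make the vanishing of that coefficient a general-gauge invariant, contradicting \eqref{eq:jetequality}. The correct mechanism, visible in the second equation of \eqref{eq:ab}, is a \emph{translation}: allowing $b(0)\neq0$ produces the term $\tfrac12 b\,x^{k+1}\tdd{x}(Q'+Q)=b(0)\,m q_m x^{k+m}+\hot$ (with $q_m$ the leading coefficient of $Q$), which shifts the coefficient by an arbitrary multiple of $mq_m\neq0$ and hence acts transitively on it. Second, the resonant case is asserted rather than handled. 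A cokernel count can produce a one-dimensional formal modulus $a$ at degree $l+2k$, but the theorem's invariant is the residue $\rho$ of \eqref{eq:rho}, a transcendental function of the whole jet; establishing that $a\mapsto\rho$ is a bijection for fixed lower jet, that $\rho$ is a tangent-to-identity invariant, and that general gauge only preserves whether $\rho$ vanishes, is exactly what Lemma~\ref{lemma:strongresonance} and Lemma~\ref{lemma:equivalenceofy11} supply via the subdominant solution $y_{11}$ and the differential $\frac{\d x}{y_{11}(x)^2x^{k+1}}$. A purely formal degree count does not give this identification, so your proof of the $\rho=\rho'$ and $\rho=0\Leftrightarrow\rho'=0$ clauses is incomplete as stated.
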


\begin{theorem}[Formal gauge--coordinate classification]\label{theorem:formalgaugecoordinateequivalence}
Two systems \eqref{eq:systemQ} of the same Poincar\'e rank $k\geq 0$ are formally gauge--coordinate equivalent if and only if
\begin{itemize}[leftmargin=2\parindent]
	\item $0\leq m<2k$: \ $m=m'$ and \ $\mu=\mu',$
	\item $2k\leq m\leq 3k$: \ $\mu=\mu'$ and \ $\rho=0\Leftrightarrow \rho'=0$, where $\rho$ is \eqref{eq:rho} if $\sqrt{4\mu+k^2}\in\Z$ otherwise $\rho=0$.
	\item $m>3k$: \ always.
\end{itemize}	

In particular, each system \eqref{eq:systemQ} is formally gauge--coordinate equivalent to one with $Q(x)$ of the form
\[Q(x)=\begin{cases}
	x^m\big(1+2\sqrt\mu\, x^{k-\frac{m}{2}}\big),& \\[2pt]
	x^{2k}\mu,&\mu\neq 0,\\[2pt]
	x^{2k}\big(\mu + u(\rho) x^l\big),& 4\mu+k^2=l^2,\quad u(\rho)=\begin{cases}0&\text{if}\ \rho=0,\\ 1&\text{if}\ \rho\neq0.\end{cases} 
\end{cases} \]
\end{theorem}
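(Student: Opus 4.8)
The plan is to reduce the gauge--coordinate classification to the already-proven formal gauge classification of Theorem~\ref{theorem:formalgaugeequivalence} by splitting off the coordinate part. First I would note that any formal gauge--coordinate transformation between two companion systems \eqref{eq:systemQ} factors as a point transformation \eqref{eq:coordinatechange} realizing its diffeomorphism $\phi$, followed by a pure gauge transformation: precomposing the given transformation with the inverse point transformation of $\phi$ cancels the coordinate part and, since point transformations preserve the companion class, leaves a coordinate-fixing transformation, i.e. a gauge transformation. Hence, writing $Q'_\phi$ for the result of applying the point transformation \eqref{eq:coordinateQ} to $Q'$, we have $Q\sim_{\mathrm{gc}}Q'$ if and only if $Q'_\phi$ is formally gauge equivalent to $Q$ for some formal $\phi$, in the sense of \eqref{eq:jetequality}. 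The task thus becomes to compute the orbits of the $\phi$-action \eqref{eq:coordinateQ} on the formal gauge invariants.

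The next step is to identify the gauge--coordinate invariants. The square residue $\mu$ of \eqref{eq:mu} is one: it is a gauge invariant by Theorem~\ref{theorem:formalgaugeequivalence}, and it is coordinate invariant because $\res_0$ is intrinsic and the additive Schwarzian term in \eqref{eq:coordinateQ} carries no residue. A direct order count in \eqref{eq:coordinateQ} shows that this Schwarzian term is $O(x^{2k+1})$ for every $\phi$ with $\phi'(0)\neq 0$, while the leading coefficient of $Q$ is merely rescaled by $c_1^{\,m-2k}$, where $c_1=\phi'(0)$. This already yields necessity in each regime: for $0\leq m<2k$ the leading monomial $x^m$ lies strictly below order $2k+1$, so $m=\ord_0 Q$ is preserved and its coefficient is unremovable, forcing $m=m'$ and $\mu=\mu'$; for $m=2k$ the factor $c_1^{\,m-2k}=1$ leaves $\mu$ unrescalable; and for $2k<m$ the Schwarzian term reaches the leading monomial and $\ord_0 Q$ is no longer invariant, so only $\mu=0$ persists. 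In the resonant situations $\sqrt{4\mu+k^2}\in\Z$ the residual coordinate freedom acts on $\rho$ of \eqref{eq:rho} by a nonzero scaling, so only the dichotomy $\rho=0\Leftrightarrow\rho'=0$ is preserved.

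For sufficiency I would build the normal forms by alternating the two actions. Using Strebel's classification recalled above, a coordinate change brings the quadratic differential $Q(\tfrac{\d x}{x^{k+1}})^2$ to its model, which depends only on $2k-m$ and $\mu$; then Theorem~\ref{theorem:formalgaugeequivalence} reduces everything to the jet $\jet^{k+N}Q$, and the residual coordinate freedom (the stabiliser of the Strebel model) together with gauge transformations is used to clear the remaining monomials of that jet order by order. Each such removal has a triangular linearised equation whose only obstructions are the resonances, of which exactly one lies in range, namely the one recorded by $\rho$ when $\sqrt{4\mu+k^2}\in\Z$. For $0\leq m<2k$ there is no resonance, the jet collapses to the Strebel companion form $x^m(1+2\sqrt\mu\,x^{k-\frac m2})$, giving the first normal form. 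For $m=2k$ the leading $\mu x^{2k}$ is fixed and all subleading terms are removed except the single resonant one, producing $x^{2k}\mu$ when $\sqrt{4\mu+k^2}\notin\Z$ and $x^{2k}(\mu+u(\rho)x^l)$ otherwise. For $2k<m\leq 3k$ one has $\mu=0$, $l=k$, and the same procedure either keeps the resonant term $x^{3k}$ (when $\rho\neq0$) or trivialises $Q$ (when $\rho=0$); this is why the band merges with $m>3k$, where $\jet^{3k}Q=0$ already forces the trivial class unconditionally.

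The main obstacle is the resonant band $2k\leq m\leq 3k$ with $\sqrt{4\mu+k^2}\in\Z$. There I must compute explicitly, from \eqref{eq:coordinateQ} and the definition \eqref{eq:rho}, how the coordinate freedom left after fixing $\mu$ and the leading structure acts on $\rho$, and check that it is exactly a nonzero rescaling $\rho\mapsto c\,\rho$; only this produces the coarse invariant $\rho=0\Leftrightarrow\rho'=0$, in contrast with the finer tangent-to-identity statement $\rho=\rho'$. A closely related delicate point, which the order count above must be used to control, is that for $\mu=0$ and $m>2k$ the Schwarzian term in \eqref{eq:coordinateQ} genuinely cancels leading monomials, so $\ord_0 Q$ is not a gauge--coordinate invariant; tracking which monomials survive this cancellation is what shows that the entire range $2k<m\leq 3k$ collapses onto the two classes separated by $\rho$.
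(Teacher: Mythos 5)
Your proposal is correct and follows essentially the same route as the paper: the order-by-order removal of jet coefficients by point transformations $\phi_l(x)=x+a_lx^{l+1}$ is exactly Lemma~\ref{lemma:sufficiency}, with the same two resonant obstructions carrying $\mu$ and $\rho$, and the extra non-tangent-to-identity freedom (rescaling and the rotation $x\mapsto\e^{\frac{2\pi\i}{2k-m}}x$) accounting for the coarsening from $\rho=\rho'$ to $\rho=0\Leftrightarrow\rho'=0$ and for the sign of $\sqrt\mu$. The one step you flag as still to be checked --- that the residual coordinate freedom acts on $\rho$ by a nonzero rescaling --- can be obtained for free from Lemma~\ref{lemma:strongresonance}, since $\rho\neq0$ is equivalent to non-diagonalizability of the local monodromy, which is manifestly a gauge--coordinate invariant.
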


\begin{theorem}[Formal/analytic point classification]\label{theorem:pointclassification}
	Two systems \eqref{eq:systemQ} are formally, resp. analytically, point equivalent if and only if they are formally, resp. analytically, gauge--coordinate equivalent. Likewise for tangent-to-identity equivalence.
\end{theorem}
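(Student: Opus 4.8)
The implication from point to gauge--coordinate equivalence is immediate: by definition a point transformation \eqref{eq:coordinatechange} \emph{is} a gauge--coordinate transformation $(x,y)\mapsto\bigl(\phi(x),G_\phi(x)y\bigr)$, and $G_\phi$ is tangent to the identity exactly when $\phi$ is, so both the plain and the tangent-to-identity versions of this direction are trivial.

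For the converse I would argue in two steps, the first being a reduction to a pure gauge transformation. Suppose $(x,y)\mapsto\bigl(\phi(x),T(x)y\bigr)$ takes a companion system \eqref{eq:systemQ} with $Q$ to one with $Q'$. Let $(\phi,G_\phi)$ be the point transformation \eqref{eq:coordinatechange} attached to the same $\phi$; it sends $Q$ to some companion system $\tilde Q$. Then $S(x'):=T\bigl(\phi^{-1}(x')\bigr)\,G_\phi\bigl(\phi^{-1}(x')\bigr)^{-1}$ is a pure gauge transformation carrying $\tilde Q$ to $Q'$, and $\det S$ is a nonzero constant because both matrices are traceless. Since $Q$ is point equivalent to $\tilde Q$ through $(\phi,G_\phi)$, everything reduces to the key claim: \emph{two companion systems related by a pure gauge transformation are point equivalent} (and a tangent-to-identity pure gauge should yield a tangent-to-identity point transformation).

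To prove this I would invoke Lemma~\ref{lemma:equivalenceofy11}. Writing $S=\left(\begin{smallmatrix}a&b\\ c&d\end{smallmatrix}\right)$, its action on first components of solutions is the first-order operator $L=a+b\,x^{k+1}\tdd{x}$, which maps the solution space of \eqref{eq:LDE} with $Q$ isomorphically onto that with $Q'$. Fix a nontrivial solution $y_{11}$ for $Q$; then $y_{11}':=Ly_{11}$ is a solution for $Q'$, and by \eqref{eq:Schwarziandifferential} the primitives $g=\int\frac{\d x}{y_{11}^2x^{k+1}}$ and $g'=\int\frac{\d x}{y_{11}'^2x^{k+1}}$ satisfy $Q=-\tfrac12\Cal S_{x^{k+1}\partial_{x}}(g)$ and $Q'=-\tfrac12\Cal S_{x^{k+1}\partial_{x}}(g')$. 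By Lemma~\ref{lemma:equivalenceofy11} it suffices to exhibit a germ of diffeomorphism $\phi$ with $\phi^*\bigl(\frac{\d x'}{y_{11}'(x')^2x'^{k+1}}\bigr)=\frac{\d x}{y_{11}(x)^2x^{k+1}}$, equivalently $g'\circ\phi=g+\mathrm{const}$; this determines $\phi=g'^{-1}\circ(g+\mathrm{const})$, and the entire difficulty is to check that $\phi$ is a genuine formal, resp. analytic, diffeomorphism of $(\C,0)$.

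The main obstacle is precisely this verification, i.e.\ that the two exact differentials $\frac{\d x}{y_{11}^2x^{k+1}}$ and $\frac{\d x}{y_{11}'^2x^{k+1}}$ are of the same local type, with equal pole order and equal residue, so that $g$ and $g'$ share the same leading singular behaviour and $\phi$ is invertible with $\phi(0)=0$, $\tfrac{\d\phi}{\d x}(0)\neq0$. The pole order is handled by choosing $y_{11}$ with $y_{11}(0)\neq0$ and reading off $\ord_0 Ly_{11}=0$ from $\det S=\mathrm{const}\neq0$; the equality of residues uses that a single-valued gauge $S$ preserves the local formal data, in particular the square residue \eqref{eq:mu} and the jet $\jet^{k+N}Q$ of Theorem~\ref{theorem:formalgaugeequivalence}, which is exactly the information recorded by $\res_0\frac{\d x}{y_{11}^2x^{k+1}}$. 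The ramified and resonant configurations, where the solutions carry fractional powers or logarithms and $y_{11}$ may be forced to vanish at $0$, demand a separate and more delicate bookkeeping of these leading terms and are where the argument is hardest. Finally, when $S$ is tangent to the identity one has $L=1+\hot$, hence $g'=g+\mathrm{const}$ to leading order and $\phi$ tangent to the identity, which settles the tangent-to-identity statement.
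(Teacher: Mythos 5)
Your easy direction and your reduction are exactly the paper's: since point transformations form a group, composing the given gauge--coordinate equivalence with the inverse of the point transformation attached to the same $\phi$ reduces everything to showing that a pure gauge equivalence between companion systems is realized by a point transformation. Where you diverge is in how you prove that core step, and this is where the gap lies. You propose to apply Lemma~\ref{lemma:equivalenceofy11} to a chosen solution $y_{11}$ and to define $\phi=g'^{-1}\circ(g+\mathrm{const})$ with $g=\int\frac{\d x}{y_{11}^2x^{k+1}}$. But for an irregular singularity ($0\leq m<2k$) there is no solution with $y_{11}(0)\neq 0$, nor indeed any solution that is an analytic or formal power-series germ at $0$: solutions behave like $Q^{-1/4}\e^{\pm\int\sqrt{Q}\,\d x/x^{k+1}}$, so $\frac{\d x}{y_{11}^2x^{k+1}}$ has an essential singularity, is only defined sectorially, and has no ``pole order'' or residue in the sense your verification requires. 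In the formal category the construction cannot even be set up, since the theorem is a statement about formal equivalence but $y_{11}$, $g$, $g'$ are not formal objects. Lemma~\ref{lemma:equivalenceofy11} is usable only in the (resonant) regular case, where Lemma~\ref{lemma:strongresonance} provides a genuine subdominant solution $x^{\frac{l-k}{2}}u(x)$ with $u$ analytic --- and that is precisely the only place the paper invokes it. Even there, your claim that the residue equality follows from preservation of the jet $\jet^{k+N}Q$ is off: in the resonant regular case the residue $\rho$ of \eqref{eq:rho} is an invariant \emph{independent} of that jet (this is exactly why Theorem~\ref{theorem:formalgaugeequivalence} imposes the extra condition $\rho=\rho'$), so it must be tracked separately.

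The paper's route for the core step is instead purely jet-theoretic: the necessity part of Theorem~\ref{theorem:formalgaugeequivalence} shows that a (tangent-to-identity) gauge equivalence forces $\jet^{k+\tilde N}Q=\jet^{k+\tilde N}Q'$ together with the residue conditions, and Lemma~\ref{lemma:sufficiency} then builds the formal point equivalence as an infinite composition of maps $\phi_l(x)=x+a_lx^{l+1}$, solving for $a_l$ order by order from the transformation rule \eqref{eq:coordinateQ}; no reference to actual solutions is needed except in the resonant regular subcase. The analytic statement is then \emph{not} a routine verification: it is delegated to Lemma~\ref{lemma:formalanalytic}, whose proof rests on the nontrivial convergence result of \cite[Theorem 1.1]{Klimes6} (essentially, that the unique formal $\phi$ conjugating two analytically gauge-equivalent systems converges, which requires controlling the Stokes data). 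Your proposal acknowledges the difficulty but offers no mechanism for gluing the sectorial determinations of $g'^{-1}\circ(g+\mathrm{const})$ into a single analytic germ, so the analytic case remains unproved as written.
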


The relative simplicity of Theorem~\ref{theorem:formalgaugeequivalence} stands in a contrast to the general theory of formal invariants by \cite{Balser-Jurkat-Lutz12, JLP12}, 
which is given in terms of a canonical form of a formal fundamental matrix solution.
In the $\sl_2(\C)$ case, it takes the form
\[\hat Y(x)=\begin{cases}
\hat T(x)\left(\begin{smallmatrix}1&p(x^{-1})\\[4pt]0&1\end{smallmatrix}\right)\!\!
\left(\begin{smallmatrix}x^{\alpha}&0\\[3pt]0&x^{-\alpha}\end{smallmatrix}\right)\!U
\e^{\int\jet^{k-1}\!\sqrt{Q(x)}\,\tfrac{\d x}{x^{k+1}}\left(\begin{smallmatrix} 1&0\\[3pt]0&-1 \end{smallmatrix}\right)},&m<2k,\\[12pt]
\hat T(x)\left(\begin{smallmatrix}1&p(x^{-1})\\[4pt]0&1\end{smallmatrix}\right)\!\!
\left(\begin{smallmatrix}x^{\alpha}&0\\[3pt]0&x^{-\alpha}\end{smallmatrix}\right)\!\!
	\left(\begin{smallmatrix}1&\delta_\rho\log x\\[3pt]0&1\end{smallmatrix}\right),&m\geq2k,\\
\end{cases}\]
where
\begin{itemize}[itemsep=1pt]
	\item[-] $\hat T(x)$ is a formal gauge transformation,
	\item[-] $p(x^{-1})$ is a polynomial of order $\leq\lfloor\frac{\tilde N}{2}\rfloor=\min\{\lfloor\frac{m}{2}\rfloor,k\}$ in $x^{-1}$ without a constant term,
	\item[-] $\alpha=\res_{x=0}\sqrt{Q(x)\left(\frac{\d x}{x^{k+1}}\right)^2+\left(\frac{\d Q(x)}{4Q(x)}\right)^2}-\frac{m}{4}=
	\begin{cases} \sqrt{\mu}-\frac{m}4, & m<2k,\\ 
		\sqrt{\mu+\left(\frac{m}{4}\right)^2}-\frac{m}4, & m=2k,\\
		0, & m>2k,\end{cases}$
	\item[-] $U=\left(\begin{smallmatrix}1&1\\[3pt]1&-1\end{smallmatrix}\right)$ if $m$ is odd, $U=I$ if $m$ is even,
	\item[-] $\delta_\rho=0$, unless $m\geq2k$ and $\alpha\in\frac12\Z$ and $\rho\neq 0$ \eqref{eq:rho} in which case it is $1$.
\end{itemize}
The quantities $\jet^{k-1}\!\sqrt{Q(x)}\,\tfrac{\d x}{x^{k+1}}$, $\alpha$, $\delta_\rho$ and $p(x^{-1})$ then form a complete set formal gauge invariants.
They are all determined by $\left(\jet^{k+N}Q(x)\right)\left(\frac{\d x}{x^{k+1}}\right)^2$, with the exception of $\delta_\rho$ for resonant regular singularities with $m=2k$.

\begin{lemma}\label{lemma:strongresonance}
	Assume the singularity is resonant regular: $m\geq 2k$ and $4\mu+k^2=l^2$ for some $l\in\Z_{\geq 0}$, where $\mu=\lim_{x\to0}\frac{Q(x)}{x^{2k}}$. 
	Then there exists a unique normalized ``subdominant'' solution $y_{11}(x)=x^{\frac{l-k}{2}}\big(1+o(x)\big)$ of \eqref{eq:LDE} with $x^{-\frac{l-k}{2}}y_{11}(x)$ analytic.
	
	The tangent-to-identity point transformations which preserve the system are precisely those associated to the diffeomorphisms
	\begin{equation}\label{eq:flow}
		\phi_s(x)=\exp\big(s\,y_{11}(x)^2x^{k+1}\!\tdd{x}\big),\quad s\in\C,
	\end{equation}
	preserving the differential $\frac{\d x}{y_{11}(x)^2x^{k+1}}$.
	Its residue
	\begin{equation}\label{eq:rho}
		\rho:=\res_0\frac{\d x}{y_{11}(x)^2x^{k+1}}. 
	\end{equation}	
is an invariant with respect to tangent-to-identity point transformations.
The monodromy is diagonalizable if and only if $\rho=0$.
\end{lemma}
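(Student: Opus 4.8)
The plan is to treat the four assertions separately, leaning on the regular-singular nature of \eqref{eq:LDE} and on Lemma~\ref{lemma:equivalenceofy11}.

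\emph{Existence and uniqueness of the subdominant solution.} First I would record that $m\geq2k$ makes $x=0$ a regular singular point: dividing \eqref{eq:LDE} by $x^{2k}$ and expanding $\big(x^{k+1}\tdd{x}\big)^2=x^{2k+2}\tfrac{\d^2}{\d x^2}+(k+1)x^{2k+1}\tdd{x}$ turns it into $x^2y_1''+(k+1)x\,y_1'-q(x)y_1=0$ with $q(x)=Q(x)/x^{2k}$ analytic and $q(0)=\mu$. Substituting $y_1=x^{\sigma}g(x)$ and writing $D=x\tdd{x}$, the equation becomes $(D+\sigma)(D+\sigma+k)g=q\,g$, so the indicial polynomial is $\sigma(\sigma+k)-\mu$ with roots $\sigma_\pm=\tfrac{-k\pm l}{2}$, and for the larger root $\sigma_+=\tfrac{l-k}{2}$ the coefficient recursion reads $n(n+l)g_n=\sum_{j\geq1}q_jg_{n-j}$. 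Since $l\geq0$, the factor $n(n+l)$ never vanishes for $n\geq1$, so with $g_0=1$ there is a unique formal solution and no logarithm can enter; convergence is the standard Frobenius majorant estimate. Uniqueness among genuine solutions follows because the second solution behaves like $x^{\sigma_-}$ (possibly times $\log$), which dominates $x^{\sigma_+}$ as $x\to0$, so the normalized subdominant asymptotics single out $y_{11}$. This also yields that $x^{-\frac{l-k}{2}}y_{11}=g$ is analytic.

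\emph{Symmetries as a one-parameter flow.} Set $\omega=\frac{\d x}{y_{11}^2x^{k+1}}$; since $y_{11}^2x^{k+1}=x^{l+1}g^2$ with $g(0)=1$, $\omega$ has a pole of order $l+1$ and residue $\rho$. By uniqueness of $y_{11}$, a point transformation \eqref{eq:coordinatechange} sends the system to itself exactly when its diffeomorphism $\phi$ fixes this $\omega$, i.e.\ $\phi^\ast\omega=\omega$; this is the self-equivalence case of Lemma~\ref{lemma:equivalenceofy11} together with \eqref{eq:Schwarziandifferential} (the normalization of $y_{11}$ removing the projective ambiguity of the primitive $\int\omega$). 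The diffeomorphisms preserving a meromorphic $1$-form are precisely the time-$s$ maps of its dual vector field $v$ defined by $\omega(v)\equiv1$, namely $v=y_{11}^2x^{k+1}\tdd{x}$: indeed $L_v\omega=\d(\omega(v))+\iota_v\d\omega=0$, while conversely in the (multivalued) time coordinate $T=\int\omega$ the condition $\phi^\ast\omega=\omega$ reads $T\circ\phi=T+s$, which is the flow $\phi_s=\exp(sv)$ of \eqref{eq:flow}. For $l\geq1$ the field $v$ vanishes to order $\geq2$, so each $\phi_s$ is automatically tangent to the identity.

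\emph{Invariance of $\rho$ and the monodromy.} Since the residue of a meromorphic $1$-form is invariant under biholomorphic pullback, any tangent-to-identity point equivalence $\phi$ between two resonant systems satisfies $\phi^\ast\omega'=\omega$ (Lemma~\ref{lemma:equivalenceofy11}), whence $\rho=\res_0\omega=\res_0\phi^\ast\omega'=\res_0\omega'=\rho'$, so $\rho$ is an invariant. For the monodromy I would use reduction of order: with first-order coefficient $\tfrac{k+1}{x}$, Abel's formula gives the second solution $y_{12}=y_{11}\int\frac{\d x}{y_{11}^2x^{k+1}}=y_{11}\,T$. Writing $T=\rho\log x+(\text{single-valued})$, the loop $x\mapsto\e^{2\pi\i}x$ sends $y_{11}\mapsto\e^{2\pi\i\sigma_+}y_{11}$ and $T\mapsto T+2\pi\i\rho$, so in the basis $(y_{11},y_{12})$ the monodromy is
\[
M=\e^{2\pi\i\sigma_+}\begin{psmallmatrix}1 & 2\pi\i\rho\\[4pt] 0 & 1\end{psmallmatrix},
\]
which is diagonalizable precisely when its off-diagonal entry $2\pi\i\rho$ vanishes, i.e.\ when $\rho=0$.

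The routine parts are the Frobenius construction and the residue invariance. The delicate point is the converse in the symmetry statement: showing that \emph{every} tangent-to-identity point symmetry is some $\phi_s$, not merely that each $\phi_s$ is one. This forces one to integrate the invariance $\phi^\ast\omega=\omega$ through the multivalued time coordinate $T$ and to check that the residue simultaneously governs the admissible germs (tangency to the identity, which in the degenerate double-root case $l=0$ amounts to $\phi_s'(0)=\e^{s\rho}$) and the Jordan type of $M$, so that the single invariant $\rho$ controls both \eqref{eq:flow} and the (non)diagonalizability.
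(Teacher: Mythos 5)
Your proof is correct and follows essentially the same route as the paper: Frobenius theory at the larger indicial root $\sigma_+=\frac{l-k}{2}$ for the subdominant solution, the reduction-of-order formula $y_{12}=y_{11}\int\frac{\d x}{y_{11}^2x^{k+1}}$ for the monodromy criterion, and the observation that a tangent-to-identity symmetry must preserve the subdominant line and hence the form $\omega$ (equivalently, the paper's dual vector field $y_{11}^2x^{k+1}\tdd{x}$), so it is a flow map. You are merely more explicit than the paper in the recursion and in writing out the monodromy matrix.
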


\begin{proof}
	Write $Q(x)=x^{2k}\tilde Q(x)$ with $\tilde Q(0)=\mu=\frac{l-k}{2}\frac{l+k}{2}$.
	The equation \eqref{eq:LDE} is the same as
	\[\big(x\tdd{x}\big)^2y_1+kx\tdd{x}y_1-\tilde Q(x)y_1=0,\]
	whose companion matrix  $\left(\begin{smallmatrix} 0 & 1\\ \tilde Q(x) & -k\end{smallmatrix}\right)$ at $x=0$ has eigenvalues $\frac{l-k}{2}$ and $\frac{-l-k}{2}$.
	From the standard theory of linear differential equations it is known that if $l>0$ then there exists a unique 1-dimensional subspace of ``subdominant'' solutions
	of the form $y_1(x)=c_1x^{\frac{l-k}{2}}u(x)$, $c_1\in\C$, with $u(x)=1+\hot$ analytic, associated to the eigenvalue $\frac{l-k}{2}$ 
	This is also true if $l=0$, because in that case the leading matrix of the companion system is conjugated to $\begin{psmallmatrix}
		-\frac{k}2&1\\0&-\frac{k}2\end{psmallmatrix}$.
	Denoting $y_{11}(x)=x^{\frac{l-k}{2}}u(x)$ this normalized subdominant solution, the general 2-parameter solution can be expressed as
	\begin{equation}\label{eq:y1}
		y_1(x)=y_{11}(x)\Big(c_1+c_2\int\frac{\d x}{y_{11}(x)^2x^{k+1}}\Big),\qquad c_1,c_2\in\C.
	\end{equation}
	The monodromy is non-diagonalizable if and only if all non-subdominant solutions, i.e. those in \eqref{eq:y1} with $c_2\neq0$, contain a logarithmic term, i.e. if and only $\rho\neq0$ \eqref{eq:rho}.
	
	Point transformations that preserve the system must preserve also the subspace of subdominant solutions,
	which by \eqref{eq:pointchange} means that $x\mapsto\phi(x)$ must preserve the vector field $y_{11}(x)^2x^{k+1}\!\tdd{x}$ up to a multiplicative constant.
	In the case of transformation tangent to identity, the only diffeomorphisms that have this property are flow maps \eqref{eq:flow}.
\end{proof}

\begin{lemma}\label{lemma:sufficiency}
	Let two systems in a companion form \eqref{eq:systemQ} with $Q(x)$ and $Q'(x)$ of the same Poincar\'e rank $k$ be such  that
	\[Q'(x)=Q(x)+O(x^{\tilde N+l}),\quad\text{where}\quad \tilde N=\min\{m,2k\},\quad l>0,\]
	and moreover $\res_0\sqrt{Q'(x)}\frac{\d x}{x^{k+1}}=\res_0\sqrt{Q(x)}\frac{\d x}{x^{k+1}}$ if $m<2k$,
 or $\rho'=\rho$ \eqref{eq:rho} if $m\geq 2k$ and $\sqrt{4\mu+k^2}\in\Z$, $\mu=\res_0^2Q(x)\left(\frac{\d x}{x^{k+1}}\right)^2$.
 Then there exists a formal diffeomorphism 
	\[\phi(x)=x+O(x^{l+1}),\]
	whose associated point transformation $(x,y)\mapsto\big(\phi(x),\ G_\phi(x)y\big)$ \eqref{eq:coordinatechange} is a formal tangent-to-identity point equivalence between the systems.
	Such $\phi$ is unique except for when $m$ is even and $l\leq k-\frac{m}{2}$.
\end{lemma}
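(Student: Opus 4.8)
The plan is to build $\phi$ by successive approximation, gaining one order of agreement between $Q$ and the pullback of $Q'$ at each step, so that everything reduces to a linear ``homological'' equation. Writing $\phi(x)=x+\eta(x)$ with $\eta=O(x^{l+1})$ and substituting into the transformation rule \eqref{eq:coordinateQ}, the nonlinear corrections only feed into strictly higher orders, so the first variation governs the recursion. Linearising \eqref{eq:coordinateQ} about the identity along a vector field $\eta=\epsilon\,v\,x^{k+1}+O(\epsilon^2)$ and abbreviating $D:=x^{k+1}\!\tdd{x}$, one computes that $Q$ changes by $\epsilon\,\mathcal L(v)+O(\epsilon^2)$, where
\[\mathcal L(v)=-\tfrac12 D^3 v+2Q\,Dv+(DQ)\,v,\]
the two middle terms coming from $\psi^2\,Q'\!\circ\phi$ (the Lie derivative of the quadratic differential) and the cubic term from the Schwarzian bracket. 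At each stage I would solve $\mathcal L(v)=Q-Q'$ for the next coefficient, update $\phi$, and reduce the discrepancy.

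Before running the recursion I would record the structure of $\mathcal L$. In the coordinate $t=\int\frac{\d x}{x^{k+1}}$ (so $D=\partial_t$) the classical identity $-\tfrac12\dddot v+2Q\dot v+\dot Q v=0$ holds exactly for products of two solutions of \eqref{eq:LDE}; hence $\ker\mathcal L$ is the second symmetric power of the solution space, spanned by $y_{11}^2,\ y_{11}y_{12},\ y_{12}^2$. These are exactly the infinitesimal point symmetries of \eqref{eq:systemQ}, the generators of the flows of Lemma~\ref{lemma:strongresonance}, and they will control both the obstructions (cokernel) and the non-uniqueness (kernel).

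For existence I would analyse the leading symbol of $\mathcal L$ on monomials: $\mathcal L(x^j)=(2j+m)q\,x^{m+j+k}+\hot$, dominated by the $Q$-terms when $m<2k$, and $\mathcal L(x^j)=-\tfrac12 j(j+k)(j+2k)x^{j+3k}+\hot$, dominated by the Schwarzian when $m>2k$, the two degrees coinciding when $m=2k$. Away from the finitely many resonant orders where the relevant leading coefficient vanishes, the recursion determines each coefficient of $v$ uniquely, so the construction proceeds unobstructed. The crux — and the step I expect to be the main obstacle — is the one resonant order that can lie inside the range $[\tilde N+l,\infty)$ of $Q-Q'$: there $\mathcal L$ has a one--dimensional cokernel, and the equation is solvable only if the corresponding obstruction functional annihilates $Q-Q'$. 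I would identify this functional with the natural invariant as follows: for $m<2k$ the resonant order is $k+\tfrac m2$ (hence occurs only for $m$ even), and the residue difference $\res_0\big(\sqrt{Q'}-\sqrt Q\big)\frac{\d x}{x^{k+1}}$, which to first order equals $\res_0\frac{Q-Q'}{2\sqrt Q}\frac{\d x}{x^{k+1}}$, is a linear functional of $Q-Q'$ vanishing on $\operatorname{im}\mathcal L$; since the cokernel is $1$-dimensional the two are proportional with nonzero constant, so matching the residues kills the obstruction. For $m\geq2k$ with $\lambda:=\sqrt{4\mu+k^2}\in\Z$ the same proportionality argument identifies the obstruction with $\rho-\rho'$ of \eqref{eq:rho}, the tangent-to-identity point invariant of Lemma~\ref{lemma:strongresonance}; when $\lambda\notin\Z$ there is no integer resonance and no condition is needed. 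Thus the stated hypotheses are exactly the vanishing conditions that keep the recursion from stalling.

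Finally, for uniqueness I would note that two admissible solutions differ by the time-one flow of a symmetry $v_0\,x^{k+1}\tdd{x}$ with $v_0\in\ker\mathcal L$ a genuine (single-valued) Laurent series and $\eta_0=v_0 x^{k+1}=O(x^{l+1})$. In the irregular case $m<2k$ the only element of $\ker\mathcal L$ that is an honest power series is the amplitude $y_{11}y_{12}\sim x^{-m/2}$ (the exponential factors of the two solutions cancel); its flow is tangent to the identity at order $k-\tfrac m2$, and $\eta_0=O(x^{l+1})$ holds precisely when $m$ is even and $l\leq k-\tfrac m2$ — the stated exceptional case — while outside it $\phi$ is forced to be unique. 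The delicate point here is the same bookkeeping as in the cokernel computation: deciding which kernel elements survive as single-valued series and at what order their flows are tangent to the identity, carried out in parallel with the residue/$\rho$ analysis above.
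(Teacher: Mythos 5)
Your proof follows essentially the same route as the paper's: the paper runs the same order-by-order recursion by composing with $\phi_l(x)=x+a_lx^{l+1}$, and the two leading terms it computes, $(2l-2k+m')c'a_lx^{m'+l}$ and $-\tfrac12(l-k)l(l+k)a_lx^{2k+l}$, are exactly your symbol of $\mathcal L$ evaluated on $v=a_lx^{l-k}$ (your $\mathcal L$ is, up to the factor $-2$, the operator in \eqref{eq:thirdorderODE}); the non-resonant steps and the location of the resonances ($l=k-\tfrac{m}{2}$ for $m<2k$, $l^2=4\mu+k^2$ for $m\geq 2k$) agree, as does your uniqueness analysis via the single-valued kernel element $y_{11}y_{12}\sim x^{-m/2}$, which reproduces the paper's Proposition~\ref{prop:flow} and the stated exceptional case. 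The one genuine divergence is the resonant regular step. There the paper does not argue abstractly: assuming $\rho=\rho'$ it produces the equivalence directly as the diffeomorphism pulling back $\frac{\d x}{y_{11}'(x)^2x^{k+1}}$ to $\frac{\d x}{y_{11}(x)^2x^{k+1}}$ (Lemma~\ref{lemma:equivalenceofy11}, using the normalized subdominant solutions of Lemma~\ref{lemma:strongresonance}), and then verifies $\phi=x+O(x^{l+1})$ by showing that $\jet^{l-1}u$ is determined by $\jet^{2k+l-1}Q$. Your cokernel argument can be made to work, but it rests on an unverified claim: that the obstruction functional and $\rho-\rho'$ are proportional \emph{with nonzero constant}. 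Invariance of $\rho$ under point transformations gives vanishing on $\operatorname{im}\mathcal L$, but you must still check that $\rho$ depends nontrivially on the coefficient of $x^{2k+l}$ in $Q$ once the lower-order jet is frozen — otherwise the hypothesis $\rho=\rho'$ could be automatically satisfied at the resonant order and would not kill the obstruction. This is a short computation with the recursion for $u$ in Lemma~\ref{lemma:strongresonance} (the coefficient $u_l$ receives a nonzero multiple of the coefficient of $x^{2k+l}$, and $\rho=-2u_l+\cdots$), but it needs to be done; the analogous nondegeneracy for $m<2k$ you do effectively verify by computing the residue difference to first order.
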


\begin{proof}
	It is enough to show that there exists $a_l\in\C$ such that the point transformation associated to the diffeomorphism $\phi_l(x)=x+a_lx^{l+1}$ 
	transforms the system with $Q'(x)$ to one with $\tilde Q(x)$ 
		\begin{equation}\label{eq:Qj}
		\tilde Q(x):= 
		\psi_l^2\cdot Q'\circ\phi_l-\tfrac12\Big[x^{k+1}\!\tdd{x}\Big(\tfrac{x^{k+1}\!\tdd{x}\psi_l}{\psi_l}\Big)-\tfrac{1}{2}\Big(\tfrac{x^{k+1}\!\tdd{x}\psi_l}{\psi_l}\Big)\!{\vphantom{\Big|}}^2\,\Big],	
	\end{equation}
	where $\psi_l=\tfrac{x^{k+1}\!\tdd{x}\phi_l}{\phi_l^{k+1}}$,
	satisfying 
	$\tilde Q(x)=Q(x)+O(x^{\tilde N+l+1})$. 
	The sought formal diffeomorphism $\phi(x)$ satisfying \eqref{eq:coordinateQ} would be constructed by induction as a formal infinite composition of such germs $\phi_j(x)=x+a_jx^{j+1}$, $j\geq l$. 
	
Let $Q'(x)-Q(x)=b_lx^{\tilde N+l}+O(x^{\tilde N+l+1})$.	
Write
\[Q'(x)=c'x^{m'}+\hot,\quad c'\neq 0,\quad\text{where }\  
\begin{cases}m'=m, & \text{if } m\leq 2k,\\ m'>2k, &\text{if } m>2k,\end{cases}\]
and  $\psi_l(x)=1+(l-k)a_lx^{l}+\hot$.
By \eqref{eq:Qj}
	$\tilde Q(x)-Q'(x)$ is the sum of
\[		\psi_l^2\cdot Q'\circ\phi_l-Q'=(2l-2k+m')c' a_lx^{m'+l}+\hot\]
and 
\[		-\tfrac12\big[x^{k+1}\!\tdd{x}\big(\tfrac{x^{k+1}\!\tdd{x}\psi_l}{\psi_l}\big)-\tfrac{1}{2}\big(\tfrac{x^{k+1}\!\tdd{x}\psi_l}{\psi_l}\big)^2\big]=-\tfrac12 (l-k)l(l+k)a_lx^{2k+l}+\hot.\]
	If $m'<2k$ then the first summand is of lower order, so if $l\neq k-\frac{m'}{2}$ then $a_l=-\frac{b_k}{(2l-2k+m')c'}$.
	If $l= k-\frac{m'}{2}$, then the assumption  $\res_0\sqrt{Q'(x)}\frac{\d x}{x^{k+1}}=\res_0\sqrt{Q(x)}\frac{\d x}{x^{k+1}}$ means that
	in fact $Q'=Q(x)+O(x^{\tilde N+l+1})$ so one can take $a_l=0$.
	If $m'>2k$ then the second summand is of lower order, so  $a_l=\frac{2b_k}{(l-k)l(l+k)}$ unless $l=k$,
and if $m'=2k$, then  $a_l=\frac{2b_k}{l\big((l-k)(l+k)-4c'\big)}$ unless $(l-k)(l+k)=4c'$.
Both of the exceptional cases correspond to a resonance condition $4\mu+k^2=l^2$ with $\mu=c'$ if $m'=2k$ and $\mu=0$ if $m'>2k$.
In this case, let $y_{11}(x)=x^{\frac{l-k}{2}}u(x)$ and $y_{11}'(x)=x^{\frac{l-k}{2}}u'(x)$ be the normalized subdominant solutions of Lemma~\ref{lemma:strongresonance}.
	If $\rho=\rho'$ \eqref{eq:rho}, then there exists an analytic diffeomorphism $\phi(x)$ which pullbacks the meromorphic differential $\frac{\d x}{y'_{11}(x)^2x^{k+1}}=\frac{\d x}{u'(x)^2x^{l+1}}$ to $\frac{\d x}{y_{11}(x)^2x^{k+1}}=\frac{\d x}{u(x)^2x^{l+1}}$, which by Lemma~\ref{lemma:equivalenceofy11} is what is needed. Let us show that moreover one can have $\phi(x)=x+O(x^{l+1})$.
	This holds if and only if $\jet^{l-1} u(x)=\jet^{l-1}u'(x)$, which is true thanks to the assumption that  $\jet^{2k+l-1} Q(x)=\jet^{2k+l-1}Q'(x)$.
	Indeed the equation \ref{eq:LDE} satisfied by $y_{11}(x)$ is equivalent to
	\[\big(x\tdd{x}\big)^2u+l\,x\tdd{x}\!u+\big(\mu-x^{-2k}Q(x)\big)u=0,\qquad u(0)=1,\]
	and it is easy to see that $\jet^{l-1} u(x)$ is uniquely determined by $\jet^{l+2k-1}\big(Q(x)\big)$.
	Therefore the assumption $\jet^{2k+l-1} Q(x)=\jet^{2k+l-1}Q'(x)$ implies $\jet^{l-1} u(x)=\jet^{l-1}u'(x)$. 
\end{proof}

\begin{lemma}[{\cite[Lemma 3.6]{Heu}}]\label{lemma:Heu}
	Given a system \eqref{eq:system} and a formal/analytic germ of diffeomorphism
	\[\phi(x)=x+O(x^{k+j+1}),\quad\text{for some }\ j\in\Z_{\geq0},\]
	there exists a formal/analytic germ of linear transformation $T(x)=I+O(x^j)$ such that the gauge--coordinate transformation
	\[x\mapsto\phi(x),\quad y\mapsto T(x)y\]
	preserves the system.
\end{lemma}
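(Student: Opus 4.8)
The plan is to reduce the statement to an explicit linear ODE for the unknown gauge $T$ and to solve that ODE by means of a fundamental solution matrix of the system. First I would write down the condition that the gauge--coordinate transformation $(x,y)\mapsto(\phi(x),T(x)y)$ preserve \eqref{eq:system}. Pulling back $x^{k+1}\tdd{x}y=A(x)y$ through $x'=\phi(x)$, $y'=T(x)y$ and demanding that the transformed system again have matrix $A$ (now at $x'$) gives, after a direct computation,
\[x^{k+1}\tdd{x}T=\psi(x)\,A(\phi(x))\,T-T\,A(x),\qquad \psi=\tfrac{x^{k+1}\tdd{x}\phi}{\phi^{k+1}}.\]
So the whole problem becomes: solve this equation for a formal/analytic $T=I+O(x^{j})$. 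The hypothesis $\phi=x+O(x^{k+j+1})$ enters here as $\psi=1+O(x^{k+j})$ and $\psi\,A(\phi)-A=O(x^{k+j})$.

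The key idea is that this equation is solved explicitly by a fundamental solution. Let $Y(x)$ be a fundamental matrix, $x^{k+1}\tdd{x}Y=A(x)Y$. Then I claim that
\[T(x):=Y(\phi(x))\,Y(x)^{-1}\]
solves the displayed equation. This is a one-line verification: differentiating $Y(\phi)Y^{-1}$ and inserting $x^{k+1}\tdd{x}Y=AY$ at both $x$ and $\phi(x)$, together with $(Y^{-1})'=-Y^{-1}Y'Y^{-1}$, produces exactly $\psi\,A(\phi)\,T-T\,A$. Moreover this $T$ is single-valued: since $\phi$ fixes $0$ with $\tdd{x}\phi(0)=1$, looping once around $0$ multiplies both $Y(\phi(x))$ and $Y(x)$ on the right by the same monodromy $M$, which cancels in $Y(\phi)Y^{-1}$. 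Hence $T$ is holomorphic on a punctured neighbourhood of $0$ (and an honest power series in the formal case, once one checks that the transcendental factors $x^{L}$, $e^{Q}$ of $Y$ recombine into genuine series — see below).

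It remains to establish the order $T=I+O(x^{j})$ and, in the analytic case, analyticity at $0$; this is where the weights in the hypothesis are really used and is the only delicate point. Substituting the Hukuhara--Turrittin normal form $Y=F(x)\,x^{L}e^{Q(x)}$ — with $F$ an invertible formal gauge, $L$ constant, and $Q$ the diagonal polar part of order $\leq k$, the explicit $\sl_2$ shape being the one recalled before Lemma~\ref{lemma:strongresonance} — and using that the diagonal factors commute, one rearranges the product into
\[T=F(\phi)\,(\phi/x)^{L}\,e^{Q(\phi)-Q(x)}\,F(x)^{-1}.\]
Now $\phi/x=1+O(x^{k+j})$ forces $(\phi/x)^{L}=I+O(x^{k+j})$ and $F(\phi)F(x)^{-1}=I+O(x^{k+j+1})$, while the dominant factor, coming from the pole of order $\leq k$ of $Q$, satisfies $Q(\phi)-Q(x)=O(x^{j})$, whence $e^{Q(\phi)-Q(x)}=I+O(x^{j})$ and therefore $T=I+O(x^{j})$. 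The main obstacle is precisely the control of this exponential factor: a priori $T=Y(\phi)Y^{-1}$ is a quotient of functions with an irregular (essential) singularity, and one must verify that the exponent $Q(\phi)-Q(x)$, a difference of terms of size $x^{-k}$ evaluated at arguments differing by $O(x^{k+j+1})$, collapses to $O(x^{j})$ — exactly the balance encoded in $\phi=x+O(x^{k+j+1})$. Once $e^{Q(\phi)-Q(x)}$ is seen to stay bounded (indeed $\to I$ for $j\geq 1$) in every sector, the isolated singularity of $T$ at $0$ is removable, $\det T(0)\neq 0$, and $T\in\GL(\Cal O(\C,0))$ as required; the ramified (even $m$) case is handled identically after passing to the double cover $x=w^{2}$, where the same weight count applies.
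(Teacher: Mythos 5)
Your argument is correct, but it is not the route the paper takes. The paper's proof never writes down the conjugation equation or a fundamental solution: it takes the formal infinitesimal generator $\hat v(x)\,x^{k+1}\tdd{x}$ of $\phi$ (with $\hat v=O(x^j)$, which is exactly where the hypothesis $\phi=x+O(x^{k+j+1})$ enters) and lifts it to the vector field $\hat\xi=\hat v(x)\bigl(x^{k+1}\tdd{x}+(\partial_{y_1},\partial_{y_2})\cdot A(x)y\bigr)$, i.e.\ to $\hat v$ times the vector field defining the system on the total space; the time-one flow of $\hat\xi$ is by construction of the form $(x,y)\mapsto(\phi(x),\hat T(x)y)$ with $\hat T=I+O(x^j)$, and it preserves the system because the flow of a function multiple of a vector field preserves its foliation. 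Convergence in the analytic case is then delegated to Lemma~\ref{lemma:formalanalytic}. Your formula $T=Y(\phi)Y^{-1}$ is an equally valid (and in fact equivalent) construction: it makes $\det T\equiv 1$ and the single-valuedness obvious, and it lets you settle analyticity directly by sectorial boundedness plus removability, without invoking Lemma~\ref{lemma:formalanalytic}. The price is that your order estimate has to go through the formal normal form, with separate bookkeeping for the ramified, resonant and logarithmic cases, whereas the flow argument is uniform in all of them and reads off $T=I+O(x^j)$ with no computation.

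One point you should tighten: the canonical form recalled before Lemma~\ref{lemma:strongresonance} is not simply $F(x)x^L\e^{Q(x)}$ with $F$ a formal power-series gauge — it also carries the meromorphic shearing $\begin{psmallmatrix}1&p(x^{-1})\\0&1\end{psmallmatrix}$ and, in the resonant regular case, a $\log x$ block. Consequently the intermediate claim $F(\phi)F(x)^{-1}=I+O(x^{k+j+1})$ is not literally correct when $p\neq 0$. This does not break the proof: since $p$ has degree at most $k$ in $x^{-1}$, the same weight count you apply to $Q$ gives $p(\phi^{-1})-p(x^{-1})=O(x^{j})$, the unipotent block contributes $\log(\phi/x)=O(x^{k+j})$, and conjugating these $O(x^{j})$ perturbations by the bounded, boundedly invertible power-series factor keeps them $O(x^{j})$, so the conclusion $T=I+O(x^{j})$ survives. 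But these factors must be accounted for explicitly rather than absorbed silently into $F$.
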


\begin{proof}
	Let $\hat v(x)x^{k+1}\!\tdd{x}$, $\hat v(x)=O(x^{j})$, be the formal infinitesimal generator for $\phi(x)$, i.e. the formal vector field whose formal time-1-flow is $\phi(x)$ (see e.g. \cite[chapter 3]{Ilashenko-Yakovenko}). 
	If $k=j=0$ then such infinitesimal generator is not unique, but that is fine.
	And	let $(x,y)\mapsto\big(\phi(x),\ \hat T(x)y\big)$ be the formal time-1-flow of the vector field 
	$\hat{\xi}(x,y)=\hat v(x)\!\left(x^{k+1}\!\tdd{x}+\big(\tfrac{\partial}{\partial y_1},\tfrac{\partial}{\partial y_2}\big)\cdot A(x)y\right)$. 
	Clearly it preserves the vector field $\hat{\xi}$, and therefore also the associated system \eqref{eq:system}. 
	The analyticity is by Lemma~\ref{lemma:formalanalytic}.
\end{proof}

\begin{lemma}\label{lemma:formalanalytic}
	Let  $(x,y)\mapsto\big(\hat\phi(x),\ G_{\hat\phi}(x)y\big)$ and $(x,y)\mapsto\big(x,\ \hat T(x)y\big)$ be a formal point transformation and a formal gauge transformation,
	such that their composition $(x,y)\mapsto\big(\hat\phi(x),\ \hat T(x)G_{\hat\phi}(x)y\big)$ preserves a meromorphic systems \eqref{eq:systemQ} and $\hat T(0)G_{\hat\phi}(0)=I$.
	Then $\hat\phi$ is convergent if and only if $\hat T(x)$ is.
\end{lemma}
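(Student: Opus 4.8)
The plan is to realise the formal symmetry analytically and then play its two factors against each other. Write $S(x)=\hat T(x)G_{\hat\phi}(x)$ for the gauge--coordinate matrix of the composed transformation, so that $\Sigma\colon(x,y)\mapsto\big(\hat\phi(x),\,S(x)y\big)$ preserves the analytic companion system \eqref{eq:systemQ} and $S(0)=I$. The explicit formula \eqref{eq:coordinatechange} expresses $G_{\hat\phi}$ through $\psi^{\pm 1/2}$ and $x^{k+1}\tdd{x}\psi$ with $\psi(0)\neq 0$; hence $G_{\hat\phi}$ converges if and only if $\hat\phi$ does (one recovers $\phi$ from $\psi$ by integrating $x^{k+1}\tdd{x}\phi=\psi\,\phi^{k+1}$, which preserves convergence). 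Since $\hat T=S\,G_{\hat\phi}^{-1}$, the asserted equivalence follows once I show that convergence of either factor forces convergence of the whole symmetry $S$, and conversely that an analytic $\hat T$ forces analyticity of $\hat\phi$.

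I first fix a genuine fundamental matrix $Y(x)$ of the analytic system, holomorphic on the universal cover of a punctured neighbourhood of $0$. Because $\Sigma$ sends solutions to solutions, the symmetry is encoded by a constant $C\in\GL_2(\C)$ through the functional equation $S(x)Y(x)=Y(\hat\phi(x))\,C$, that is $S(x)=Y(\hat\phi(x))\,C\,Y(x)^{-1}$. Transporting this identity around a small loop and using that $S$ is single-valued shows $C$ commutes with the monodromy of $Y$, so the right-hand side $F(x):=Y(\hat\phi(x))\,C\,Y(x)^{-1}$ is itself single-valued on the punctured neighbourhood.

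For the implication $\hat\phi$ convergent $\Rightarrow\hat T$ convergent: if $\hat\phi$ is analytic then $F$ is a genuine single-valued analytic matrix on the punctured disc. Writing $Y$ in its sectorial normal form $Y\sim H(x)\,x^{L}\e^{\Lambda(x)}$, with $\Lambda$ the diagonal polar part built from $\jet^{k-1}\!\sqrt{Q(x)}\,\tfrac{\d x}{x^{k+1}}$, the factor $\e^{\Lambda(\hat\phi)}\,C\,\e^{-\Lambda}$ stays of moderate growth precisely because $\Sigma$ preserves the formal normal form, so $\hat\phi$ preserves $\Lambda$ up to a bounded term and $C$ is diagonal or anti-diagonal accordingly. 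Thus $F$ has moderate growth, is single-valued, and is asymptotic to the formal series $S$ in every sector; by the removable-singularity theorem $F=S$ is holomorphic at $0$, whence $\hat T=S\,G_{\hat\phi}^{-1}$ converges. I expect this growth estimate --- checking that the wild exponential parts of $Y(\hat\phi)$ and $Y$ cancel --- to be the main obstacle, and it is exactly here that the hypothesis that $\Sigma$ \emph{preserves the system} is used.

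For the converse $\hat T$ convergent $\Rightarrow\hat\phi$ convergent: an analytic $\hat T$ gauges \eqref{eq:systemQ} to another analytic companion system with some potential $Q'(x)$, so by \eqref{eq:coordinateQ} the formal diffeomorphism $\hat\phi$ conjugates the two analytic quadratic differentials $Q(x)\big(\tfrac{\d x}{x^{k+1}}\big)^2$ and $Q'(x)\big(\tfrac{\d x}{x^{k+1}}\big)^2$. These analytic differentials then share all their invariants, so Strebel's normal forms (recalled before Theorem~\ref{theorem:formalgaugeequivalence}) furnish an honest \emph{analytic} coordinate change $\phi_0$ between them; by Lemma~\ref{lemma:equivalenceofy11} the composition $\phi_0^{-1}\circ\hat\phi$ is a formal symmetry of the differential, which in the resonant case is an analytic flow \eqref{eq:flow} matching the residue \eqref{eq:rho}. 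Hence $\hat\phi=\phi_0\circ(\phi_0^{-1}\circ\hat\phi)$ is analytic. Combining the two implications with $\hat T=S\,G_{\hat\phi}^{-1}$ yields the stated equivalence.
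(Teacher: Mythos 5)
Your forward implication ($\hat\phi$ convergent $\Rightarrow$ $\hat T$ convergent) is essentially the paper's argument: both rest on the observation that an analytic point transformation preserves the Stokes data while the formal gauge factor preserves the formal invariants, so the formal gauge transformation must coincide with the unique analytic one agreeing with it at $x=0$; your version via $S=Y(\hat\phi)\,C\,Y^{-1}$, moderate growth and removable singularities is a reasonable way to make that precise.

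The converse direction contains a genuine gap. You assert that ``by \eqref{eq:coordinateQ} the formal diffeomorphism $\hat\phi$ conjugates the two analytic quadratic differentials $Q(x)\big(\tfrac{\d x}{x^{k+1}}\big)^2$ and $Q'(x)\big(\tfrac{\d x}{x^{k+1}}\big)^2$.'' But \eqref{eq:coordinateQ} is precisely \emph{not} the tensorial pullback law: it carries the Schwarzian-type correction $-\tfrac12\big[x^{k+1}\tdd{x}(\cdots)-\tfrac12(\cdots)^2\big]$, so $Q\big(\tfrac{\d x}{x^{k+1}}\big)^2$ transforms under point transformations as a projective connection, not as a quadratic differential. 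Strebel's classification therefore does not furnish the analytic $\phi_0$ intertwining $Q$ and $Q'$ in the sense you need, and the object whose formal symmetries you must control is the companion system, not the differential. For that object your last step fails outright: by Proposition~\ref{prop:flow}, for even $m<2k$ the tangent-to-identity formal point symmetries of the system form a one-parameter group whose generator is analytic only when the system is meromorphically diagonalizable, so ``formal symmetry $\Rightarrow$ analytic flow'' is false in general. (There is also a latent circularity: an analytic point equivalence between the two systems is exactly the content of Theorem~\ref{theorem:pointclassification}, whose analytic case is itself deduced from the present lemma.) The paper instead rescales $\hat\phi$ to be tangent to identity, identifies it with the unique coordinate change of Lemma~\ref{lemma:sufficiency}, and appeals to the analyticity theorem of \cite[Theorem 1.1]{Klimes6}, whose implicit-function-theorem argument on the mixed-basis domains is reproduced in the proof of Theorem~\ref{prop:unfoldedpointequivalence}; some such quantitative input is unavoidable here and is absent from your argument.
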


\begin{proof}
	Assume first that $\hat\phi(x)$ is analytic. Then the system transformed by the point transformation  $(x,y)\mapsto\big(\hat\phi(x),\ G_{\hat\phi}(x)y\big)$ has the same Stokes matrices, and at the same formal invariants, since time it is also formally gauge equivalent to it by $y\mapsto\hat T(x)y$. 
	This means that if $\hat Y(x)$ is a formal fundamental solution matrix for the original system, then $G_{\hat\phi}(x)\hat Y(\hat\phi(x))=H(x)\hat Y(x)$ for an analytic gauge transformation $H(x)$. Since $\hat T(0)=H(0)$, it follows that $\hat T(x)=H(x)$ is analytic (the group of formal tangent-to-identity gauge transformations preserving a system is trivial).
		
	Conversely, assume that $\hat T(x)$ is analytic. Let $c=\tdd{x}\hat\phi(0)$, $\tilde\phi(x)=\frac1c\hat\phi(x)$. 
	Then the system transformed by $(x,y)\mapsto\big(cx,\,\hat T(x)G_cy\big)$ is in a companion form \eqref{eq:systemQ}, and $\tilde\phi(x)$ is the unique formal coordinate change of Lemma~\ref{lemma:sufficiency}. 
	The analyticity of this unique $\tilde\phi(x)$ has been proven for $m=0,1$ in \cite[Theorem 1.1]{Klimes6}. In case of traceless systems the proof works for general $m$ as well (see also the proof of Theorem~\ref{prop:unfoldedpointequivalence} in this paper which follows the same lines).
\end{proof}

\begin{corollary}
	Formal gauge--coordinate transformations between regular singularities ($m\geq 2k$) are convergent.
\end{corollary}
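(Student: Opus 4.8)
The plan is to strip the statement down to the convergence of a single formal diffeomorphism and then feed in the rigidity enjoyed by regular singularities. By Theorem~\ref{theorem:pointclassification} a formal gauge--coordinate equivalence between the two companion systems is the same as a formal point equivalence, hence is the prolongation \eqref{eq:coordinatechange} of a formal diffeomorphism $\hat\phi$ (up to the flip $y\mapsto\pm y$). The gauge factor $G_{\hat\phi}$ in \eqref{eq:coordinatechange} is an explicit analytic expression in $\hat\phi$, so the whole transformation converges precisely when $\hat\phi$ does, and everything reduces to the convergence of $\hat\phi$. Splitting off the automatically analytic linear part $x\mapsto\tfrac{\d\hat\phi}{\d x}(0)\,x$ --- itself an analytic point transformation onto a third companion system --- I may assume $\hat\phi$ is tangent to the identity. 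Since $m\ge 2k$ forces $k-\tfrac{m}{2}\le 0<l$, the exceptional case of Lemma~\ref{lemma:sufficiency} never arises, so $\hat\phi$ is the \emph{unique} formal tangent-to-identity point equivalence between the two systems. It therefore suffices to exhibit \emph{some} analytic tangent-to-identity point equivalence $\phi_0$: uniqueness then forces $\hat\phi=\phi_0$, and we are done.

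In the resonant regular case ($4\mu+k^2=l^2\in\Z$) this is clean. Lemma~\ref{lemma:strongresonance} furnishes, on each side, the normalized subdominant solution $y_{11}(x)=x^{\frac{l-k}{2}}u(x)$ with $u$ analytic and $u(0)=1$, hence a genuinely analytic meromorphic $1$-form $\tfrac{\d x}{y_{11}(x)^2x^{k+1}}=\tfrac{\d x}{u(x)^2x^{l+1}}$ with a pole of order $l+1$ and residue $\rho$ \eqref{eq:rho}. A point equivalence carries subdominant solutions to subdominant solutions, so by \eqref{eq:pointchange} $\hat\phi$ pulls the $1$-form of the $Q'$-system back to a constant multiple of that of the $Q$-system, and, the residues being equal ($\rho=\rho'$, by Theorem~\ref{theorem:formalgaugecoordinateequivalence}), for a suitable normalization of the subdominant solutions the constant is $1$; conversely any such pullback is a point equivalence by Lemma~\ref{lemma:equivalenceofy11}. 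I would close this case with the classical rigidity of germs of meromorphic $1$-forms with a pole of order $\ge 2$ (equivalently of holomorphic vector fields with an isolated zero): their analytic classification coincides with the formal one, given by pole order and residue, and formal equivalences between them converge. This produces the required analytic $\phi_0$ (indeed it shows $\hat\phi$ itself convergent).

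For the remaining regular cases the $1$-form above becomes multivalued, and constructing the analytic $\phi_0$ is the step I expect to be the real obstacle. The governing principle is that a regular singularity carries no Stokes phenomenon, so there is no analytic obstruction to normalizing and the unique formal point equivalence cannot diverge. Concretely I would argue as for the unique coordinate change in the proof of Lemma~\ref{lemma:formalanalytic}, controlling $\hat\phi$ through the developing map $f=\tfrac{y_{11}}{y_{12}}$ with $Q=-\tfrac12\Cal S_{x^{k+1}\partial_x}(f)$ \eqref{eq:Schwarziandifferential}, whose exponents and (projective) monodromy are analytic data that pin $\hat\phi$ down. This is the traceless $\sl_2(\C)$ analogue of the convergence established in \cite{Klimes6} for $m=0,1$, which carries over to all $m$ in the present setting (cf. the proof of Theorem~\ref{prop:unfoldedpointequivalence}); it is precisely here that the regularity hypothesis $m\ge 2k$, rather than mere formal matching, enters, and it is what fails for irregular singularities, where a Stokes phenomenon generically obstructs convergence.
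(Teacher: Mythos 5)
The paper states this corollary without a written proof, so I am judging your argument on its own terms; it contains one genuine gap, and it occurs in the very first reduction. You claim that, by Theorem~\ref{theorem:pointclassification}, ``a formal gauge--coordinate equivalence between the two companion systems is the same as a formal point equivalence, hence is the prolongation \eqref{eq:coordinatechange} of a formal diffeomorphism.'' That theorem only asserts that the two equivalence \emph{relations} coincide (two systems are point equivalent iff they are gauge--coordinate equivalent); it does not say that every gauge--coordinate transformation is itself a point transformation, and the group of gauge--coordinate transformations is strictly larger. The corollary asserts that \emph{every} formal gauge--coordinate transformation between regular singularities converges: already in the degenerate case $Q=Q'$, $\hat\phi=\mathrm{id}$, an arbitrary formal gauge self-equivalence $\hat T$ is such a transformation, and your argument concludes nothing about it. In general $(\hat\phi,\hat S)$ factors as $\hat S=\hat T\cdot G_{\hat\phi}$, and after your treatment of $\hat\phi$ you still owe the convergence of the formal gauge factor $\hat T$ between two analytic regular-singular systems. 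This is the classical ``no Stokes phenomenon at a regular singularity'' statement and can be extracted from \eqref{eq:ab}: for $m\geq 2k$ the term $-\tfrac12\big(x^{k+1}\tdd{x}\big)^3b$ dominates the coefficient recursion, so the coefficients of $b$ are obtained with division by roughly $n^3$; equivalently $\hat T=Y'CY^{-1}$ with $Y,Y'$ convergent Frobenius-type fundamental matrices. But it must be supplied, and it is the half of the corollary your reduction erases.

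On the half you do address, two further remarks. In the resonant regular case the tangent-to-identity point equivalence is \emph{not} unique: Lemma~\ref{lemma:strongresonance} exhibits the nontrivial one-parameter family $\phi_s$ \eqref{eq:flow} of tangent-to-identity self-equivalences, so ``uniqueness forces $\hat\phi=\phi_0$'' fails as stated; your direct argument via the analytic $1$-form $\tfrac{\d x}{y_{11}(x)^2x^{k+1}}=\tfrac{\d x}{u(x)^2x^{l+1}}$ rescues this (and the ambiguity $\phi_s$ is analytic anyway), except that when $l=0$ the pole has order $1$ and the rigidity statement you invoke for poles of order $\geq 2$ needs its simple-pole variant. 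In the non-resonant regular case, which you flag as the obstacle and defer to the method of \cite{Klimes6}, there is in fact a clean self-contained route available precisely because $m\geq 2k$: non-resonance gives two Frobenius solutions $x^{\frac{\pm l-k}{2}}u_\pm(x)$ with $u_\pm$ analytic and $l=\sqrt{4\mu+k^2}\notin\Z$, a point equivalence must send each exponent subspace to the corresponding one, hence $f'\circ\hat\phi=cf$ for $f=x^{l}u_+/u_-$, and the implicit function theorem applied to $\hat\phi\,v(\hat\phi)=c^{1/l}x\,v(x)$ with $v=(u_+/u_-)^{1/l}$ produces a unique convergent solution that must coincide with the unique formal one. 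With the gauge-factor convergence added and these repairs made, your strategy does yield the corollary.
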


\begin{proof}[\bfseries Proof of Theorem~\ref{theorem:formalgaugeequivalence}]
Let us first show the necessity of the conditions.	
If $y'=T(x)y$ is a formal gauge transformation, then 
\begin{equation*}\label{eq:gaugeA}
	A(x)=T(x)^{-1}\Big( A'(x)T(x)-x^{k+1}\!\tdd{x} T(x)\Big).
\end{equation*}
implies that $\det A(x)=\det A'(x)+O(x^{k+1})$, 
and hence $\ord_0(Q'-Q)\geq k+1$.
For $m>0$ we need to be a bit more precise.
Writing $T(x)$ as
\begin{equation}\label{eq:T}
T(x)=a(x)I+b(x)\left(\begin{smallmatrix}0&1\\[4pt]Q(x)&0\end{smallmatrix}\right)+c(x)\left(\begin{smallmatrix}0&0\\[4pt]1&0\end{smallmatrix}\right)+
d(x)\left(\begin{smallmatrix}1&0\\[4pt]0&-1\end{smallmatrix}\right),
\end{equation}
the transformation equation
\begin{equation}\label{eq:conjugationT}
x^{k+1}\!\tdd{x} T=\big[\left(\begin{smallmatrix}0&1\\[4pt]Q&0\end{smallmatrix}\right)\!,\,T\big]+
(Q'-Q)\left(\begin{smallmatrix}0&0\\[4pt]1&0\end{smallmatrix}\right)T
\end{equation}
becomes
\begin{equation}\label{eq:abcd}
\begin{aligned}
x^{k+1}\!\tdd{x} b&=-2d\\
x^{k+1}\!\tdd{x}(a+d)&=c\\
x^{k+1}\!\tdd{x}(a-d)&=-c+(Q'-Q)b\\
 x^{k+1}\!\tdd{x}(Qb+c)&=(Q'+Q)d+(Q'-Q)a,
\end{aligned}	
\end{equation}
from which
\[ d=-\tfrac12x^{k+1}\!\tdd{x} b,\qquad c=\tfrac12(Q'-Q)b-\tfrac12\big(x^{k+1}\!\tdd{x}\big)^2b,\]
and
\begin{equation}\label{eq:ab}
\begin{aligned}
x^{k+1}\!\tdd{x} a&=\tfrac12 (Q'-Q)b,\\
(Q'+Q)x^{k+1}\!\tdd{x} b+\tfrac12bx^{k+1}\!\tdd{x}(Q'+Q)-\tfrac12\big(x^{k+1}\!\tdd{x}\big)^3b&=(Q'-Q)a.
\end{aligned}
\end{equation}
If $m>0$, then $T$ is invertible (resp. tangent to identity) if and only if $a(0)\neq 0$ (resp. $a(0)=1$, $b(0)=0$), and it follows from \eqref{eq:ab} that 
\[\ord_0(Q'-Q)\geq\min\{k+m,\ 3k+1\},\quad\text{resp. }\ \ord_0(Q'-Q)\geq\min\{k+m+1,\ 3k+1\}\]
which gives \eqref{eq:jetequality}.
In the case \eqref{eq:strongresonance} Lemma~\ref{lemma:strongresonance} applies.

The sufficiency of the conditions follows from the combination of Lemma~\ref{lemma:sufficiency} 
with
$l=\begin{cases} k,& \text{if}\ 0<m\leq 2k,\\ k+1,&\text{otherwise},\end{cases}$ if \eqref{eq:jetequality} or
$l=k+1$ if \eqref{eq:jetequality1},
and of Lemma~\ref{lemma:Heu}.
Note that the residue is determined at the jet $\jet^{k+\frac{\tilde N}{2}}Q(x)$, $k+\lceil\frac{\tilde N}{2}\rceil\leq \tilde N+l-1$.
\end{proof}

\begin{proof}[\bfseries Proof of Theorem~\ref{theorem:formalgaugecoordinateequivalence}]
	By Lemma~\ref{lemma:sufficiency}, one can use the point transformation associated with $\phi_l(x)=x+a_lx^{l+1}$ to get rid of the term $x^{\tilde N+l}$, $\tilde N=\min\{m,2k\}$, in $Q(x)$ for any $l\geq 1$,
	except if $l=k-\frac{m}{2}$ and $\tilde N=m\leq2k$ (i.e. of the term carrying the quadratic residue $\mu$ \eqref{eq:mu}), or if $l^2=4\mu+k^2$ and $\tilde N=2k\geq m$ (i.e. of the term carrying the residue $\rho$ \eqref{eq:rho} in the resonant regular case).
	Note that if $\sqrt{\mu'}=-\sqrt{\mu}$, $m<2k$, one may have to use the transformation $x\mapsto \e^{\frac{2\pi\i}{2k-m}}x$ to invert the sign. 
\end{proof}

\begin{proof}[\bfseries Proof of Theorem~\ref{theorem:pointclassification}]
	We need to show that gauge--coordinate equivalence implies point equivalence. Since point transformations form a group, it is enough to show that
	gauge equivalence implies point equivalence (resp. tangent-to-identity gauge equivalence implies tangent-to-identity point equivalence). 
	In the formal setting this follows from Lemma~\ref{lemma:sufficiency} and Theorem~\ref{theorem:formalgaugeequivalence}.
	The analytic version follows additionally from Lemma~\ref{lemma:formalanalytic}.
\end{proof}

\begin{proposition}\label{prop:flow}
Given a companion system \eqref{eq:systemQ} with even $m<2k$, 
there exists a formal vector field $\hat Y_0(x)=x^{k-\frac{m}{2}+1}(1+\hot(x))\tdd{x}$, such that the formal point transformations \eqref{eq:coordinatechange} associated
to its formal flow maps $\hat\phi_s(x)=\exp(s\hat Y_0)(x)\in\widehat{\Diff}(\C,0)$, $s\in\C$, preserve the system.
The vector field $\hat Y_0(x)$ is analytic if and only if the system is meromorphically gauge equivalent to a diagonal one.
\end{proposition}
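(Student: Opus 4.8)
The plan is to recast the statement as an infinitesimal symmetry problem and then integrate it, in direct parallel with the resonant regular case treated in Lemma~\ref{lemma:strongresonance}. Write the sought vector field as $\hat Y_0=\hat v(x)\,x^{k+1}\tdd{x}$, so that its flow has the form $\hat\phi_s(x)=x+s\,\hat v(x)x^{k+1}+O(s^2)$, and set $\delta:=x^{k+1}\tdd{x}$. First I would linearize the invariance condition \eqref{eq:coordinateQ} (with $Q'=Q$) in $s$: since $\psi_s=1+s\,\delta\hat v+O(s^2)$, the order-$s$ term of \eqref{eq:coordinateQ} collapses to the homogeneous linear ODE
\[\delta^3\hat v-4Q\,\delta\hat v-2(\delta Q)\,\hat v=0,\]
which is precisely the second symmetric power of the companion equation \eqref{eq:LDE}; its three-dimensional solution space is spanned by the pairwise products $y_ay_b$ of solutions of \eqref{eq:LDE}.

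For existence I would produce a formal power series solution $\hat v$ of this equation. An indicial computation (substitute $\hat v=x^{r}(1+\hot)$) shows that, because $m<2k$, the lowest-order contribution comes from $-4Q\,\delta\hat v-2(\delta Q)\hat v$ and forces $2r+m=0$, i.e. $r=-\tfrac m2$; since $m$ is even this exponent is an integer, so $\hat v$ is a genuine formal power series $x^{-\frac m2}(1+\hot)$. The recursion for the remaining coefficients is non-degenerate — the coefficient multiplying the $n$-th unknown is a nonzero multiple of $n$ — so a unique normalized $\hat v$ exists, giving $\hat Y_0=\hat v\,x^{k+1}\tdd{x}=x^{k-\frac m2+1}(1+\hot)\tdd{x}$, which vanishes at $0$ and generates a formal flow $\hat\phi_s\in\widehat{\Diff}(\C,0)$. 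To upgrade the infinitesimal symmetry to the statement that each $\hat\phi_s$ preserves the system, I would invoke the group law for point transformations (the one associated with $\phi_1\circ\phi_2$ is the composite of those of $\phi_1,\phi_2$): transporting $Q$ by $\hat\phi_s$ defines an orbit $s\mapsto Q_s$ with $Q_{s+s'}=(Q_s)_{s'}$, hence $\tdd{s}Q_s=F(Q_s)$ for the affine operator $F$ computed above; as $F(Q)=0$ by construction, uniqueness for this autonomous equation yields $Q_s\equiv Q$.

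For the analyticity dichotomy I would first identify $\hat v$ concretely: among the products $y_ay_b$ only the mixed product $y_1^+y_1^-$ of the two formal solutions has no exponential factor, so $\hat v=y_1^+y_1^-$. The symmetric-square equation admits the first integral $\hat v\,\delta^2\hat v-\tfrac12(\delta\hat v)^2-2Q\hat v^2=-\tfrac12 c^2$ (a $\delta$-constant equal to minus half the squared Wronskian, with $c\neq0$ as the two factors are independent), whence by reduction of order $y_1^\pm=\sqrt{\hat v}\,\exp\!\big(\pm\tfrac c2\int\tfrac{\d x}{\hat v\,x^{k+1}}\big)$, consistent with \eqref{eq:Schwarziandifferential} and Lemma~\ref{lemma:equivalenceofy11}. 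If the system is meromorphically gauge equivalent to a diagonal connection $\diag(\lambda,-\lambda)\tfrac{\d x}{x^{k+1}}$, then both $y_1^\pm$ are a meromorphic function times $\exp(\pm\int\lambda\tfrac{\d x}{x^{k+1}})$, so the product $\hat v$ is meromorphic, hence convergent, and $\hat Y_0$ is analytic.

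The converse is where I expect the real work. Assuming $\hat v$ convergent, the quantity $\tfrac{c}{2\hat v}-\sqrt Q$ is a convergent (meromorphic) function, so the exponent $\tfrac c2\int\tfrac{\d x}{\hat v\,x^{k+1}}-\int\sqrt Q\,\tfrac{\d x}{x^{k+1}}$ is meromorphic up to a logarithm; consequently the formal-series parts $h_\pm=y_1^\pm\exp(\mp\int\sqrt Q\,\tfrac{\d x}{x^{k+1}})$ equal a convergent series times a power of $x$, and therefore converge. Convergence of $h_\pm$ means the formal diagonalization converges, i.e. the system is meromorphically (indeed analytically) diagonalizable. I anticipate this direction to be the main obstacle: it requires controlling the interplay between the convergence of the invariant $\hat v$ and the a priori divergent (Gevrey) formal solutions $y_1^\pm$ — equivalently, showing that convergence of the invariant differential $\tfrac{\d x}{\hat v\,x^{k+1}}$ forces triviality of the Stokes phenomenon — together with some care in the ramified case $m\equiv 2\pmod 4$, where one uses that $\sqrt Q$ is single-valued (as $m$ is even) to keep the diagonalizing gauge single-valued.
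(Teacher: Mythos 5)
Your construction of $\hat Y_0$ is the paper's: writing $\hat Y_0=\hat v(x)\,x^{k+1}\tdd{x}$, the invariance condition reduces to the third-order equation \eqref{eq:thirdorderODE} (the symmetric square of \eqref{eq:LDE}), and your indicial computation --- dominance of $-4Q\,x^{k+1}\tdd{x}\hat v-2\hat v\,x^{k+1}\tdd{x}Q$ because $m<2k$, forcing $r=-\tfrac{m}{2}$ with a nondegenerate recursion --- is exactly what underlies the paper's assertion that there is a unique formal power series solution $x^{-m/2}(1+\hot)$. The differences are in how the remaining facts are handled: the paper passes to the prolonged field $\hat Y_2$ on $(x,y)$-space and uses the commutator condition $[X,\hat Y_2]=\hat\alpha X$ to get preservation of the system by the whole flow (your autonomous-ODE/group-law argument is an acceptable substitute), and it cites \cite{Klimes6} for the analyticity dichotomy, which you instead prove in place. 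Your route through the Wronskian first integral is sound and, in fact, stronger than you give it credit for: the identity $y_1^{\pm}=\sqrt{\hat v}\,\exp\bigl(\pm\tfrac{c}{2}\int\tfrac{\d x}{\hat v\,x^{k+1}}\bigr)$ --- valid because $\hat v$ must be proportional to $y_1^+y_1^-$, the only product of formal solutions of \eqref{eq:LDE} carrying no exponential factor (this is where irregularity $m<2k$ enters), and because $c\neq0$ by linear independence --- settles the converse outright. Indeed, if $\hat v$ converges then $\mu_\pm=\tfrac{x^{k+1}\tdd{x}\hat v\,\pm\, c}{2\hat v}$ are meromorphic and $T=\begin{psmallmatrix}1&1\\ \mu_+&\mu_-\end{psmallmatrix}$, with $\det T=-c/\hat v\not\equiv0$, is a meromorphic gauge transformation diagonalizing the system; no control of the Stokes phenomenon or of Gevrey divergence is needed, so the ``main obstacle'' you anticipate in your last paragraph does not actually arise.
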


\begin{proof}
Write $\hat Y_0(x)=\hat h(x)\,x^{k+1}\!\tdd{x}$. The action of the flow maps $\hat\phi_s$ give rise to a 1-parameter group of formal point transformations. Their formal infinitesimal generator is the vector field in $(x,y)$-coordinates
\[\hat Y_2(x,y)= \hat h(x)x^{k+1}\!\tdd{x} + \tfrac12\big(x^{k+1}\!\tdd{x}\hat h(x)\big)\big(y_1\tdd{y_1}-y_2\tdd{y_2}\big)\, + 
\tfrac12\big((x^{k+1}\!\tdd{x})^2\hat h(x)\big)y_1\tdd{y_2},
\]	
which is the jet prolongation of $\hat Y_1(x,y_1)= \hat h(x)x^{k+1}\!\tdd{x} + \tfrac12\big(x^{k+1}\!\tdd{x}\hat h(x)\big)y_1\tdd{y_1}$.
The point transformations, given by the flow of $\hat Y_2$, preserve the system \eqref{eq:systemQ} if and only if
\[\big[X,\hat Y_2\big]=\hat{\alpha}(x)X,\qquad\text{where}\quad X(x,y)=x^{k+1}\!\tdd{x} +y_2\tdd{y_1}+Q(x)y_1\tdd{y_2},\]
for some $\hat{\alpha}(x)$.
This is equivalent to $\hat\alpha(x)=x^{k+1}\!\tdd{x}\hat h(x)$ and $\hat h(x)$ being a solution to
\begin{equation}\label{eq:thirdorderODE}
	\big(x^{k+1}\!\tdd{x}\big)^3\hat h(x)-4Q(x)\big(x^{k+1}\!\tdd{x}\hat h(x)\big)-2\hat h(x)\big(x^{k+1}\!\tdd{x}Q(x)\big),
\end{equation}
see \cite[\S1.4]{Klimes6}. This 3rd order linear ODE has a unique formal power series solution of the form $\hat h(x)=x^{-\frac{m}{2}}(1+\hot(x))$, which in the irregular case $m<2k$ is analytic if and only if the system is meromorphically diagonalizable \cite[Theorem 1.13 and Proposition 1.7]{Klimes6}.
%
\end{proof}

\begin{corollary}
For any system \eqref{eq:system} with even $m<2k$, and any $s\in\C$, there exists an analytic gauge--coordinate transformation $\big(\phi(x),\ T(x)y\big)$, with $\phi(x)=x+sx^{k-\frac{m}{2}+1}+\hot(x)$, which preserves the system.	
\end{corollary}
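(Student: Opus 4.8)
The plan is to reduce to a companion system and then promote the \emph{formal} flow of Proposition~\ref{prop:flow} to an \emph{analytic} automorphism by absorbing its divergence into the gauge factor, using Lemma~\ref{lemma:formalanalytic}.

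First I would apply Lemma~\ref{lemma:systemQ} to bring the system into companion form \eqref{eq:systemQ}, say $\nabla_Q$, by an analytic gauge transformation; since such a transformation leaves the $x$-coordinate untouched, an automorphism of $\nabla_Q$ with coordinate part $\phi$ conjugates back to an automorphism of the original system with the \emph{same} coordinate part $\phi$, so it suffices to treat \eqref{eq:systemQ}. By Proposition~\ref{prop:flow} the formal flow map $\hat\phi_s(x)=x+s\,x^{k-\frac m2+1}+\hot$ generates a formal point transformation preserving $\nabla_Q$. I would then set $\phi:=\jet^{k+1}\hat\phi_s$, an honest analytic (polynomial) diffeomorphism still of the form $\phi(x)=x+s\,x^{k-\frac m2+1}+\hot$. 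Since $\psi(0)=1$ for $\psi=\tfrac{x^{k+1}\tdd{x}\phi}{\phi^{k+1}}$ and the lower-left entry of $G_\phi$ in \eqref{eq:coordinatechange} vanishes at $x=0$, one has $G_\phi(0)=I$, so the associated point transformation is tangent to the identity.

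Next, the point transformation $(\phi,G_\phi)$ sends $\nabla_Q$ to another companion system $\nabla_{Q'}$. Because $\hat\phi_s$ reproduces $Q$ exactly while $\phi$ agrees with $\hat\phi_s$ up to order $k+1$, the mechanism of Lemma~\ref{lemma:sufficiency} (a change $O(x^{k+2})$ in the diffeomorphism, i.e. $l=k+1$, perturbs $Q$ only at order $\tilde N+l=m+k+1$, with $\tilde N=\min\{m,2k\}=m$) gives $Q'-Q=O(x^{m+k+1})$, hence $\jet^{k+m}Q'=\jet^{k+m}Q$. As $m<2k$, the resonance case \eqref{eq:strongresonance} does not occur, so the tangent-to-identity part of Theorem~\ref{theorem:formalgaugeequivalence} furnishes a formal gauge transformation $\hat K(x)=I+\hot$ realizing $(\mathrm{id},\hat K)\colon\nabla_{Q'}\to\nabla_Q$.

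Composing, $\big(\phi,\,(\hat K\circ\phi)\,G_\phi\big)$ is a formal gauge--coordinate automorphism of $\nabla_Q$, tangent to the identity (its value at $0$ is $\hat K(0)G_\phi(0)=I$), presented as the analytic point part $(\phi,G_\phi)$ followed by the gauge part $\hat K$. Here is the crucial step, and the only genuine obstacle: \emph{a priori} $\hat K$ may diverge, reflecting the Stokes phenomenon of the irregular singularity $m<2k$. But the point part $\phi$ is analytic, so Lemma~\ref{lemma:formalanalytic} forces the accompanying gauge part to be analytic as well; thus $\hat K$, and with it $T:=(\hat K\circ\phi)\,G_\phi$, is convergent. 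This yields an analytic gauge--coordinate transformation $(\phi,T)$ preserving $\nabla_Q$ with $\phi(x)=x+s\,x^{k-\frac m2+1}+\hot$, and conjugating by the gauge of Lemma~\ref{lemma:systemQ} proves the corollary. I expect everything apart from this convergence point to be routine jet-order bookkeeping; the virtue of the argument is that it sidesteps any appeal to the analytic classification of irregular singularities, replacing it with the single analyticity criterion of Lemma~\ref{lemma:formalanalytic}.
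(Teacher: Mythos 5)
Your proof is correct and follows essentially the same route as the paper's: reduce to companion form, truncate the formal flow of Proposition~\ref{prop:flow}, check the truncated point transformation stays in the same formal gauge class via Theorem~\ref{theorem:formalgaugeequivalence}, and invoke Lemma~\ref{lemma:formalanalytic} for convergence of the gauge factor. The only (immaterial) difference is that you truncate at order $k+1$ and use the tangent-to-identity criterion $\jet^{k+m}Q=\jet^{k+m}Q'$, whereas the paper truncates at order $k+N+1$ and uses the general gauge criterion; both suffice.
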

\begin{proof}
We can assume that the system is of the companion form \eqref{eq:systemQ}. Let $\hat{\phi}_s(x)$ be the formal diffeomorphism of Proposition~\ref{prop:flow} whose associated point transformation $(\hat\phi_s,\ G_{\hat\phi_s})$ preserve the system.
Let $\phi=\jet^{k+N+1}\hat\phi_s$ where $N$ is as in Theorem~\ref{theorem:formalgaugeequivalence}.
Then the system transformed by the point change associated to $\phi$ has the same $\jet^{k+N}Q$ and therefore is in the same formal gauge equivalence class. 
The convergence of the formal gauge equivalence $T(x)$ is by Lemma~\ref{lemma:formalanalytic}.
\end{proof}

\section{Isomonodromic deformations of meromorphic connections}\label{sec:isomonodromy}

Universal isomonodromic deformations of meromorphic connections on Riemann surfaces have been constructed by Jimbo, Miwa \& Ueno \cite{Jimbo-Miwa-Ueno},
Malgrange \cite{Malgrange-Iso} and others, under a generic assumption on non-resonance of their singularities.
In the case of rank 2 vector bundles, a construction of a universal isomonodromic deformation  has been achieved by Heu \cite{Heu} for connections with
any types of singularities. 
We will show that the natural space for the universal deformation parameter is given by a product 
of the configuration space of singularities with a tuple of jet spaces of quadratic differentials, one for each singularity. 

\medskip
A family of meromorphic connections $\nabla_t(x)$ on a family of analytic vector bundles $E_t\to \sX$ over a Riemann surface $\sX$, with poles
at a divisor $\Sigma_t\subset \sX$, all depending analytically on some parameter $t\in \sT$, is called \emph{isomonodromic}, if there exists a meromorphic  connection $\nabla(x,t)=\d-\Omega(x,t)$ on the total bundle $\Cal E=\coprod_{t}E_t\to \sX\times \sT$ with poles over $\Sigma=\coprod_{t}\Sigma_t$ 
which is \emph{flat} 
\[\d\Omega-\Omega\wedge\Omega=0,\]
and of which $\nabla_t(x)$ are restrictions for all $t\in \sT$.

We assume the polar divisor $\Sigma$ is smooth at $t=0$, i.e. has no crossing. 
In a local coordinate $(x,t,y)$ in which the divisor is $\{x=0\}$, the family becomes
\begin{equation*}
\nabla_t(x)=\d-A(x,t)\tfrac{\d x}{x^{k+1}},\qquad t\in(\C^l,0).
\end{equation*}
The isomonodromicity asks for existence of meromorphic matricial forms $B_i(x,t)\tfrac{\d t_i}{x^k}$ that make the connection
\begin{equation}\label{eq:connection-t} 
	\nabla=\d-A(x,t)\tfrac{\d x}{x^{k+1}}-\sum_{i=1}^l B_i(x,t)\tfrac{\d t_i}{x^k}
\end{equation}
flat.

\begin{definition}
	Let us suppose that $A(0,0)\neq0$, i.e. that the order of the pole of $A(x,t)\tfrac{\d x}{x^{k+1}}$ at $x=0$ is locally constant. 
	Flat connection $\nabla(x,t)$ is said to satisfy a \emph{transversality condition} \cite{Heu} at $\{x=0\}$, if it takes the form \eqref{eq:connection-t}
	with analytic $B_i(x,t)$, $i=1,\ldots,l$.
\end{definition}

\begin{proposition}[Heu {\cite[Proposition 2.5]{Heu}}]\label{proposition:Heu}
A flat traceless meromorphic connection on rank 2 vector bundle \eqref{eq:connection-t} with locally constant Poincar\'e rank $k$ and Katz rank $\nu=\max\{0,k-\frac{m}{2}\}$ along the polar divisor satisfies the transversality condition if and only if there exists an analytic gauge-coordinate transformation
$(x,t,y)\mapsto\big(\phi(x,t),\,t,\,T(x,t)y\big)$ 
transforming the isomonodromic deformation to a constant one of the form
\[\nabla=\d-\begin{psmallmatrix}0&1\\[3pt]Q(x)&0\end{psmallmatrix}\frac{\d x}{x^{k+1}}.\]
\end{proposition}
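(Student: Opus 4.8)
The statement is an equivalence whose two implications are of very different character, and I would treat them separately, spending essentially all the effort on necessity.

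\emph{Sufficiency $(\Leftarrow)$.} Here the plan is to note that transversality is invariant under analytic gauge--coordinate transformations $(x,t,y)\mapsto(\phi(x,t),t,T(x,t)y)$ preserving the divisor $\{x=0\}$ (so $\phi(0,t)=0$), while the target $\nabla=\d-\begin{psmallmatrix}0&1\\ Q(x)&0\end{psmallmatrix}\tfrac{\d x}{x^{k+1}}$ trivially satisfies it with $B_i\equiv 0$. Pulling this form back, one computes $\tfrac{\d\phi}{\phi^{k+1}}=\tfrac{\partial_x\phi\,\d x+\sum_i\partial_{t_i}\phi\,\d t_i}{\phi^{k+1}}$; since $\phi(0,t)=0$ forces $\partial_{t_i}\phi=x\cdot(\text{analytic})$ while $\phi^{k+1}=x^{k+1}\cdot(\text{unit})$, the $\d x$-part keeps a pole of order $k+1$ and each $\d t_i$-part a pole of order at most $k$. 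Conjugating by the analytic $T$ only adds the analytic term $-(\partial_{t_i}T)T^{-1}$, so the $\d t_i$-coefficients take the shape $B_i(x,t)\tfrac{\d t_i}{x^{k}}$ with $B_i$ analytic, i.e.\ \eqref{eq:connection-t} satisfies transversality.

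\emph{Necessity $(\Rightarrow)$.} Assuming the $B_i$ analytic, I would first apply Lemma~\ref{lemma:systemQ} fibrewise, with analytic dependence on $t$, to gauge the $\d x$-part into companion form; the same pole bookkeeping as above shows this preserves transversality, so the connection reads $\nabla=\d-\begin{psmallmatrix}0&1\\ Q(x,t)&0\end{psmallmatrix}\tfrac{\d x}{x^{k+1}}-\sum_i \tilde B_i(x,t)\tfrac{\d t_i}{x^{k}}$ with $Q,\tilde B_i$ analytic and flat. The heart of the argument is then the mixed (zero-curvature) component of flatness, $\partial_{t_i}P-\partial_x Q_i+[P,Q_i]=0$ with $P=\begin{psmallmatrix}0&1\\ Q&0\end{psmallmatrix}x^{-k-1}$ and $Q_i=\tilde B_i\,x^{-k}$. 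On $x\neq 0$ this says each $\partial_{t_i}$ is an infinitesimal symmetry of the family, and because the companion (oper) form is preserved, the $\SL_2(\C)$-oper structure described in the oper remark forces this symmetry to be an infinitesimal \emph{point} transformation, generated by a vector field $\xi_i=\partial_{t_i}+w_i(x,t)\,\partial_x$ whose $y$-prolongation is \eqref{eq:coordinatechange} and whose base flow, being a diffeomorphism fixing $0$, has $w_i(0,t)=0$. The $\d t_i\wedge\d t_j$ components of flatness give $[\xi_i,\xi_j]=0$, so by Frobenius the $\xi_i$ integrate to the coordinate change $\phi(x,t)$ of the statement, straightening the horizontal foliation; in the new coordinate the $\d t_i$-components vanish, whence the remaining mixed equation forces $\partial_{t_i}Q\equiv 0$, i.e.\ $Q=Q(x')$. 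The gauge $T(x,t)$ is the prolongation assembled from \eqref{eq:coordinatechange} and Lemma~\ref{lemma:Heu}, up to the sign flip $y\mapsto\pm y$.

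The step I expect to be the genuine obstacle is \textbf{convergence at the divisor}, which I have deliberately left implicit above. Away from $\{x=0\}$ everything reduces to elementary integration of a flat connection and the trivialization always exists; the pole of order $k+1$ is what makes the flows $\xi_i$ a priori singular at $x=0$ and, as Proposition~\ref{prop:flow} warns, only formal in general (the analogous generators there are analytic precisely when the system is meromorphically diagonalizable). The exact function of the transversality hypothesis is to bound the $\d t_i$-poles by $k$, which is what guarantees that each $w_i$ is analytic with $w_i(0,t)=0$, hence that $\phi$ and $T$ extend analytically across the divisor uniformly in $t$. Promoting the formal trivialization to an analytic one is governed by the formal-implies-analytic mechanism of Lemma~\ref{lemma:formalanalytic} (and Heu's Lemma~\ref{lemma:Heu}), and carrying this out with control of the parameter dependence is where the real work lies.
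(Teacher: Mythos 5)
Your skeleton for necessity is essentially the route the paper itself takes (it does not reprove Heu's proposition, but its parametric generalization, Theorem~\ref{theorem:confluentisomonodromy}, is proved by exactly this scheme): put the $\d x$-part in companion form, observe that the $t$-directions of the flat connection single out a foliation transverse to the fibers $\{t=\const\}$, straighten it, and let the remaining integrability equations kill everything else. The sufficiency direction and your identification of the role of transversality (bounding the $\d t_i$-poles by $k$) are correct. Two remarks on how the paper does it more directly: instead of arguing that "the oper structure forces the symmetry to be an infinitesimal point transformation", one simply writes the traceless connection form as $\Omega=\begin{psmallmatrix}\alpha&\beta\\ \gamma&-\alpha\end{psmallmatrix}$ and looks at the $(1,2)$-entry $\beta=\tfrac{\d x}{x^{k+1}}+\sum_i b_i(x,t)\tfrac{\d t_i}{x^{k}}$; then $x^{k+1}\beta=\d x+\sum_i x\,b_i\,\d t_i$ is an \emph{analytic, nowhere-vanishing} $1$-form whose kernel is integrable by flatness ($\d\beta=2\alpha\wedge\beta$ gives $\omega\wedge\d\omega=0$), transverse to the fibers, and tangent to $\{x=0\}$. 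Rectifying it to $\d x'=0$ produces your $\phi$, and the integrability relations $\d\beta=2\alpha\wedge\beta$, $\d\alpha=\beta\wedge\gamma$, $\d\gamma=2\gamma\wedge\alpha$ then successively force $\alpha=0$, $\gamma=Q\tfrac{\d x'}{x'^{k+1}}$ and $\partial_{t_i}Q=0$. Your $w_i$ is just $-x\,b_i$, read off the connection; no appeal to the oper remark is needed.

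The genuine problem with your write-up is the last paragraph, where you locate "the real work" in a formal-to-analytic convergence argument at the divisor and propose to settle it with Lemma~\ref{lemma:formalanalytic} and the mechanism of Proposition~\ref{prop:flow}. That obstacle does not exist here. Proposition~\ref{prop:flow} concerns the \emph{isotropy} of a single fixed system — a symmetry generator that must satisfy the third-order ODE \eqref{eq:thirdorderODE} and is therefore generically only formal. In the present situation nothing is being solved for formally: the transversality hypothesis makes $x^{k+1}\beta$ analytic and non-singular across $\{x=0\}$, so the rectifying coordinate $\phi(x,t)$ is analytic (with $\phi(0,t)=0$, since $\{x=0\}$ is a leaf) by the ordinary flow-box/Frobenius theorem with analytic parameter dependence, and the gauge part is the explicit algebraic prolongation $G_\phi$ composed with the analytic transformation of Lemma~\ref{lemma:systemQ}. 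There is no divergent formal series to resum, so Lemma~\ref{lemma:formalanalytic} plays no role. As written, your proof defers its completion to an argument that is both unnecessary and, if pursued, would send you toward the wrong tools; the correct completion is the elementary rectification above.
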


\begin{corollary}\label{cor:Heu}
	Isomonodromic deformations satisfying transversality condition are iso-Stokes.
\end{corollary}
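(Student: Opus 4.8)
The plan is to read the statement off directly from Heu's normal form in Proposition~\ref{proposition:Heu}, combined with the fact that the Stokes data are invariants under transformations analytic in $(x,t)$. First I would invoke the transversality hypothesis: by Proposition~\ref{proposition:Heu}, the flat connection \eqref{eq:connection-t} underlying the isomonodromic deformation is analytically gauge--coordinate equivalent, through a transformation $(x,t,y)\mapsto\big(\phi(x,t),\,t,\,T(x,t)y\big)$ whose data $\phi,T$ are analytic in $(x,t)$, to the $t$-independent companion normal form $\tilde\nabla=\d-\begin{psmallmatrix}0&1\\[3pt]Q(x)&0\end{psmallmatrix}\frac{\d x}{x^{k+1}}$. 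The entire nontrivial input is thus already packaged in that proposition; what remains is to translate ``constant normal form'' into ``constant Stokes data''.

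Next I would observe that the Stokes data of the constant family $\tilde\nabla$ is manifestly independent of $t$: its canonical sectorial fundamental solutions and the Stokes matrices comparing them on overlapping sectors do not involve $t$ at all. Then I transport this constancy back to $\nabla_t$. For each fixed $t$, the restriction $\Phi_t\colon(x,y)\mapsto\big(\phi(x,t),\,T(x,t)y\big)$ is an analytic gauge--coordinate equivalence of the individual singular connection $\nabla_t$ with $\tilde\nabla$; since such analytic equivalences leave the Stokes data unchanged (they carry sectorial solutions to sectorial solutions over the corresponding sectors, as already used in the proof of Lemma~\ref{lemma:formalanalytic}), the Stokes matrices of $\nabla_t$ coincide with those of $\tilde\nabla$, hence do not depend on $t$.

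The point that must be handled with care is the $t$-dependence of the identification, i.e. checking that the equality of Stokes matrices is the \emph{natural} one across different values of $t$ and not merely a $t$-dependent conjugacy. Stokes matrices are only well defined once a formal/sectorial normalization is fixed, and here the analyticity of $\phi,T$ in $(x,t)$ is exactly what supplies such a normalization coherently in $t$: pulling back the $t$-constant sectorial frames of $\tilde\nabla$ by $\Phi_t$ yields sectorial frames for $\nabla_t$ depending analytically on $t$, while the coordinate change $\phi(x,t)=c(t)x+\hot$ only rotates the sectors by an amount analytic in $t$ without altering their combinatorial arrangement. The transition (Stokes) matrices read in these frames are therefore literally the pullbacks of the $t$-independent matrices of $\tilde\nabla$, which is precisely the iso-Stokes property and generalizes the classical constancy of Stokes matrices along isomonodromy.
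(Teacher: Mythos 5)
Your proof is correct and follows essentially the same route as the paper, which states this corollary as an immediate consequence of Proposition~\ref{proposition:Heu}: the analytic gauge--coordinate equivalence to a $t$-constant companion form transports the manifestly $t$-independent Stokes data back to the family. Your extra care about the $t$-coherence of the sectorial normalization is a reasonable elaboration of the same argument rather than a different method.
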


\begin{remark}
 In some cases there exist isomonodromic deformations that don't satisfy the transversality condition. In the case with simple poles (Fuchsian singularities) these are the non-Schlesinger deformations \cite{Bolibrukh-isom}.
\end{remark}

\begin{theorem}\label{theorem:isomonodromic}
The space of parameters of a universal isomonodromic deformation of a connection $\nabla_{0}$ with $p$ singularities 
$\Sigma_0=\{a_{1}^*,\ldots,a_{p}^*\}$ of Poincar\'e ranks $k_i$ and Katz ranks $\nu_i$
is the universal cover $\tilde \sT$ of
\[\sT=\Big(\Sigma\times\prod_{i=1}^{n}\sP_i\Big) \big/\Aut(\sX,\Sigma),\]
where  $\Sigma=\{(a_1,\ldots,a_n)\in \sX^n\mid a_i\neq a_j\}$ is the space of configurations of $p$ distinct points in $\sX$,
and $\sP_i$ are the spaces of local deformation parameters at the singularity $a_i$,
identified with jet spaces of meromorphic quadratic differentials:
\begin{equation}\label{eq:universalparameter}
	\sP_i= \Big\{\Delta_i(x,\bm q_i)=\big(q_{i,n_i}x^{n_i}+\ldots+q_{i,k_i+N_i}x^{k_i+N_i}\big)\big(\tfrac{\d x}{x^{k_i+1}}\big)^2 \ \mid\ \res_0^2 \Delta_i(x,\bm q_i)=\mu_i\Big\},
\end{equation}
where $n_i=\min\{m_i,2k_i+1\}$ and $N_i$ is as in \eqref{eq:jetequality}, namely
\[(n_i,\,k_i+N_i)=\begin{cases} (0,\,k_i) & \text{if }\ m_i=0,\\ (m_i,\,k_i+m_i-1) & \text{if }\ 0<m_i\leq 2k_i, \\  (2k_i+1,\,3k_i) & \text{if }\ 2k_i<m_i, \end{cases}\] 
and $\bm q_i=(q_{i,n_i},\ldots,q_{i, k_i+N_i})\in\begin{cases}
\C^*\times\C^{k_i+N_i-n_i}&\text{if}\ m_i\leq 2k_i\\	
\C^{k_i+N_i-n_i+1} &\text{if}\ 2k_1<m_i
\end{cases}$.\\
In particular, 
\[\dim \sP_i=\begin{cases} k_i-1,& \text{if}\ 0<m_i\leq 2k_i\ \text{is even},\\ k_i,&\text{otherwise}.\end{cases}\]
\end{theorem}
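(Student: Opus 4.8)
The plan is to identify the parameter space concretely by combining the universal isomonodromic deformation constructed by Heu~\cite{Heu} with the formal classification of Theorem~\ref{theorem:formalgaugeequivalence}. The starting point is that, by Proposition~\ref{proposition:Heu} together with Corollary~\ref{cor:Heu}, any isomonodromic deformation satisfying the transversality condition is iso-Stokes and, near each singularity, analytically gauge--coordinate equivalent to a $t$-independent companion system $\d-\begin{psmallmatrix}0&1\\Q_i(x)&0\end{psmallmatrix}\tfrac{\d x}{x^{k_i+1}}$. Hence the only data that can vary along the deformation are (i) the positions of the singular points and (ii) the local formal types $Q_i$, subject to the requirement that the monodromy representation be preserved.

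First I would treat the positional part. Letting the $p$ singular points move in $\sX$ while remaining pairwise distinct yields the configuration space $\Sigma=\{(a_1,\dots,a_p)\in\sX^p\mid a_i\neq a_j\}$, and deformations related by an automorphism of the pair $(\sX,\Sigma)$ are isomorphic and must be identified, producing the quotient $(\Sigma\times\prod_i\sP_i)/\Aut(\sX,\Sigma)$. Since the monodromy and the Stokes labelling can be permuted as the configuration traverses a nontrivial loop (the braiding of the points), the single-valued universal family is defined over the universal cover $\tilde\sT$; this is precisely the base over which Heu's construction yields a globally well-defined flat connection.

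Next I would pin down the local factor $\sP_i$. By Theorem~\ref{theorem:formalgaugeequivalence} the formal gauge class of each local companion system is captured exactly by the jet $\jet^{k_i+N_i}Q_i(x)$, with $N_i$ as in \eqref{eq:jetequality}. As the deformation is iso-Stokes with fixed monodromy, the formal monodromy exponents --- equivalently the square residue $\mu_i=\res_0^2 Q_i(x)\big(\tfrac{\d x}{x^{k_i+1}}\big)^2$ \eqref{eq:mu} --- must stay constant, whereas all remaining jet coefficients (including the polar/irregular part, which constitutes the classical deformation times) are free to vary. This matches $\sP_i$ of \eqref{eq:universalparameter}: a jet of a quadratic differential with fixed residue. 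The lower endpoint $n_i=\min\{m_i,2k_i+1\}$ records the irregular/regular dichotomy: transversality preserves the Katz rank $\nu_i$, so at an irregular singularity $m_i$ is fixed, the leading coefficient $q_{i,m_i}$ remains nonzero (the $\C^*$ factor), and the jet starts at degree $m_i$; at a regular singularity only $\nu_i=0$ is preserved, so $Q_i$ may take any order $>2k_i$ and the parameter sweeps the full tail of degrees $2k_i+1,\dots,3k_i$.

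Finally the dimension count is bookkeeping: the number of free jet coefficients is $k_i+1$ when $m_i=0$ and $k_i$ otherwise, while the residue constraint is nontrivial --- and so removes one dimension --- exactly for even $m_i\leq 2k_i$; combining the two gives $\dim\sP_i=k_i-1$ precisely when $0<m_i\leq 2k_i$ is even and $\dim\sP_i=k_i$ in all remaining cases. The genuine difficulty, I expect, lies not in this count but in matching Heu's abstract deformation parameter with the explicit jet coordinates of $\sP_i$: one must show that every jet in $\sP_i$ is realized by an \emph{actual} isomonodromic family with the prescribed monodromy (surjectivity), and that distinct jets give inequivalent families (faithfulness). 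Surjectivity rests on the wild Riemann--Hilbert correspondence --- realizing an arbitrary formal type compatible with the fixed Stokes and connection data --- together with Heu's flatness/transversality construction assembling the local models into one flat connection over $\tilde\sT$; faithfulness rests on the completeness of the invariant in Theorem~\ref{theorem:formalgaugeequivalence} and on $\mu_i$ being a genuine monodromy invariant.
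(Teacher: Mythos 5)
Your proposal is correct and follows essentially the same route as the paper: Heu's construction identifies $\sP_i$ with the quotient of the analytic gauge--coordinate equivalence class of $\nabla_0$ at $a_i^*$ by analytic gauge equivalence, Corollary~\ref{cor:Heu} (iso-Stokes) lets one replace ``analytic'' by ``formal'', and Theorems~\ref{theorem:formalgaugeequivalence} and~\ref{theorem:formalgaugecoordinateequivalence} then yield the jet-space description \eqref{eq:universalparameter} with fixed residue, together with the dimension count. The surjectivity/faithfulness concern you raise at the end is absorbed by exactly these two steps (Heu's construction realizing every local gauge--coordinate class, and the iso-Stokes reduction guaranteeing that formal gauge equivalence plus constant Stokes data gives analytic gauge equivalence), so no separate appeal to a wild Riemann--Hilbert correspondence is needed.
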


\begin{proof}
Following the construction of universal isomonodromic deformations of meromorphic $\sl_2(\C)$-connections by V.~Heu \cite{Heu}, the local parameter spaces correspond to
the quotient
\begin{equation*}
\sP_i=\left\{\begin{array}{c}\text{analytic gauge-coordinate}\\ \text{equivalence class} \\ \text{of $\Nabla_0$ at $a_i^*$}\end{array}\right\}\Big/
\left[\begin{array}{c}\text{analytic gauge}\\ \text{equivalence}\end{array}\right].
\end{equation*}
One may here replace ``analytic'' by ``formal'' since by Corollary~\ref{cor:Heu} the Stokes modulus of the singularity stays constant along the isomonodromic deformation: 
\begin{equation*}
\sP_i\simeq\left\{\begin{array}{c}\text{formal gauge-coordinate}\\ \text{equivalence class} \\ \text{of $\Nabla_0$ at $a_i^*$}\end{array}\right\}\Big/
\left[\begin{array}{c}\text{formal gauge}\\ \text{equivalence}\end{array}\right],
\end{equation*}
which by Theorems~\ref{theorem:formalgaugecoordinateequivalence} and~\ref{theorem:formalgaugeequivalence}  can be identified with \eqref{eq:universalparameter}.
\end{proof}

\begin{remark}
In \cite{Heu} the local parameter space $\sP_i$ at the singularity $x_i$ of Poincar\'e rank $k_i>0$ is identified with $\jet^{k_i}$-jets of germs diffeomorphisms.
These diffeomorphisms act on the quadratic differential $\big(\jet^{k_i+N_i}Q_i^*(x)\big)\big(\tfrac{\d x}{x^{k+1}}\big)^2$ that is the formal invariant of $\Nabla_0$ at the singularity $a_i^*$ by point transformations \eqref{eq:coordinateQ} restricted to the $\jet^{k_i+N_i}$ jet.
By Proposition~\ref{prop:flow}, in the case when $0<m_i\leq 2k_i$ is even
there exists a one-parameter subgroup of the group of formal diffeomorphisms with non-trivial $\jet^{k_i}$-jet, whose point transformation action \eqref{eq:coordinateQ} preserve the differential
$\jet^{k_i+N_i}Q_i^*(x)\big(\tfrac{\d x}{x^{k+1}}\big)^2$. 
Therefore in this case the number of local parameters is only $k_i-1$, correcting the statement of \cite[\S 3.3.2]{Heu}. 
\end{remark}

\section{Confluences and degenerations of singularities}\label{sec:2}

An unfolding of a meromorphic connection \eqref{eq:connection} at the singularity at $x=0$ is a parametric family of connections
\begin{equation}\label{eq:unfoldedconnection}
\nabla_\epsilon(x)=\d-A(x,\epsilon)\tfrac{\d x}{P(x,\epsilon)},\qquad (x,\epsilon)\in(\C\times\C^l,0),
\end{equation}
with $P(x,0)=x^{k+1}$, $A(x,\epsilon)\in\sl_2(\C)$, depending analytically on a parameter $\epsilon\in(\C^l,0)$.
It corresponds to a family of systems
\begin{equation}\label{eq:unfoldedsystem}
	P(x,\epsilon)\tdd{x} y=A(x,\epsilon)y.
\end{equation}
With the use of Weierstrass preparation theorem, one may assume that
\begin{equation}\label{eq:P}
	P(x,\epsilon)=x^{k+1}+P_k(\epsilon)x^k+\ldots+P_0(\epsilon).
\end{equation}
We denote
\[m=\ord_0 \det A(x,0).\]
We will only study unfoldings of irregular singularities, $m<2k$.


\begin{lemma}\label{lemma:systemQepsilon}
	There exists an analytic gauge transformation $y\mapsto T(x,\epsilon)y$ which brings the family \eqref{eq:unfoldedsystem} to the form
	\begin{equation}\label{eq:unfoldedQ}
		P(x,\epsilon)\tdd{x} y=\begin{psmallmatrix}0&1\\[3pt]Q(x,\epsilon)& 0\end{psmallmatrix}y.
	\end{equation}
\end{lemma}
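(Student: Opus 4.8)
The plan is to imitate the explicit construction of Lemma~\ref{lemma:systemQ} verbatim, replacing the derivation $x^{k+1}\tdd{x}$ everywhere by $P(x,\epsilon)\tdd{x}$, and upgrading the normalization performed at a single point in the non-parametric case to one that holds uniformly in the parameter $\epsilon$ on a full neighbourhood of $(0,0)$. The whole content is that the formula \eqref{eq:renormalization} is insensitive both to the precise coefficient in the derivation and to the presence of an analytic parameter.

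First I would write $A(x,\epsilon)=\big(a_{ij}(x,\epsilon)\big)$ and arrange that the upper-right entry is a \emph{unit}, i.e. $a_{12}(0,0)\neq 0$. Since $A(0,0)$ is a nonzero element of $\sl_2(\C)$, there is a constant $C\in\GL_2(\C)$ for which $C A(0,0) C^{-1}$ has nonzero $(1,2)$-entry; conjugating by such a $C$ replaces the system by $P(x,\epsilon)\tdd{x}y'=(CAC^{-1})y'$. As $C$ is constant it commutes with $P(x,\epsilon)\tdd{x}$, so this changes neither $P$ nor the tracelessness of the matrix. By analyticity, $a_{12}(x,\epsilon)$ is then nonvanishing on a whole polydisc about $(0,0)$, so that $a_{12}^{-1}$, $\log a_{12}$, and a branch of $a_{12}^{1/2}$ are all well-defined and jointly analytic in $(x,\epsilon)$ there.

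Next I would set, in exact analogy with \eqref{eq:renormalization},
\[
T(x,\epsilon)=\tfrac{1}{\sqrt{a_{12}(x,\epsilon)}}\begin{psmallmatrix} 1 & 0\\[4pt] a_{11}(x,\epsilon)-\tfrac12 P(x,\epsilon)\tdd{x}\log a_{12}(x,\epsilon) &\ a_{12}(x,\epsilon)\end{psmallmatrix}.
\]
This is analytic in $(x,\epsilon)$ near $(0,0)$ and satisfies $\det T\equiv 1$, hence it lies in $\SL_2$ and is analytically invertible; since $\det T$ is in particular $x$-independent, the gauge change preserves tracelessness. The verification that $y'=T(x,\epsilon)y$ brings \eqref{eq:unfoldedsystem} to the companion form \eqref{eq:unfoldedQ} is then \emph{the same} calculation as in Lemma~\ref{lemma:systemQ}: it uses only that $D:=P(x,\epsilon)\tdd{x}$ is a derivation (so the Leibniz and logarithmic-derivative rules hold) together with the relation $Dy=Ay$, and nowhere the specific shape $x^{k+1}$ of the coefficient. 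The resulting potential is the parametric analogue of \eqref{eq:Q},
\[
Q(x,\epsilon)=-\det A+D a_{11}-a_{11}\,D\log a_{12}-\tfrac12 D^2\log a_{12}+\tfrac14\big(D\log a_{12}\big)^2,
\]
which is manifestly analytic in $(x,\epsilon)$.

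The only point requiring any care — the ``main obstacle,'' such as it is — is making the single-point normalization of Lemma~\ref{lemma:systemQ} uniform in the parameter: one must ensure $a_{12}$ is a unit on a full neighbourhood of $(0,0)$, not merely nonzero at the central point, so that the square root, logarithm, and inverse entering $T(x,\epsilon)$ are jointly analytic. This is exactly what the constant conjugation above provides, together with openness of the nonvanishing condition. With that in hand the remainder is the parametric re-run of the derivation-only computation already carried out at $\epsilon=0$, and no new phenomenon arises either from the dependence on $\epsilon$ or from the confluent polynomial $P(x,\epsilon)$.
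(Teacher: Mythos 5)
Your proposal is correct and follows essentially the same route as the paper, whose proof consists precisely of the remark that one repeats the construction of Lemma~\ref{lemma:systemQ} with $x^{k+1}\tdd{x}$ replaced by $P(x,\epsilon)\tdd{x}$. Your elaboration of the uniform normalization $a_{12}(0,0)\neq 0$ via a constant conjugation and the joint analyticity of $\sqrt{a_{12}}$ and $\log a_{12}$ on a polydisc is exactly the implicit content of that remark.
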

\begin{proof}
Same as Lemma~\ref{lemma:systemQ} where one replaces the derivative $x^{k+1}\tdd{x}$ by $P(x,\epsilon)\tdd{x}$.
\end{proof}

The system \eqref{eq:unfoldedQ} is a companion system associated to the second order linear ODE
\begin{equation}\label{eq:unfoldedODE}
	\left(P(x,\epsilon)\tdd{x}\right)^2y_1-Q(x,\epsilon)\,y_1=0. 
\end{equation}
If $y_{11}$, $y_{12}$ are two linearly independent solutions, then $Q(x,\epsilon)=-\tfrac12\Cal S_{P(x,\epsilon)\partial_{x}}\Big(\frac{y_{11}(x,\epsilon)}{y_{12}(x,\epsilon)}\Big)$ as in \eqref{eq:Schwarzian}.

\medskip
Again there are three kinds of \emph{parametric transformations}:
\begin{enumerate}[label=\arabic*.]	
	\item \emph{Parametric gauge transformations}:
	\[y'=T(x,\epsilon)y, \qquad \det T(0,0)\neq 0.\]
	\item \emph{Parametric gauge--coordinate transformations}:
	\[		x'=\phi(x,\epsilon),\quad  y'=T(x,\epsilon)y, 		\qquad \tdd{x}\phi(0,0)\neq 0, \quad\det T(0,0)\neq 0,	\]
	\item  \emph{Parametric point transformations}:
	\begin{equation}\label{eq:parametricpoint}
	x'=\phi(x,\epsilon),\qquad y'=\left(\begin{smallmatrix} \psi(x,\epsilon)^{\frac12} & 0\\ \!\!\mfrac{P(x,\epsilon)\tdd{x}\psi(x,\epsilon)}{2\psi(x,\epsilon)^{\frac32}} & \ \ \psi(x,\epsilon)^{-\frac12}\!\!\end{smallmatrix}\right)y, \qquad \tdd{x}\phi(0,0)\neq 0,
	\end{equation}
	with \ $\psi(x,\epsilon)=\mfrac{P(x,\epsilon)\tdd{x}\phi}{P'\circ\phi(x,\epsilon)}$, \	
where  $P(x,\epsilon)$, $P'(x',\epsilon)$ are monic Weierstrass polynomials of the form \eqref{eq:P} and $P$ divides $P'\circ\phi$.	
\end{enumerate}
They are \emph{analytic} if $\phi(x,\epsilon)$, $T(x,\epsilon)$ are analytic germs, and \emph{formal} if
\[ \phi(x,\epsilon)=\sum_{i,\bm j\geq 0}\phi_{i,\bm j} x^i\epsilon^{\bm j}, \qquad  T(x,\epsilon)=\sum_{i,\bm j\geq 0}T_{i,\bm j} x^i\epsilon^{\bm j}.\]
It is easy to verify that both in the formal and the analytic setting each of the three kinds of parametric transformations form a group.

\subsection{Formal theory}

In the cases $k>0,\ m=0$ (non-resonant irregular) and $k>0,\ m=1$ (generic resonant irregular) the formal gauge invariants of the system at $\epsilon=0$ are given by $\jet^kQ(x,0)=Q(x,0)\mod x^{k+1}$. 
This easily generalizes in the parametric setting as the Weierstrass division remainder $Q(x,\epsilon)\mod P(x,\epsilon)$ (Theorem~\ref{thm:unfoldedformal}).
On the other hand, for general $m$ things get a bit more tricky and we shall instead use a weaker notion of formal equivalence. 

\begin{definition}[Weak formal gauge equivalence]
	Two analytic germs of parametric systems \eqref{eq:unfoldedsystem} with the same singular divisor $P(x,\epsilon)$ are \emph{weakly formally gauge equivalent}
	if for each $\epsilon$ small enough the two systems restricted to $\epsilon$ have the same formal invariants
	at each zero of $P(x,\epsilon)$ which is not a resonant regular singularity.
\end{definition}

The values of $\epsilon$ for which some singularity is resonant regular form at most a countable union of divisors.

For $m=0,1$, this weak formal gauge equivalence agrees with the formal gauge equivalence of parametric families (Theorem~\ref{cor:weakformal}).

\begin{proposition}\label{prop:weakformal}
Consider two systems \eqref{eq:unfoldedQ} with germs $P(x,\epsilon)$, $Q(x,\epsilon)$, and $P'(x,\epsilon)$, $Q'(x,\epsilon)$.
If $P'(x,\epsilon)= P(x,\epsilon)$ and
\begin{equation}\label{eq:reducedDelta}
	\tfrac{Q(x,\epsilon)}{Q'(x,\epsilon)}=1\mod P(x,\epsilon),
\end{equation}	 
then the two systems are weakly formally equivalent.
\end{proposition}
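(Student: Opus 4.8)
The plan is to unwind the definition of weak formal gauge equivalence and reduce the statement to a pointwise jet comparison governed by Theorem~\ref{theorem:formalgaugeequivalence}. Fix a small $\epsilon$ and factor $P(x,\epsilon)=\prod_j(x-x_j)^{k_j+1}$ with $\sum_j(k_j+1)=k+1$. At each root $x_j$ the restricted companion system \eqref{eq:unfoldedQ} has local Poincar\'e rank $k_j$ and local invariant $m_j=\ord_{x_j}Q(\cdot,\epsilon)$, and by definition I must show that, whenever $x_j$ is not resonant regular, the two restricted systems carry the same formal gauge invariants at $x_j$. By Theorem~\ref{theorem:formalgaugeequivalence}, read locally at $x_j$, these invariants are determined by the jet $\jet^{k_j+N_j}Q$ in the coordinate $x-x_j$, with $N_j$ as in \eqref{eq:jetequality}; so it suffices to prove that $Q$ and $Q'$ have the same such jet at every non-resonant-regular root.

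First I would rewrite the hypothesis \eqref{eq:reducedDelta} as the divisibility statement that $Q/Q'$ is an analytic germ congruent to $1$ modulo $P$, i.e. $Q=Q'(1+P\,S)$ for an analytic germ $S(x,\epsilon)$, equivalently $Q-Q'=Q'\,P\,S$. In particular $\ord_{x_j}Q=\ord_{x_j}Q'=m_j$, since the correction term is of strictly higher order, so the two systems share the local type at $x_j$, and
\[\ord_{x_j}(Q-Q')\;\ge\;\ord_{x_j}Q'+\ord_{x_j}P\;=\;m_j+k_j+1.\]
Thus the difference of $Q$ and $Q'$ vanishes at $x_j$ to order at least $m_j+k_j+1$.

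It remains to compare $m_j+k_j+1$ with the jet order $k_j+N_j$ supplied by \eqref{eq:jetequality}. When $m_j>2k_j$ one has square residue $\mu=0$ and $\sqrt{4\mu+k_j^2}=k_j\in\Z$, so $x_j$ is resonant regular and need not be checked. In the remaining range $m_j\le 2k_j$ one has $k_j+N_j=k_j$ for $m_j=0$ and $k_j+N_j=k_j+m_j-1$ for $0<m_j\le 2k_j$; in both cases $m_j+k_j+1>k_j+N_j$, so $\jet^{k_j+N_j}(Q-Q')=0$ at $x_j$, that is $\jet^{k_j+N_j}Q=\jet^{k_j+N_j}Q'$. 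The only subcase of $m_j\le 2k_j$ that could be resonant regular is $m_j=2k_j$ with $4\mu+k_j^2=l^2$, which is excluded; for every other non-resonant-regular root the equality $\jet^{k_j+N_j}Q=\jet^{k_j+N_j}Q'$ gives, via Theorem~\ref{theorem:formalgaugeequivalence}, the full set of formal gauge invariants, including the square residue $\mu$. Hence $Q$ and $Q'$ determine identical formal invariants at each non-resonant-regular zero of $P(\cdot,\epsilon)$, for every small $\epsilon$, which is precisely weak formal equivalence.

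The one point requiring care is that Theorem~\ref{theorem:formalgaugeequivalence} is stated for companion systems with the derivation $x^{k+1}\tdd{x}$, whereas here the local model uses $P(x,\epsilon)\tdd{x}=(x-x_j)^{k_j+1}u(x)\tdd{x}$ with $u(x_j)\ne0$. I expect this to be the main obstacle, and I would address it by observing that $P\tdd{x}$ raises the vanishing order at $x_j$ by exactly $k_j$, just as $x^{k+1}\tdd{x}$ does at the origin; consequently the order bookkeeping in the necessity/sufficiency computation \eqref{eq:T}--\eqref{eq:ab} behind Theorem~\ref{theorem:formalgaugeequivalence} transfers verbatim after replacing $x^{k+1}\tdd{x}$ by $P\tdd{x}$ and localizing at $x_j$, the unit $u(x_j)\ne0$ leaving all leading coefficients and the resonance conditions $4\mu+k_j^2=l^2$ unchanged. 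Granting this localized form of Theorem~\ref{theorem:formalgaugeequivalence}, the jet equalities established above yield the claim.
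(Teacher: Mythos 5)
Your proof is correct and follows essentially the same route as the paper's: from $Q-Q'=Q'PS$ one gets $\ord_{a_i}(Q-Q')\ge m_i+k_i+1$, which exceeds the jet order required by Theorem~\ref{theorem:formalgaugeequivalence} applied locally at each non-resonant-regular root of $P(\cdot,\epsilon)$. The only cosmetic differences are that the paper compares the tangent-to-identity jets $\jet^{k_i+m_i}$ rather than $\jet^{k_i+N_i}$, and that your explicit exclusion of the resonant-regular case $m_j>2k_j$ and your remark on localizing the derivation $P\tdd{x}=(x-a_i)^{k_i+1}u(x)\tdd{x}$ are details the paper leaves implicit.
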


\begin{proof}
For any zero $a_i(\epsilon)$ of $P(x,\epsilon)$ of multiplicity $k_i+1>0$, and with $Q(x,\epsilon)$ vanishing at $a_i$ up to an order $m_i\geq 0$,
the $k_i+m_i$-the jets with respect to $x$ at $a_i$ of $Q$ and $\tilde Q$ agree 
\[\jet_{a_i}^{k_i+m_i} Q'=\jet_{a_i}^{k_i+m_i}Q.\]
By Theorem~\ref{theorem:formalgaugeequivalence} this means that the two systems have the same formal tangent-to-identity invariants at all their singularities.	
\end{proof}

\subsection{Subdominant solutions and mixed solution bases}\label{sec:subdominant}

Let \eqref{eq:unfoldedQ} be a parametric system in the companion form  and $Q(x,\epsilon)\left(\tfrac{\d x}{P(x,\epsilon)}\right)^2$  its associated
meromorphic quadratic differential.
Following Proposition~\ref{prop:weakformal} we shall take its Weierstrass division by a multiplicative factor $\big(1+P(x,\epsilon)\C\{x,\epsilon\}\big)$.
Namely, decomposing by the Weierstrass preparation and division theorems
\[Q(x,\epsilon)=q(x,\epsilon)\big(U(x,\epsilon)+P(x,\epsilon)f(x,\epsilon)\big),\]
with $q(x,\epsilon)$, resp. $U(x,\epsilon)$, polynomials in $x$ of orders $m$, resp. $k$, with $u(0,0)\neq 0$ and $f(x,\epsilon)\in\C\{x,\epsilon\}$, 
let 
\[\tilde Q(x,\epsilon)=q(x,\epsilon)U(x,\epsilon)\]
be the unique polynomial in $x$ of order $k+m$ satisfying $\frac{Q'(x,\epsilon)}{\tilde Q(x,\epsilon)}=1\mod P(x,\epsilon)\C\{x,\epsilon\}$.
The quadratic differential 
\begin{equation}\label{eq:quaddiffomega}
	\Delta(x,\epsilon)=\tilde Q(x,\epsilon)\left(\tfrac{\d x}{P(x,\epsilon)}\right)^2
\end{equation}
carries information about the formal invariants but not about the analytic class.
We denote
\[\pm\Delta^{\frac12}=\pm\tfrac{\sqrt{\tilde Q}}{P(x,\epsilon)}\d x,\qquad \pm\Delta^{-\frac12}=\pm\tfrac{P(x,\epsilon)}{\sqrt{\tilde Q}}\d x,\]
the associated differential form and vector field. 



\smallskip

Solutions of the ODE \eqref{eq:unfoldedODE} are expected to exhibit asymptotic behavior near the singular points, approximately proportional to either
$y_1^+\simeq Q(x,\epsilon)^{-\frac14}\e^{\pm\int \Delta^{\frac12}}$ or $y_1^-\simeq Q(x,\epsilon)^{-\frac14}\e^{\mp\int \Delta^{\frac12}}$
on appropriately chosen domains, as detailed in Theorem~\ref{prop:subdominant}.
Depending which of the two exponentials dominates, that is, whether
\begin{equation}\label{eq:expasymptotic}
	\left|\tfrac{y_1^+}{y_1^-}\right|\simeq\left|\e^{\pm2\int \Delta^{\frac12}}\right| 
\end{equation}
tends to infinity or zero, the solutions are classified as \emph{dominant} or \emph{sub-dominant}, respectively (terminology from Sibuya \cite{Sibuya0, Sibuya2}).
This classification introduces the concept of a \emph{growth rate filtration}, leading to a \emph{flag structure} on the solution space.
The domains where such filtrations exist are intrinsically tied to two distinct types of foliations:

\begin{definition}[Horizontal foliation]\label{def:horizontalfoliation}~
\begin{enumerate}
\item	
The \emph{horizontal foliation} of the quadratic differential $\Delta(x,\epsilon)$ is formed by the
gradient curves  of \eqref{eq:expasymptotic}:  those along which $\Im\left(\pm\int\Delta^{\frac12}\right)$ stays constant.
They are the real time trajectories of the vector field $\pm\Delta^{-\frac12}$,
well defined up to orientation.	
They are also called the \emph{steepest descent curves}, or \emph{anti-Stokes curves}.

\item The \emph{vertical foliation} of the quadratic differential $\Delta(x,\epsilon)$ is formed by the
level curves of \eqref{eq:expasymptotic}:  those along which $\Re\left(\pm\int\Delta^{\frac12}\right)$ stays constant.
They are the real time trajectories of the vector field $\pm\i\Delta^{-\frac12}$,
well defined up to orientation.	
They are also called the \emph{oscillation curves}, or \emph{Stokes curves}.
\end{enumerate}
\end{definition}

More generally, we will consider also the rotations
\begin{equation}\label{eq:steep} 
	\e^{-2\i\vartheta}\Delta\qquad \text{for an angle}\quad \vartheta\in \ ]\!-\tfrac{\pi}{2},\tfrac{\pi}{2}[,
\end{equation}
and their associated horizontal foliations, which correspond to the real-time trajectories of  $\pm\e^{\i\vartheta}\Delta^{-\frac12}$.
Along these curves there is a dominance relation between the exponentials \eqref{eq:expasymptotic},
allowing for a filtration on the solution space.
The existence of such filtrations is well established in the non-parametric case, and has been proven for unfoldings of non-resonant irregular singularities by Hurtubise, Lambert \& Rousseau \cite{Hurtubise-Lambert-Rousseau}, using a theorem of Levinson.

\begin{theorem}[Subdominant solutions]\label{prop:subdominant}
Let $\Delta$ be \eqref{eq:quaddiffomega}, $m<2k$.
	For any fixed small parameter $\epsilon$ and an angle $\vartheta\in \ ]\!-\frac{\pi}{2},\frac{\pi}{2}[$, and a horizontal trajectory of $\e^{-2\i\vartheta}\Delta$ tending to an equilibrium point $x=a_i(\epsilon)$ of $\Delta$ (pole of order $\geq 2$), there is a unique 1-dimensional subspace of the solution space of the parametric system \eqref{eq:unfoldedsystem}, called the space of \emph{subdominant solutions}, consisting of those solutions that have vanishing limit at the singularity. 	
	
	More precisely, let $\bt(x,\epsilon)$ be a branch of a determination of $\pm\int \Delta^{-\frac12}$, such that $\e^{-\bt(x,\epsilon)}\to 0$ when $x\to a_i(\epsilon)$ along the trajectory:
	\begin{itemize}
		\item If $a_i(\epsilon)$ is either Fuchsian or irregular singularity, then there is a unique normalized subdominant solution 
		\[y(x,\epsilon)\sim \e^{-\bt(x,\epsilon)}\left(\begin{smallmatrix} \tilde Q(x,\epsilon)^{-\frac14}\big(1+o(1)\big) \\[3pt] \tilde Q(x,\epsilon)^{\frac14}\big(1+o(1)\big) \end{smallmatrix}\right).\]
		\item If $a_i(\epsilon)$ is non-Fuchsia regular singularity (i.e. double pole of $\Delta$ which is a zero of multiplicity $m_i>0$ of $\tilde Q$), then there is a unique normalized subdominant solution 
		\[y(x,\epsilon)\sim \e^{\mp\int\sqrt{\Delta(x,\epsilon)+\left(\frac{m_i\d x}{4(x-a_i(\epsilon))}\right)^2}}\left(\begin{smallmatrix} \tilde Q(x,\epsilon)^{-\frac14}\big(1+o(1)\big) \\[3pt] \tilde Q(x,\epsilon)^{\frac14}\big(1+o(1)\big) \end{smallmatrix}\right).\]
	\end{itemize}
	This solution does not depend on the trajectory or the rotation angle $\vartheta$ as long as they are varied continuously. 
	Moreover the solution $y(x,\epsilon)$ depends analytically on $\epsilon$ as long as the trajectory varies continuously with the parameter $\epsilon$ .
\end{theorem}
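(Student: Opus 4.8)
The plan is to reduce the companion system \eqref{eq:unfoldedsystem} to a Levinson normal form along the given horizontal trajectory and then to invoke the classical asymptotic-integration (dichotomy) theorem, following the scheme used by Hurtubise, Lambert \& Rousseau \cite{Hurtubise-Lambert-Rousseau} for $m=0$, but now allowing arbitrary $m<2k$ and tracking the analytic dependence on $\epsilon$. First I would pass to the complex time coordinate $\bt=\int\Delta^{\frac12}$ as new independent variable; since along a horizontal trajectory of $\e^{-2\i\vartheta}\Delta$ the quantity $\Im(\e^{-\i\vartheta}\bt)$ is constant, $\bt$ runs along the ray of direction $\e^{\i\vartheta}$ with $\Re\bt\to+\infty$ as $x\to a_i(\epsilon)$, the divergence being guaranteed by $m<2k$ (pole of $\Delta$ of order $>2$). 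The diagonalizing substitution $y=V(x,\epsilon)w$, whose two columns are the leading WKB vectors $\big(\tilde Q^{-\frac14},\,\pm\tilde Q^{\frac14}\big)$ diagonalizing the leading matrix $\begin{psmallmatrix}0&1\\[3pt]\tilde Q&0\end{psmallmatrix}$, then turns \eqref{eq:unfoldedsystem} into
\[\tdd{\bt}w=\big[\diag(-1,1)+R(\bt,\epsilon)\big]w,\]
where $R$ gathers the WKB/Schwarzian correction terms together with the relative discrepancy $\tfrac{Q-\tilde Q}{\tilde Q}$.

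The crucial step is to show that $R(\cdot,\epsilon)\in L^1$ along the trajectory, uniformly for $\epsilon$ near $0$. Here one uses that $Q-\tilde Q$ is divisible by $P$ (by the construction of $\tilde Q$ in \eqref{eq:quaddiffomega}), so that its contribution decays fast enough, while the remaining correction terms are $O(|\bt|^{-1-\delta})$ for some $\delta>0$ along the ray --- this is precisely where the order $\geq2$ of the pole enters. Once this bound is in hand, the Levinson hypothesis is trivially met, as $\Re\!\int(\lambda_1-\lambda_2)\,\d\bt=-2\Re(\bt-\bt_0)\to-\infty$; the theorem then yields a fundamental matrix $w\sim(I+o(1))\e^{\diag(-1,1)\,\bt}$, hence two solutions with leading behaviour $\e^{-\bt}\big(\tilde Q^{-\frac14}(1+o(1)),\,\tilde Q^{\frac14}(1+o(1))\big)$ and $\e^{+\bt}\big(\tilde Q^{-\frac14}(1+o(1)),\,-\tilde Q^{\frac14}(1+o(1))\big)$, which is the asserted form. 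The first decays since $\Re\bt\to+\infty$, vanishes at $a_i(\epsilon)$, and spans the subdominant line.

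Uniqueness of the subdominant subspace is then immediate: writing any solution tending to $0$ at $a_i(\epsilon)$ in this basis, its component along the growing solution must vanish because $|\e^{\bt}|\to\infty$, so the space of subdominant solutions is exactly $\C$ times the decaying solution, one-dimensional. Invariance under continuous variation of the trajectory and of $\vartheta$ holds because the defining property ``vanishes at $a_i(\epsilon)$'' is preserved as long as the dominance ordering of the two exponentials is not reversed, which does not occur while the trajectory stays within a single horizontal strip of $\e^{-2\i\vartheta}\Delta$. Finally, since the Levinson construction depends holomorphically on every parameter entering $R$ holomorphically, and since the normalized subdominant solution is unique, analytic dependence on $\epsilon$ follows whenever $a_i(\epsilon)$, the chosen branch of $\bt$, and the trajectory vary continuously with $\epsilon$.

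In the non-Fuchsian regular case, where $\tilde Q$ vanishes to order $m_i>0$ at the double pole $a_i$, the naive exponent $\int\Delta^{\frac12}$ is only logarithmic and the WKB factor $\tilde Q^{-\frac14}$ contributes a pole term of the same order, destroying the $L^1$ bound; I would instead diagonalize using the corrected exponent $\int\sqrt{\Delta+\big(\tfrac{m_i\,\d x}{4(x-a_i)}\big)^2}$, which exactly absorbs this resonant term and restores integrability, the rest of the argument being unchanged. The main obstacle throughout is precisely this uniform $L^1$ control of $R$ along trajectories that may be long and approach the pole tangentially, together with the borderline integrability at regular singular points; everything else reduces to the cited dichotomy theorem and to the elementary uniqueness argument above.
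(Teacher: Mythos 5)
Your overall strategy (pass to the time variable $\bt$, diagonalize by the WKB frame, invoke a parametric Levinson theorem) is the same as the paper's, but the key quantitative claim on which your argument rests is false in exactly the generality the theorem is meant to cover. After the substitution $y=\begin{psmallmatrix}Q^{-1/4}&0\\0&Q^{1/4}\end{psmallmatrix}\begin{psmallmatrix}1&1\\1&-1\end{psmallmatrix}\tilde y$ the perturbation of $\diag(1,-1)$ is the off-diagonal coupling $b(x,\epsilon)=\tfrac{P}{4\sqrt{Q}}\tdd{x}\log Q$. Near a singular point $a_i$ where $P$ vanishes to order $k_i+1$ and $Q$ to order $m_i$, one has $b\sim\tfrac{m_i}{4\sqrt{c}}(x-a_i)^{\nu_i}$ with $\nu_i=k_i-\tfrac{m_i}{2}$, while $\bt\sim-\tfrac{\sqrt{c}}{\nu_i}(x-a_i)^{-\nu_i}$, so $|b|\sim\tfrac{m_i}{4\nu_i}|\bt|^{-1}$. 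Hence for \emph{every} singularity with $m_i>0$ — irregular ones included, not just the non-Fuchsian regular case you single out at the end — the coupling is only $O(|\bt|^{-1})$, $\int|b|\,|\d\bt|$ diverges logarithmically, and the $L^1$ hypothesis of the Levinson theorem you cite is not met. Your bound $O(|\bt|^{-1-\delta})$ holds only when $m_i=0$, which is precisely the case already treated in \cite{Hurtubise-Lambert-Rousseau}; the whole point of the theorem is to go beyond it.

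The paper repairs this with a second, exact diagonalization of the constant-plus-coupling matrix: conjugating by $\tfrac12\begin{psmallmatrix}1+\sqrt{1+b^2}&-b\\ b&1+\sqrt{1+b^2}\end{psmallmatrix}$ produces a diagonal part $\sqrt{1+b^2}\,\diag(1,-1)$ and a remainder proportional to $\tfrac{\d b}{\d\bt}=O(|\bt|^{-2})$, which \emph{is} integrable, while $\tfrac{\d}{\d\bt}\sqrt{1+b^2}$ is integrable as well, so Levinson applies. The non-integrable part of the coupling has been shifted into the exponent $\int\sqrt{1+b^2}\,\d\bt$; at irregular points $\int(\sqrt{1+b^2}-1)\,\d\bt$ converges (since $b^2=O(|\bt|^{-2})$), so the asymptotics can be rewritten with $\e^{-\bt}$, whereas at non-Fuchsian regular points it does not converge and yields exactly the corrected exponent $\int\sqrt{\Delta+\bigl(\tfrac{m_i\,\d x}{4(x-a_i)}\bigr)^2}$ of the statement. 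Your proposed fix for the regular case is morally this same transformation, but you must apply it uniformly; moreover, in the regular case one still has to check that the corrected and uncorrected exponentials have the same dominance ordering along the trajectory (the paper does this via the focus-type condition $\mu_i(\epsilon)\notin\e^{2\i\vartheta}\R_{<0}$ and a homotopy in $\alpha\in[0,1]$), a point your sketch omits.
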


\begin{proof}
The existence and uniqueness of such normalized subdominant solutions follows for each fixed $\epsilon$ from the usual theorems on existence of local normalizing transformations at the point $x=a_i$, such as the Borel (multi)-summability of the Hukuhara--Levelt--Turrittin formal fundamental solutions \cite[Theorem 20.13]{Ilashenko-Yakovenko}.
The local analytic dependence on $\epsilon$ is quite clear as long as the type of the singularity does not change, however it is less clear that it is also analytic at the values for which $a_i$ is a resonant Fuchsian singularity, and that it passes well to the limit when $a_i$ becomes irregular.
We will follow the method of proof of \cite{Hurtubise-Lambert-Rousseau}, using 
a parametric version of the Levinson's theorem \cite[Theorem~5.3]{Hurtubise-Lambert-Rousseau} (see \cite[\S3, Theorem 8.1]{Coddington-Levinson} for the original  non-parametric version):
	
\begin{itemize}[leftmargin=\parindent, itemindent=0em]
\item[] \textit{\textbf{Levinson's theorem.}
	Consider a system of linear differential equations on the real line of the form
	\[\frac{\d \tilde y}{\d\bt}=\big[\Lambda(\bt,\epsilon)+B(\bt,\epsilon)\big]\tilde y,\]
	where $\Lambda(\bt,\epsilon)$ is diagonal, its limit $\Lambda(+\infty,\epsilon)=\lim_{\bt\to+\infty}\Lambda(\bt,\epsilon)$ has eigenvalues with distinct real parts, and assume that
	\begin{equation}\label{eq:RC-Levinson1}
	\int_{0}^{+\infty}\Big|\frac{\d }{\d\bt}\Lambda(\bt,\epsilon)\Big|\d\bt<\infty,\qquad \int_{0}^{+\infty}\big|B(\bt,\epsilon)\big|\d\bt<\infty.
	\end{equation}
	Then for each eigenvalue $\lambda(\bt,\epsilon)$ of $\Lambda(\bt,\epsilon)$ and  an eigenvector $v_\lambda(+\infty,\epsilon)$ of $\Lambda(+\infty,\epsilon)$ associated to  $\lambda(+\infty,\epsilon)$
	there exists $t_0>0$ and a solution $\tilde y=\phi_\lambda(\bt,\epsilon)$ on $]t_0,+\infty[$ such that
	\begin{equation}\label{eq:RC-Levinson2}
	\lim_{\bt\to+\infty} \phi_\lambda(\bt,\epsilon)\cdot \exp\Big(-\int_{t_0}^\bt \lambda(\bt,\epsilon)\d\bt\Big)=v_\lambda(+\infty,\epsilon),
	\end{equation}
	If the system and the eigenvector $v_\lambda(+\infty,\epsilon)$ depend continuously (resp. analytically) on a parameter $\epsilon$ over compact sets in the $\bt$-space, with the integrals in \eqref{eq:RC-Levinson1}
	uniformly bounded, then the solution can be chosen depending continuously (resp. analytically) on $\epsilon$.
	}
\end{itemize}

Applying the gauge transformation $y=\left(\begin{smallmatrix}Q(x,\epsilon)^{-\frac{1}{4}}\!\!\!&0\\0&\!\!\!Q(x,\epsilon)^{\frac{1}{4}}\end{smallmatrix}\right)
\left(\begin{smallmatrix}1&1\\[5pt]1&-1\end{smallmatrix}\right)\tilde y$ to \eqref{eq:unfoldedQ}, then $\tilde y$ satisfies
\[\tfrac{P}{\sqrt Q}\tdd{x}\tilde y=\left[\begin{pmatrix}1&0\\0&-1\end{pmatrix}+\begin{pmatrix}0&b(x,\epsilon)\\b(x,\epsilon)&0\end{pmatrix}\right]\tilde y,\qquad b(x,\epsilon)=\tfrac{P}{4\sqrt{Q}}\tdd{x}\log Q.\]
An additional gauge transformation  
$\tilde y=\tfrac12\begin{pmatrix}1\!+\!\sqrt{1\!+\!b^2}\hskip-6pt& -b\\b& \hskip-6pt 1\!+\!\sqrt{1\!+\!b^2}\end{pmatrix}\tilde{\tilde y}$ 
leads to a system  
\begin{equation}\label{eq:tty}
	\tfrac{P}{\sqrt Q}\tdd{x}\tilde{\tilde y}=\left[\begin{pmatrix}\sqrt{1+b^2}&0\\0&-\sqrt{1+b^2}\end{pmatrix}-\tfrac{P}{\sqrt Q}\tfrac{\d b}{\d x}\begin{pmatrix}\frac{b}{1+b^2}&-\frac{1}{2\sqrt{1+b^2}^3\vphantom{\big|}}\\ \tfrac{1}{2\sqrt{1+b^2}^3\vphantom{\big|}}&\frac{b}{1+b^2}\end{pmatrix}\right]\tilde{\tilde y}.
\end{equation}
Here we take the branch of $\sqrt{1+b^2}$ with value $1$ for $b=0$ on
$b^2\in\C\smallsetminus]-\infty,-1]$.
We can now apply the Levinson's theorem with $\tilde\Lambda=\sqrt{1+b^2}\begin{psmallmatrix}1&0\\[3pt]0&-1\end{psmallmatrix}$.

The limit point $a_i(\epsilon)$ of the trajectory is a pole of $\Delta$ of order $2k_i-m_i+2\geq 2$, where $k_i+1$ and $m_i$ are the multiplicities of $a_i$ as a zero of $P$ and $Q$.
If it is an irregular singularity, $2k_i-m_i+2>0$, then $\int\frac{\sqrt{Q+Qb^2}-\sqrt{\tilde Q}}{P}\d x$ is bounded near $x=a_i$, so one 
replace $\e^{\pm\int \frac{\sqrt{Q+Qb^2}}{P}\d x}$ by $\e^{pm\int \frac{\sqrt{\tilde Q+Qb^2}}{P}\d x}=\e^{\bt}$.
Moreover one can deform the trajectory of $\e^{-2\i\vartheta}Q(x,\epsilon)\left(\tfrac{\d x}{P(x,\epsilon)}\right)$ to a trajectory of $\e^{-2\i\vartheta}\Delta$ with the same asymptotic tangent.
In fact the two differentials are locally tangent-to-identity conformally equivalent near $a_i(\epsilon)$.

If $a_i(\epsilon)$ is a regular singularity, then $\frac{\sqrt{Q+Qb^2}}{P}\sim\sqrt{\mu_i(\epsilon)+\left(\frac{m_i}{4}\right)^2}$, where $\mu_i(\epsilon)=\res_{x=a_i}^2\Delta(x,\epsilon)$. The singularity must be of focus type (see \S\,\ref{sec:parametricfoliation}) 
meaning that $\mu_i(\epsilon)\notin \e^{2\i\vartheta}\R_{<0}$.
Since $|\mu_i(\epsilon)|\to \infty$ as $\epsilon\to 0$, one can assume that $|\epsilon|$ is small enough that the closed segment
$[\mu_i(\epsilon),\  \mu_i(\epsilon)+\left(\frac{m_i}{4}\right)^2]$ doesn't cross the ray $\e^{2\i\vartheta}\R_{<0}$,
which means that $\e^{\pm\int\sqrt{Q+\alpha Qb^2}\frac{\d x}{P}}\to 0$ has the same limit $0$ or $\infty$ along the trajectory as $\e^{-\i \vartheta}\bt\to+\infty$ for all $\alpha\in[0,1]$.
\end{proof}

\begin{remark}
One can treat both cases in Theorem~\ref{prop:subdominant} in a unified way by replacing the quadratic differential	$\Delta$ by
\[\Delta(x,\epsilon)+\left(\tfrac14 \d\log R(x,\epsilon)\right)^2,\]
where $R(x,\epsilon)$ is the Weierstrass polynomial that is the greatest common divisor of $P(x,\epsilon)^2$ and $\tilde Q(x,\epsilon)$. 
\end{remark}

Taking a \emph{complete horizontal trajectory} of $\e^{-2\i\vartheta}\Delta$ for some $|\vartheta|<\frac\pi2$, one obtains a pair of subdominant solutions:
one associated with the positive end of the trajectory ($\Re\bt\to+\infty$) and one with the negative end ($\Re\bt\to-\infty$).
If they are linearly independent, they form a \emph{mixed solution basis}.
Mixed solution bases originated in the study of hypergeometric equations and have been investigated in the context of confluence by various authors, including Sch\"afke \cite{Schaefke}, Ramis \cite{Ramis}, Zhang \cite{Zhang}, Duval \cite{Duval}, Glutsyuk \cite{Gl1, Gl2}, Parise~\cite{Parise}
and Lambert \& Rouseau \cite{LR1, Lambert-Rousseau, Hurtubise-Lambert-Rousseau}.

In the case $m=0$ the linear independence can be established by a deformation argument as in \cite{Hurtubise-Lambert-Rousseau}: their limit when $\epsilon\to 0$  is one of the standard sectorial solution bases at the irregular singularity.
Consequently, the linear independence persists for $\epsilon$within a suitably defined ``sectorial'' domain of sufficiently small radius.
However, when $m>0$, some of these mixed solution pairs can fail to have a well-defined limit as
$\epsilon\to 0$. This occurs because the natural domain of their definition shrinks and disappears.
As a result, the deformation argument cannot be applied in general.

\begin{theorem}[Mixed solution bases]\label{proposition:mixedbasis}
	Assume $m<2k$. Let $\sigma$ be complete horizontal trajectory of $\e^{-2\i\vartheta}\Delta$ for some $|\vartheta|<\frac\pi2$ whose closure lies inside the domain
	\begin{equation}\label{eq:mixeddomain}
		\left|\tfrac{P(x,\epsilon)}{4\sqrt{\tilde Q(x,\epsilon)}}\tdd{x}\log \tilde Q(x,\epsilon)\right|<\tfrac12,\qquad \left|\tfrac{Q}{\tilde Q}-1\right|<\tfrac14.
	\end{equation}
	Then the associated pair of sub-dominant solutions is linearly independent, hence  forms a \emph{mixed solution basis}.
\end{theorem}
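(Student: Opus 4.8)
The plan is to reduce linear independence of the two subdominant solutions to the non-vanishing of their Wronskian, and to establish the latter by showing that the solution which \emph{decays} at one end of $\sigma$ actually \emph{grows} at the other end; the domain condition \eqref{eq:mixeddomain} is exactly what guarantees this along the whole trajectory.

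First I would note that, since the coefficient matrix of the companion system \eqref{eq:unfoldedQ} is trace-free, the Wronskian $W$ of any two of its solutions is constant in $x$. Hence the pair of subdominant solutions $y^+$ (attached to the end $\Re\bt\to+\infty$ of $\sigma$) and $y^-$ (attached to $\Re\bt\to-\infty$), supplied by Theorem~\ref{prop:subdominant}, is linearly independent if and only if $W\neq0$. Evaluating the constant $W$ at the negative end against the leading asymptotics there shows that $W$ equals the coefficient of the \emph{dominant} mode of $y^+$ at that end, times the non-zero determinant of the two leading eigenvectors; so it suffices to prove that $y^+$, which is subdominant at the positive end, is dominant at the negative end (equivalently, that the relevant connection coefficient does not vanish).

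I would carry out the estimate in the coordinate $\tilde y$ of the first gauge transformation in the proof of Theorem~\ref{prop:subdominant}, where the system reads $\tfrac{P}{\sqrt Q}\tdd{x}\tilde y=\big[\begin{psmallmatrix}1&0\\0&-1\end{psmallmatrix}+b\begin{psmallmatrix}0&1\\1&0\end{psmallmatrix}\big]\tilde y$ with $b=\tfrac{P}{4\sqrt Q}\tdd{x}\log Q$. Parametrising $\sigma$ by the real variable $\tau$ with $\Delta^{\frac12}=\e^{\i\vartheta}\d\tau$ (so the ends $\Re\bt\to\pm\infty$ correspond to $\tau\to\pm\infty$), the system becomes $\tdd{\tau}\tilde y=c\,\big[\begin{psmallmatrix}1&0\\0&-1\end{psmallmatrix}+b\begin{psmallmatrix}0&1\\1&0\end{psmallmatrix}\big]\tilde y$ with $c=\e^{\i\vartheta}\sqrt{Q/\tilde Q}$. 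Along $\sigma$ the two inequalities in \eqref{eq:mixeddomain} give: (i) $|b|$ essentially bounded by $\tfrac12$, so the eigenvalues $\pm\sqrt{1+b^2}$ stay near $\pm1$ and the eigendirections $v_\pm$ lie in two disjoint cones about the coordinate axes, separated by a definite gap; and (ii) $\sqrt{Q/\tilde Q}$ confined to a narrow sector about the positive reals, so that the rotated eigenvalues $\pm c\sqrt{1+b^2}$ have real parts of strictly opposite sign bounded away from $0$ (this is where $|\vartheta|<\tfrac\pi2$ enters). Then I would introduce an indefinite Hermitian form $q(\tilde y)=\tilde y^{*}H\tilde y$ of signature $(1,1)$, adapted to the rotated leading matrix, whose derivative $\tdd{\tau}q$ is positive along the flow; since $y^+\sim\e^{-\bt}v_-$ gives $q<0$ near the positive end and $y^-\sim\e^{\bt}v_+$ gives $q>0$ near the negative end, monotonicity forces $q(y^+)<0<q(y^-)$ at \emph{every} point of $\sigma$, and two solutions on which $q$ takes values of opposite sign cannot be proportional.

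The main obstacle is to secure this monotonicity uniformly along the entire trajectory, which in the confluent regime may be long and pass close to several degenerate singular points. The naive choice $H=\begin{psmallmatrix}1&0\\0&-1\end{psmallmatrix}$ makes $\tdd{\tau}q>0$ only when $\cos\big(\arg(c)\big)>|b|$, a condition that deteriorates as $\vartheta\to\pm\tfrac\pi2$; for such $\vartheta$ I would instead align $H$ with the (rotation-independent) eigenvectors $v_\pm$ and absorb the extra terms produced by the variation of $b$ — these are the off-diagonal entries $\propto\d b$ of the diagonalised system \eqref{eq:tty} — through an integration-by-parts/Gronwall estimate, in which the constants $\tfrac12$ and $\tfrac14$ of \eqref{eq:mixeddomain} are exactly calibrated to keep the accumulated coupling subordinate to the spectral gap. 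An equivalent route is to run the parametric Levinson theorem of Theorem~\ref{prop:subdominant} not merely at the two ends but uniformly along $\sigma$, trapping $y^+$ in the cone around $v_-$ and $y^-$ in the cone around $v_+$; the disjointness of these cones then yields the linear independence at once.
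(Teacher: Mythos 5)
Your overall strategy is the same as the paper's: reduce linear independence to showing that the solution which decays at one end of $\sigma$ must grow at the other. But the technical core of your argument --- propagating a cone, or an indefinite Hermitian form of signature $(1,1)$, along the diagonalised first-order system --- has a gap that you half-identify and do not close. In the diagonalised system \eqref{eq:tty} the off-diagonal coupling is proportional to $\tfrac{P}{\sqrt Q}\tfrac{\d b}{\d x}$, i.e.\ to the \emph{derivative} of $b$ along the trajectory, whereas the hypotheses \eqref{eq:mixeddomain} are purely pointwise bounds on $|\tilde b|$ and $|Q/\tilde Q-1|$: they give no control on $\int_\sigma|\d b|$ over the middle portion of $\sigma$, which in the confluent regime has unbounded $\bt$-length. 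So the assertion that the constants $\tfrac12$ and $\tfrac14$ are ``exactly calibrated to keep the accumulated coupling subordinate to the spectral gap'' is not something you can extract from \eqref{eq:mixeddomain}; the Gronwall/integration-by-parts step is precisely the missing proof. The same objection applies to your fallback of running Levinson's theorem ``uniformly along $\sigma$'': Levinson controls asymptotics at the two infinite ends, where the integrability conditions \eqref{eq:RC-Levinson1} hold, not cone-invariance across the finite middle.

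The paper avoids this difficulty by staying with the \emph{scalar} second-order equation. In the flat coordinate $\bt$ the first component satisfies $\big(\tdd{\bt}\big)^2y_1=\tfrac{Q}{\tilde Q}\,y_1-2\tilde b\,\tdd{\bt}y_1$ with $\tilde b=\tfrac14\tdd{\bt}\log\tilde Q$ --- only $\tilde b$ itself appears, never its derivative. A short computation then yields the convexity inequality $\big(\tdd{\Re\bt}\big)^2|y_1|^2> \tfrac12|y_1|^2$ along the trajectory, using exactly the pointwise bounds $|\tilde b|<\tfrac12$ and $\Re(Q/\tilde Q)>\tfrac34$ together with completing a square against the term $2\big|\tdd{\bt}y_1\big|^2$. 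Since the solution subdominant at the negative end has $|y_1|^2$ increasing there, this convexity forces exponential growth at the positive end, hence non-subdominance, and linear independence follows. To rescue your version you would have to replace the cone argument by an estimate of this kind in which no derivative of $b$ enters; as written, the step that carries the whole proof is asserted rather than proved. (One further caveat: you are right that the argument degenerates as $\vartheta\to\pm\tfrac\pi2$, since the relevant quantity becomes $\Re\big(\e^{2\i\vartheta}Q/\tilde Q\big)$; the paper itself only treats $\vartheta=0$ explicitly, but flagging the issue is not the same as resolving it.)
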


\begin{proof}
First let us remark that if $a_i$ is a zero of $P$ of order $k_i+1$ which is also a zero of $Q$ order $m_i$, then if the assumption of Theorem~\ref{prop:subdominant} is violated: $2k_i+2-m_i=2$ and $m_i>0$, then $b(a_i,\epsilon)=\frac{m_i}{4}\geq 2$ so in this case $a_i$ is not in the domain considered.  

Choose a determination of $\bt(x,\epsilon)=\pm\int\Delta^{\frac12}$, and let $y(x,\epsilon)=\begin{psmallmatrix}y_1(x,\epsilon)\\y_2(x,\epsilon) \end{psmallmatrix}$ be the subdominant solution along the negative end of the trajectory, $\e^{-\i \vartheta}\bt\to-\infty$.
We will show that $|y_1(x,\epsilon)|\to\infty$ the other end of the trajectory $\e^{-\i \vartheta}\bt\to+\infty$, meaning that is dominant there, and therefore it is not subdominant.
To simplify, let us assume that the angle $\vartheta=0$, i.e. that the trajectory is horizontal for $\Delta$.
As $y_1$ satisfies $\big(P\tdd{x}\big)^2y_1=Qy_1$, we have
\[\left(\tdd{\bt}\right)^2y_1=\tfrac{1}{\tilde Q}\big(P\tdd{x}\big)^2y_1-\tfrac{P^2}{2\tilde Q}\tdd{x}\log\tilde Q\cdot\tdd{x}y_1
=\tfrac{Q}{\tilde Q}y_1-2b\tdd{\bt}y_1,\]
where $\tilde b(x,\epsilon)=\frac14\tdd{\bt}\log\tilde Q(x,\epsilon)$.
Hence
\begin{align*}
	\left(\tdd{\Re \bt}\right)^2|y_1|^2&=2\Re\left(\frac{\tdd{\bt}^2y_1}{y_1}\right)|y_1|^2+2\left|\tdd{\bt}y_1\right|^2\\
	&=2\Re\left(\tfrac{Q}{\tilde Q}\right)|y_1|^2-4\Re\left(\tilde b\frac{\tdd{\bt}y_1}{y_1}\right)|y_1|^2+2\left|\tdd{\bt}y_1\right|^2.
\end{align*}
By assumption $|\tilde b|<\frac12$, hence $4\Re\left(\tilde b\frac{\tdd{\bt}y_1}{y_1}\right)|y_1|^2<2\left|\tdd{\bt}y_1\right|\cdot |y_1|$, and
$\Re\left(\tfrac{Q}{\tilde Q}\right)>\tfrac34$, so
\[\left(\tdd{\Re \bt}\right)^2|y_1|^2> 
\tfrac12|y_1|^2+\left|\tdd{\bt}y_1\right|^2+\left(|y_1|-\left|\tdd{\bt}y_1\right|\right)^2>  \tfrac12|y_1|^2.\]

Since near $\bt=-\infty$, $y_1\sim Q^{\frac14}\e^\bt$, then $\tdd{\Re \bt}|y_1|^2\sim 2|y_1|^2(1+\Re b)>0$,
which means that $|y_1|^2>C\e^{\frac12\Re \bt}$ for some $C>0$, hence it is exponentially big when $\Re \bt\to+\infty$. 
\end{proof}


\subsection{Geometry of horizontal foliations}\label{sec:foliation}

To understand the natural domain on which mixed solution bases exist, we need to understand the geometry of horizontal foliations in a rotating family of quadratic differentials $\e^{-2\i\vartheta}\Delta$ \eqref{eq:quaddiffomega}.
We begin by studying meromorphic quadratic differentials defined globally on $\CP^1$, or more generally on a compact Riemann surface.
Afterwards, we relativize the concepts to more general semi-local domains. 

\bigskip

Let $\Delta$ be a meromorphic quadratic differential on  a compact Riemann surface $\sX$, we denote $\Delta^{-\frac12}$ the dual vector field defined on a two-sheeted ramified covering of $\sX$.
Under certain generic assumptions, much of the theory of  real dynamics of rational vector fields on
 $\CP^1$ (see, for example, \cite{Benziger,Brickman-Thomas,Douady-Estrada-Sentenac,Hajek,Jenkins-Spencer,Klimes-Rousseau2, Tomasini})
generalizes directly to quadratic differentials on compact Riemann surfaces. 
Recent years have seen a growing interest in the study of meromorphic quadratic differentials and their moduli spaces in works such as
\cite{Bridgeland-Smith,Haiden-Katzarkov-Kontsevich,Tahar1}.

Given a quadratic differential $\Delta$, then the various determinations of
\begin{equation}\label{eq:t}
	\bt=\pm\int\sqrt{\Delta}
\end{equation}
form a system of flat local coordinates on $\sX\smallsetminus\Crit(\Delta)$, giving it a structure of a \emph{half-translation surface}.
The horizontal foliation of $\e^{-2\i \vartheta}\Delta=(\e^{-\i \vartheta}\d\bt)^2$ corresponds to the foliation by lines parallel to $\e^{\i \vartheta}\R$ in the coordinate $\bt$. It's leaves are the real-time trajectories of  $\e^{\i \vartheta}\Delta^{-\frac12}$.

\begin{figure}[t]
	\centering
	\begin{subfigure}[t]{0.24\textwidth}
		\includegraphics [width=\textwidth]{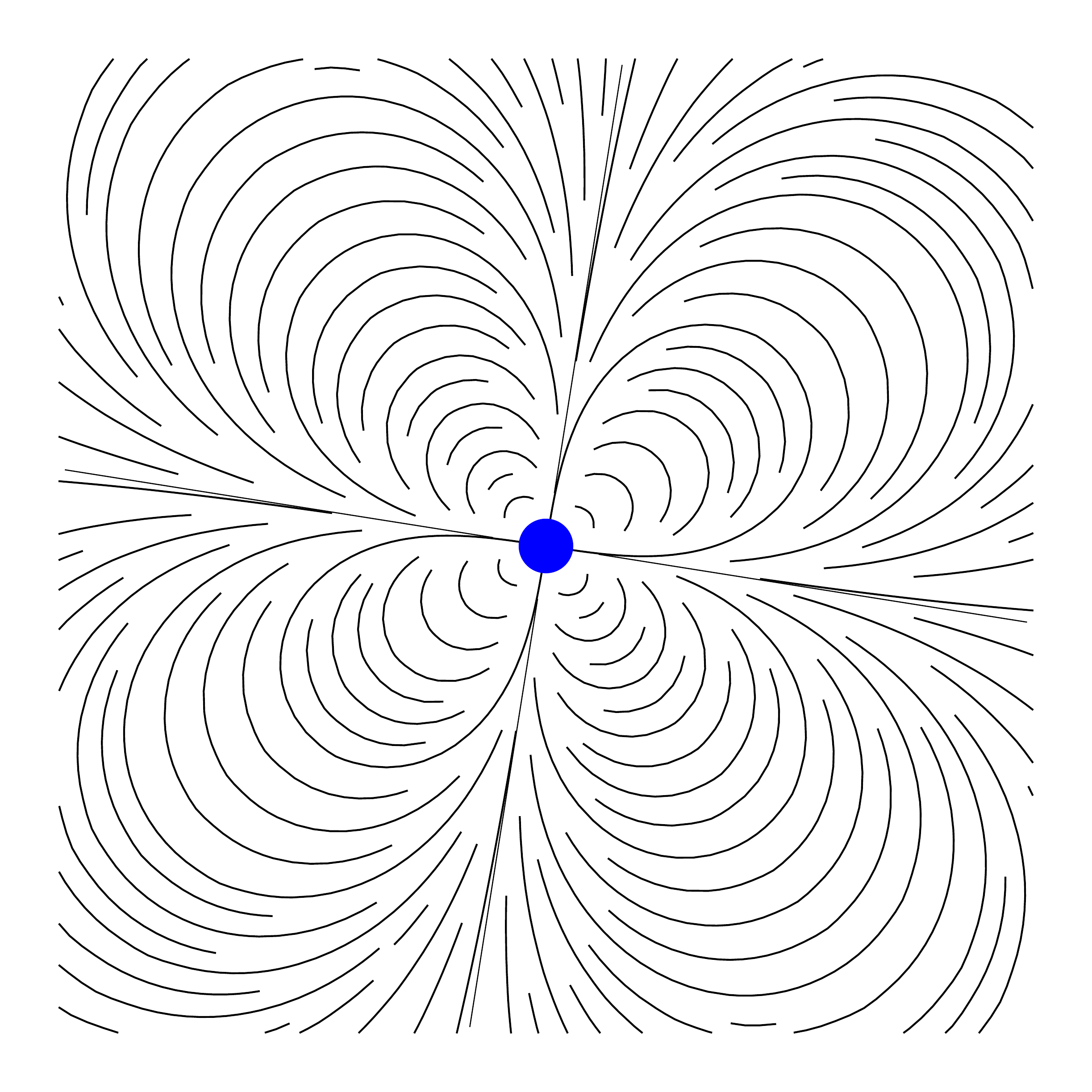}
		\caption{Parabolic}	
	\end{subfigure}
	\begin{subfigure}[t]{0.24\textwidth}
		\includegraphics [width=\textwidth]{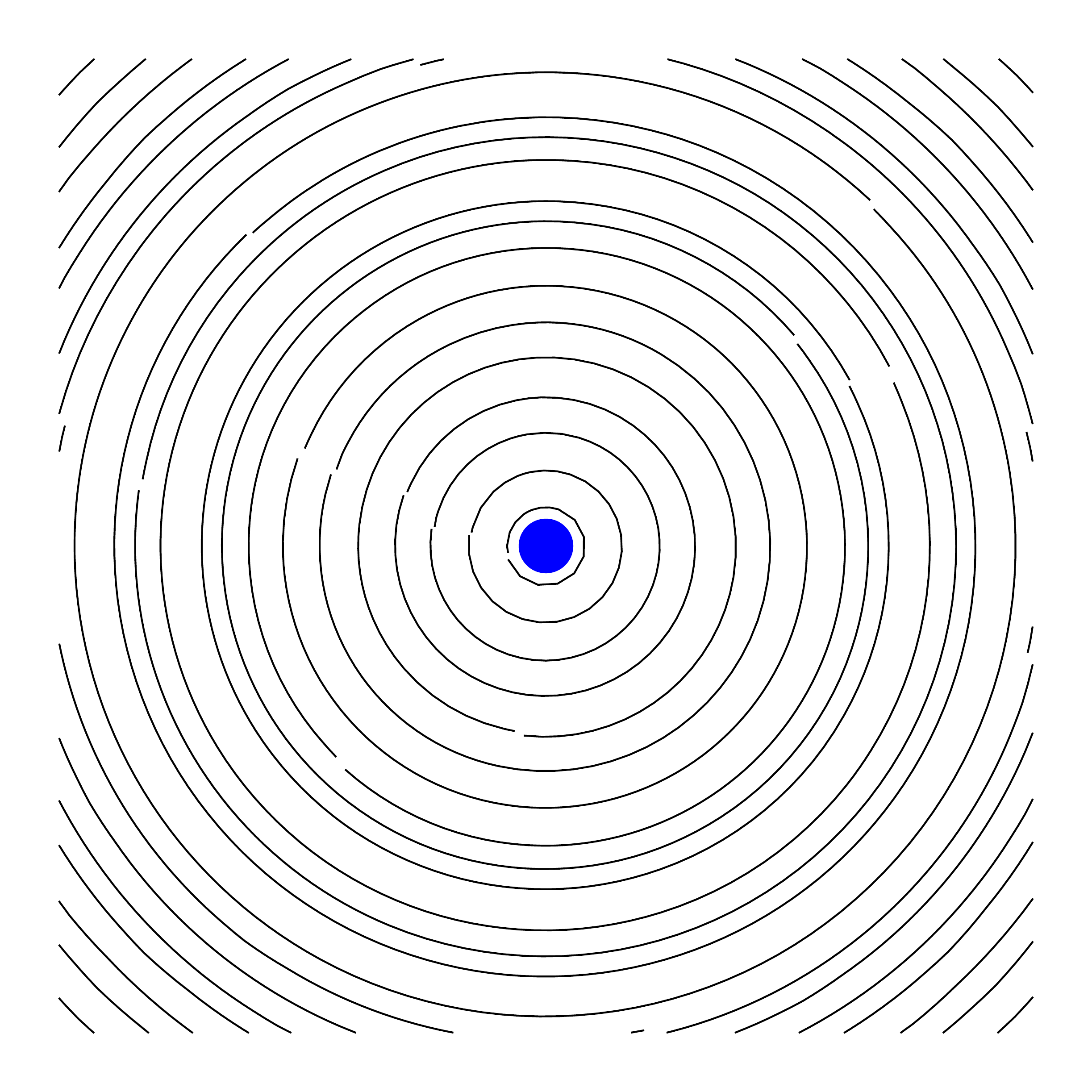}
		\caption{Center}	
	\end{subfigure}
	\begin{subfigure}[t]{0.24\textwidth}
		\includegraphics [width=\textwidth]{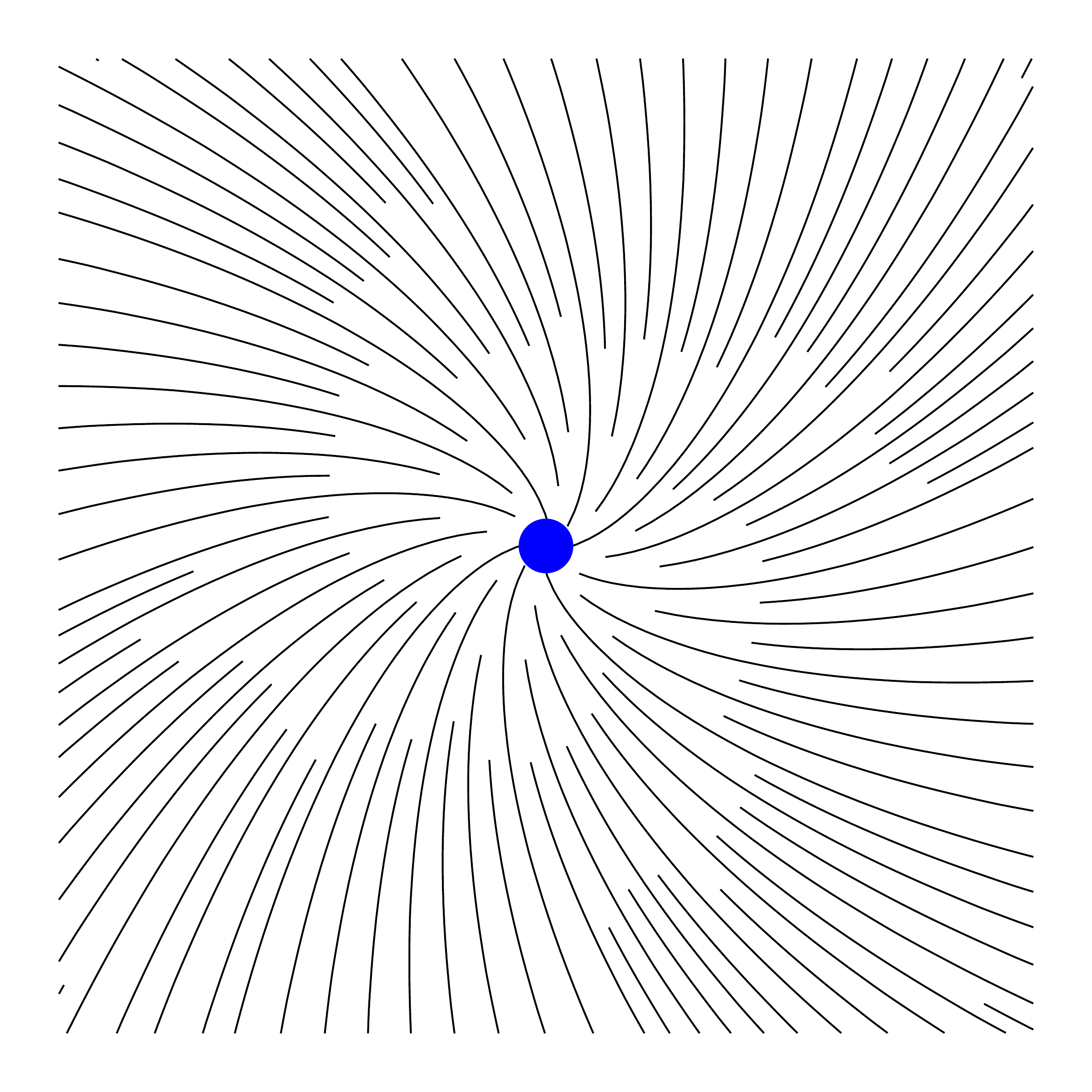}
		\caption{Focus}	
	\end{subfigure}
	\begin{subfigure}[t]{0.24 \textwidth}
		\includegraphics [width=\textwidth]{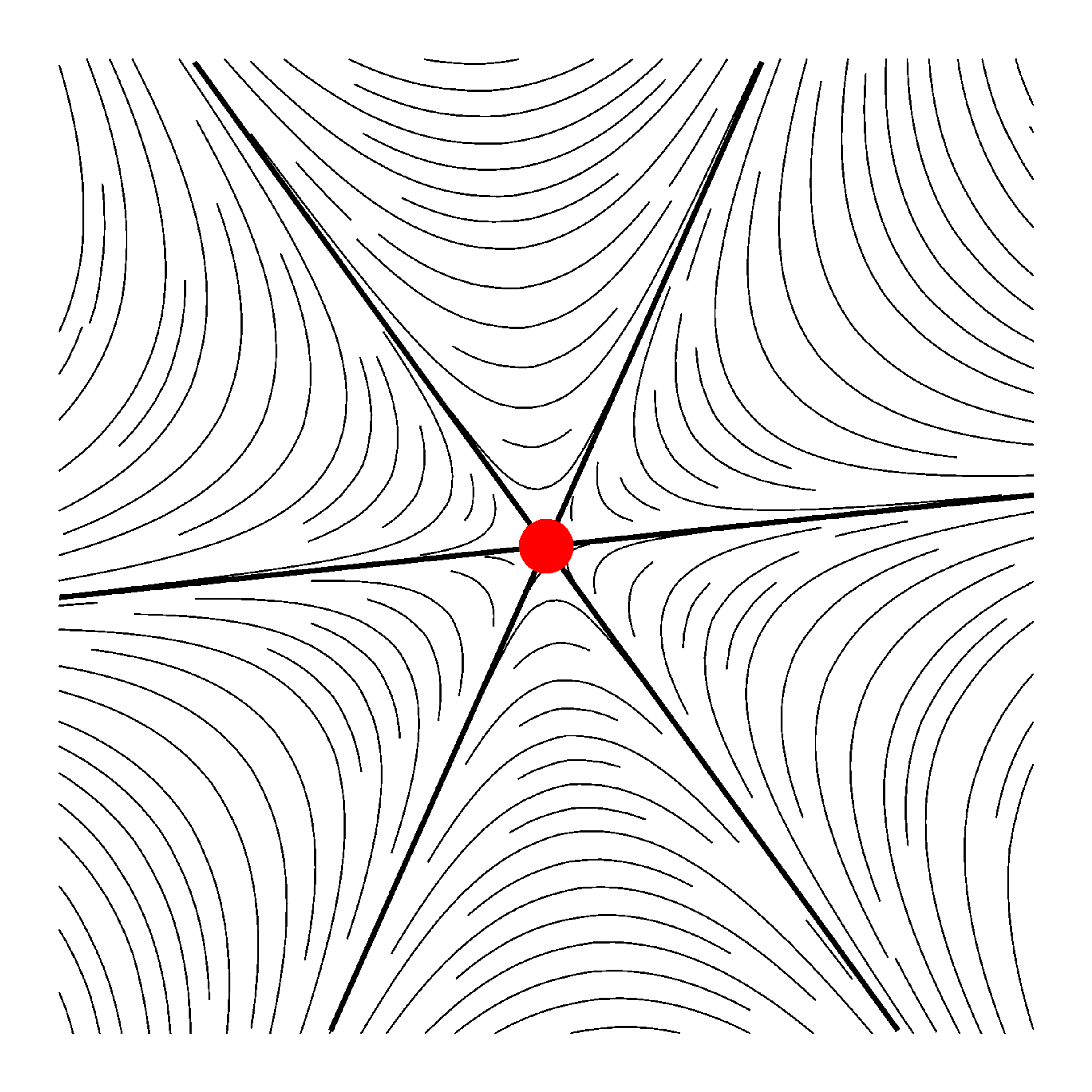}
		\caption{Saddle}	
	\end{subfigure}
	\caption{Critical points of a horizontal foliation.}
	\label{figure:critical}
\end{figure}

\subsubsection{Critical points}\label{sec:criticalpoints}
For a zero or a pole of a quadratic differential $\Delta$ of polar order $2\nu+2$ we call $\nu$ the \emph{rank} of the point (i.e. $\nu<-1$ for a zero of $\Delta$).
It corresponds to the Katz rank of the associated differential system.
The \emph{critical points} are of the following kinds \cite[\S7]{Strebel}:
\begin{enumerate}[leftmargin=2\parindent]
\item \emph{parabolic equilibrium} if rank $\nu>0$ (i.e. pole of order $>2$): the horizontal foliation has $2\nu$ sepal zones, separated by $2\nu$ \emph{separating directions} (asymptotic directions).
\item \emph{simple equilibrium} if rank $\nu=0$  (i.e. pole of order $2$):
		\begin{itemize}
		\item \emph{center} if $\mu=\res^2\Delta<0$: trajectories are periodic of the same period $\pm 2\pi\i \sqrt{\mu}$. 
		\item \emph{focus} if $\mu=\res^2\Delta\notin\R_{<0}$: trajectories are conformally equivalent to either straight rays or logarithmic spirals.
		\end{itemize}
\item \emph{saddle point} if rank $\nu<0$, this includes the zeros and the simple poles of $\Delta$ and possibly a finite number of marked regular points.
The local dynamics is of hyperbolic type with $2|\nu|$ \emph{hyperbolic sectors} separated by the same number of \emph{separatrices}. 
\end{enumerate}
The time $\bt$ \eqref{eq:t} it takes to arrive at an equilibrium from any regular point is infinite, while the time it takes to arrive at a saddle point is finite. 
We denote
\[\Equilib(\Delta)=\{\text{equilibriums of } \Delta\},\qquad \Saddle(\Delta)=\{\text{saddles of } \Delta\},\]
and 
\[\Crit(\Delta)=\Equilib(\Delta)\cup\Saddle(\Delta).\]
\emph{\textbf{We shall assume that }}
\begin{equation}\label{eq:assumption}
	\Equilib(\Delta)\neq\emptyset,\qquad\Saddle(\Delta)\neq\emptyset.
\end{equation}
The second condition can always be achieved by adding a marked regular point to the set of saddles (if $\sX=\CP^1$ then the only quadratic differentials without saddles are those with either two  equilibria of rank $0$ or with a single equilibrium of rank $1$).

\begin{figure}[t]
	\centering
	\begin{subfigure}[t]{0.49\textwidth}
		\includegraphics [width=\textwidth]{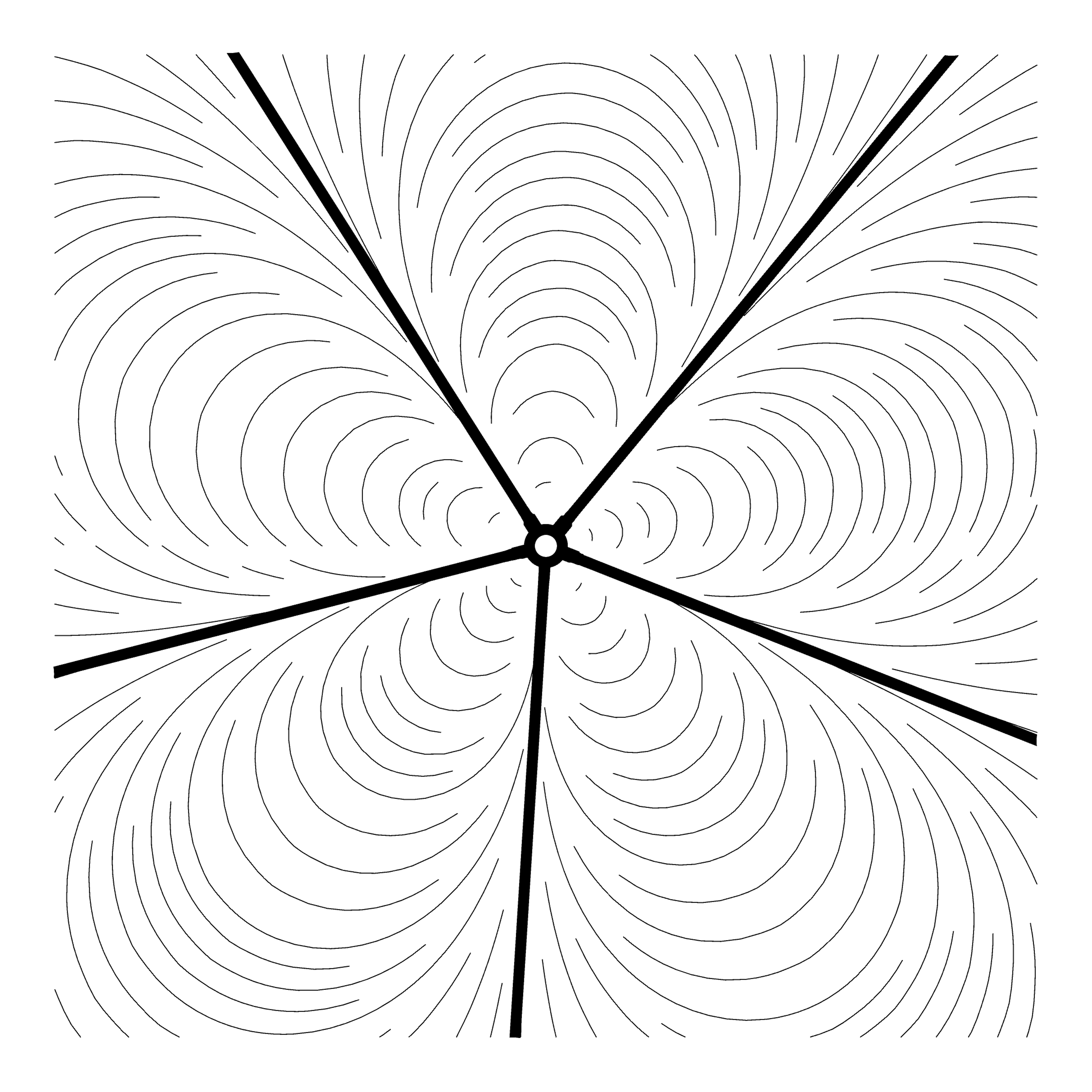}
		\caption{$\Delta=\frac{c}{x^7}(\d x)^2$}	
	\end{subfigure}
	\begin{subfigure}[t]{0.49\textwidth}
		\includegraphics [width=\textwidth]{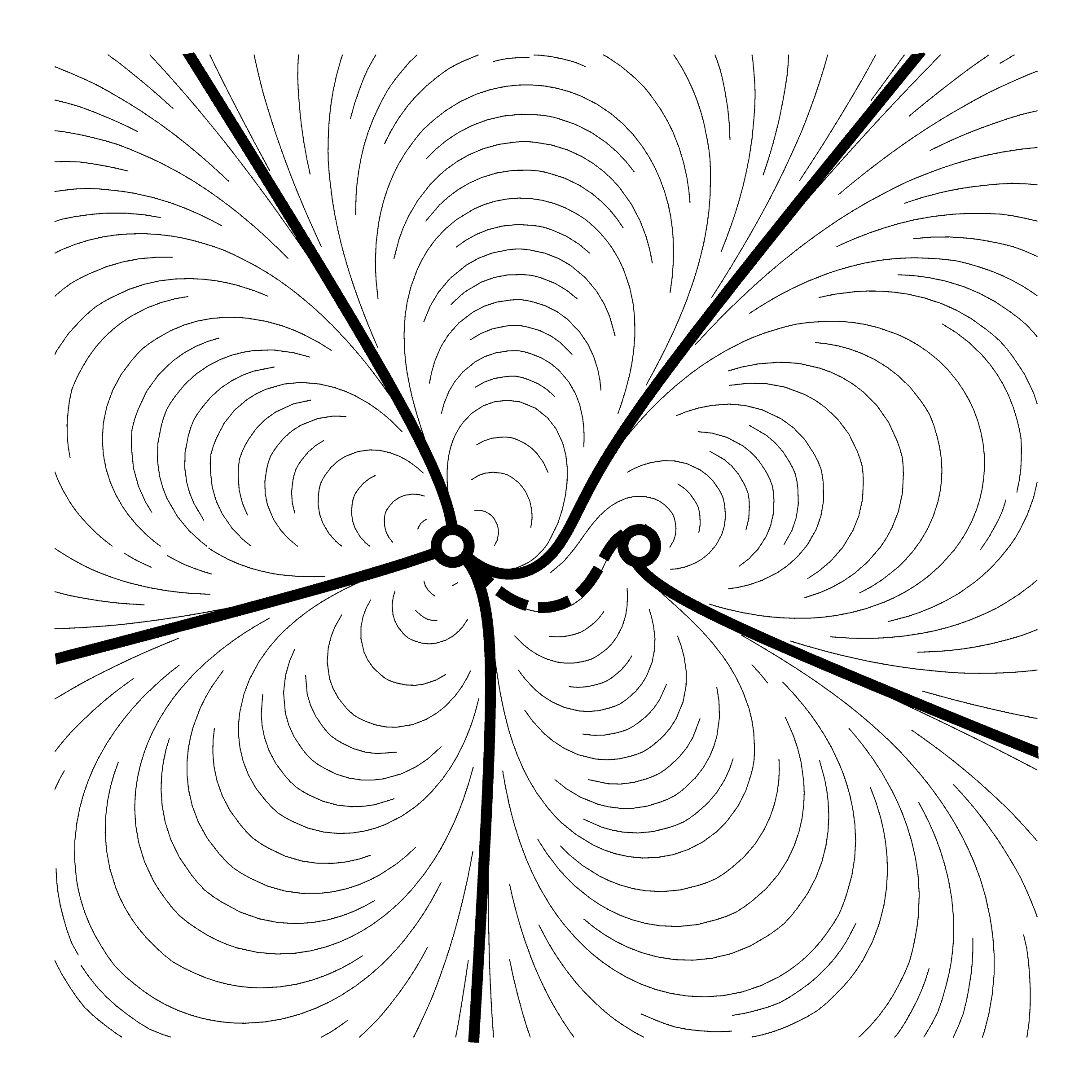}
		\caption{$\Delta=\frac{c}{(x-\epsilon)^2(x+\epsilon)^5}(\d x)^2$}	
	\end{subfigure}
	
	\begin{subfigure}[t]{0.49\textwidth}
		\includegraphics [width=\textwidth]{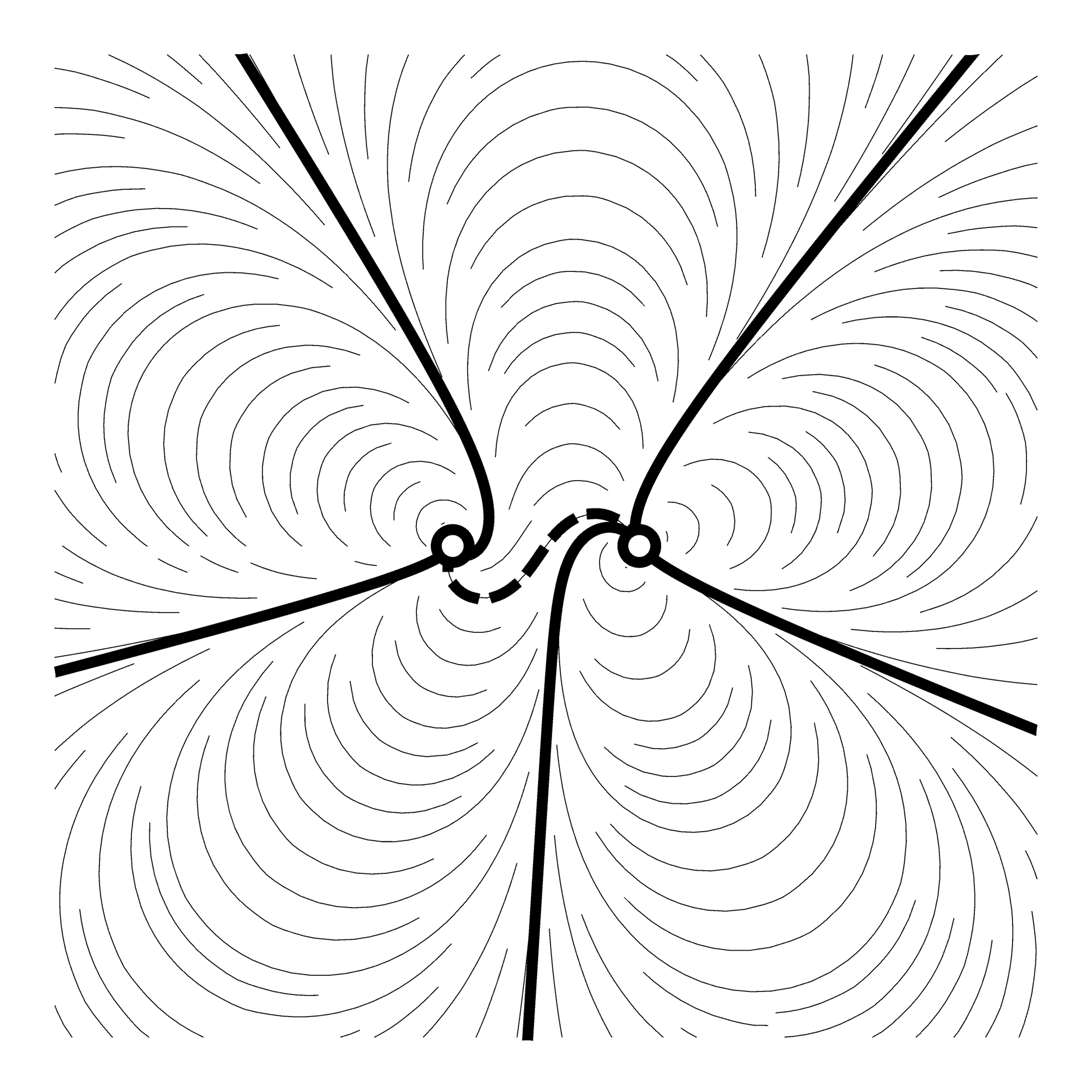}
		\caption{$\Delta=\frac{c}{(x-\epsilon)^3(x+\epsilon)^4}(\d x)^2$}	
	\end{subfigure}
	\begin{subfigure}[t]{0.49\textwidth}
		\includegraphics [width=\textwidth]{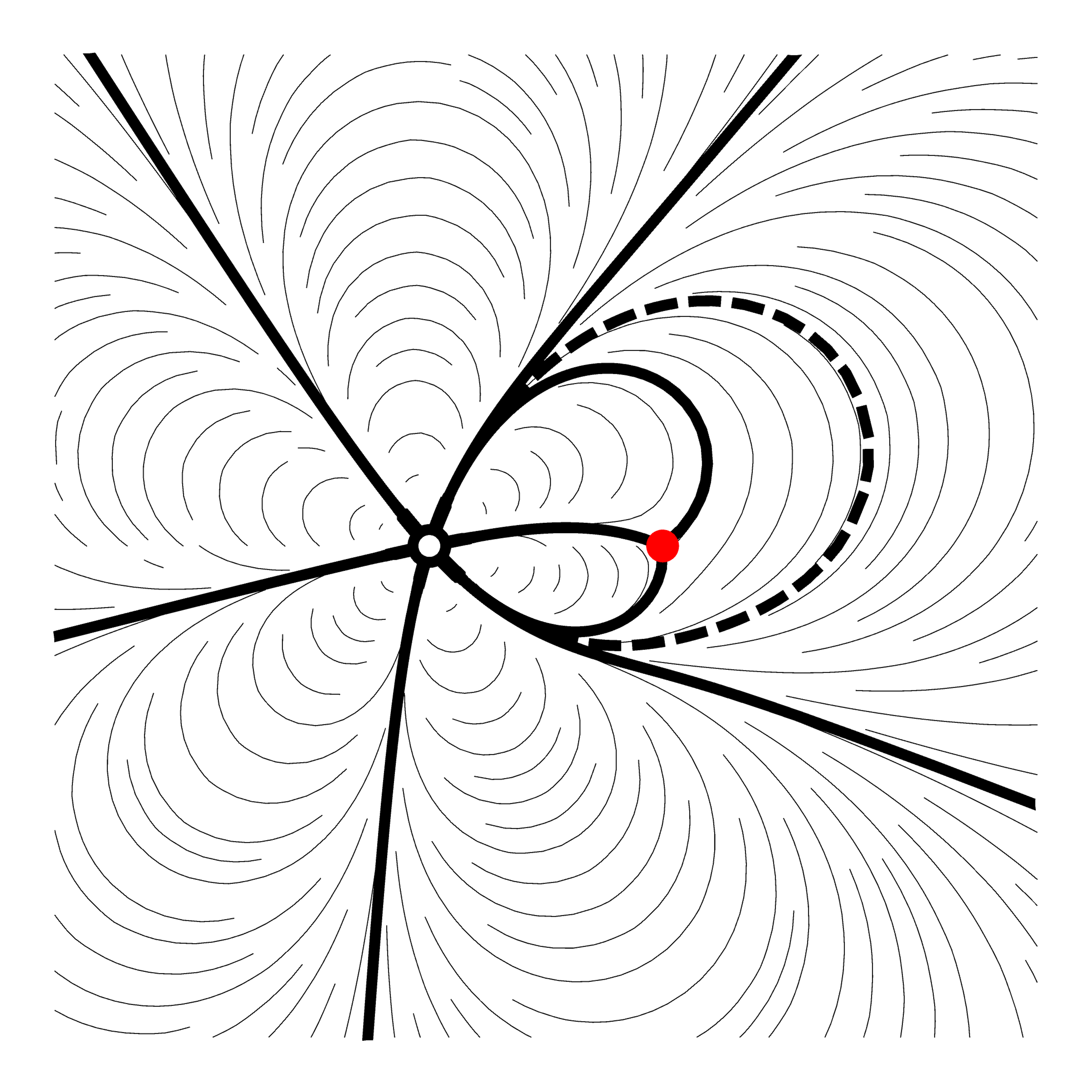}
		\caption{$\Delta=\frac{c(x-\epsilon)}{(x+\epsilon)^8}(\d x)^2$}	
	\end{subfigure}
	\caption{Examples of rotationally stable horizontal foliations: separatrices in thick, gates dashed. In all 4 examples the infinity is a saddle point of rank $-\frac52$.}
	\label{figure:a}
\end{figure}

\subsubsection{Zone decomposition}
The (maximal) trajectories of a quadratic differential $\e^{-2\i\vartheta}\Delta$ are of the following kinds \cite[\S11]{Strebel}:
\begin{enumerate}[leftmargin=2\parindent]
	\item closed: \emph{periodic trajectories},
	\item finite: \emph{saddle trajectories}, i.e. homoclinic and heteroclinic separatrices,
	\item semi-infinite: 
	\begin{itemize}
		\item \emph{converging separatrices}: on one side have a finite limit at a saddle point and on the other side an infinite limit at an equilibrium,
		\item \emph{recurrent separatrices}:  on one side have a finite limit at a saddle point and on the other side their limit set equals to their closure and has a non-empty interior,
	\end{itemize}
	\item infinite: 
	\begin{itemize}
		\item \emph{converging trajectories}: on both sides have infinite limits at equilibrium points (either 2 different, or the same parabolic one),
		\item \emph{recurrent trajectories}: on both sides their limit sets are the same, equal to their closure, and have a non-empty interior.  
	\end{itemize}
\end{enumerate}

Each focus equilibrium is the limit point of at least one convergent separatrix, and each separating direction of a parabolic equilibrium is tangent to at least one convergent separatrix.

\begin{definition}[Zones]
	The \emph{separating graph } is the closure of union of all the separatrices of saddle points $\Saddle(\Delta)$.
	The connected components in $\sX$ of the complement of the separating graph  and of the set of equilibria $\Equilib(\Delta)$ are called \emph{zones of $\e^{-2\i\vartheta}\Delta$ in $\sX$}. 	
They can be of the following kinds according to their form in the flat coordinate $\bt$ \cite[\S11]{Strebel}:
\begin{enumerate}[leftmargin=2\parindent]
	\item \emph{sepal zone}: all trajectories have their $\alpha$-limit and $\omega$-limit   at the same multiple singular point (parabolic equilibrium).
	On the translation surface it takes the form of a half-plane $\e^{\i \vartheta}\H^\pm$.  
	\item \emph{$\alpha\omega$-zone}: all trajectories have the same $\alpha$-limit and the same $\omega$-limit  which are two different equilibria (simple or multiple). On the translation surface it takes the form of an open strip parallel to $\e^{\i \vartheta}\R$.
	\item \emph{center zone}: all trajectories are periodic filling a neighborhood of a center equilibrium. 
	On the translation surface it takes the form of a semi-infinite cylinder.
	\item \emph{annular zone}: all trajectories are periodic. 
	On the translation surface it takes the form of a  finite cylinder.
\end{enumerate}
\end{definition}

\subsubsection{Saddle connections, periodic/parabolic domains of equilibria, and core}

Instead of looking at the horizontal foliation of $\e^{-2\i\vartheta}\Delta$ for a fixed rotation angle $\vartheta\in\R$ it is also useful to  consider all such rotations at once.

\begin{definition}[Saddle connections and period map]
	A \emph{saddle connection} of $\Delta$ is a saddle trajectory $\gamma$ of $\e^{-2\i\vartheta}\Delta$ for any rotation $\vartheta\in\R$.
	Its \emph{square period} is the square of its $\bt$-length:
	\begin{equation}\label{eq:period}
		\gamma\mapsto\left(\int_{\gamma}\Delta^{\frac12}\right)^2.
	\end{equation}
\end{definition}

\begin{definition}[Periodic/parabolic domains of equilibria and the core]\label{def:periodicparabolic}
	A \emph{periodic domain} of a simple equilibrium, resp. \emph{parabolic domain} of a multiple equilibrium,
	is the connected component of the equilibrium in the complement of the union of all saddle connections (over all rotations $\vartheta$).	
	Namely,
	\begin{itemize}[leftmargin=\parindent]
		\item for a simple equilibrium $a$ with $\mu=\res_{x=a}^2\Delta$, the periodic domain is the center zone of $a$ of $-\frac{|\mu|}{\mu}\Delta$,
		\item for a multiple equilibrium $a$, the parabolic domain is the union of all sepal zones of $a$ over all rotations $\e^{-2\i\vartheta}\Delta$, $\vartheta\in\R$. 	
	\end{itemize}	
	By the construction, it is the maximal domain such that for any rotation	$\vartheta$ any trajectory of $\e^{-2\i\vartheta}\Delta$ that enters the domain will not be able to leave and will asymptotically tend to the equilibrium. See \cite{Klimes-Rousseau2}.
	
	The periodic and parabolic zones of different equilibria are disjoint. Their complement is called the \emph{core} \cite{Haiden-Katzarkov-Kontsevich, Tahar1, Tahar2}
	(its image in the $\bt$-coordinate is also called the \emph{periodgon} \cite{Klimes-Rousseau2}). 
\end{definition}

\subsubsection{Rotational stability}

\begin{definition}[Rotational stability]
Assume \eqref{eq:assumption}. The horizontal foliation of $\e^{-2\i\vartheta}\Delta$ is called \emph{rotationally stable} if it contains no saddle trajectories.
\end{definition}

\begin{proposition}\label{prop:rotstability}
Assume \eqref{eq:assumption}.
\begin{enumerate}[leftmargin=2\parindent]
	\item The set of directions $\vartheta\in \R/\pi\Z$ for which $\e^{-2\i\vartheta}\Delta$ is not rotationally stable is at most countable closed set with at most finitely many accumulation points.	
	\item When  $\e^{-2\i\vartheta}\Delta$ is rotationally stable there are no recurrent trajectories. 
\end{enumerate}
\end{proposition}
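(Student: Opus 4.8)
I would prove the two parts together, establishing (2) first because it feeds the closedness argument in (1). The starting observation is that $\vartheta\in\R/\pi\Z$ fails to be rotationally stable exactly when $\e^{-2\i\vartheta}\Delta$ carries a saddle trajectory, i.e. when some saddle connection $\gamma$ has $\arg\!\big(\int_\gamma\Delta^{\frac12}\big)=\vartheta\pmod\pi$; see \eqref{eq:period}. So the exceptional set is $B=\big\{\arg\!\int_\gamma\Delta^{\frac12}\bmod\pi \;:\; \gamma\ \text{a saddle connection}\big\}$, and the whole proposition reduces to understanding the saddle connections of $\Delta$ and the arguments of their periods.

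\textbf{Part (2).} I would argue by contradiction. Suppose $\e^{-2\i\vartheta}\Delta$ has a recurrent trajectory $\ell$; by definition its limit set is $\overline\ell$ with non-empty interior $V$. Since a recurrent trajectory neither converges to an equilibrium nor can approach one arbitrarily closely (the periodic and parabolic domains of Definition~\ref{def:periodicparabolic} are trapping), $\overline\ell$ lies in the core and $V\neq\sX$, so $\partial V\neq\emptyset$ is closed, invariant, and contained in $\overline\ell$. I would then show that a trajectory in $\partial V$ can be neither a converging trajectory or separatrix (otherwise $\ell$ would accumulate on it, enter the trapping domain of its equilibrium, and converge), nor periodic (a closed leaf bounds an annular or center zone, and the flat foliation has no holonomy, so $\ell$ cannot spiral onto it), nor itself recurrent with interior (that interior would lie in $V$). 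The only remaining possibility is a finite trajectory with both endpoints at saddles, i.e. a saddle connection, contradicting rotational stability. The same reasoning excludes recurrent separatrices.

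\textbf{Part (1), countability.} Every saddle connection avoids the periodic and parabolic domains, since a finite trajectory cannot tend to an equilibrium; hence it lies in the compact core. On this compact half-translation surface I would invoke the standard length count: for each $L>0$ only finitely many saddle connections satisfy $\big|\int_\gamma\Delta^{\frac12}\big|\le L$ (an infinite bounded-length family of geodesics joining the finitely many saddles would sub-converge to a limiting geodesic and degenerate the flat metric). Thus there are at most countably many saddle connections, so $B$ is at most countable.

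\textbf{Part (1), closedness, accumulation points, and the hard step.} For closedness I would show the complementary set of rotationally stable directions is open: if $\vartheta_0$ is stable then by (2) all trajectories of $\e^{-2\i\vartheta_0}\Delta$ are periodic, converging, or converging separatrices, a configuration that is structurally stable under rotation (equilibria persist and separatrices keep converging for nearby $\vartheta$), so no saddle connection appears near $\vartheta_0$; cf. \cite{Klimes-Rousseau2}. For the accumulation points, countability forces any sequence of distinct saddle connections with $\arg\!\int_{\gamma_n}\Delta^{\frac12}\to\vartheta_\ast$ to satisfy $\big|\int_{\gamma_n}\Delta^{\frac12}\big|\to\infty$. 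A long saddle connection must run many times along the necks bounding the periodic and parabolic domains, each passage adding a fixed period increment: one determined by the residue $\mu=\res^2\Delta$ for a simple equilibrium, and one lying in a separating direction for a parabolic equilibrium. Hence $\arg\!\int_{\gamma_n}\Delta^{\frac12}$ tends to the argument of the dominant increment, which lies in the finite set formed by the directions attached to the residues of the finitely many simple equilibria and the separating directions of the finitely many parabolic equilibria, so $B$ has at most finitely many accumulation points. The genuinely delicate point — the main obstacle — is precisely this last step: one must verify that arbitrarily long saddle connections can be produced only by repeated winding in these finitely many neck directions, and that mixed windings create no new accumulation directions. I would settle this through the period/holonomy bookkeeping on the core (the periodgon picture) of \cite{Klimes-Rousseau2}.
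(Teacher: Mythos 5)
Your route is genuinely different from the paper's, which disposes of both statements by citation: part (1) is attributed to Tahar \cite{Tahar2} (and to Muci\~no-Raymundo--Valero \cite{Mucino-Valero} for rational vector fields), and part (2) rests on Strebel's structure theorem that the boundary of the closure of a recurrent trajectory is a union of saddle points and saddle trajectories \cite[\S 11]{Strebel}, combined with the remark that this boundary cannot be empty because the equilibrium guaranteed by \eqref{eq:assumption} cannot lie in the limit set. Your part (2) is essentially a from-scratch reconstruction of that Strebel statement (eliminating from $\partial V$ everything except saddle connections), and it is sound in outline; likewise your countability argument (saddle connections live in the compact core, finitely many below any length bound) and your closedness argument (convergence of the finitely many separatrices into the trapping domains of Definition~\ref{def:periodicparabolic} is an open condition in $\vartheta$, given part (2)) are correct and are exactly what the cited references formalize. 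What the self-contained route buys is independence from \cite{Tahar2}; what it costs is that you must actually carry out the hardest step yourself.

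That step --- finitely many accumulation points --- is where there is a genuine gap, and your identification of the limit directions is not right as stated. The period increment \eqref{eq:period} acquired by winding once around an equilibrium $a$ is $\pm 2\pi\i\res_a\sqrt{\Delta}$, i.e.\ it is governed by the residue for simple \emph{and} parabolic equilibria alike; the ``separating directions'' of a parabolic equilibrium are directions in the $x$-plane of the horizontal foliation, they rotate with $\vartheta$ and carry no period information, so they cannot serve as accumulation directions of arguments of periods. Moreover, when the residue vanishes (e.g.\ at a ramified pole) winding contributes nothing and one must argue separately that only boundedly many saddle connections arise in such homotopy classes; and a long saddle connection need not wind around a single neck --- it may wind around several equilibria at once or around handles of $\sX$, and the resulting periods lie in a finitely generated subgroup of $\C$ whose set of argument-accumulation points is a priori dense in $\R/\pi\Z$ as soon as its rank exceeds one. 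Excluding these mixed windings is precisely the content of \cite{Tahar2}; the periodgon bookkeeping of \cite{Klimes-Rousseau2} that you invoke covers only rational vector fields on $\CP^1$ and does not close the general case. So part (1) of your proposal is incomplete at exactly the point the paper outsources.
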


The first statement is proven by Tahar \cite{Tahar2} in much greater generality. In the case of rational vector fields it is due to Muciño-Raymundo and Valero \cite{Mucino-Valero}.  
The second statement is essentially due to Strebel: the boundary of the closure of a recurrent trajectory always consists of a union of saddle points and saddle trajectories \cite[\S11]{Strebel}, but by the assumption there are none. 
At the same time the boundary could not be empty since by the assumption \eqref{eq:assumption} there is at least one  equilibrium and it cannot be in the limit set of any recurrent trajectory.

\begin{figure}[t]
	\centering
	\begin{subfigure}[t]{0.49\textwidth}
		\includegraphics [width=\textwidth]{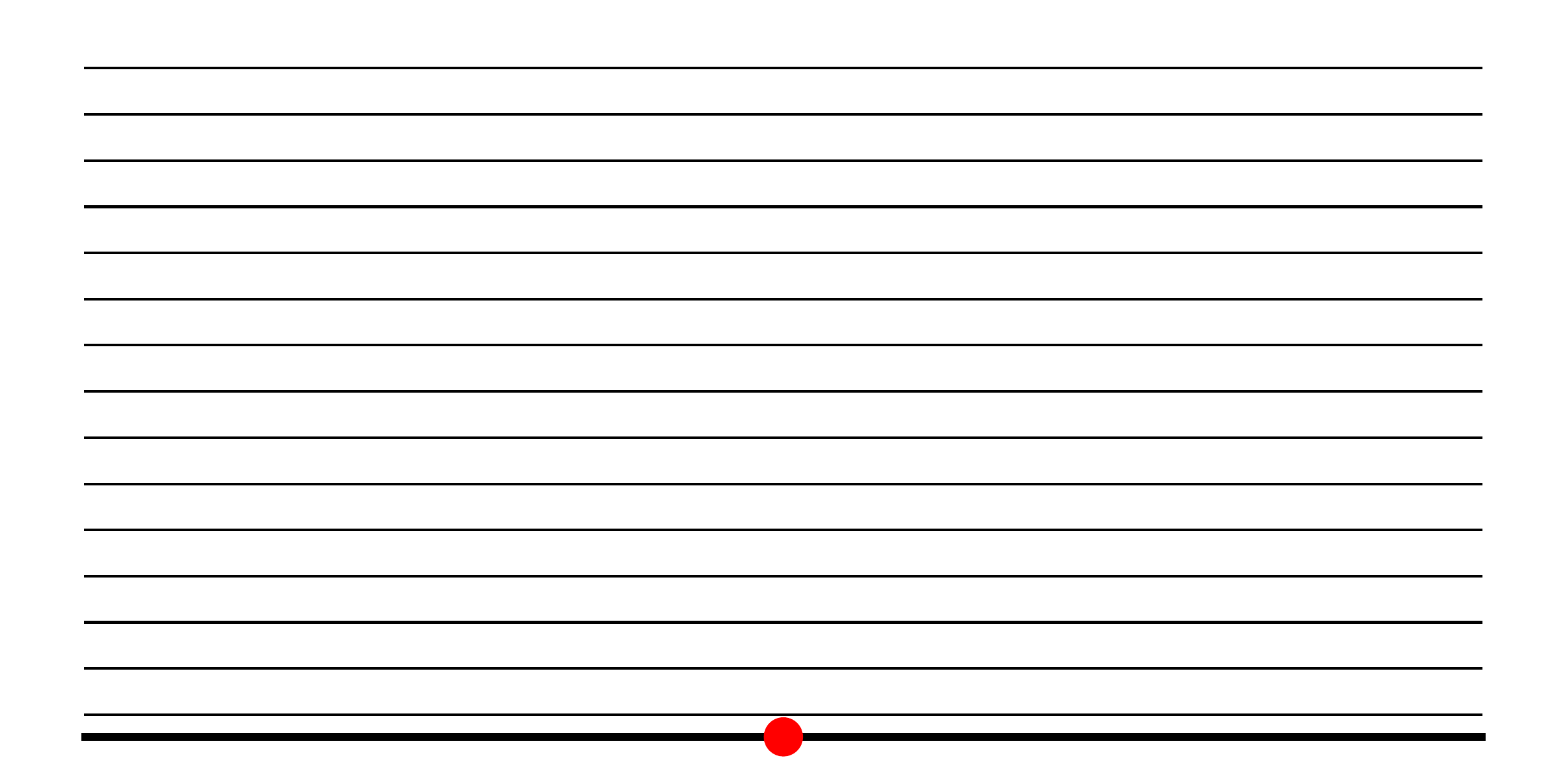}
		\caption{Sepal zone}	
	\end{subfigure}
	\begin{subfigure}[t]{0.49\textwidth}
		\includegraphics [width=\textwidth]{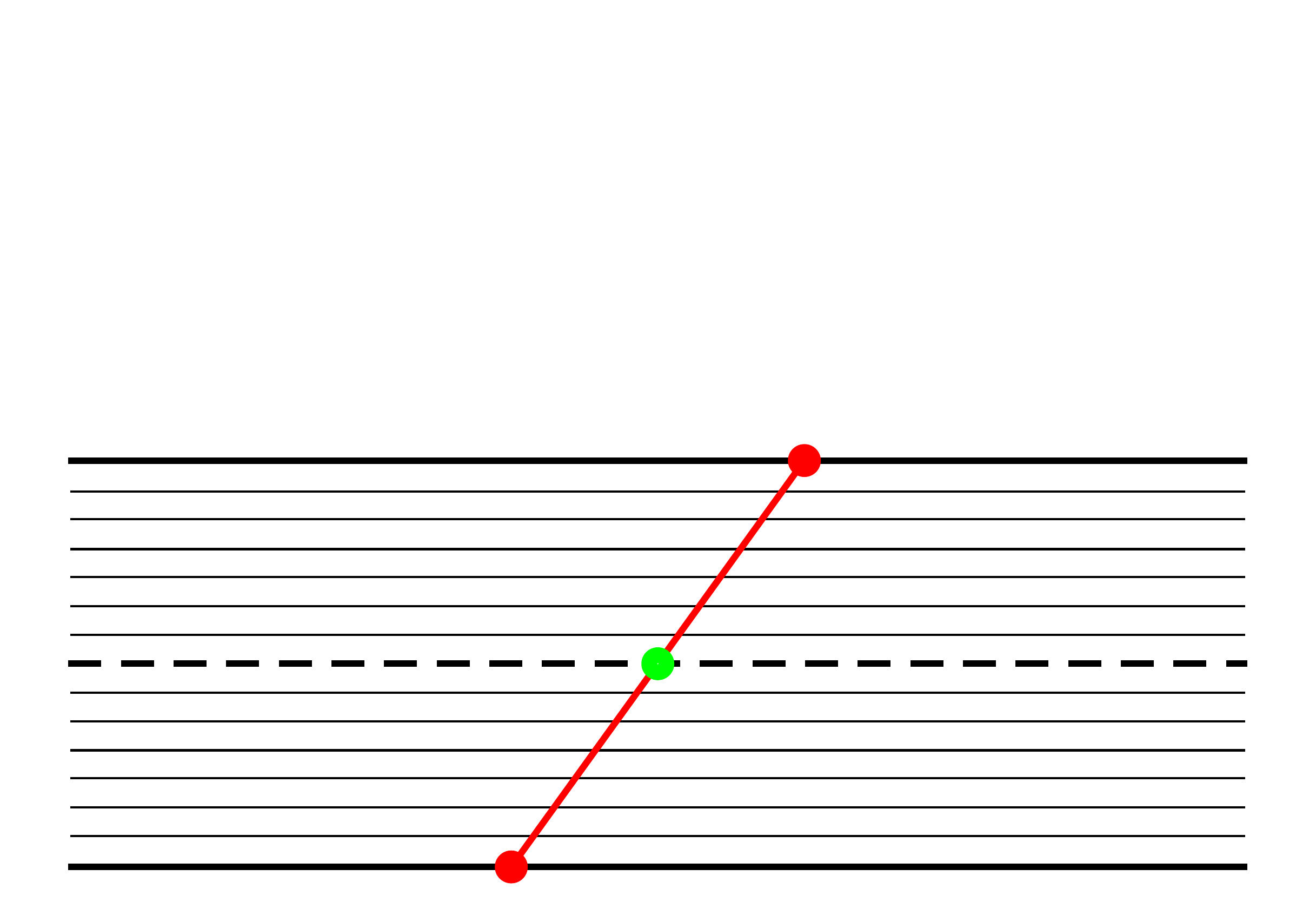}
		\caption{$\alpha\omega$-zone}	
	\end{subfigure}
	\caption{Images of zones of rotationally stable horizontal foliations in coordinate $\pm\e^{-\i\vartheta}\bt$: separatrices and saddles on the boundary,
		the equilibrium(s) correspond to the point(s) at infinity. An $\alpha\omega$-zone with its transversal (red), its midpoint and the gate trajectory (dashed). }
	\label{figure:zones}
\end{figure}

\bigskip

Assume \eqref{eq:assumption} and let $\e^{-2\i\vartheta}\Delta$ be rotationally stable.
Then there are only two kinds of zones (see Figure~\ref{figure:zones}): 
\begin{enumerate}[leftmargin=2\parindent]
	\item \emph{rotationally stable sepal zone} with one saddle point at its boundary line,
	\item \emph{rotationally stable $\alpha\omega$-zone} with one saddle point at each of its two boundary lines.
\end{enumerate}

\begin{figure}[t]
	\centering
		\includegraphics [width=0.55\textwidth]{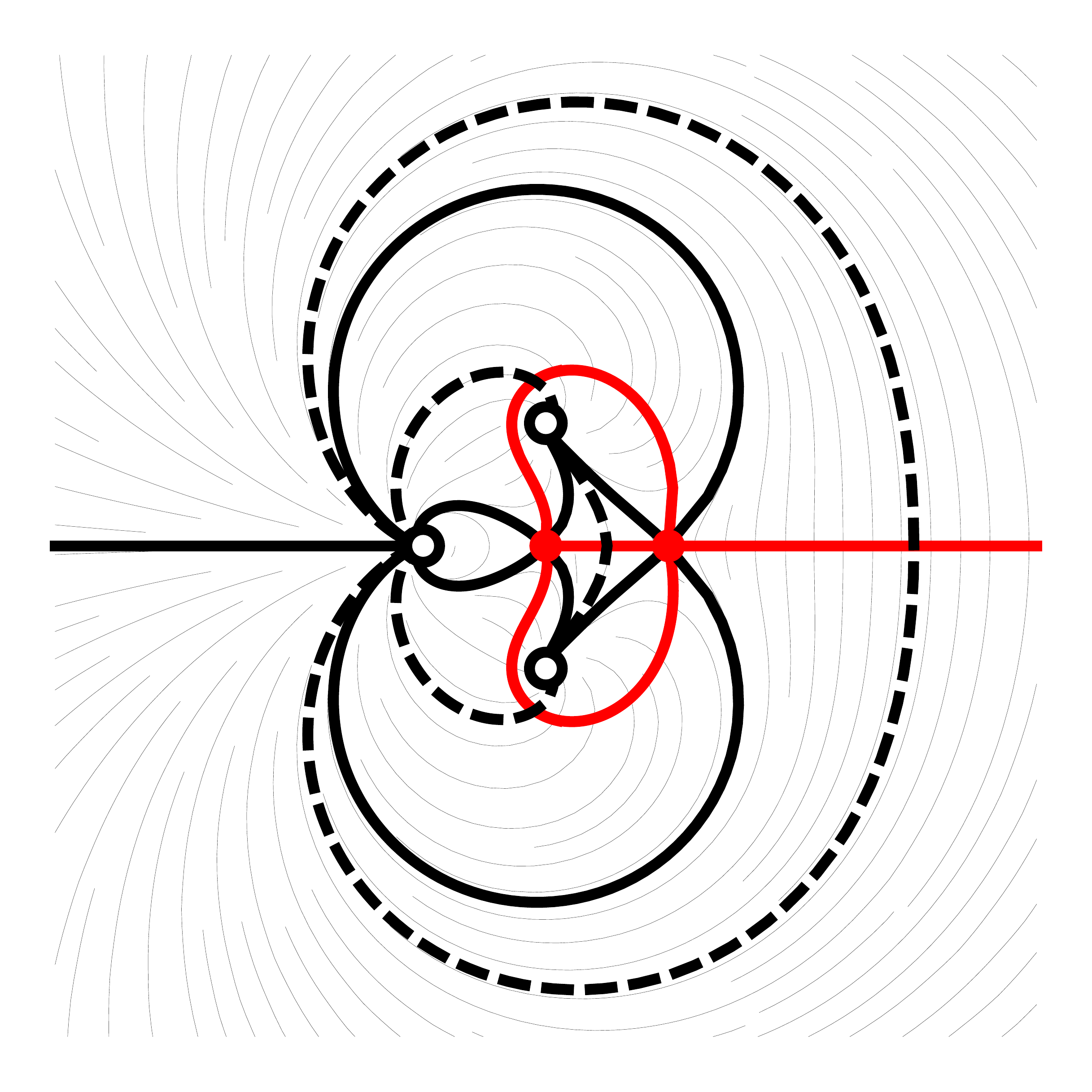}
	\caption{The separating graph  (solid black), the gate graph (dashed black) and the transverse graph (red) for $\Delta=-\frac{x^2(x-1)^2}{(x^2+1)^2(x+1)^3}(\d x)^2$. The horizontal foliation has 1 sepal zone and 4 $\alpha\omega$-zones.}
	\label{figure:example}
\end{figure}

\subsubsection{Gates, transversals and generalized petals}

\begin{definition}
\begin{enumerate}
\item 	Rotationally stable $\alpha\omega$-zone contains exactly one saddle connection, which joins its two saddles. It is called the \emph{transversal} of the zone \cite{Douady-Estrada-Sentenac}.
	Since  the transversal has a finite period \eqref{eq:period}, it has a well defined \emph{midpoint}; see Figure~\ref{figure:zones}. Note that the transversal and the midpoint are independent  of the angle $\vartheta$.  
	Following \cite{Hurtubise-Lambert-Rousseau}, the trajectory through the midpoint is called \emph{gate}. See also \cite{Douady-Estrada-Sentenac, Pilgrim, Oudekerk} for the notion of gates and transversals.
	
\item	The \emph{gate graph} has equilibria as vertices and the gates of $\alpha\omega$-zones as edges (see Figure~\ref{figure:zones}).

\item	The \emph{extended separating graph} is the union of the separating graph and the gate graph.
	
\item	The	\emph{(generalized) petals} of a rotationally stable horizontal foliation are the connected components of the complement of the extended separating graph. They are either 
\begin{itemize}[leftmargin=\parindent]
	\item  halves of rotationally stable  $\alpha\omega$-zones split lengthwise in two by their gate trajectories, or
	\item  rotationally stable sepal zones.
\end{itemize}
Thus each petal contains exactly one hyperbolic sector of some saddle point.   
	\end{enumerate}
	
\item	The \emph{transverse graph} has saddles as vertices and the transversals of $\alpha\omega$-zones as edges see Figure~\ref{figure:zones}.

\item	The \emph{extended transverse graph} (Reeb graph) has saddles as vertices and 
\begin{itemize}[leftmargin=\parindent]
	\item for each $\alpha\omega$-zones the transversal as an edge, divided by its the midpoint into a pair of half-edges (of finite length),
	\item for each sepal zones the unique separatrix of the vertical foliation of $\Delta$ inside the zone as an open half-edge (of infinite length).
\end{itemize}
Therefore each petal contains exactly one half-edge, whose points are in one-to-one correspondence with the trajectories inside the petal.
In particular this means that each vertex saddle point of rank $\nu<0$ is attached to $2|\nu|$ half-edges which alternate with its $2|\nu|$ separatrices. 

\end{definition}

\begin{lemma}\label{lemma:transversalgategraph}
Assume \eqref{eq:assumption}. The transverse graph  of a rotationally stable horizontal foliation is a homotopy retract of $\sX\smallsetminus\Equilib(\Delta)$: it divides $\sX$ into cells, one for each equilibrium. 
The gate graph is the dual graph: it connects all equilibria and is a homotopy retract of $\sX\smallsetminus\Saddle(\Delta)$.
\end{lemma}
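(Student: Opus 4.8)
The plan is to realize the transverse graph $T$ (vertices the saddles, edges the transversals of the $\alpha\omega$-zones) and the gate graph $\Gamma$ (vertices the equilibria, edges the gates) as a dual pair of cellular decompositions of $\sX$, and then to read off both retraction statements from the standard fact that the $1$-skeleton of a cell decomposition of a closed surface is a deformation retract of the surface with one interior point of each $2$-cell deleted.

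First I would establish the core claim: that $T$ cuts $\sX$ into open topological disks, exactly one containing each equilibrium. For an equilibrium $a$ let $D_a$ denote the union of $a$ with all horizontal half-trajectories and separatrices that reach $a$ without first meeting a transversal. Using the structure of a rotationally stable foliation --- every zone is either a sepal zone, all of whose trajectories limit to one parabolic equilibrium, or an $\alpha\omega$-zone, whose transversal is a cross-cut joining its two boundary saddles and separating its two equilibria --- one sees that $D_a$ is precisely the union of $a$, the sepal zones limiting to $a$, the $a$-side halves of the $\alpha\omega$-zones incident to $a$, and the separatrices ending at $a$. Letting each point flow to its limit $a$ along the leaf through it yields a deformation retraction of $D_a$ onto $a$, so $D_a$ is contractible and, as an open subsurface of $\sX$, an open disk. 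The $D_a$ are pairwise disjoint, since leaving $D_a$ forces one to cross a transversal, and they exhaust $\sX\smallsetminus T$, since every point lies in a zone or on a separatrix and flows to the equilibrium on its side of the relevant transversal. Thus the $D_a$ are exactly the complementary regions, and $T$ is the $1$-skeleton of a cell decomposition $\mathcal M$ of $\sX$ with one $2$-cell per equilibrium. Deleting the equilibria, i.e. one interior point from each $2$-cell, and applying the standard deformation retraction of a punctured surface onto the $1$-skeleton of a cell decomposition, then exhibits $T$ as a deformation retract of $\sX\smallsetminus\Equilib(\Delta)$.

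Next I would pass to the dual decomposition $\mathcal M^\ast$, whose vertices are the $2$-cells of $\mathcal M$ (the equilibria), whose edges are dual to the transversals, and whose $2$-cells are the vertices of $\mathcal M$ (one per saddle). The dual edge of a transversal joins the two equilibria on its two sides and is realized by the gate of the corresponding $\alpha\omega$-zone, which runs between those equilibria and meets $T$ transversally in the single point at the midpoint of the transversal; hence $\mathcal M^\ast$ is exactly $\Gamma$, so $\Gamma$ is the dual graph of $T$. Applying the $1$-skeleton retraction to $\mathcal M^\ast$ --- the deleted interior points now being the saddles, one per dual $2$-cell --- shows that $\Gamma$ is a deformation retract of $\sX\smallsetminus\Saddle(\Delta)$. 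Since $\sX\smallsetminus\Saddle(\Delta)$ is connected, so is its retract $\Gamma$, and as every equilibrium is one of its vertices it connects all equilibria.

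The main obstacle is the core claim, namely verifying that each $D_a$ is genuinely an open disk and that the $D_a$ partition $\sX\smallsetminus T$. This is exactly where rotational stability is indispensable: by Proposition~\ref{prop:rotstability} there are no recurrent trajectories and the zone decomposition is finite, so that only finitely many zone-halves accumulate cyclically around each equilibrium and no exotic limit sets obstruct the radial retraction. The delicate cases are foci, where infinitely many trajectories spiral into $a$ (so that the contraction along leaves must be reparametrised to reach $a$ in finite time), and parabolic equilibria carrying several sepal zones, where one must check that the incident zone-halves close up cyclically into a single disk rather than a surface of higher genus or with extra boundary.
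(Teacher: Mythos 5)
Your proof is correct and follows essentially the same route as the paper: cut each $\alpha\omega$-zone along its transversal, attach the resulting halves (together with the sepal zones and separatrices) to their equilibrium, and observe that the resulting region around each equilibrium is a cell whose boundary consists only of saddles and transversals, rotational stability ruling out saddle trajectories and recurrence. You merely make explicit the cell-decomposition duality and the standard $1$-skeleton retraction that the paper's short sketch leaves implicit.
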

\begin{proof}
Cut each $\alpha\omega$-zone across along its transversal, than each half is attached to just one equilibrium.
Now given an equilibrium point take the union of all the halves of $\alpha\omega$-zones, all the sepal zones and all the separatrices that are attached to it.
Clearly this set covers a full neighborhood of the equilibrium. Since the foliation is rotationally stable there are no saddle trajectories, therefore the boundary of this set can only consist of saddles and transversals of the $\alpha\omega$-zones attached to the equilibrium. 
 \end{proof}

\subsubsection{Flat coordinate system}
Let $\tilde\sX\to\sX$ be the 2-sheeted cover on which $\Delta^{\frac12}$ is defined, equipped with the lift of horizontal foliation of $\e^{-2\i\vartheta}\Delta$ and its petal decomposition.  
For each saddle point $b_i$ and each petal on $\tilde\sX$ attached to it there is a unique flat coordinate $\bt(x)=\int_{b_i}^x\Delta^{\frac12}$ \eqref{eq:t}
on the petal which vanishes at the saddle.
This flat coordinate system on $\tilde\sX$, determined uniquely up to a sign, is such that it
\begin{itemize}
	\item agrees over separatrices,
	\item differs by period of transversals over gates.
\end{itemize}

\begin{figure}[t]
	\centering
	\begin{subfigure}[t]{0.4\textwidth}
		\includegraphics [width=\textwidth]{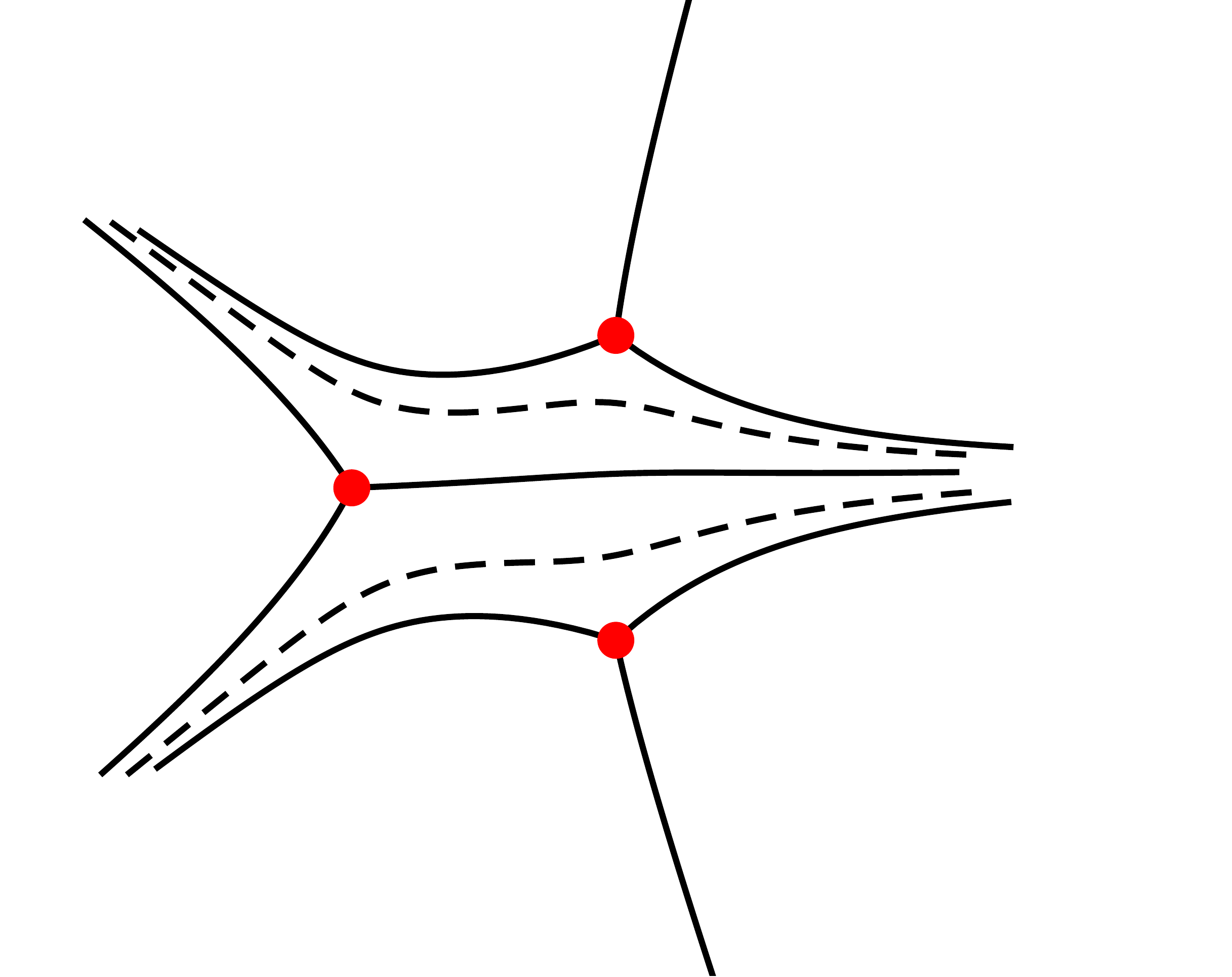}
		\caption{$\Delta_\epsilon=c(x^3+\epsilon)(\d x)^2$}	
	\end{subfigure}
	\hskip0.05\textwidth
	\begin{subfigure}[t]{0.4\textwidth}
			\includegraphics [width=\textwidth]{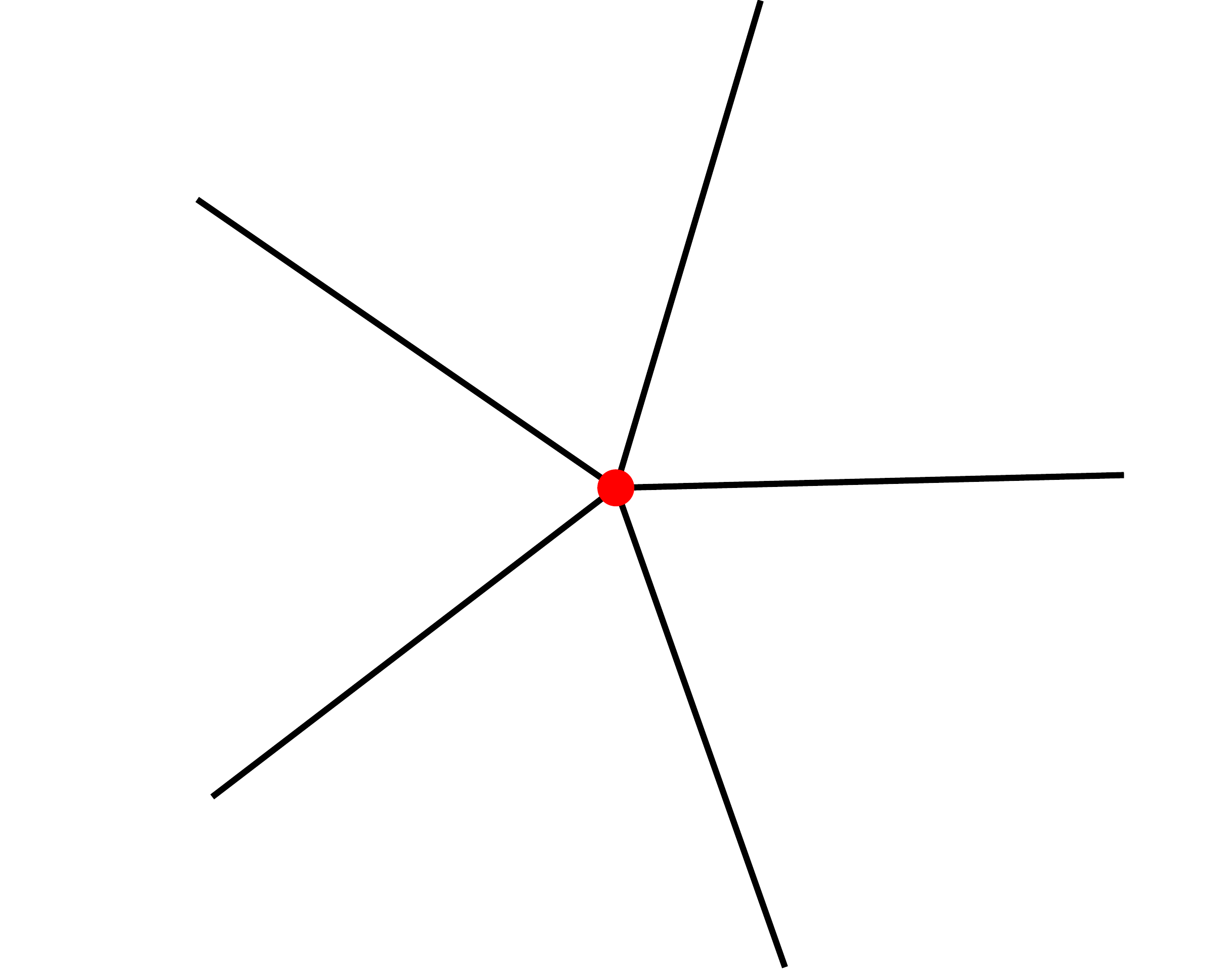}
		\caption{$\Delta_0=cx^3(\d x)^2$}
	\end{subfigure}\\
		\centering
	\begin{subfigure}[t]{0.4\textwidth}
		\includegraphics [width=\textwidth]{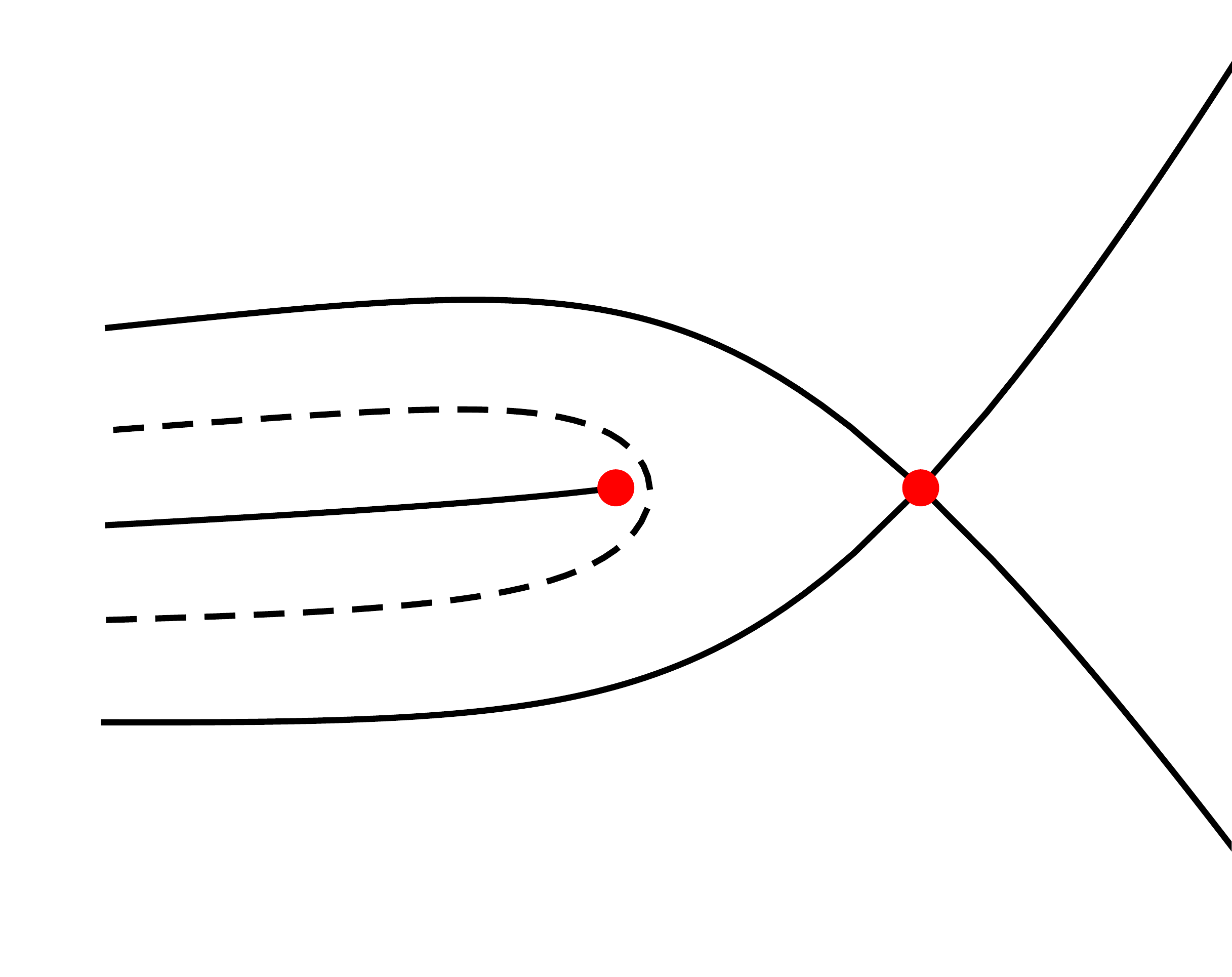}
		\caption{$\Delta_\epsilon=c\frac{x^2}{x+\epsilon}(\d x)^2$}	
	\end{subfigure}
	\hskip0.05\textwidth
	\begin{subfigure}[t]{0.4\textwidth}
		\includegraphics [width=\textwidth]{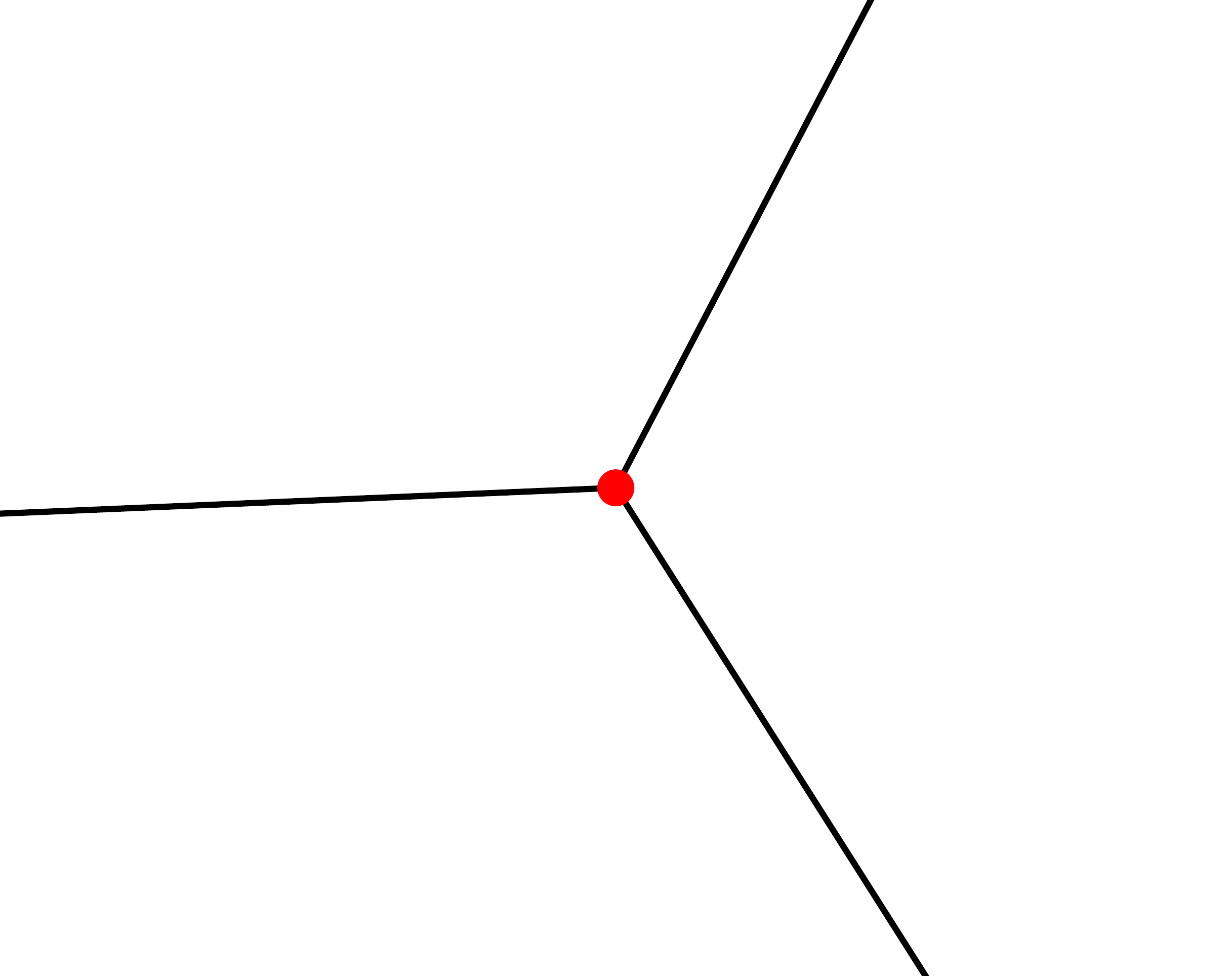}
		\caption{$\Delta_0=cx(\d x)^2$}
	\end{subfigure}
	\caption{Confluence of saddle points (separatrices in solid lines, gates dashed).}
	\label{figure:confluentsaddles}
\end{figure}

\subsubsection{Parametric theory}\label{sec:parametricfoliation}

Let $\Delta(x,\epsilon)$ be a parametric family of quadratic differentials over some open set of parameters $\sE\subseteq \C^l$.
For each $\epsilon\in\sE$ let $\Delta_\epsilon(x)=\Delta(x,\epsilon)$ and denote
\[\Crit(\Delta)=\coprod_{\epsilon\in\sE}\Crit(\Delta_\epsilon)\]
the \emph{critical divisor} in $\sX\times\sE$.
Let $\sC\subset\sE$ be the \emph{confluence divisor} where critical points merge and their ranks change.
Assume that 
\begin{itemize}
	\item $\Crit(\Delta)$ is closed in $\sX\times\sE$: if a regular point is created by a merging of critical points then it needs to be considered as a saddle of rank $-1$,
	\item the restriction of $\Crit(\Delta)$ to $\sX\times (\sE\smallsetminus\sC)$ is a smooth codimension 1 variety transverse to the fibers of $(x,\epsilon)\mapsto\epsilon$.
\end{itemize}
Also denote
\[\Cal{RS}=\{(\vartheta,\epsilon)\in\big(\R/\pi\Z\big)\times \sE\mid \text{$\e^{-2\i\vartheta}\Delta(x,\epsilon)$ is rotationally stable}\}=\coprod_{\epsilon\in \sE}\Cal{RS}_\epsilon.\]

\begin{proposition}
	Assume \eqref{eq:assumption}.
	\begin{enumerate}[leftmargin=\parindent]
		\item The interior $\mathring{\Cal{RS}}$ of the set $\Cal{RS}$ is dense in $\big(\R/\pi\Z\big)\times\sE$.
		\item $\Cal{RS}\smallsetminus\mathring{\Cal{RS}}=\coprod_{\epsilon\in\sC}\Cal{RS}_\epsilon$
	\end{enumerate}
\end{proposition}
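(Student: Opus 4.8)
The plan is to prove the two statements together, treating separately the behaviour over the regular locus $\sE\smallsetminus\sC$ and over the confluence divisor $\sC$; the two key inputs are Proposition~\ref{prop:rotstability}, which for fixed $\epsilon$ controls the bad directions, and the holomorphy in $\epsilon$ of the period map $\gamma\mapsto\int_\gamma\Delta^{\frac12}$ of saddle connections. The first and main step is to show that rotational stability is an \emph{open} condition jointly in $(\vartheta,\epsilon)$ over the regular locus, i.e. that $\Cal{RS}\cap\big((\R/\pi\Z)\times(\sE\smallsetminus\sC)\big)\subseteq\mathring{\Cal{RS}}$. Fix $(\vartheta_0,\epsilon_0)$ with $\epsilon_0\notin\sC$ and $\e^{-2\i\vartheta_0}\Delta_{\epsilon_0}$ rotationally stable. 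By Proposition~\ref{prop:rotstability}(2) there are then no recurrent trajectories, so every one of the finitely many separatrices issuing from the finitely many saddles is a converging separatrix: it enters the periodic or parabolic domain of some equilibrium (Definition~\ref{def:periodicparabolic}) and is trapped there, tending to that equilibrium. Over $\sE\smallsetminus\sC$ the critical divisor is smooth and transverse to the fibres, so the saddles, the equilibria, and their periodic/parabolic domains vary continuously with $(\vartheta,\epsilon)$; since the trapping property is open (a trajectory that has entered such a domain cannot leave it, cf.\ \cite{Klimes-Rousseau2}), each perturbed separatrix still enters a nearby domain and converges to the corresponding equilibrium. Hence no saddle connection can form under a small perturbation, and rotational stability persists.

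Granting this openness, statement (1) follows by a density argument: for each $\epsilon\in\sE\smallsetminus\sC$ the fibre $\Cal{RS}_\epsilon$ is dense in $\R/\pi\Z$ by Proposition~\ref{prop:rotstability}(1), and $\sE\smallsetminus\sC$ is dense in $\sE$ because $\sC$ is a proper analytic subvariety; thus every nonempty product open set meets $\Cal{RS}\cap\big((\R/\pi\Z)\times(\sE\smallsetminus\sC)\big)\subseteq\mathring{\Cal{RS}}$, so $\mathring{\Cal{RS}}$ is dense. The same openness gives the inclusion $\subseteq$ in statement (2): any point of $\Cal{RS}\smallsetminus\mathring{\Cal{RS}}$ must lie over $\sC$.

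It remains to prove the reverse inclusion in (2): no point $(\vartheta_0,\epsilon_0)$ with $\epsilon_0\in\sC$ is interior to $\Cal{RS}$. Here I would exploit the vanishing saddle connections created at a confluence. As $\epsilon\to\epsilon_0$ at least two critical points of $\Delta_\epsilon$ merge; choosing a class $\gamma$ joining two of the colliding saddles, the short straight trajectory between them is unobstructed for $\epsilon$ near $\epsilon_0$, and its square period \eqref{eq:period} is a nonconstant holomorphic function of $\epsilon$ with $\int_\gamma\Delta^{\frac12}\to0$ as $\epsilon\to\epsilon_0$. Restricting to a disc transverse to $\sC$ and winding $\epsilon$ around the zero of this period, the argument $\arg\int_\gamma\Delta^{\frac12}\bmod\pi$ sweeps through every value; in particular for $\epsilon$ arbitrarily close to $\epsilon_0$ one has $\arg\int_\gamma\Delta^{\frac12}\equiv\vartheta_0\pmod\pi$, which realizes an actual saddle connection of $\e^{-2\i\vartheta_0}\Delta_\epsilon$. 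These bad points $(\vartheta_0,\epsilon)$ accumulate at $(\vartheta_0,\epsilon_0)$, so the latter is not interior. This gives $\supseteq$ and completes the proof.

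The main obstacle is the openness step over $\sE\smallsetminus\sC$: although for each fixed $\epsilon$ only countably many directions are bad (Proposition~\ref{prop:rotstability}), countably many ``walls'' $\{\vartheta=\arg\int_\gamma\Delta^{\frac12}(\epsilon)\}$ could in principle accumulate jointly at $(\vartheta_0,\epsilon_0)$, so joint openness cannot be read off the period map alone. The trapping argument via periodic/parabolic domains is what rules this out, and making its uniformity precise---finitely many separatrices, continuity of the domains over the regular locus, and the non-escape property---is the delicate part of the proof.
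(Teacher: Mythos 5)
Your treatment of part (1) and of the inclusion $\Cal{RS}\smallsetminus\mathring{\Cal{RS}}\subseteq\coprod_{\epsilon\in\sC}\Cal{RS}_\epsilon$ is correct, and the route is somewhat different from the paper's: where the paper simply invokes the continuous dependence on $\epsilon\in\sE\smallsetminus\sC$ of the arguments of periods of saddle connections, you establish joint openness of rotational stability over the regular locus by the trapping argument (every separatrix is converging, enters the periodic/parabolic domain of an equilibrium in finite $\bt$-time, and cannot leave; all ingredients vary continuously). This is in substance the mechanism the paper deploys only later, in the proof of continuity of petals, and it has the merit of bypassing the worry you raise about countably many walls accumulating jointly — so this part is a legitimate, arguably more robust, variant.

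There is, however, a genuine gap in your proof of the reverse inclusion in (2). You produce unstable directions accumulating at $(\vartheta_0,\epsilon_0)$ by taking ``a class $\gamma$ joining two of the colliding saddles'' and using that its period tends to $0$ with rotating argument. This only works when the confluence at $\epsilon_0$ involves at least two merging \emph{saddles}. But a point of $\sC$ may correspond to a confluence of equilibria only (this is the ``hole-hooking'' case the paper emphasizes, e.g.\ two double poles of $\Delta_\epsilon$ merging into a pole of order four, as in Figure~\ref{figure:lagoons2}); then no saddles collide and your $\gamma$ does not exist. In that case the saddle connection whose argument sweeps through $\vartheta_0$ is the \emph{transversal} of the thick $\alpha\omega$-zone joining the two merging equilibria: its period \eqref{eq:period} tends to $\infty$ (not $0$) as $\epsilon\to\epsilon_0$, with argument rotating in $\arg(\epsilon-\epsilon_0)$ because of the diverging residue-type contribution. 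The paper's proof covers both regimes in one sentence (``period which tends to either $0$ or $\infty$ with argument rotating''). Your argument needs this second case added; once it is, the conclusion that the whole circle $\R/\pi\Z$ over $\epsilon_0$ consists of non-interior points goes through as you describe.
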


\begin{proof}
	(1) This follows from Proposition~\ref{prop:rotstability}: the unstable directions $\R/\pi\Z\smallsetminus\Cal{RS}_\epsilon$
	are the arguments of periods of saddle connections which depend continuously on $\epsilon\in\sE\smallsetminus\sC$.
	Note that  a saddle connection can cease to exist through acquiring an extra saddle, i.e. when a triangle in the translation surface flattens to a segment.
	It happens over a real codimension 1 set in the parameter space $\sE$.
	
	(2) By the previous considerations $\coprod_{\epsilon\in\sE\smallsetminus\sC}\Cal{RS}_\epsilon\subseteq \mathring{\Cal{RS}}$.
Conversely, when $\epsilon\to \epsilon_0\in\sC$, as some critical points merge there is always an attached $\alpha\omega$-zone whose transversal has period which tends to either $0$ or $\infty$ with argument rotating in dependence on $\epsilon-\epsilon_0$. This means that the $\epsilon_0$-slice of $\big(\R/\pi\Z\big)\times \sE\smallsetminus\mathring{\Cal{RS}}$ is the whole direction circle $\R/\pi\Z$.
\end{proof}

Let us look in more detail at the various confluences of critical points that can arise:

\begin{itemize}
	\item \emph{Confluence of saddles:}
	If a saddle point of rank $\nu<0$, which has $2|\nu|$ separatrices, is a limit of merging of $j$ saddle points of ranks $\nu_1, \ldots, \nu_j<0$, \ $\nu_1+\ldots+\nu_j+j=\nu+1$, this means that before the limit for $(\vartheta,\epsilon)\in\mathring{\Cal{RS}}$ there are extra $2j-2$ separatrices which will merge together with other separatrices.
	The extra $2j-2$ parabolic sectors belong to $j-1$ ``thin'' $\alpha\omega$-zones that pairwise separate the $j$ saddles (``thin'' meaning that their $\bt$-width $\to 0$). 
	In fact in this situation a local version of Lemma~\ref{lemma:transversalgategraph} holds: the transversals of these $j-1$ $\alpha\omega$-zones form a tree which connects the $j$ saddle points, and which shrinks to a point at the limit. See Figure~\ref{figure:confluentsaddles}.

	\item \emph{Confluence of equilibria:}
	If an equilibrium of rank $\nu>0$, which has $2\nu$ sepals, is a limit of merging of $j$ equilibria  of ranks $\nu_1, \ldots, \nu_j\geq 0$, \ $\nu_1+\ldots+\nu_j+j=\nu+1$.
	Then for $(\vartheta,\epsilon)\in\mathring{\Cal{RS}}$ there are $j-1$ ``thick'' $\alpha\omega$-zones that connect the equilibria (``thick'' meaning that their $\bt$-width $\to \infty$).
	Again a local version of Lemma~\ref{lemma:transversalgategraph} holds:  the gates of these $j-1$ $\alpha\omega$-zones form a tree which connects the $j$ equilibria and splits the $j-1$ $\alpha\omega$-zones into $2j-2$ petals.
	At the limit, the gates shrink to a point but the petals persists. See Figure~\ref{figure:b}.

	\item \emph{Confluence of equilibria and saddles:}
	Assume an equilibrium of rank $\nu>0$ is a limit of merging of $j$ equilibria  of ranks $\nu_1, \ldots, \nu_j\geq 0$ and of $l$ saddles of ranks
	$\nu_{j+1}, \ldots, \nu_{j+l}<0$, \ $\nu_1+\ldots+\nu_{j+l}+j+l=\nu+1$.	
	Then there are $2\nu$ \emph{persisting petals} which have a non-empty limit when $(\vartheta, \epsilon)\to (\vartheta_0,0)$ in $\Cal{RS}$,
and $2|\nu_{j+1}+\ldots+\nu_{j+l}|$ \emph{vanishing  petals} which shrink and disappear at the limit.
The vanishing petals are those attached to the saddles; for each saddle the closure of their union is a simply connected neighborhood bounded by a loop of vanishing gates.
The homotopy type of the vanishing gate graph is that of a bouquet of $l$ loops, the Euler characteristic is $l-1$. It has $j$ vertices at the equilibria and therefore $j+l-1$ edges.
Therefore one has again a local version of Lemma~\ref{lemma:transversalgategraph}.
	
\end{itemize}

\begin{proposition}
	Assume \eqref{eq:assumption}.
The petals of  $\e^{-2\i\vartheta}\Delta(x,\epsilon)$ depend continuously under a Hausdorff metric on their compact closures in $\sX$ on $(\vartheta,\epsilon)\in \Cal{RS}$, even though some can disappear at points of $\Cal{RS}\smallsetminus\mathring{\Cal{RS}}$.
\end{proposition}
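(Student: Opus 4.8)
The plan is to track the boundary of each petal and to control separately its compact part, lying away from the equilibria, and its noncompact tails accumulating at the equilibria. By the petal decomposition, each petal is bounded by finitely many separatrix arcs and gate arcs of $\e^{-2\i\vartheta}\Delta(x,\epsilon)$, together with its saddle point and its one or two limit equilibria; so it suffices to prove Hausdorff continuity of these boundary curves and uniform control of how the trajectories filling the petal run into the equilibria.

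First I would establish continuity of the combinatorial skeleton on $\mathring{\Cal{RS}}$, where the combinatorial type is constant on each connected component. The critical points move continuously in $(x,\epsilon)$ by the standing transversality hypothesis on $\Crit(\Delta)$; the vector field $\e^{\i\vartheta}\Delta^{-\frac12}$ depends continuously on $(\vartheta,\epsilon)$ on compact sets avoiding $\Crit(\Delta)$, and at each saddle the $2|\nu|$ separatrix directions rotate continuously with $\vartheta$ and vary continuously with $\epsilon$; finally the period map \eqref{eq:period} is holomorphic, so the transversals, their midpoints, and hence the gates through those midpoints all vary continuously. Continuous dependence of solutions of ODEs on initial conditions and parameters then gives, on any compact subset of $\sX\smallsetminus\Equilib(\Delta_\epsilon)$, uniform convergence of the separatrix and gate arcs as $(\vartheta,\epsilon)$ varies.

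The technical heart is to upgrade this compact-set convergence to Hausdorff convergence of the full closures, which include the equilibrium limit points reached only in infinite $\bt$-time (and, for foci, after infinite spiralling). Here I would invoke the parabolic/periodic domains of Definition~\ref{def:periodicparabolic}: each petal tail lies in the trapping domain $D_a$ of its equilibrium $a$, into which any entering trajectory is absorbed and converges to $a$. Since $\partial D_a$ is built from saddle connections whose periods vary continuously, $D_a$ itself varies continuously; fixing a small closed ball $B\ni a$, the part of a petal outside $B$ is a compact region bounded by finite-length arcs, handled by the previous step, while the part inside $B$ has diameter $\le\operatorname{diam}B$. Bounding the Hausdorff distance of nearby petals by the continuity of the compact part plus $\operatorname{diam}B$, and then letting $B$ shrink, yields continuity on $\mathring{\Cal{RS}}$.

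Finally, continuity up to the boundary $\Cal{RS}\smallsetminus\mathring{\Cal{RS}}=\coprod_{\epsilon\in\sC}\Cal{RS}_\epsilon$ is read off from the three confluence scenarios. As $(\vartheta,\epsilon)\to(\vartheta_0,\epsilon_0)$ with $\epsilon_0\in\sC$, the \emph{persisting} petals limit to the petals of the confluent foliation, the local version of Lemma~\ref{lemma:transversalgategraph} identifying the surviving skeleton, while the \emph{vanishing} petals, being those attached to the merging saddles and enclosed by a loop of gates whose periods tend to $0$, have closures shrinking to a point. The main obstacle throughout is precisely this infinite-time control near equilibria whose rank jumps at confluence: one must check that the trapping domains of the separate equilibria merge continuously into that of the limit equilibrium, so that no part of a persisting petal escapes in the limit and the vanishing petals genuinely collapse.
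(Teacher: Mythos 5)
Your approach is essentially the paper's: petals are identified with hyperbolic sectors of saddle points, their boundaries consist of the pair of separatrices delimiting the sector plus possibly a gate trajectory, the separatrices vary continuously near the saddle as long as its rank does not change, they are absorbed in finite $\bt$-time into the periodic/parabolic (trapping) domains of Definition~\ref{def:periodicparabolic}, and those domains vary continuously because their boundaries consist of finitely many saddle connections. Your shrinking-ball argument for converting this into genuine Hausdorff convergence of the compact closures is a reasonable way of making explicit what the paper leaves implicit, and your remark that the limit equilibrium of a separatrix cannot change without passing through a rotationally unstable configuration is exactly the paper's justification that the combinatorics is locally constant on $\mathring{\Cal{RS}}$.

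The one place you stop short is precisely the step you yourself flag as the main obstacle: the behaviour at $(\vartheta_0,\epsilon_0)\in\Cal{RS}\smallsetminus\mathring{\Cal{RS}}$, i.e.\ $\epsilon_0\in\sC$. You write that one must check that the trapping domains merge continuously, but you do not check it. The paper closes this with two observations. First, as $\epsilon\to\epsilon_0\in\sC$ the saddle connections bounding a periodic/parabolic domain can only shrink and vanish or get pinched by merging critical points, so the trapping domain of the limit equilibrium can only be \emph{larger} than the limit of the trapping domains of the merging equilibria; hence no part of a persisting petal escapes and the absorption argument survives the confluence. Second, the genuinely dangerous degeneration --- an equilibrium being swallowed by saddles so that the limit point is itself a saddle (possibly of rank $-1$) --- is excluded by the hypothesis $(\vartheta_0,\epsilon_0)\in\Cal{RS}$: a separatrix landing at such a disappearing equilibrium would limit to a saddle trajectory of the limit foliation, contradicting its rotational stability. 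With these two points supplied, your argument matches the paper's.
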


\begin{proof}
The petals are in one-to-one correspondence with the hyperbolic sectors of saddle points. 
The boundary of a petal consists of the pair of separatrices delimiting the hyperbolic sector, plus potentially by a gate trajectory.
Near a saddle point the separatrices depend continuously on $(\vartheta,\epsilon)$ as long as the rank of the saddle point doesn't change. 
And it takes them a finite $\bt$-time to enter the periodic/parabolic domain of an equilibrium (Definition~\ref{def:periodicparabolic}), once there, it will inevitably tend to the equilibrium.
The equilibrium point to which it tends therefore cannot change with $(\vartheta,\epsilon)$ unless passing through a rotationally unstable situation
where the separatrix becomes a boundary component of the periodic/parabolic domain.
Also the periodic and parabolic domains depend continuously on $\epsilon$ (they are independent of $\vartheta$) as long as the rank of the equilibrium doesn't change. 
In fact, the boundary of each domain consists of a finite number of saddle connections which depend continuously on $\epsilon\in\sE\smallsetminus\sC$.
Although saddle connections can cease to exist through flattening of a triangle in the translation surface, this is still a continuous process.

For $\epsilon\to \epsilon_0 \in\sC$ some saddle connections can disappear at the limit: either they shrink and vanish or they get pinched by merging critical points.
This can only increase the size of the periodic/parabolic domain of the limit equilibria.
However, it can also happen that an equilibrium point disappears through a confluence with saddles when the limit point is a saddle (including that of rank $-1$). 
But in this case the limit of a separatrix landing at such equilibrium would be a saddle trajectory, i.e. $(\vartheta, \epsilon)\to(\vartheta_0,\epsilon_0)\notin \Cal{RS}$.
\end{proof}

\begin{figure}[t]
	\centering
	\begin{subfigure}[t]{0.49\textwidth}
		\includegraphics [width=\textwidth]{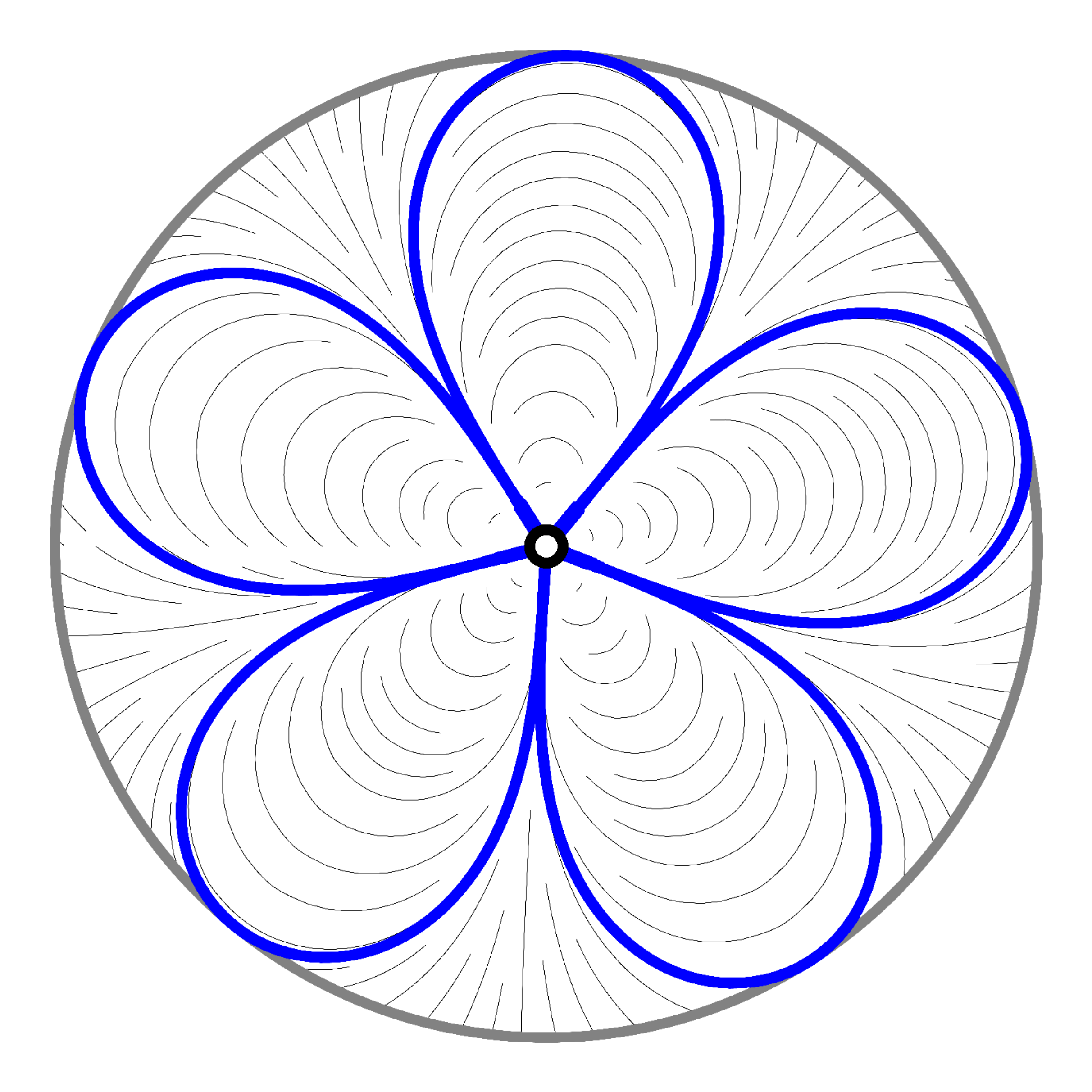}
		\caption{$\Delta_0=\frac{c}{x^7}(\d x)^2$}	
	\end{subfigure}
	\begin{subfigure}[t]{0.49\textwidth}
		\includegraphics [width=\textwidth]{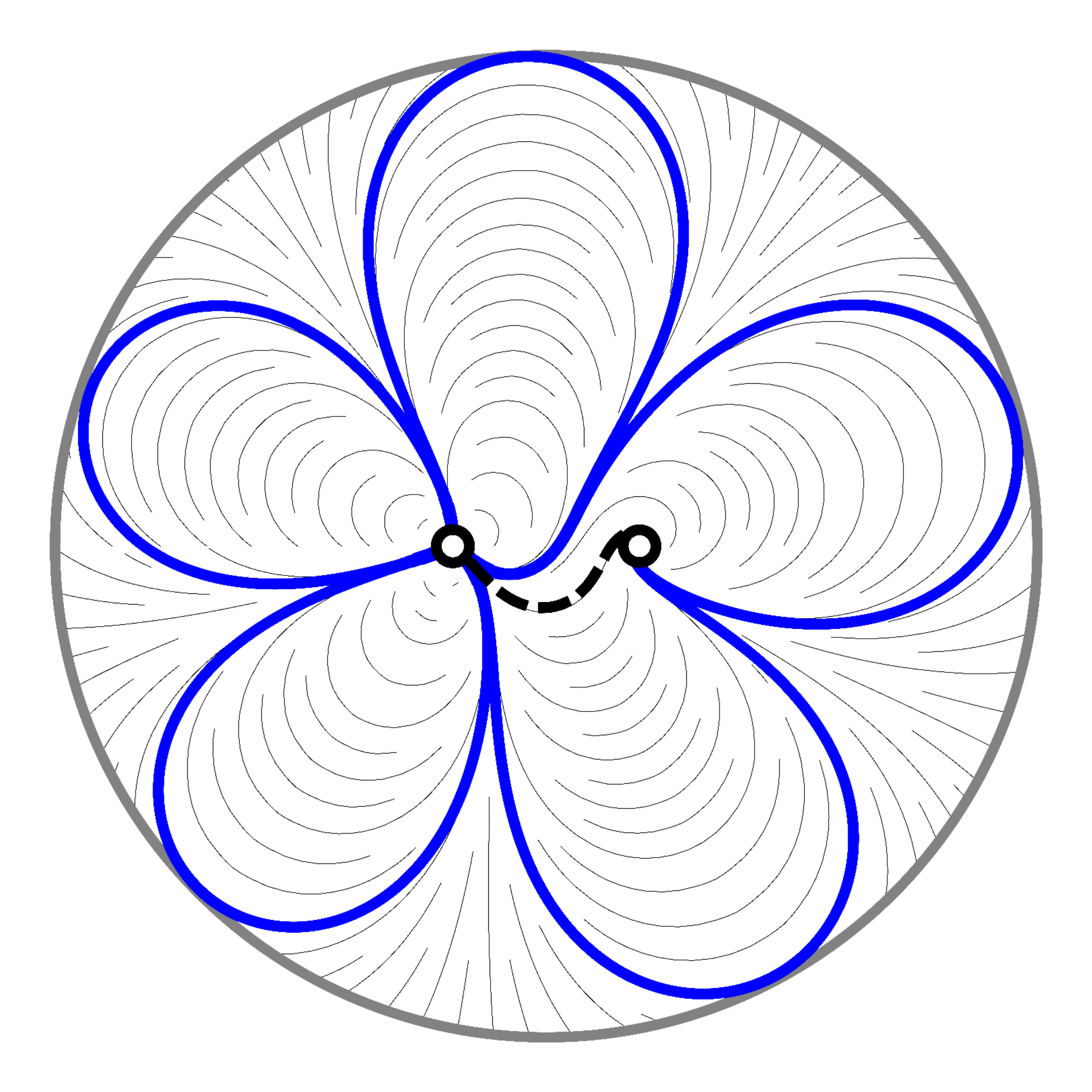}
		\caption{$\Delta_\epsilon=\frac{c}{(x-\epsilon)^2(x+\epsilon)^5}(\d x)^2$}	
	\end{subfigure}
	
	\begin{subfigure}[t]{0.49\textwidth}
		\includegraphics [width=\textwidth]{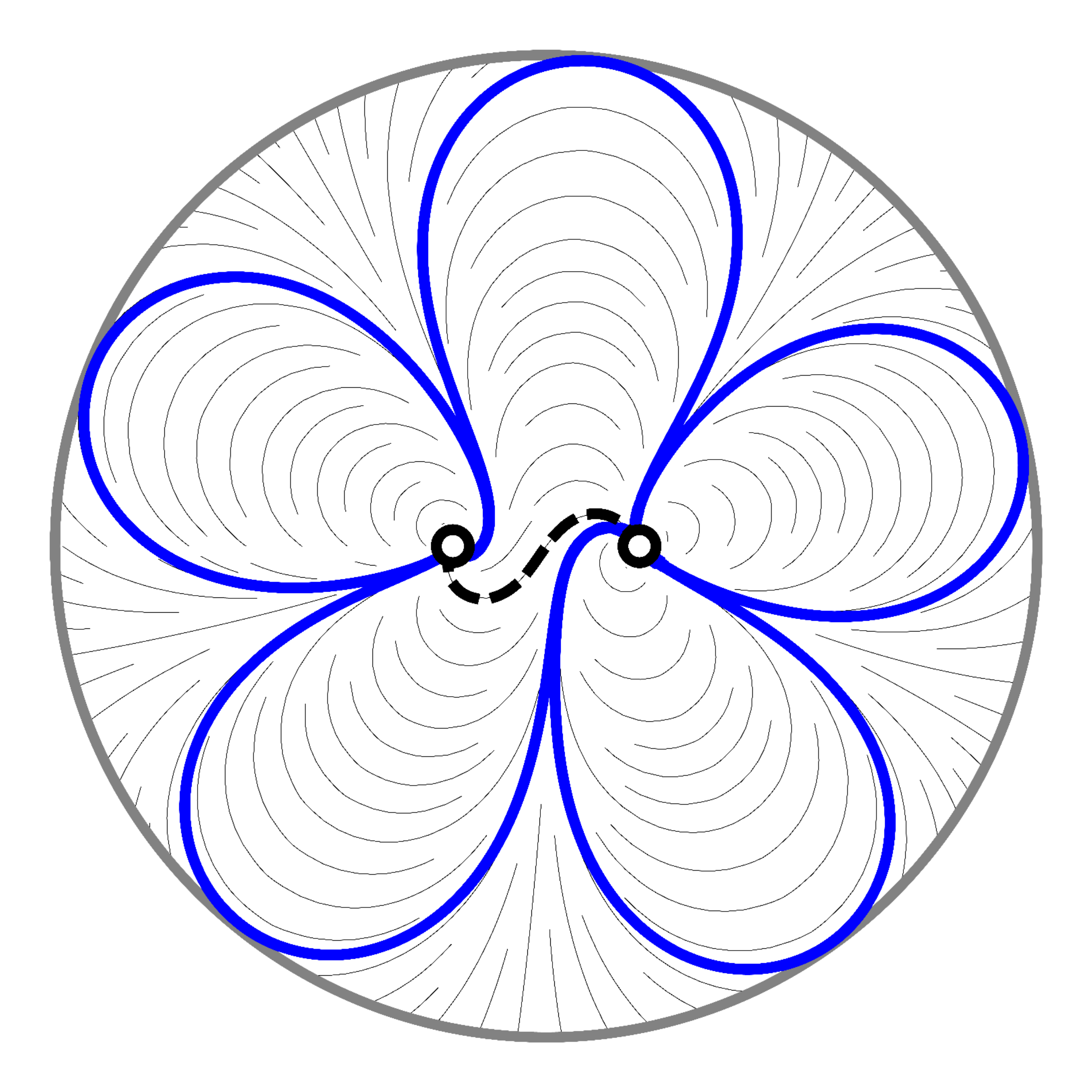}
		\caption{$\Delta_\epsilon=\frac{c}{(x-\epsilon)^3(x+\epsilon)^4}(\d x)^2$}	
	\end{subfigure}
	\begin{subfigure}[t]{0.49\textwidth}
		\includegraphics [width=\textwidth]{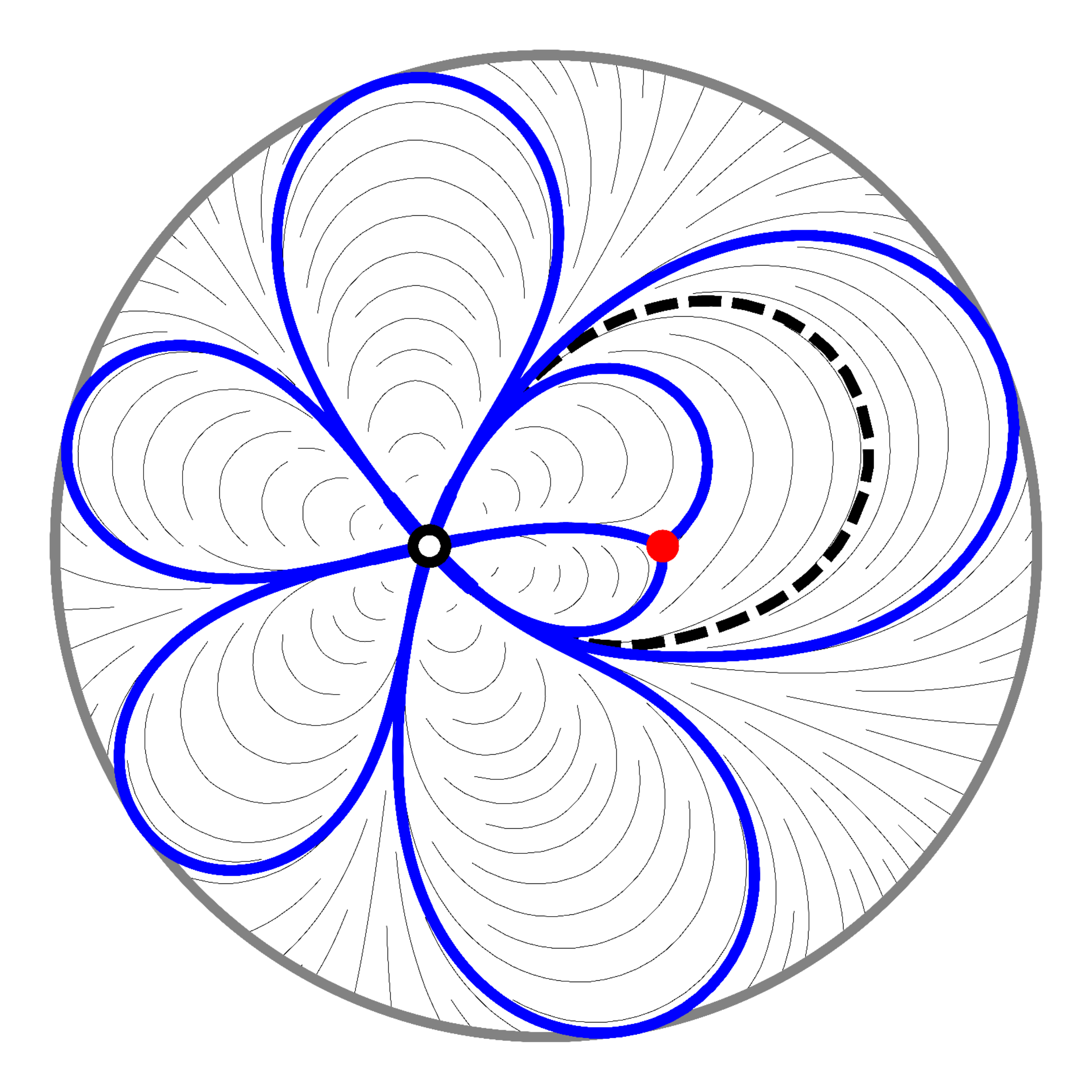}
		\caption{$\Delta_\epsilon=\frac{c(x-\epsilon)}{(x+\epsilon)^8}(\d x)^2$}	
	\end{subfigure}
	\caption{Examples of zones/petals relative to a disc $\sX$.}
	\label{figure:b}
\end{figure}

\begin{figure}[t]
	\centering
	\begin{subfigure}[t]{0.49\textwidth}
		\includegraphics [width=\textwidth]{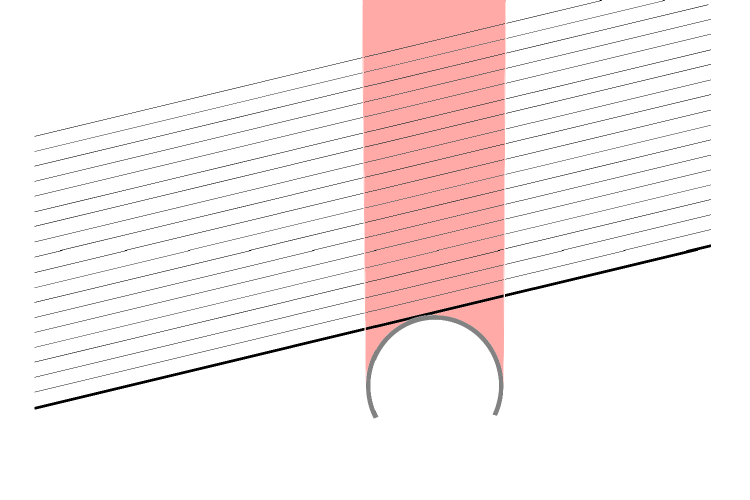}
		\caption{Sepal zone relative to $\sX$}	
	\end{subfigure}
	\begin{subfigure}[t]{0.49\textwidth}
		\includegraphics [width=\textwidth]{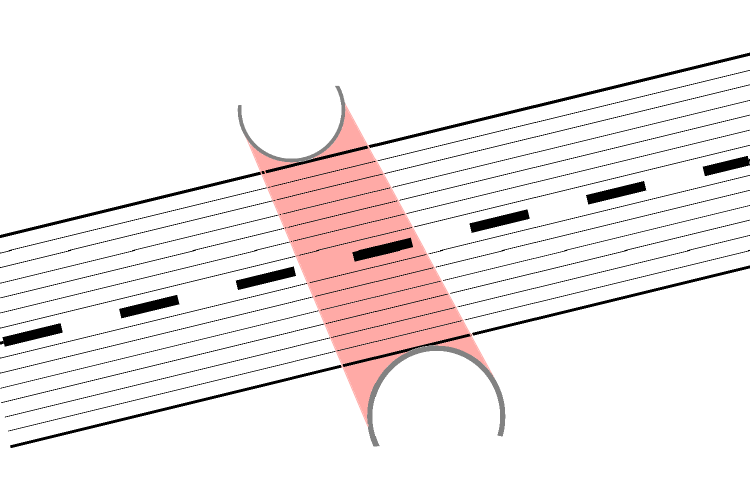}
		\caption{$\alpha\omega$-zone relative to $\sX$}	
	\end{subfigure}
	\caption{Images of zones relative to a domain $\sX$ in the coordinate $\pm\bt$: 
		boundary of  $\sX\smallsetminus\Saddle$ in gray, fat transverse graph in pink, gate graph dashed black.}
	\label{figure:zones2}
\end{figure}

\subsubsection{Zones relative to a domain}

Instead of considering a meromorphic quadratic differential $\Delta$ globally on a compact Riemann surface, we shall now consider it on some domain $\sX$, and reformulate the above theory relative to the domain.

\begin{definition}\label{def:relativezones}~
Let $\sX$ be an open domain with a compact closure, $\Delta$ a meromorphic quadratic differential on the closure of $\sX$ with at least one equilibrium inside $\sX$ and none on the boundary.
\begin{enumerate}[label=\roman*),leftmargin=\parindent]
\item The role of saddle points is played by the boundary components of  $\sX\smallsetminus\Saddle(\Delta)$.	
\item For a fixed $\vartheta$, the role of the separating graph  for $\e^{-2\i\vartheta}\Delta$ is played by the
\emph{fat separating graph  relative to $\sX$}: the set of all trajectories that escape $\sX\smallsetminus\Saddle(\Delta)$ in at least one direction.
Near a simple equilibrium it takes the form of a union of \emph{separating spiral sectors}, while at a parabolic equilibrium it is a union of \emph{separating cusps};
both correspond to semi-infinite parallel strips in the direction $\vartheta$ in the translation coordinate $\bt(x)$.

\item Its complement in $\sX$ is a union of \emph{zones relative to $\sX$}: they consist of complete trajectories inside $\sX\smallsetminus\Saddle(\Delta)$. 

\item The role of a saddle connection is played by \emph{fat saddle connection relative to $\sX$}: 
union over varying $\vartheta$ of a connected component of the set spanned by the trajectories of $\e^{-2\i\vartheta}\Delta$ with which escape $\sX\smallsetminus\Saddle(\Delta)$ in both directions depending continuously on $\vartheta$.
In the translation coordinate it takes the form of the convex envelope of two copies of boundary components of $\sX$: a finite strip with straight but not necessary parallel margins.
In particular, \emph{fat transversal of relative  $\alpha\omega$-zone} is the the fat saddle connection that connects the points at which the zone touches boundary of $\sX$.

\item The  \emph{gate of relative  $\alpha\omega$-zone} is the trajectory that splits its $\bt$-image into a pair of strips half the thickness (Figure~\ref{figure:zones2}).

\item The role of saddle trajectories is played by the
those trajectories that escape $\sX\smallsetminus\Saddle(\Delta)$ in both directions. If the set of them is empty for some angle $\vartheta$, then 
$\e^{-2\i\vartheta}\Delta$ \emph{is stable relative to $\sX$} and \emph{$\vartheta$ is a stable angle}.


\item For a stable $\vartheta$,  the \emph{petals relative to $\sX$} are the connected components of the complement  of the fat separating graph  and of the gate graph in $\sX$.

\end{enumerate}	
\end{definition}

\subsection{Flags and splittings}\label{sec:flagsplitting}

Assume $m<2k$.
Let 
\[\sX\times \sE=\{|x|<\delta_x\}\times\{|\epsilon|<\delta_\epsilon\},\quad \delta_x,\delta_\epsilon>0,\]
be a small poly-disc on which a system \eqref{eq:unfoldedQ} is analytic, and assume that $\delta_\epsilon>0$ small enough, in particular such that
\[\Sing_\epsilon=\{x:P(x,\epsilon)=0\}\subset \sX,\qquad \epsilon\in\sE.\]
Let 
\[\Delta(x,\epsilon)=\Delta_\epsilon(x)=\frac{\tilde Q(x,\epsilon)}{P(x,\epsilon)^2}(\d x)^2\] 
be the associated meromorphic quadratic differential,.



Following Theorem~\ref{proposition:mixedbasis}, denote
\begin{equation}\label{eq:sX}
	\sX_\epsilon=\left\{|x|<\delta_x,\quad	\left|\tilde b(x,\epsilon)\right|<\tfrac12 \right\},\qquad \tilde b(x,\epsilon)=\tfrac{P(x,\epsilon)}{4\sqrt{\tilde Q(x,\epsilon)}}\tdd{x}\log \tilde Q(x,\epsilon).
\end{equation}
We shall  \textbf{\emph{assume that}}
\begin{equation}\label{eq:ass}
\sX_\epsilon \ \text{is connected}\qquad \text{and} \qquad  \Sing_\epsilon=\Equilib_\epsilon\subset\sX_\epsilon.
\end{equation}
The connectedness assumption on $\sX_\epsilon$ is automatically satisfied for small $\epsilon$ when, for example,
there is either no saddle point, or there is only one saddle and one equilibrium (see Proposition~\ref{prop:covering}).

\begin{example}
Examples where the domain $\sX_\epsilon$ is disconnected  for every small $\epsilon\neq 0$.

\medskip
\noindent (1)
$\tilde Q(x,\epsilon)=x^{3}$, \ $P(x,\epsilon)=(x-\epsilon^2)(x-\epsilon)^2$: \
Write $x=\epsilon^{\frac43}z$, then
\[\tilde b(x,\epsilon)^{-1}=\tfrac8{3}\mfrac{x^{\frac52}}{(x-\epsilon^2)(x-\epsilon)^2}=\tfrac8{3}\mfrac{z^{\frac52}}{(z-\epsilon^\frac23)(\epsilon^\frac13 z-1)^2}\to \tfrac8{3}z^{\frac32},\] 
so the $x$-radius of
the complementary set $\big\{|\tilde b|^{-1}\leq 2\big\}$ is asymptotically proportional to $|\epsilon|^\frac43$.
However the root $x=\epsilon^2$ tends to the origin with a faster rate, therefore its open component in $\sX_\epsilon$ gets disconnected from the outer open component containing the slower root $x=\epsilon$.

Some of the theory can nevertheless be adapted to this situation by restricting to the outer open component of  $\sX_\epsilon$,
thus omitting the fast singular point $x=\epsilon^2$ from the picture: considering $\tilde Q'(x,\epsilon)=\frac{x^3}{(x-\epsilon^2)^2}\sim x$ as a perturbed saddle of rank $-\frac32$ and $P'(x,\epsilon)=(x-\epsilon)^2$.


\medskip
\noindent (2) $\tilde Q(x,\epsilon)=x$, \ $P(x,\epsilon)=(x-\epsilon^3)(x-\epsilon)$: \
 Similar to the previous example, the root $x=\epsilon^3$ tends to 0 faster than the radius of the complementary set which is asymptotically proportional to $|\epsilon|^2$.
One could again try to restrict to the outer open component of  $\sX_\epsilon$ by considering  $\tilde Q'(x,\epsilon)=\frac{x}{(x-\epsilon^3)^2}\sim \frac{1}{x}$ as a perturbation of a saddle of rank $-\frac12$ and $P'(x,\epsilon)=(x-\epsilon)$. However in this case the outer component has only one simple equilibrium and the horizontal foliation of the quadratic differential $\e^{-2\i\vartheta}\Delta_\epsilon$ doesn't have any relative zone in $\sX_\epsilon$. 

%

\end{example}

Based on Theorem~\ref{proposition:mixedbasis} on the existence of mixed solution bases and  Theorem~\ref{prop:subdominant} on the existence of subdominant solutions,
we will construct two types of natural domains:
\begin{itemize}[leftmargin=2\parindent]
	\item[(A)] \emph{Enlarged petals} on which there is a splitting of solution space into a direct sum,
	\item[(B)] \emph{Lagoons} at singular points on which there is a flag filtration on solution space according to growth rates.
\end{itemize}
For $\epsilon=0$ they'll correspond to:
\begin{itemize}[leftmargin=2\parindent]
	\item[(A)] \emph{Large sectors} (sectors of Borel summation): each of them covers a consecutive pair of anti-Stokes rays (asymptotic directions of the horizontal foliation of $\Delta_\epsilon$) and exactly one Stokes ray (asymptotic direction of the vertical foliation of $\Delta_\epsilon$),
	\item[(B)] \emph{Small sectors} (intersection sectors):  each of them covers exactly one anti-Stokes ray and no Stokes rays.
\end{itemize}

\subsubsection{Enlarged petals and splitting of solution space}

Let 
\[\Cal{RS}_\sX\cap \Big(]\!-\!\tfrac{\pi}{2},\tfrac{\pi}{2}[\ \times \sE\Big)=\Big\{(\vartheta,\epsilon)\in\ ]\!-\!\tfrac{\pi}{2},\tfrac{\pi}{2}[\ \times \sE:\text{$\e^{-2\i\vartheta}\Delta(x,\epsilon)$ is rot. stable w.r.t. $\sX_\epsilon$}\Big\}
\]
see Definition~\ref{def:relativezones}.
Let $\mathring \sW$ be a connected component of its interior,
and let 
\begin{equation}\label{eq:sW}
\sW=\coprod_\epsilon \sW_\epsilon	
\end{equation}
be the relative closure of  $\mathring \sW$ inside $\Cal{RS}_\sX\cap \Big(]\!-\!\frac{\pi}{2},\frac{\pi}{2}[\ \times \sE\Big)$.

\begin{definition}[Enlarged petals]\label{def:enlargedpetal}
For $(\vartheta,\epsilon)\in \sW$, let $\sZ_{\vartheta,\epsilon}$ be a petal of $\e^{-2\i\vartheta}\Delta_\epsilon(x)$ relative to $\sX_\epsilon$ depending continuously on $(\vartheta,\epsilon)\in \sW$.
We denote $\sZ_{\epsilon}=\bigcup_{\vartheta\in \sW_\epsilon}\sZ_{\vartheta,\epsilon}$ a possibly ramified simply connected domain (technically this union should be taken on the universal covering of $\sX_\epsilon\smallsetminus\Crit_\epsilon$).
It is an enlargement of $\sZ_{\vartheta,\epsilon}$ through stable rotations. 	
\end{definition}

\begin{proposition}\label{prop:covering}
Assume that $\Sing_\epsilon=\Equilib_\epsilon$ and
\begin{itemize}
	\item either there is no saddle point,
	\item or there is only one saddle point and one parabolic equilibrium.
\end{itemize}
Let $\proj_\epsilon(\sW)$ be a ramified projection of $\sW$ onto the $\epsilon$-space $\sE$. Assuming the radius $\delta_\epsilon>0$ of $\sE$ is sufficiently small, 
then for every $\epsilon\in\proj_\epsilon(\sW)$ the union of the various enlarged petals $\sZ_{\epsilon}$ associated to $\sW$ covers a full neighborhood of each singularity.

The union of the $\epsilon$-projections $\proj_\epsilon(\sW)$ of the various components $\sW$ covers  whole $\sE$.
\end{proposition}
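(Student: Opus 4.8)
I would prove the two assertions of the proposition separately, the coverage of $\sE$ being the substantive one. To get it, the plan is to produce over every $\epsilon\in\sE$ an angle $\vartheta_*\in\ ]\!-\!\tfrac{\pi}{2},\tfrac{\pi}{2}[$ such that $(\vartheta_*,\epsilon)$ is an \emph{interior} point of $\Cal{RS}_\sX\cap(]\!-\!\tfrac{\pi}{2},\tfrac{\pi}{2}[\times\sE)$: any such point lies in some connected component $\mathring\sW$ of the interior, and then $\epsilon\in\proj_\epsilon(\sW)$ since $\sW$ is by definition the relative closure of $\mathring\sW$. First I would fix $\epsilon$ and note that, by Definition~\ref{def:relativezones}, the angles $\vartheta$ at which $\e^{-2\i\vartheta}\Delta_\epsilon$ is \emph{not} stable relative to $\sX_\epsilon$ are exactly those carrying a trajectory that escapes $\sX_\epsilon\smallsetminus\Saddle$ in both directions; running the argument of Proposition~\ref{prop:rotstability} relative to $\sX_\epsilon$ shows these unstable angles are the degeneration directions of the finitely many fat relative saddle connections, hence form a closed set with empty interior. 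Since the window $]\!-\!\tfrac{\pi}{2},\tfrac{\pi}{2}[$ is the whole direction circle $\R/\pi\Z$ minus one point, a relatively stable $\vartheta_*$ exists there.

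The crux is to promote a relatively stable $\vartheta_*$ to an interior point, that is, to show relative stability is open jointly in $(\vartheta,\epsilon)$. Using that the boundary $\partial\sX_\epsilon$ from \eqref{eq:sX}, the equilibria, and their periodic/parabolic domains (Definition~\ref{def:periodicparabolic}) all vary continuously --- the content of the earlier proposition on continuity of petals --- a bi-escaping trajectory appearing arbitrarily near a stable $(\vartheta_*,\epsilon_0)$ would, by a compactness/limiting argument, force a relative saddle trajectory already at $(\vartheta_*,\epsilon_0)$, a contradiction, so long as no critical points are confluing. The main obstacle is therefore the confluence locus $\epsilon_0\in\sC$, where this fails in general: exactly as in the global identity $\Cal{RS}\smallsetminus\mathring{\Cal{RS}}=\coprod_{\epsilon\in\sC}\Cal{RS}_\epsilon$, a confluence of \emph{saddles} creates a thin $\alpha\omega$-zone whose transversal period shrinks to $0$ with rotating argument, making an entire interval of directions unstable in the limit. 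Here I would use the standing hypothesis that there is at most one saddle to rule out saddle-confluence outright; the only admissible confluence is of equilibria, which shrinks gates rather than transversals and creates no relative saddle connection, since trajectories entering the merging region are absorbed by the attracting equilibrium instead of escaping $\sX_\epsilon$ in both directions. Relative stability therefore remains open at every $\epsilon$, so the stable $\vartheta_*$ above is interior, and $\sE$ is covered.

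For the local assertion I would fix $\epsilon\in\proj_\epsilon(\sW)$ and a singularity $a=a_i(\epsilon)$, an equilibrium lying in the interior of $\sX_\epsilon$ by \eqref{eq:ass}. For a single fixed angle the sepal zones (or, for a simple focus, the spiral sectors) attached to $a$ already fill a punctured neighborhood of $a$ save for the finitely many separating rays that bound them, by the local form of Lemma~\ref{lemma:transversalgategraph}. Because $\epsilon$ lies in the projection of $\mathring\sW$, the fibre $\sW_\epsilon$ is a nondegenerate interval of angles, and as $\vartheta$ sweeps it the separating directions rotate (at rate $-\tfrac{1}{\nu}$ in $\arg x$ at a parabolic equilibrium of rank $\nu$), so every separating ray becomes interior to a petal at some nearby angle. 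Consequently the enlarged petals $\sZ_\epsilon=\bigcup_{\vartheta\in\sW_\epsilon}\sZ_{\vartheta,\epsilon}$ of Definition~\ref{def:enlargedpetal} cover a full punctured neighborhood of $a$, with $a$ in their closure. I expect the only tedious point here to be the bookkeeping of which petals attach to $a$, the genuine difficulty staying the openness at confluence treated above.
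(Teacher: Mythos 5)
Your reduction of the first claim to ``relative stability is open jointly in $(\vartheta,\epsilon)$'' fails precisely at the confluence locus $\sC$, which is the only nontrivial part of the statement, and the hypothesis of a single saddle does not rescue it. Two problems. First, the hypothesis only forbids several \emph{saddles} from merging with each other; it does not forbid the one saddle from merging with the one parabolic equilibrium (this is exactly the ``cusp removal'' case $\Delta_\epsilon=c(x-\epsilon)(x+\epsilon)^{-8}(\d x)^2$ of Figure~\ref{figure:lagoonsD}), nor does it forbid several equilibria from merging. Second, and more seriously, it is not only \emph{shrinking} transversals that obstruct openness: as the paper's own proposition in \S\,\ref{sec:parametricfoliation} records, at a confluence there is always an attached zone whose period tends to $0$ \emph{or to $\infty$} ``with argument rotating in dependence on $\epsilon-\epsilon_0$''. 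Under the standing hypotheses it is the diverging case that occurs, and it still kills openness at a fixed direction: take even the saddle-free example of two merging double poles, where $\bt_\epsilon$ is a covering of a cylinder of circumference $\sim\pi\sqrt{c}/\epsilon$; the fat relative saddle connections join copies of the single hole $\CP^1\smallsetminus\sX_\epsilon$ on different sheets, their lengths diverge but their \emph{directions} rotate with $\arg\epsilon$, so for any fixed $\vartheta_*$ there are parameters arbitrarily close to $0$ at which $\vartheta_*$ is unstable. Thus $(\vartheta_*,0)$ lies in $\Cal{RS}_\sX$ but never in its interior, your ``compactness/limiting argument'' cannot be completed there, and your chain ``stable $\Rightarrow$ interior $\Rightarrow$ $\epsilon\in\proj_\epsilon(\sW)$'' breaks exactly where it is needed. (The conclusion is still true at $\epsilon_0\in\sC$, but only because $(\vartheta,\epsilon_0)$ lies in the relative \emph{closure} of an interior component; establishing that requires knowing the unstable set over nearby $\epsilon$ consists of \emph{thin} arcs, which is a quantitative fact, not a soft one.)

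The paper's proof supplies precisely the missing quantitative input: (i) the holes cut out of the Riemann surface of $\bt_\epsilon$ by $\CP^1\smallsetminus\sX_\epsilon$ have uniformly bounded $\bt_\epsilon$-radius (this is where the one-saddle/one-equilibrium hypothesis enters, via the estimate $\bt_\epsilon\tilde b\sim\frac{\nu_s-1}{2\nu_s}$ on the saddle component), and (ii) the $\bt_\epsilon$-distance between any two holes on a sheet grows like $|a(\epsilon)|^{-\nu}$. From these, the unstable directions form arcs of angular width $O(R/L)\to 0$, which yields the covering of $\sE$, and the regions not covered by the enlarged petals are ``triangular shadows'' of the holes of bounded stretch $R/\sin\frac{|\sW_\epsilon|}{2}$, which yields the local covering. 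Your second paragraph also misses this last point: you treat the separating set as a union of measure-zero rays that rotate with $\vartheta$, whereas relative to $\sX_\epsilon$ it is a union of \emph{fat} strips (shadows of holes of positive radius), and without the bound on $R$ and on the stretch of the shadows you cannot conclude that their union over $\vartheta\in\sW_\epsilon$ leaves a full punctured neighborhood of the equilibrium covered.
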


\begin{proof}
The coordinate $\bt_\epsilon(x)=\int \Delta_\epsilon^{\frac12}(x)$ is a ramified analytic function on $\CP^1$ with ramification points at the equilibria.
The image of the complementary set $\CP^1\smallsetminus\sX_\epsilon$ corresponds to holes on the Riemann surface of $\bt_\epsilon$.
Let us show that:
\begin{itemize}
	\item[(i)] The sizes of these holes on the different sheets of the Riemann surface of $\bt_\epsilon$ are uniformly bounded.
	\item[(ii)] The distance between any pair of such holes on the same sheet of the Riemann surface of $\bt_\epsilon$ tends to $+\infty$ with a rate at least $|\epsilon|^{-\alpha}$ for some $\alpha>0$.
\end{itemize}
This then means that the influence of these holes on the relative petals of $\e^{-2\i\vartheta}\Delta_\epsilon$ becomes the more negligible the smaller is $\epsilon$. 
In particular, this means that for each small $\epsilon$ there exists a stable direction $\vartheta$, hence $(\vartheta,\epsilon)\in\sW$ for some $\sW$ and $\epsilon\in\proj_\epsilon(\sW)$.

Given $\sW$, the regions in $\sX_\epsilon$ not covered by the union of the relative petals correspond on the Riemann surface of $\bt_\epsilon$ to triangular shadows of the holes: the angle at their vertex is the length $|\sW_\epsilon|$ of the interval in which $\vartheta$ varies, the stretch of the shadow is roughly $\frac{R}{\sin\frac{|\sW_\epsilon|}{2}}$ where $R$ is the radius of the hole. Therefore these shadows are uniformly bounded, and for $\epsilon$ small enough they are disjoint.

Let us prove (i). When $\epsilon$ is small enough, then on the outer complement $\CP^1\smallsetminus\sX=\{|x|\geq \delta_x\}$ one has $\bt_\epsilon(x)\sim\bt_0(x)$ so the radius of its $\bt_\epsilon$-image is close to that of its $\bt_0$-image.
If there are no saddles, then $\sX_\epsilon=\sX$, and the holes in the Riemann surface are images of the outer complement only.

If there is a single equilibrium $x=a(\epsilon)$ of rank $\nu_a>0$, and a single saddle point $x=s(\epsilon)$ of rank $-\nu_s<0$, then the complement of $\sX_\epsilon$ has also a component containing the saddle point.
One can assume $s(\epsilon)=0$. Write $x=a(\epsilon)z$, then 
$P=a(\epsilon)^{k+1}(z-1)^{k+1}$ and $\sqrt{\tilde Q}=a(\epsilon)^{k+\nu_s-\nu_a-1}z^{\nu_s-1}(z-1)^{k-\nu_a}\sqrt{U(a(\epsilon)z,\epsilon)}$ for some $U(x,\epsilon)$, $U(0,0)\neq0$,
\[a^{\nu}\tilde b(x,\epsilon)^{-1}\sim \tfrac{2\sqrt{U(0,0)}}{(\nu_s-1)(-1)^{\nu_a}}z^{\nu_s},\qquad \nu=\nu_a+1-\nu_s>0.\]
Hence the $z$-size of this connected component at $0$ of  
$\big\{|\tilde b|^{-1}\leq 2\big\}$ is asymptotically proportional to $|a(\epsilon)|^{\frac{\nu}{\nu_s}}\to 0$.
Therefore, expanding $a^{\nu}\bt_\epsilon(x)$ in the powers of $z$ on this component,
one obtains
\[\bt_\epsilon(x)\,\tilde b(x,\epsilon)\sim \frac{(\nu_s-1)}{2\nu_s},\]
i.e. the $\bt_\epsilon$-radius of this component tends to $\frac{|\nu_s-1|}{\nu_s}$ as $\epsilon\to 0$ and therefore is bounded.

Let us prove (ii). We want to estimate the $\bt_\epsilon$-lengths of the relative saddle connections in $\sX_\epsilon$.
The case with no saddles is the same as \cite[Lemma~4.7]{Hurtubise-Lambert-Rousseau}.
The proof can be adapted also for the second case
(cf. \cite[Lemma 6.29]{Klimes-Stolovitch}).
As before write $x=a(\epsilon)z$, thus
$\Delta_\epsilon^{\frac12}=a(\epsilon)^{-\nu}\frac{z^{\nu_s-1}\sqrt{U(a(\epsilon)z,\epsilon)}}{(z-1)^{\nu_a+1}}\d z$,
and let
\[W=\left\{z:\left|\tfrac{z^{\nu_s}}{(z-1)^{\nu_a+1}}\right|>C\right\},\qquad
\tilde W=\left\{z:\left|\tfrac{z^{\nu_s}}{(z-1)^{\nu_a+1}}\right|>\tfrac{C}{2}\right\},
\]
for some $0<C<1$ small enough.
A saddle connection $\gamma$ has to pass through $U$, which means it has to cross at least one of the two components of $\tilde W\smallsetminus W$, and we want to estimate the $\bt_\epsilon$-length of this crossing.
For $C$ small enough, 
\[\tilde W\smallsetminus W\sim \Big\{\left(\tfrac{C}{2}\right)^{\frac{1}{\nu_s}}<|z|\leq C^{\frac{1}{\nu_s}}\Big\}\cup
\Big\{\left(\tfrac{1}{C}\right)^{\frac{1}{\nu}}\leq|z|<\left(\tfrac{2}{C}\right)^{\frac{1}{\nu}}\Big\},\]
hence
\[\Big|\int_{\gamma\cap(\tilde W\smallsetminus W)}\Delta_\epsilon^{\frac12}\Big|\geq |a(\epsilon)|^{-\nu}K,\qquad \text{for}\quad K\sim\tfrac{C}{2}(1-2^{-\frac{1}{\nu_*}})\min_{\overline{\tilde W}}\sqrt{|U|},\quad \nu_*=\nu_s,\nu, \]
i.e. the $\bt_\epsilon$-length of the saddle connection grows like $|a(\epsilon)|^{-\nu}\to \infty$.
\end{proof}

By Proposition~\ref{proposition:mixedbasis}, $\sZ_{\epsilon}$ gives rise to a direct decomposition of the solution space into a pair of spaces of solutions subdominant along positive or negative half-trajectories.
To this decomposition we associate two different \emph{mixed solution bases}: they will differ in the ordering of the pair of solutions and in their normalization.

\begin{definition}[Mixed basis fundamental solution matrices]
Let us choose a branch of $\sqrt{\tilde Q}$ on $\sZ_{\epsilon}$ and hence orientation on the vector field $\e^{\i\vartheta}\frac{P(x,\epsilon)}{\sqrt{\tilde Q(x,\epsilon)}}\dd{x}$.
We define $Y_\sZ^+(x,\epsilon)$, resp. $Y_\sZ^-(x,\epsilon)$, as the mixed basis fundamental solution matrices (\S\,\ref{sec:subdominant}) that are asymptotic along positive, resp. negative, half-trajectories of the vector field to
\begin{equation}\label{eq:Y+-}
\begin{aligned}
Y_\sZ^+(x,\epsilon)
&\sim \tfrac{\i}{\sqrt 2} \tilde Q(x,\epsilon)^{-1/4} \begin{psmallmatrix}1&0\\ 0 &\sqrt{\tilde Q(x,\epsilon)}\end{psmallmatrix} \begin{psmallmatrix} 1 & 1 \\[4pt] 1& -1\end{psmallmatrix}
	\e^{-\int \frac{\sqrt{\tilde Q(x,\epsilon)}}{P(x,\epsilon)}\d x\begin{psmallmatrix}1&0\\[2pt]0&-1\end{psmallmatrix}},\\
Y_\sZ^-(x,\epsilon)
&\sim \tfrac{1}{\sqrt 2} \tilde Q(x,\epsilon)^{-1/4} \begin{psmallmatrix}1&0\\ 0 &-\sqrt{\tilde Q(x,\epsilon)}\end{psmallmatrix} \begin{psmallmatrix} 1 & 1 \\[4pt] 1& -1\end{psmallmatrix}
\e^{\int \frac{\sqrt{\tilde Q(x,\epsilon)}}{P(x,\epsilon)}\d x\begin{psmallmatrix}1&0\\[2pt]0&-1\end{psmallmatrix}}.		
\end{aligned}	
\end{equation}
In either case the first column is the normalized subdominant solution along the respective half-trajectory.
The two solution matrices  are related by an anti-diagonal matrix
\begin{equation}\label{eq:permutation}
Y_\sZ^+(x,\epsilon)=Y_\sZ^-(x,\epsilon) \begin{psmallmatrix}0 & \i\kappa_\sZ \\[4pt] \i\kappa_\sZ^{-1} & 0\end{psmallmatrix}.
\end{equation}
In the case when $\sZ$ is of sepal type and the same determination of $\int \frac{\sqrt{\tilde Q(x,\epsilon)}}{P(x,\epsilon)}\d x$ over $\sZ$ is chosen for $Y_\sZ^+$ and $Y_\sZ^-$, then $\kappa_\sZ=1$.
\end{definition}

\begin{remark}
An explicit form of the multiplier $\kappa_\sZ$ in \eqref{eq:permutation} is given in \cite[Proposition 3.19]{Klimes1} for general unfoldings in the case $k=1$, $m=1$. 
Another explicit example is the Gauss--Kummer formula for a determinant of certain mixed solution bases in a confluent family of hypergeometric equations, $k=1$, $m=0$, (see e.g. \cite{Klimes3}).
In both cases the zeros and poles of $\kappa_\sZ$ 
form a countable union of divisors which accumulates to the origin in a way that it asymptotically outlines the boundary of the maximal natural domain  
in the parameter space on which the mixed basis exists. We conjecture that this is true in general.
\end{remark}

\begin{figure}[t]
	\centering
	\begin{subfigure}[t]{0.49\textwidth}
		\includegraphics [width=\textwidth]{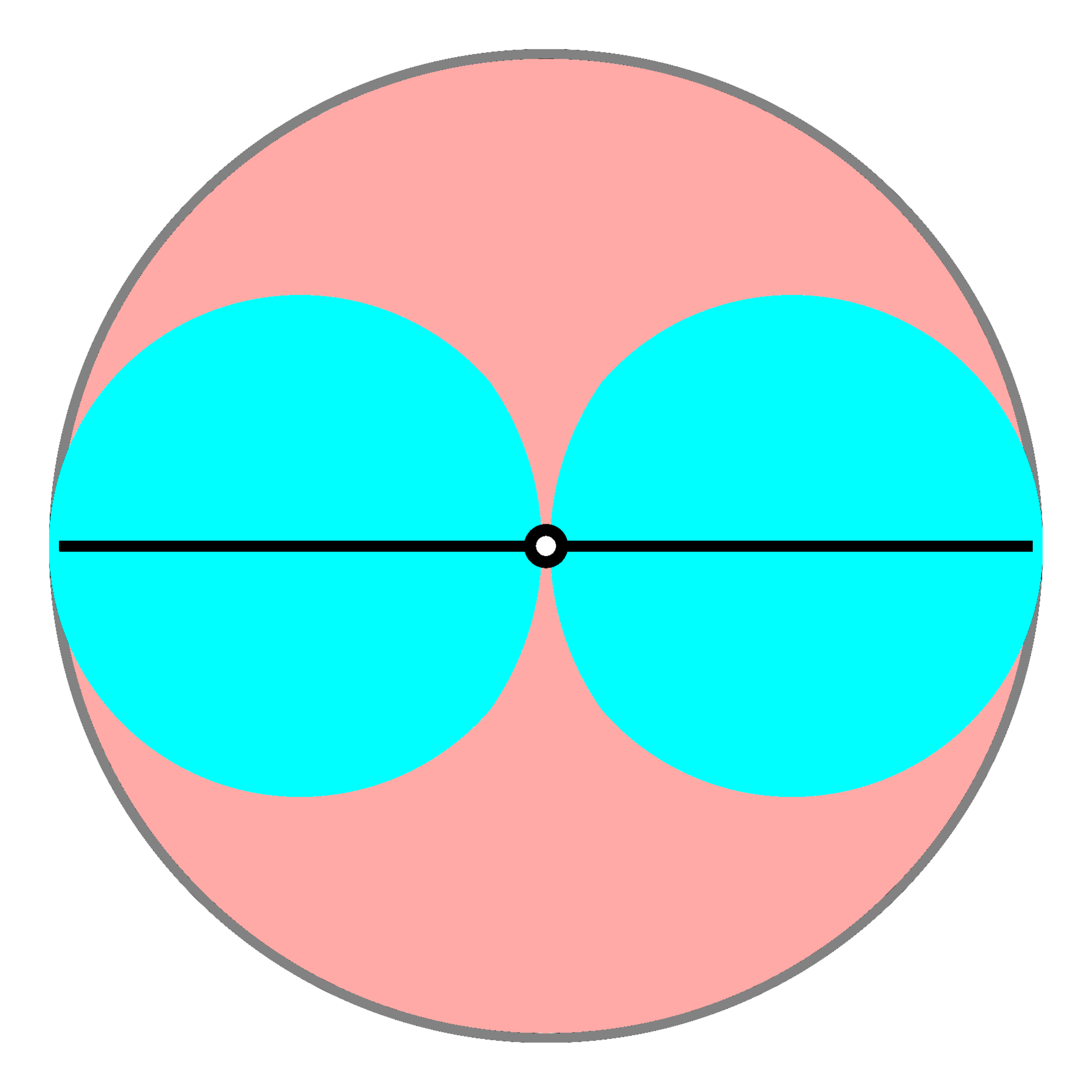}
		\caption{$\Delta_0=\frac{c}{x^4}(\d x)^2$}		\label{figure:lagoons2A}
	\end{subfigure}
	\begin{subfigure}[t]{0.49\textwidth}
		\includegraphics [width=\textwidth]{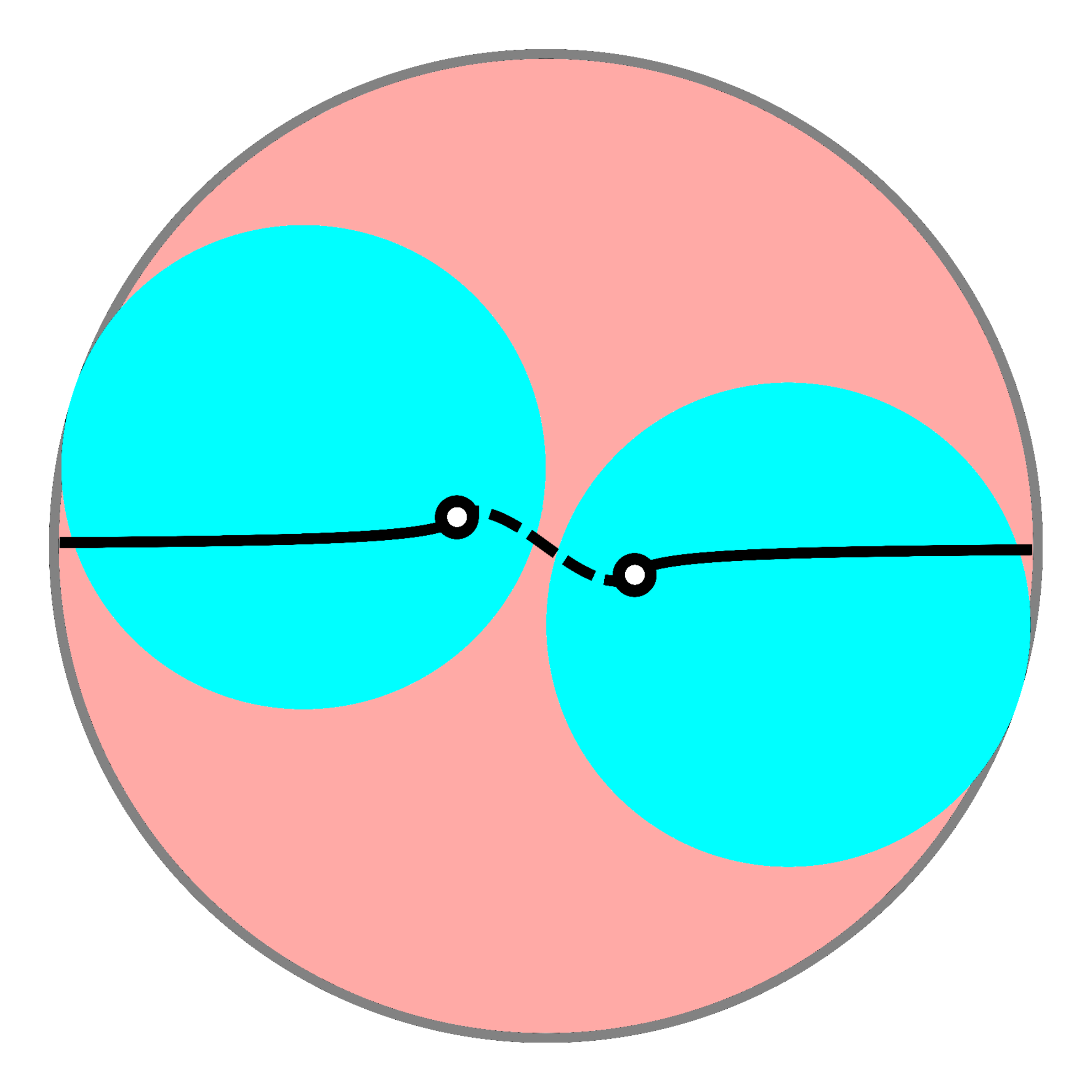}
		\caption{$\Delta_\epsilon=\frac{c}{(x-\epsilon)^2(x+\epsilon)^2}(\d x)^2$}		\label{figure:lagoons2B}
	\end{subfigure}
\caption{Lagoons (turquoise) are the connected components of the complement the fat extended transverse graph (pink). The separatrices (thick black) of $\Delta_\epsilon$ for reference.}
	\label{figure:lagoons2}
\end{figure}

\begin{figure}[t]
	\centering
	\begin{subfigure}[t]{0.49\textwidth}
		\includegraphics [width=\textwidth]{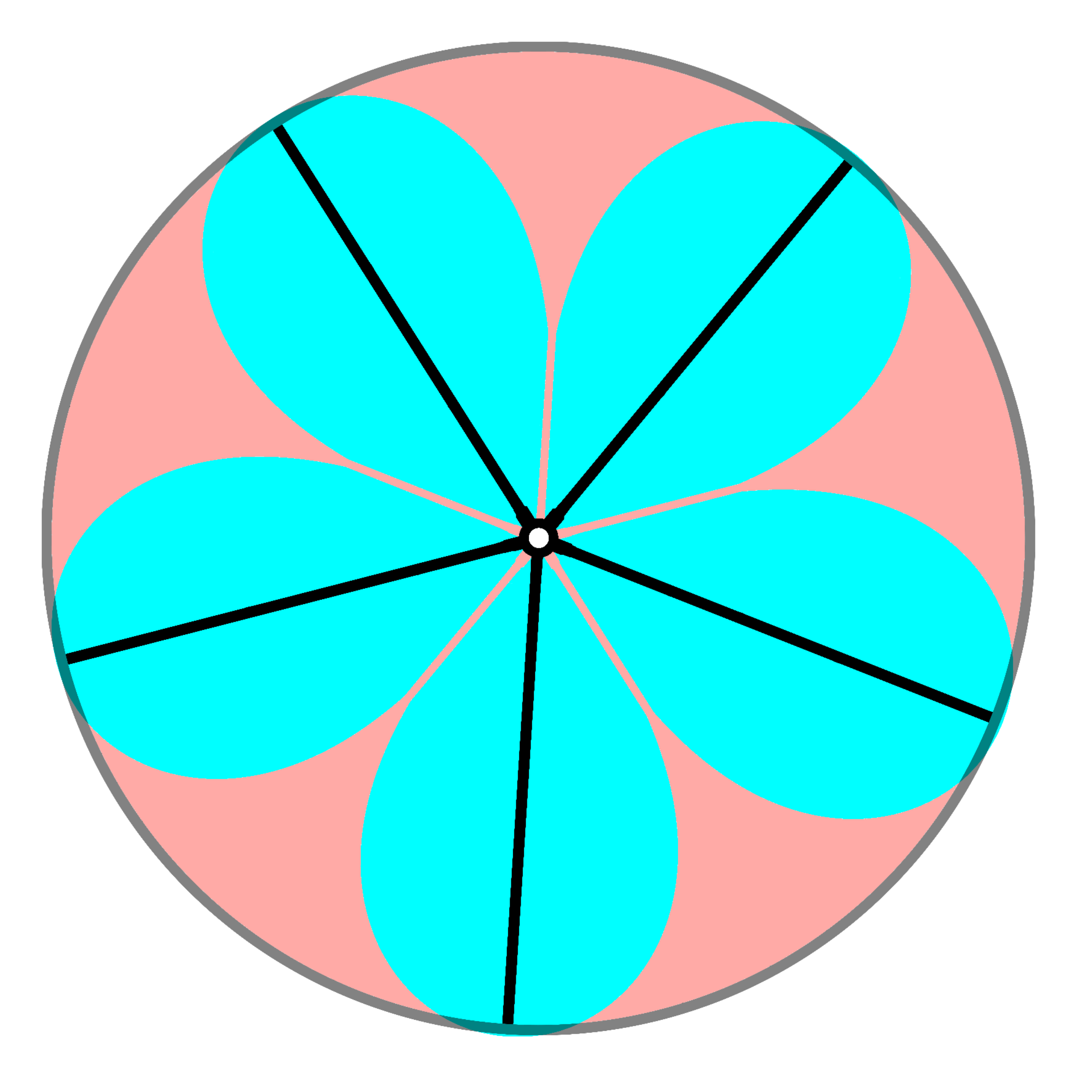}
		\caption{$\Delta_0=\frac{c}{x^7}(\d x)^2$}		\label{figure:lagoonsA}
	\end{subfigure}
	\begin{subfigure}[t]{0.49\textwidth}
		\includegraphics [width=\textwidth]{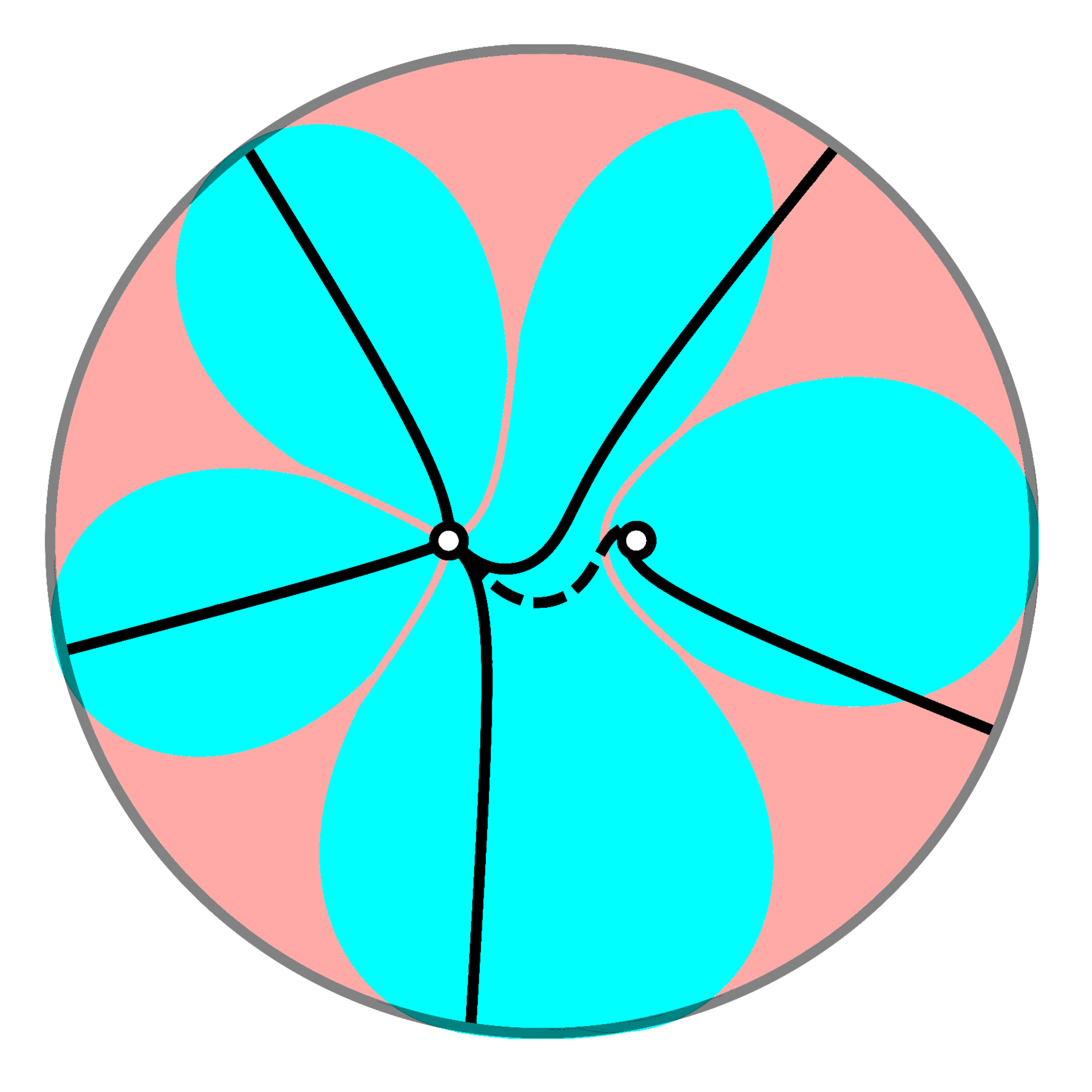}
		\caption{$\Delta_\epsilon=\frac{c}{(x-\epsilon)^2(x+\epsilon)^5}(\d x)^2$}		\label{figure:lagoonsB}
	\end{subfigure}
	
	\begin{subfigure}[t]{0.49\textwidth}
		\includegraphics [width=\textwidth]{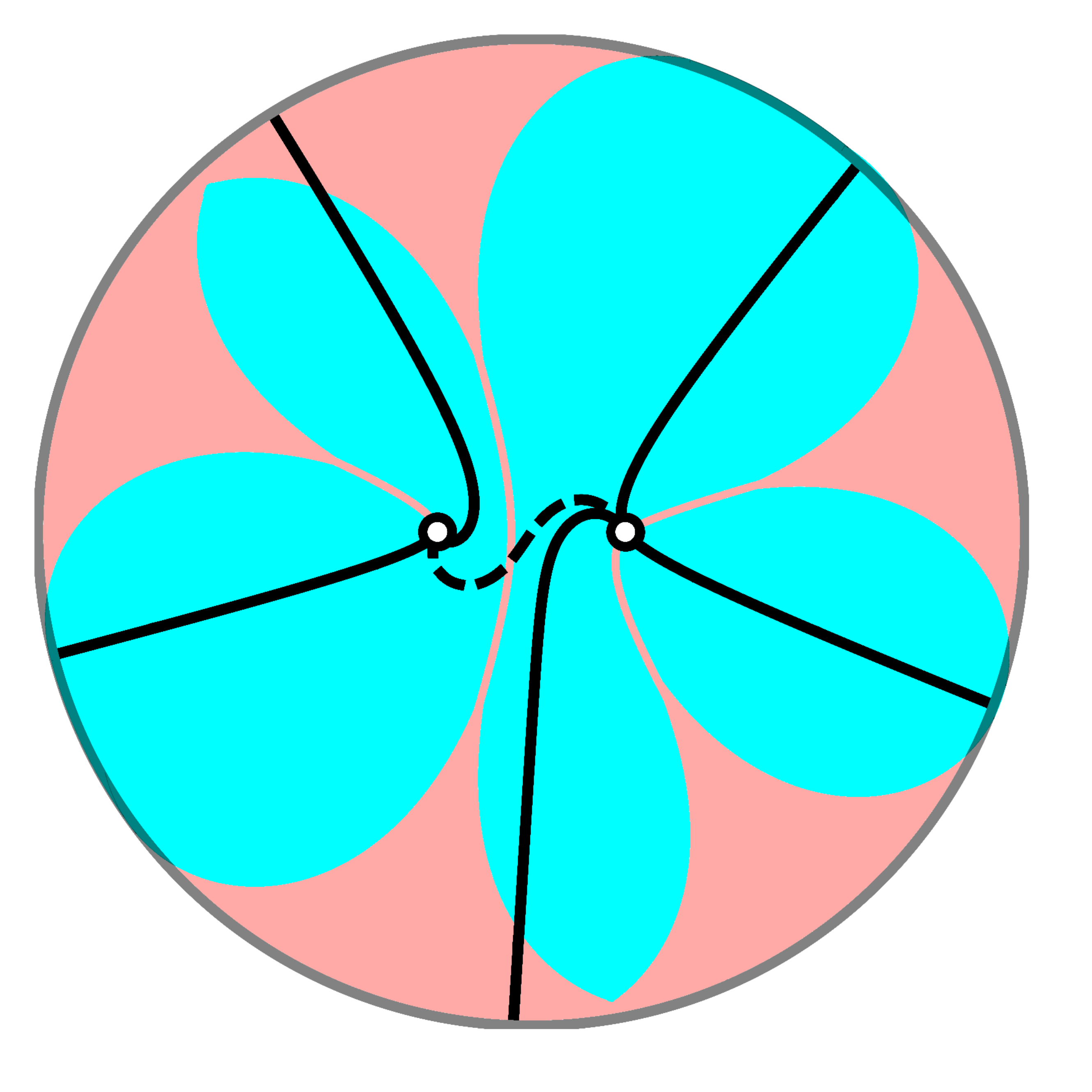}
		\caption{$\Delta_\epsilon=\frac{c}{(x-\epsilon)^3(x+\epsilon)^4}(\d x)^2$}		\label{figure:lagoonsC}
	\end{subfigure}
	\begin{subfigure}[t]{0.49\textwidth}
		\includegraphics [width=\textwidth]{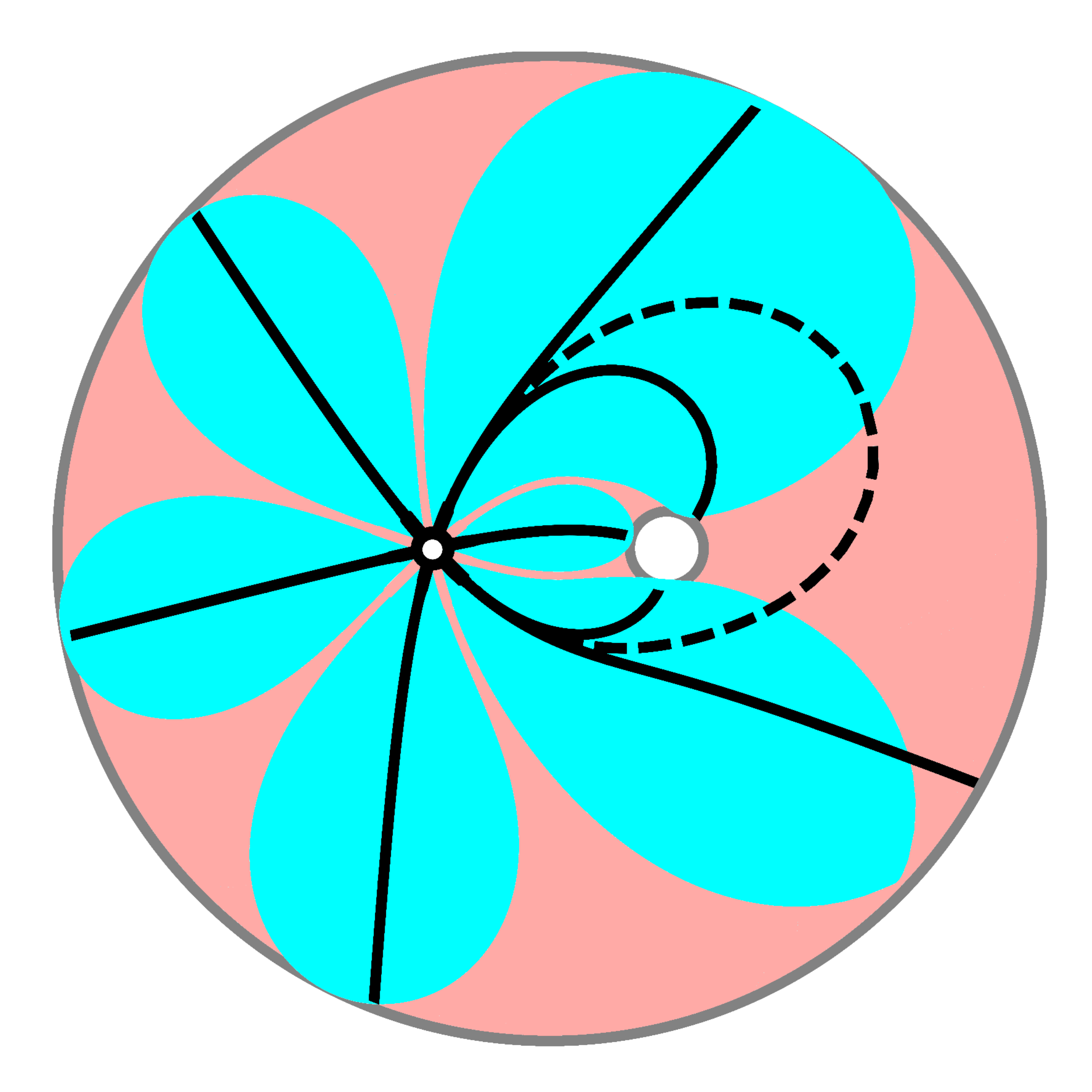}
		\caption{$\Delta_\epsilon=\frac{c(x-\epsilon)}{(x+\epsilon)^8}(\d x)^2$}		\label{figure:lagoonsD}
	\end{subfigure}
\caption{Lagoons (turquoise) are the connected components of the complement the  fat extended transverse graph (pink). The separatrices (thick black) of $\Delta_\epsilon$ for reference.}
	\label{figure:lagoons}
\end{figure}

\subsubsection{Lagoons and flag structure on the solution space}

\begin{definition}[Lagoons]
Let $\sX_\epsilon$ \eqref{eq:sX} and $\sW_\epsilon$ \eqref{eq:sW} be as above, $\vartheta\in\sW_\epsilon$. 
We define \emph{fat extended transverse graph} as the union of:
\begin{itemize}[leftmargin=2\parindent]
	\item for each $\alpha\omega$-zone of $\e^{-2\i \vartheta}\Delta_\epsilon$ relative to $\sX_\epsilon$ take its fat transversal (see Definition~\ref{def:relativezones}),
	\item for each  sepal zone of $\e^{-2\i \vartheta}\Delta_\epsilon$ relative to $\sX_\epsilon$ take the separating cusp of the vertical foliation of $\Delta_\epsilon$ which passes through the zone.
\end{itemize}
See Figure \ref{figure:lagoons}.	

The connected components of the complement of the fat extended transverse graph in $\sX_\epsilon$ will be called \emph{lagoons}.

Each lagoon is attached to an equilibrium point: either as an open neighborhood of a simple equilibrium or as a sepal at a parabolic equilibrium of the vertical foliation.
The boundary of each lagoon is transverse to the trajectories of  $\e^{-2\i\vartheta}\Delta_\epsilon(x)$ for all $\vartheta\in\sW_\epsilon$. Any trajectory of $\e^{-2\i\vartheta}\Delta_\epsilon(x)$ that enters the lagoon stays in the lagoon and will asymptotically tend to the equilibrium.
\end{definition}

By means of Theorem~\ref{prop:subdominant}, each lagoon gives rise to a space of subdominant solutions along trajectories of $\e^{-2\i\vartheta}\Delta_\epsilon(x)$, $\vartheta\in\sW_\epsilon$. Therefore the solution space above it is endowed with a complete flag filtration. 
Geometrically this means that the automorphism group $\SL_2(\C)$ of the solution space is restricted above each lagoon to the Borel subgroup preserving the filtration.

\begin{remark}
The complement of the lagoon of a regular singularity is a surface with a simple hole.
The complement of the $2\nu_i$ lagoons of an irregular singularity of Katz rank $\nu_i=k_i-\frac{m_i}2>0$ is a surface with a topological hole
with $2\nu_i$ cusps on its boundary corresponding to the Stokes rays (the separating directions of the vertical foliation).
This is similar to the pictures of L.~Chekhov, M.~Mazzocco and V.~Rubtsov \cite{Chekhov-Mazzocco, Chekhov-Mazzocco-Rubtsov}.
Also the confluence procedure of the lagoons is like the one described in \cite{Chekhov-Mazzocco, Chekhov-Mazzocco-Rubtsov}:
	\begin{enumerate}[label=(\arabic*), leftmargin=2\parindent]
		\item \emph{Hole hooking}: confluence of two singularities of ranks $\nu_1,\nu_2\geq0$,
		\[\Delta_\epsilon(x)=\frac{U(x,\epsilon)^2(\d x)^2}{(x+\epsilon)^{2\nu_1+2}(x-\epsilon)^{2\nu_2+2}},\quad U(0,0)\neq 0.\]
		 There is a unique gate connecting the two singularities and a unique associated fat transversal; during the confluence the fat transversal is stretched (its period \eqref{eq:period} tends to $\infty$) and broken, creating two new cusps. 
		 See Figures~\ref{figure:lagoons2B}, \ref{figure:lagoonsB},  \ref{figure:lagoonsC}.
		\item \emph{Cusps removal}: confluence of an irregular singularity of rank $\nu_a\geq 0$ and a saddle point of rank $\nu_s\leq 0$, with $\nu=\nu_a+\nu_s+1>0$:
		\[\Delta_\epsilon(x)=\frac{(x+\epsilon)^{-2\nu_s-2}U(x,\epsilon)^2(\d x)^2}{(x-\epsilon)^{2\nu_a+2}},\quad U(0,0)\neq 0.\]
		From the hole in $\sX_\epsilon$ around the saddle point emanate $2|\nu_s|$ separating regions: of which $2|\nu_s|-1$ extend as separating cusps toward the singularity, while the last one, a fat transversal of an $\alpha\omega$-zone, extends towards the outer boundary of $\sX_\epsilon$. As the saddle point merges with the singularity, the $2|\nu_s|-1$ cusps shrink together and disappear, while the fat transversal forms a single new cusp at the limit.
		The generic situation is with $\nu_s=-\frac32$, when 2 existing cusps disappear and 1 new one is created, see Figure~\ref{figure:lagoonsD}.
		The case $\nu_s=-\frac12$, in which the total number of cusps would increase by one, is at odds with the assumption \eqref{eq:ass}.
	\end{enumerate}
\end{remark}

\subsection{Wild monodromy representations}\label{sec:wildmonodromy}

\subsubsection{Fundamental groupoid}

We now consider all the different petals of the horizontal foliation of $\e^{-2\i\vartheta}\Delta_\epsilon(x)$ for $(\vartheta,\epsilon)\in \sW$,
along with their associated fundamental solution matrices.
Our focus will be on the various connection matrices that relate these fundamental solution matrices.
Following the approach of E.~Paul and J.P.~Ramis \cite{Paul-Ramis, Paul-Ramis1}, we interpret the connection matrices as representations of a fundamental groupoid. 
The concept of "wild fundamental groupoids" has appeared previously in the works of J.~Martinet and J.P.~Ramis \cite{Martinet-Ramis, Deligne-Malgrange-Ramis} and P.~Boalch \cite{Boalch14, Boalch18}.
However, in our treatment, we neither blow up irregular singularities nor attach infinitesimal halos to them.
For our purposes, the fundamental groupoid is to be
$\Pi_1(\sX\smallsetminus\Crit_\epsilon, \sD_{\sW_\epsilon})$, where the base-points $\sD_{\sW_\epsilon}$ 
correspond to the intersections of petals and lagoons.
Each petal thus contains precisely two base points, one associated with each of the two fundamental solution matrices \eqref{eq:Y+-}.
In the confluent setting it depends on the choice of $\sW_\epsilon$ \eqref{eq:sW}. 
On the other hand, if considered for an individual irregular singularity, or for a parametric family of such singularities with constant Katz rank, then there is only one $\sW_\epsilon=\ ]\!-\!\frac{\pi}{2},\frac{\pi}{2}[$, so it doesn't play a role.

\begin{definition}[Fundamental groupoid]
Let $\epsilon\in\proj_\epsilon(\sW)$. Each enlarged petal $\sZ_{\epsilon}$ with respect to $\sX_\epsilon$ intersects two different lagoons (corresponding to the positive and negative half-trajectories).
Chose a basepoint $d_\sZ^\pm$ in each of the two intersection.
The notation is such that the fundamental solution $Y_\sZ^\pm(x,\epsilon)$ \eqref{eq:Y+-} is the one 
whose first column is subdominant on the intersection containing  $d_\sZ^\pm$.
Let $\sD_{\sW_\epsilon}=\{d_\sZ^+,\ d_\sZ^-\mid \sZ_{\epsilon}\ \text{petal}\}$ be the set of these basepoints.
The \emph{fundamental groupoid} is a small category whose
\begin{itemize}
	\item set of objects $\sD_{\sW_\epsilon}$ is the set of basepoints,
	\item set of morphisms
	\[\Pi_1(\sX\smallsetminus\Crit_\epsilon, \sD_{\sW_\epsilon})\]
	is the set of homotopy classes of paths in $\sX\smallsetminus\Crit_\epsilon$  with the operation of concatenation of paths:
	\[
	\begin{tikzcd}
	d_1
	\arrow[r,"\gamma_{12}"]
	\arrow[rr,bend left=45,"\gamma_{12}\gamma_{23}"]
	& d_2
	\arrow[r,"\gamma_{23}"]
	& d_3
	\end{tikzcd}.
	\]
\end{itemize}

We draw it schematically by choosing $\vartheta\in\sW_\epsilon$ and representing respectively
\begin{enumerate}[label=\alph*), leftmargin=2\parindent]
	\item[a)] the fat separating graph of $\e^{-2\i\vartheta}\Delta_\epsilon(x)$ in $\sX_\epsilon$,
	\item[b)] the gate graph of $\e^{-2\i\vartheta}\Delta_\epsilon(x)$ in $\sX_\epsilon$,
	\item[c)] the fat extended transversal graph of $\e^{-2\i\vartheta}\Delta_\epsilon(x)$ in $\sX_\epsilon$,
\end{enumerate}
by their non-fat versions (i.e. by choosing a particular trajectory to represent in each part of the fat graph), and by taking one base-point $d_\sZ^\pm$ in each connected component in $\sX\smallsetminus\Crit_\epsilon$ of the complement of the three graphs.
See Figures~\ref{figure:wild2},~\ref{figure:wild}.
In this presentation the only intersection is between the gates and the transversals of $\alpha\omega$-zones.

There are three kinds of \emph{elementary paths} which generate the groupoid:
\begin{enumerate}[label=\alph*), leftmargin=2\parindent]
	\item[a)] those that cross the separating graph: stay inside the same lagoon,
	\item[b)] those that cross the gate graph: stay inside the same $\alpha\omega$-zone and the same lagoon,
	\item[c)] those that cross the extended transversal graph: stay inside the same petal.
\end{enumerate}
\end{definition}

For $(\vartheta,\epsilon)=(0,0)$ the separating graph represents the anti-Stokes (steepest descent) curves, and the extended transverse graph represent the Stokes (oscillation) curves.

\begin{figure}[t]
	\centering
	\begin{subfigure}[t]{0.49\textwidth}
		\includegraphics [width=\textwidth]{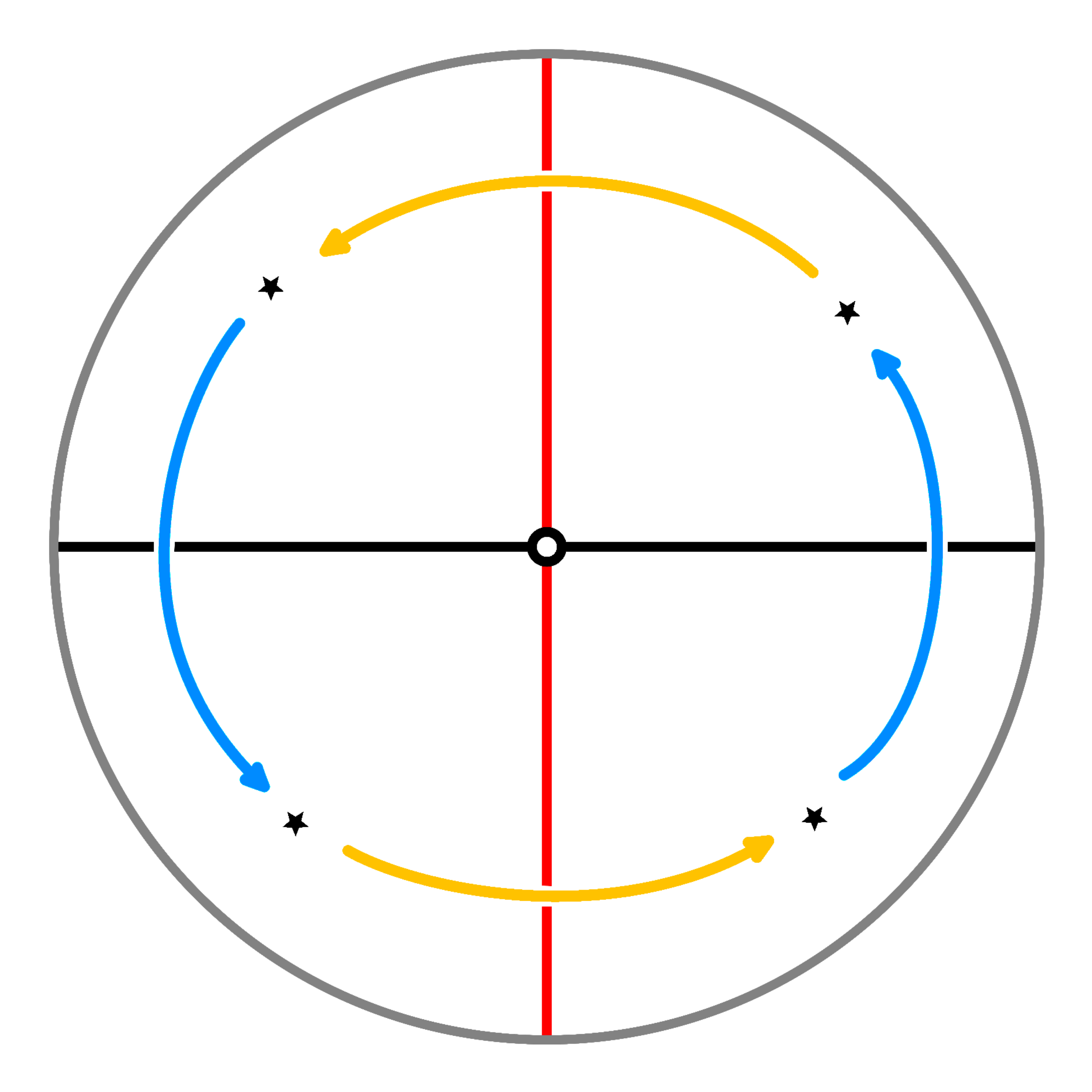}
		\caption{$\Delta_0=\frac{c}{x^4}(\d x)^2$}		\label{figure:wild2A}
	\end{subfigure}
	\begin{subfigure}[t]{0.49\textwidth}
		\includegraphics [width=\textwidth]{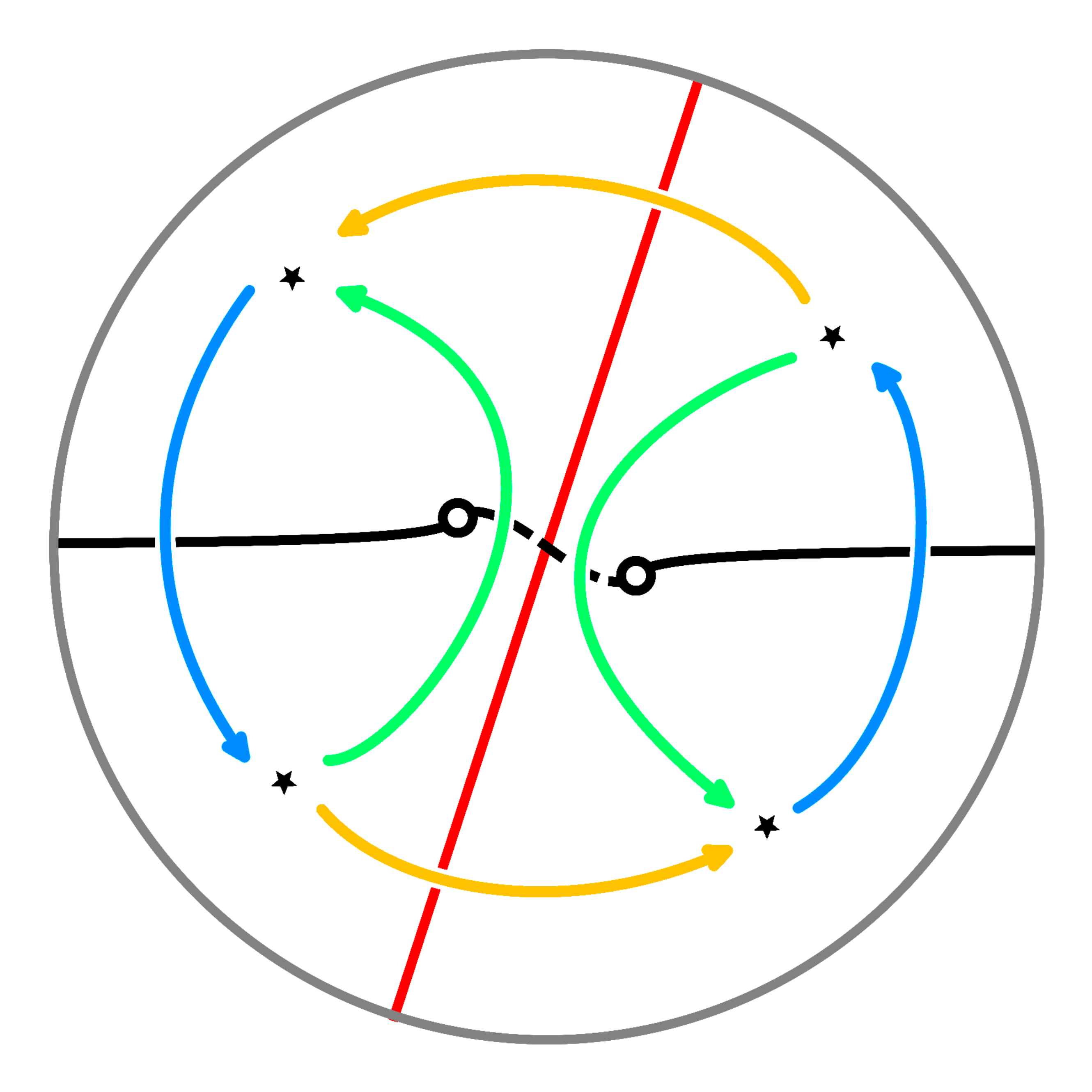}
		\caption{$\Delta_\epsilon=\frac{c}{(x-\epsilon)^2(x+\epsilon)^2}(\d x)^2$}		\label{figure:wild2B}
	\end{subfigure}
	\caption{Schematic presentation of the fundamental groupoid associated with $\Delta_\epsilon$.
		The elementary paths in a) blue, b) green, c) yellow crossing a) separating graph  in black, b) gate graph dashed, c) extended transverse graph in red.}
	\label{figure:wild2}
\end{figure}

\begin{figure}[t]
	\centering
	\begin{subfigure}[t]{0.49\textwidth}
		\includegraphics [width=\textwidth]{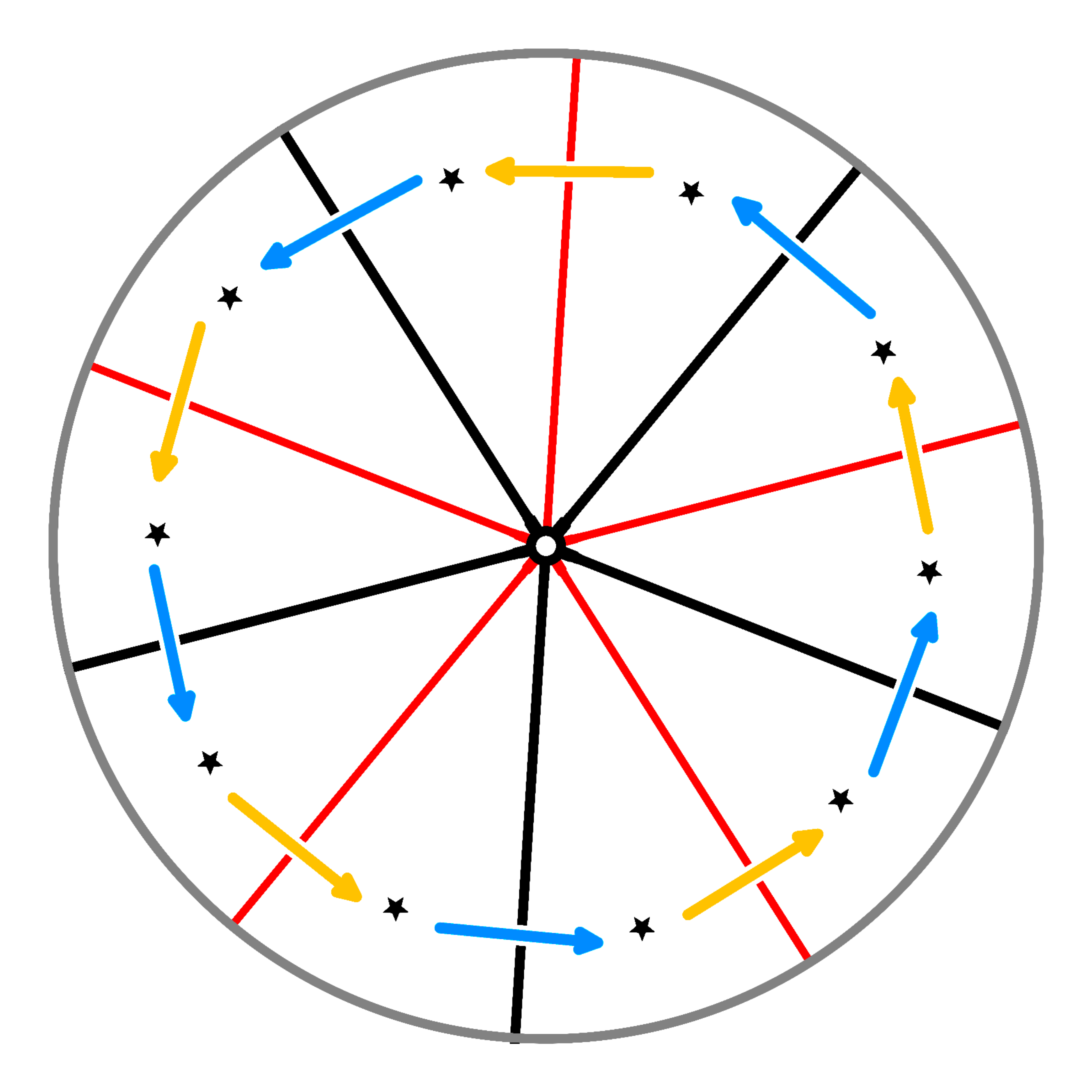}
		\caption{$\Delta_0=\frac{c}{x^7}(\d x)^2$}	
	\end{subfigure}
	\begin{subfigure}[t]{0.49\textwidth}
		\includegraphics [width=\textwidth]{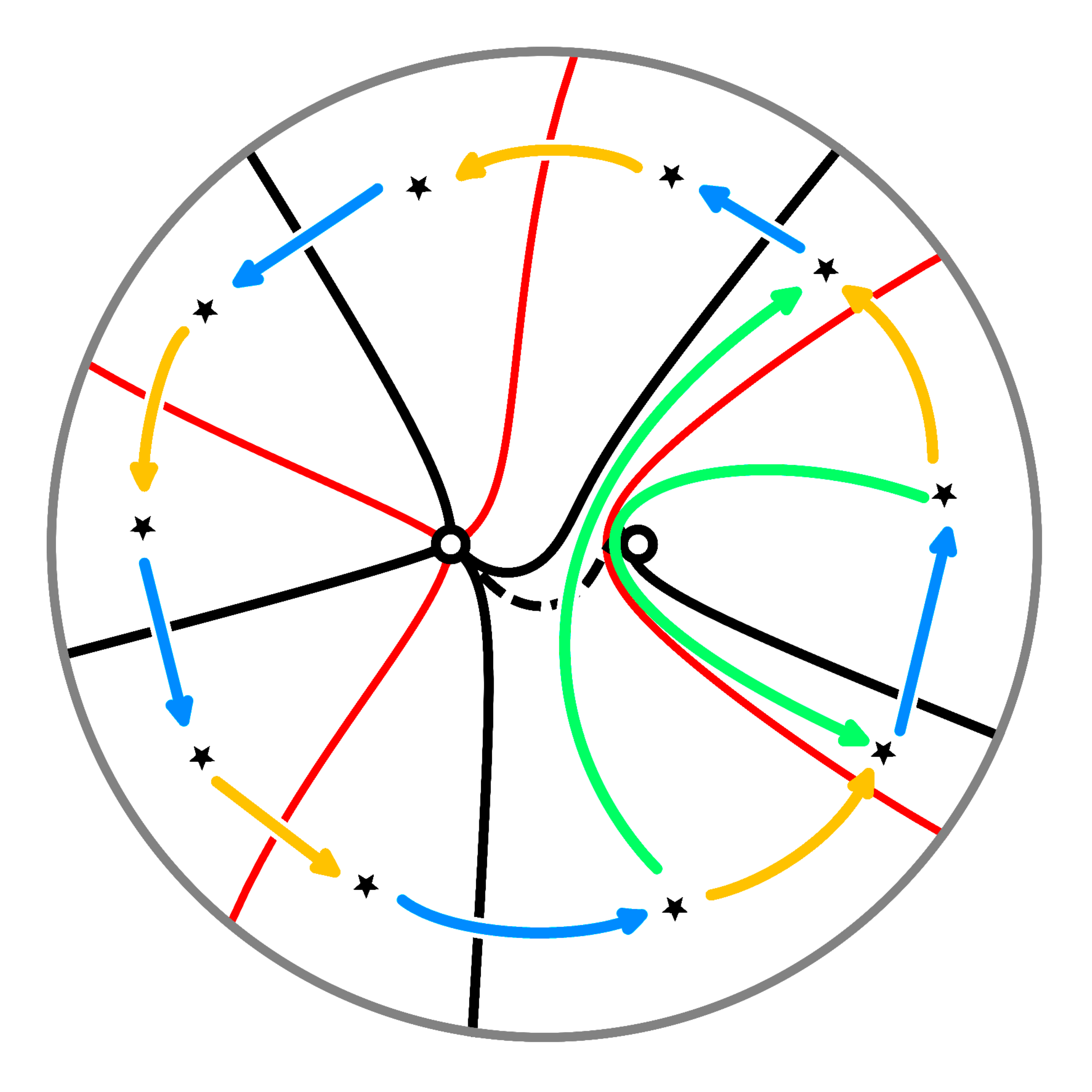}
		\caption{$\Delta_\epsilon=\frac{c}{(x-\epsilon)^2(x+\epsilon)^5}(\d x)^2$}	
	\end{subfigure}
	
	\begin{subfigure}[t]{0.49\textwidth}
		\includegraphics [width=\textwidth]{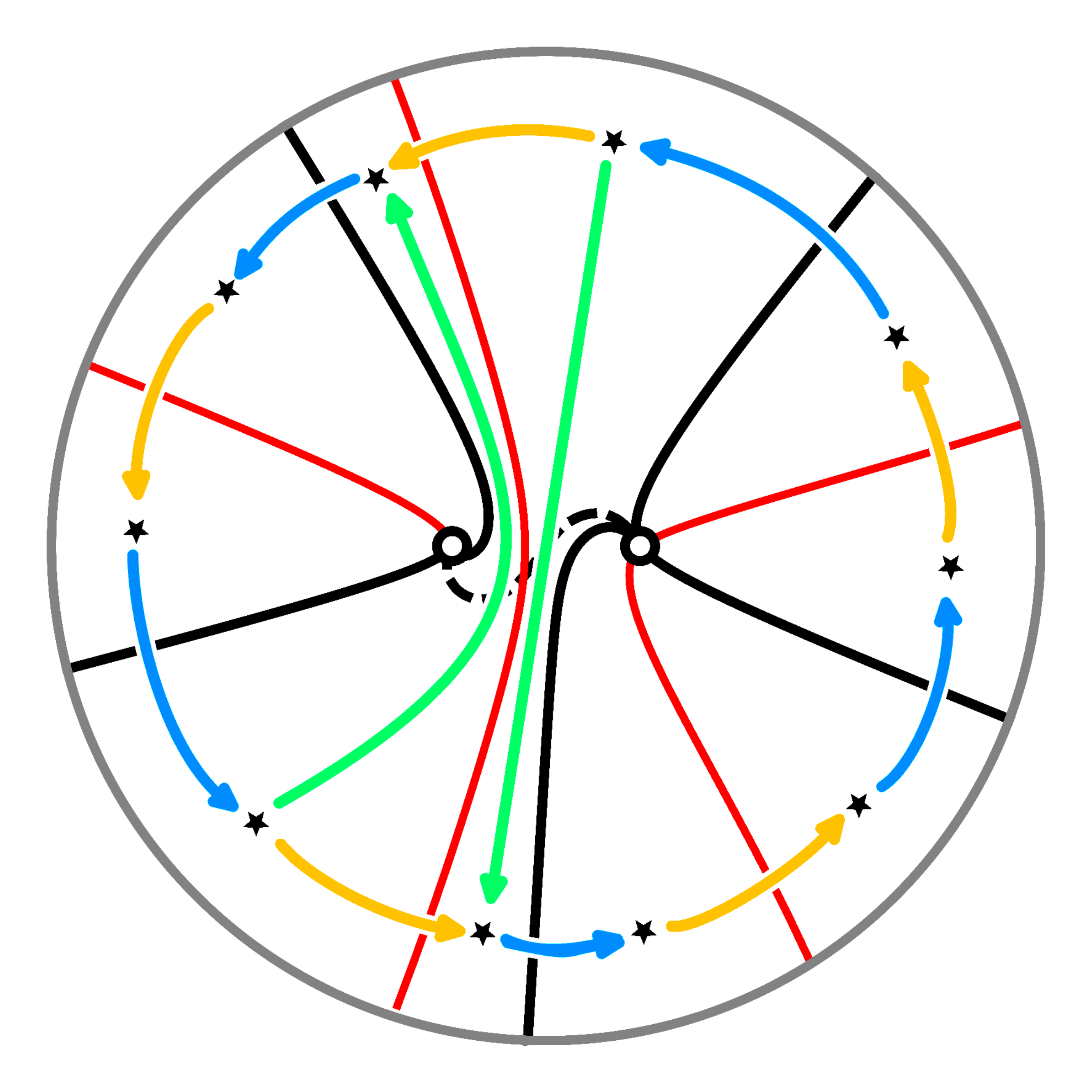}
		\caption{$\Delta_\epsilon=\frac{c}{(x-\epsilon)^3(x+\epsilon)^4}(\d x)^2$}	
	\end{subfigure}
	\begin{subfigure}[t]{0.49\textwidth}
		\includegraphics [width=\textwidth]{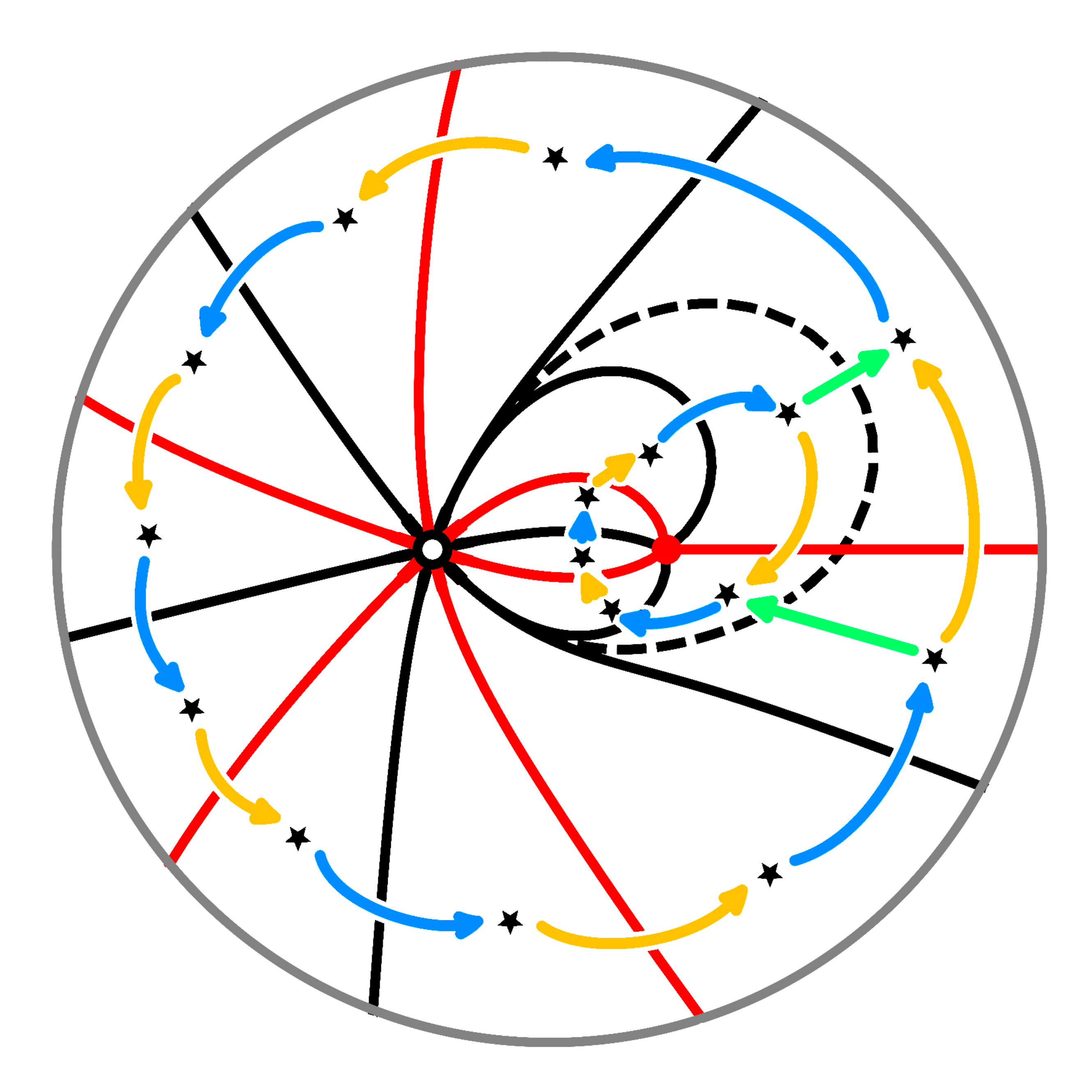}
		\caption{$\Delta_\epsilon=\frac{c(x-\epsilon)}{(x+\epsilon)^8}(\d x)^2$}		\label{figure:wildD}
	\end{subfigure}
	\caption{Schematic presentation of the fundamental groupoid associated with $\Delta_\epsilon$.
		The elementary paths in a) blue, b) green, c) yellow crossing a) separating graph  in black, b) gate graph dashed, c) extended transverse graph in red.}
	\label{figure:wild}
\end{figure}

\subsubsection{Wild monodromy representations.}

Denote:
\begin{itemize}[leftmargin=2\parindent]
	\item[--] $\bT$ the standard Cartan subgroup of $\SL_2(\C)$ consisting of diagonal matrices,
	\item[--] $\rotT=\bT\cdot\begin{psmallmatrix}0&\i\\ \i&0\end{psmallmatrix}$ the set of anti-diagonal matrices,
	\item[--] $\bN=\bT\cup\rotT$ the  normalizer of $\bT$,
	\item[--] $\bB$ the Borel subgroup of upper-triangular matrices,
	\item[--] $\bU$ the subgroup of $\bB$ consisting of unipotent upper-triangular matrices.
\end{itemize}

\begin{definition}\label{def:wildmonodromy}
A \emph{wild monodromy representation of the fundamental groupoid}
is a contravariant functor
\[\rho:\Pi_1(\sX\smallsetminus\Crit_\epsilon, \sD_{\sW_\epsilon})\to\SL_2(\C)\] 
which
\begin{enumerate}[leftmargin=\parindent]
	\item associates to each basepoint $d\in\sD_{\sW_\epsilon}$ the triple $\big(\C^2,\ \textbf{flag},\  \textbf{splitting}\big)$, where
	\[\textbf{flag}=\left(\{0\}\subset\langle\begin{psmallmatrix}1\\0\end{psmallmatrix}\rangle\subset\C^2\right),\qquad \textbf{splitting}=\langle\begin{psmallmatrix}1\\0\end{psmallmatrix}\rangle\oplus\langle\begin{psmallmatrix}0\\1\end{psmallmatrix}\rangle\]
	are the standard flag  and splitting structures associated to the groups $\bB$ and $\bT$,
	\item represents the elementary paths by elements of
	\begin{enumerate}[label=\alph*), leftmargin=2\parindent]
		\item[a)] $\bB$ for those that cross the separating graph: preserve flags,
		\item[b)] $\bT$ for those that cross the gate graph: preserve both flags and splittings,
		\item[c)] $\rotT$ for those that cross the extended transverse graph: preserve splittings and make flags transverse,
	\end{enumerate}
	\item factors through the projection 
	\[\Pi_1(\sX\smallsetminus\Crit_\epsilon, \sD_{\sW_\epsilon})\to \Pi_1(\sX\smallsetminus\Sing_\epsilon, \sD_{\sW_\epsilon}),\] 
	meaning that for each saddle point of rank $\nu$ the product of the $4|\nu|$ alternating upper-triangular and anti-diagonal matrices along a loop around the saddle is the identity.
\end{enumerate}
Two representations  $\rho,\rho'$ are \emph{equivalent},  $\rho\sim\rho'$,  if there exists a map (natural transformation)
$\tau\in\bT^{\sD_{\sW_\epsilon}}$ from the set of base-points $\sD_{\sW_\epsilon}$ to the group $\bT$ of automorphisms of $\big(\C^2,\ \textbf{flag},\  \textbf{splitting}\big)$,
that conjugates the two representations: for each path $d_1\xrightarrow{\gamma} d_2$ one asks that $\tau(d_2)\cdot\rho(\gamma)=\rho'(\gamma)\cdot\tau(d_1)$. 
\end{definition}


\begin{definition}
Denote $\tilde\sX\smallsetminus\Crit_\epsilon$ the 1- or 2-sheeted covering of $\sX\smallsetminus\Crit_\epsilon$ on which the form  $\sqrt{\Delta_\epsilon}$ is defined.
Given a wild monodromy representation $\rho$, we will associate to it its \emph{formalization} 
\[\rho^\natural:\Pi_1(\tilde\sX\smallsetminus\Crit_\epsilon, \tilde\sD_{\sW_\epsilon})\to \bN,\] 
which to every lifted elementary path $\gamma$ of type a) associates $\rho^\natural(\gamma)=\diag(\rho(\gamma))$ the diagonal of $\rho(\gamma)\in \bB$ and
to elementary paths $\gamma$ of type b) \& c) associates $\rho^\natural(\gamma)=\rho(\gamma)\in \bN$.

The wild monodromy representation $\rho$ is \emph{compatible} with the quadratic differential $\Delta_\epsilon$ if for  any closed loop $\gamma\in \Pi_1(\tilde\sX\smallsetminus\Crit_\epsilon, \tilde\sD_{\sW_\epsilon})$ based at some $d\in \tilde\sD_{\sW_\epsilon}$
the $(1,1)$-element $[\rho^\natural(\gamma)]_{1,1}$ of $\rho^\natural(\gamma)$ is equal to
\begin{equation}\label{eq:compatible}
	[\rho^\natural(\gamma)]_{1,1}=\e^{\pm\int_{\gamma}\sqrt\Delta_\epsilon},
\end{equation}
where the branch $\e^{\pm\int\sqrt\Delta_\epsilon}$ is the one with subdominant behavior in the lifted lagoon containing $d$.

For any path $\gamma\in\Pi_1(\sX\smallsetminus\Crit_\epsilon, \sD_{\sW_\epsilon})$, the action of the equivalence of representations on the product $\big(\rho^\natural(\gamma)\big)^{-1}\cdot\rho(\gamma)$ is by conjugation with elements of $\bT$, therefore its diagonal \[\diag\left(\big(\rho^\natural(\gamma)\big)^{-1}\cdot\rho(\gamma)\right)\]
is an \emph{invariant} of the wild monodromy representation $\rho$.
\end{definition}


Denote
$\Rep_{\Delta_\epsilon,\sW_\epsilon}$
the space of wild monodromy representations $\Pi_1(\sX\smallsetminus\Crit_\epsilon, \sD_{\sW_\epsilon})\to \SL_2(\C)$ compatible with $\Delta_\epsilon$.
The space of their equivalence classes is the quotient 
\[\Mon_{\Delta_\epsilon,\sW_\epsilon}= \left.\raisebox{.2em}{$\Rep_{\Delta_\epsilon,\sW_\epsilon}$}\Big/\raisebox{-.2em}{$\bT^{\sD_{\sW_\epsilon}}$}\right..\]
The \emph{local wild character variety} is the GIT-version of this quotient, 
\[\Char_{\Delta_\epsilon,\sW_\epsilon}=\left.\raisebox{.2em}{$\Rep_{\Delta_\epsilon,\sW_\epsilon}$}\Big/\!\!\!\Big/\raisebox{-.2em}{$\bT^{\sD_{\sW_\epsilon}}$}\right.,\]
i.e. the affine variety associated to the ring of functions on $\Rep_{\Delta_\epsilon,\sW_\epsilon}$ invariant by the equivalence relation.

\begin{remark}\label{remark:flag}
	Another way to define the moduli space $\Mon_{\Delta_\epsilon,\sW_\epsilon}$ would be through representations of
	$\Pi_1(\sX\smallsetminus\Crit_\epsilon, \sD_{\sW_\epsilon})$ respecting the flag structure only:
	\begin{itemize}[leftmargin=2\parindent]
		\item[--] To each basepoint from $\sD_{\sW_\epsilon}$ attach the pair $\big(\C^2,\ \textbf{flag}\big)$,
		\item[--] To each path from $\Pi_1(\sX\smallsetminus\Crit_\epsilon, \sD_{\sW_\epsilon})$ attach linear map from $\SL_2(\C)$ between the source and target spaces, such that
		\begin{itemize}
			\item if the path is elementary of type a) or b), i.e. crosses the extended separating graph (union of separating and gate graphs), then it maps between the two flags, i.e. it is an element of $\bB$,
			\item if the path is elementary of type c), i.e. crosses the extended transverse graph, then it makes the two flags transverse, i.e. it is an element of 
			$\SL_2(\C)\smallsetminus\bB=\bB\cdot \begin{psmallmatrix}0&\i\\ \i&0\end{psmallmatrix}\cdot\bB=\bU\cdot \rotT\cdot\bU$, 
		\end{itemize}
		and such that is factors as a representation of $\Pi_1(\sX\smallsetminus\Sing_\epsilon, \sD_{\sW_\epsilon})\to\SL_2$. 
		\item[--] An equivalence relation is a linear automorphism of each of the vector spaces preserving the flag, i.e. an elements of $\bB^{\sD_{\sW_\epsilon}}$.
	\end{itemize}
	The source and targets of the elementary paths of type c) provide a partitioning of $\sD_{\sW_\epsilon}$ in pairs of points, and every element of 	$\SL_2(\C)\smallsetminus\bB$ has a unique decomposition as a product in $\bU\cdot \rotT\cdot\bU$, while every element of $\bB$ decomposes uniquely as $\bT\cdot \bU$. Therefore this flag-only definition leads to the same space of equivalence classes as those of wild monodromy representations of Definition~\ref{def:wildmonodromy} (the diagonality for elementary paths of type b) comes from the commutation relation over the gates:  $\rotT\cdot\bB\cdot \rotT\cap\bB=\bT$).
	
	The formalization of matrices in $\bB$ representing paths of type a) and b) is their diagonal, the formalization of matrices in $\SL_2(\C)\smallsetminus\bB$ representing paths of type c) is the matrix $\begin{psmallmatrix}0&-c^{-1}\\c&0\end{psmallmatrix}$ where $c\neq 0$ is the $(2,1)$-element of the matrix.
\end{remark}

\subsubsection{Wild monodromy representation associated to a differential system}

\begin{definition}
Each path  $d_{\sZ}^{*}\xrightarrow{\gamma} d_{\tilde\sZ}^{\tilde *}$, $*,\tilde*=\pm$, in $\Pi_1(\sX\smallsetminus\Crit_\epsilon, \sD_{\sW_\epsilon})$ gives rise to a connection matrix $M_\gamma$ defined by
\[\cont_\gamma Y_{\sZ}^{*}=Y_{\tilde\sZ}^{\tilde*}\cdot M_\gamma,\]
where $\cont_\gamma$ is the operator of analytic continuation along the path $\gamma$, and $Y_{\sZ}^{*},Y_{\tilde\sZ}^{\tilde*}$ are the fundamental solutions \eqref{eq:Y+-}.
The map $\rho:\gamma\mapsto M_\gamma$ defines a \emph{wild monodromy representation associated to the differential system} \eqref{eq:unfoldedsystem}.
\end{definition}

Let $\proj_\epsilon(\sW)$ be the projection of $\sW$ onto the $\epsilon$-coordinate as a possibly ramified set, and $\sC\subset\sE$ the confluence divisor where critical points merge (cf. \S\ref{sec:parametricfoliation}).
Then the wild monodromy representation $\rho:\Pi_1(\sX\smallsetminus\Crit_\epsilon, \sD_{\sW_\epsilon})\to \SL_2(\C)$ associated to the system depends analytically on $\epsilon\in\proj_\epsilon(\sW)\smallsetminus\sC$ and that the representations of those paths that have a limit when $\epsilon\to\epsilon_0\in\sC$ have a limit as well.

%

\begin{figure}[t]
	\captionsetup[subfigure]{labelformat=empty}
	\centering
	\begin{subfigure}[t]{0.45\textwidth}
		\includegraphics [width=\textwidth]{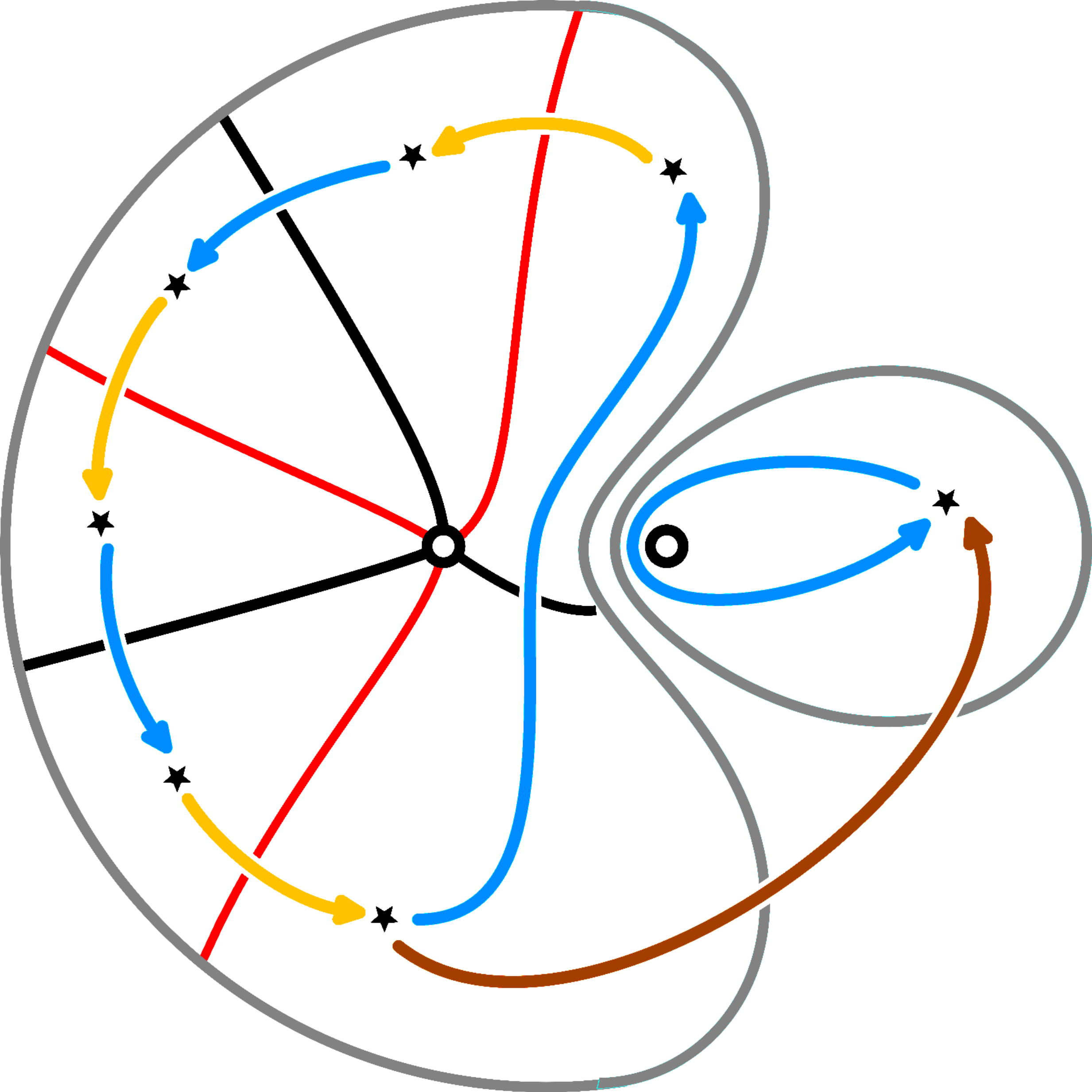}
		\caption{({\sc b}) \ $\Delta_\epsilon=\frac{c}{(x-\epsilon)^2(x+\epsilon)^5}(\d x)^2$}	
	\end{subfigure}
\hskip0.04\textwidth
	\begin{subfigure}[t]{0.45\textwidth}
		\includegraphics [width=\textwidth]{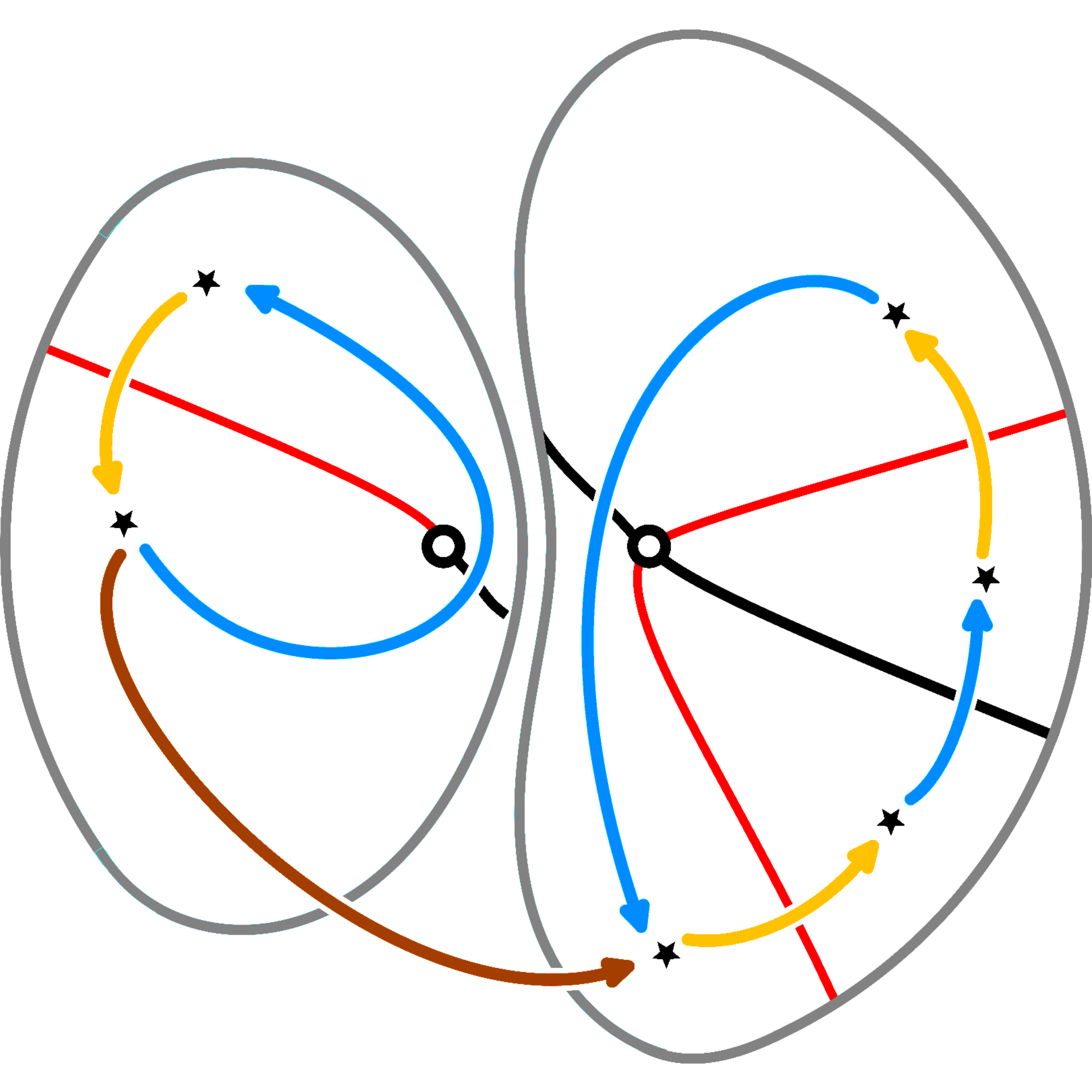}
		\caption{({\sc c}) \ $\Delta_\epsilon=\frac{c}{(x-\epsilon)^3(x+\epsilon)^4}(\d x)^2$}	
	\end{subfigure}
	\caption{Schematic presentation of a cluster wild monodromy compatible with $\Delta_\epsilon$.
		 Compare with Figure~\ref{figure:wild}\,({\sc b}) and ({\sc c}).}
	\label{figure:cluster}
\end{figure}

\subsection{Confluent, cluster and outer wild monodromy representations}\label{sec:cluster}

\subsubsection{Confluent wild monodromy vs. cluster wild monodromy}

When the critical point divisor $\Crit_\epsilon$ is non-smooth at $\epsilon=0$ there is a \emph{confluence} of equilibria or equilibria and saddles of the quadratic differential $\Delta_\epsilon$.
When we treat all the coalescing critical points together, as we did in the previous sections, 
we talk about \emph{confluent wild monodromy representations} 
\[\rho^\conf_\epsilon:\Pi_1(\sX\smallsetminus\Crit_\epsilon, \sD_{\sW_\epsilon})\to \SL_2(\C)\]
where $\rho^\conf_\epsilon=\rho$ are as in \S\,\ref{sec:wildmonodromy}, $\epsilon\in\proj_\epsilon(\sW)$.

On the other hand, for $\epsilon\in\sE\smallsetminus\sC$, one can also consider the set of coalescing singularities as a cluster of separate singularities,
where each of them has its own local invariants.
Combined together, the local monodromy, resp.  local wild monodromy, of regular singularities, resp. irregular, singularities
form a \emph{cluster wild monodromy representation}. See Figure~\ref{figure:cluster}. Such description is naturally discontinuous at the points of confluence $\epsilon\in\sC$.

\begin{definition}[Cluster wild monodromy]
Fix $\epsilon$. For each of the singularities $a_i(\epsilon)\in\Sing_\epsilon$ we choose a simply connected neighborhood $\sX_{a_i(\epsilon)}\subset \sX_\epsilon$,
in a way that they are pairwise disjoint.
Namely, we may choose $\sX_{a_i(\epsilon)}$ to be the connected component of the complement of the fat transverse graph (not the extended one) in $\sX_\epsilon$. Let $\Delta_{a_i(\epsilon)}$ be the germ of $\Delta_{\epsilon}$ at $a_i(\epsilon)$.
\begin{itemize}
	\item For an irregular (parabolic equilibrium) $a_i(\epsilon)$ of rank $\nu_i>0$, let $\sD_{a_i(\epsilon)}\subset \sX_{a_i(\epsilon)}$ 
	consist of the $4\nu_i$ base-points situated in the $2\nu_i$ sepal zones of $a_i(\epsilon)$.
	\item For a regular (simple equilibrium) $a_i(\epsilon)$ of rank $\nu_i=0$,  let $\sD_{a_i(\epsilon)}=\{d_{a_i(\epsilon)}\}\subset \sX_{a_i(\epsilon)}$ consist of a single new base-point.
\end{itemize}

A \emph{cluster wild monodromy representation} 
\[\rho^\cluster_\epsilon:\Pi_1(\sX\smallsetminus\Sing_\epsilon, \sD_\epsilon^\cluster)\to \SL_2(\C),\qquad \sD_\epsilon^\cluster=\bigcup_{a_i(\epsilon)\in\Sing_\epsilon}\sD_{a_i(\epsilon)},\]
is a monodromy representation whose restriction to each $\Pi_1(\sX_{a_i(\epsilon)}\smallsetminus\{a_i(\epsilon)\}, \sD_{a_i(\epsilon)})$
is:
\begin{itemize}
	\item local wild monodromy representation associated to $\Delta_{a_i(\epsilon)}$  when $a_i(\epsilon)$ is irregular,
	\item ordinary $\SL_2(\C)$-monodromy representation when $a_i(\epsilon)$ is regular.
\end{itemize}	
Namely, it associates to a base-point $d\in\sD_{a_i(\epsilon)}$ the structure
$\big(\C^2,\ \textbf{flag},\ \textbf{splitting}\big)$ if $a_i(\epsilon)$ is irregular, and the vector space $\C^2$ if $a_i(\epsilon)$ is regular.

An equivalence relation is provided by an element 
\[\tau\in \prod_{a_i(\epsilon)\ \text{irr.}} \bT^{\sD_{a_i(\epsilon)}}\times \prod_{a_i(\epsilon)\ \text{reg.}} \SL_2(\C)^{\sD_{a_i(\epsilon)}}.\] 
\end{definition}

\begin{definition}
A cluster wild monodromy representation $\rho^\cluster_\epsilon$ depending continuously on  $\epsilon\in\proj_\epsilon(\sW)\smallsetminus\sC$
 is \emph{compatible with $\Delta_\epsilon$}
if, up to the equivalence relation continuous in $\epsilon$, its restriction to each $\Pi_1(\sX_{a_i(\epsilon)}\smallsetminus\{a_i(\epsilon)\}, \sD_{a_i(\epsilon)})$
is:
\begin{itemize}
	\item compatible with $\Delta_{a_i(\epsilon)}$  when $a_i(\epsilon)$ is irregular,
	\item can  be equipped with the standard flag structure $\big(\C^2,\ \textnormal{\textbf{flag}}\big)$ when $a_i(\epsilon)$ is regular in such a way that $\rho_\epsilon^\cluster(\gamma)\in\bB$ and satisfies \eqref{eq:compatible}
	for each $\gamma\in\pi_1(\sX_{a_i(\epsilon)}\smallsetminus\{a_i(\epsilon)\}, d_{a_i(\epsilon)})$. 
\end{itemize}	
\end{definition}

\begin{remark}
Let $a_i(\epsilon)$ be a regular singularity, and assume that the local monodromy $\rho_\epsilon^\cluster$ satisfies the trace relation:
	\[	\tr\rho^\cluster_\epsilon(\gamma)=\e^{\int_{\gamma}\sqrt\Delta_\epsilon}+\e^{-\int_{\gamma}\sqrt\Delta_\epsilon},\quad\text{for each}\quad \gamma\in\pi_1\big(\sX_{a_i(\epsilon)}\smallsetminus\{a_i(\epsilon)\}, d_{a_i(\epsilon)}\big).
	\]	
Then for $\epsilon$ for which $a_i(\epsilon)$ is non-resonant, not only is $\rho^\cluster_\epsilon(\gamma)$ is diagonalizable by $\SL_2(\C)$, but one can also choose which eigenvalue is in the $(1,1)$-position, so the compatibility condition is automatically satisfied. 
However it can fail near the values of $\epsilon$ for which there is a resonance:
although one can still upper-triangularize by $\SL_2(\C)$, in general, one may not choose which eigenvalue (as a function of $\epsilon$) gets to be the first.  
\end{remark}

%

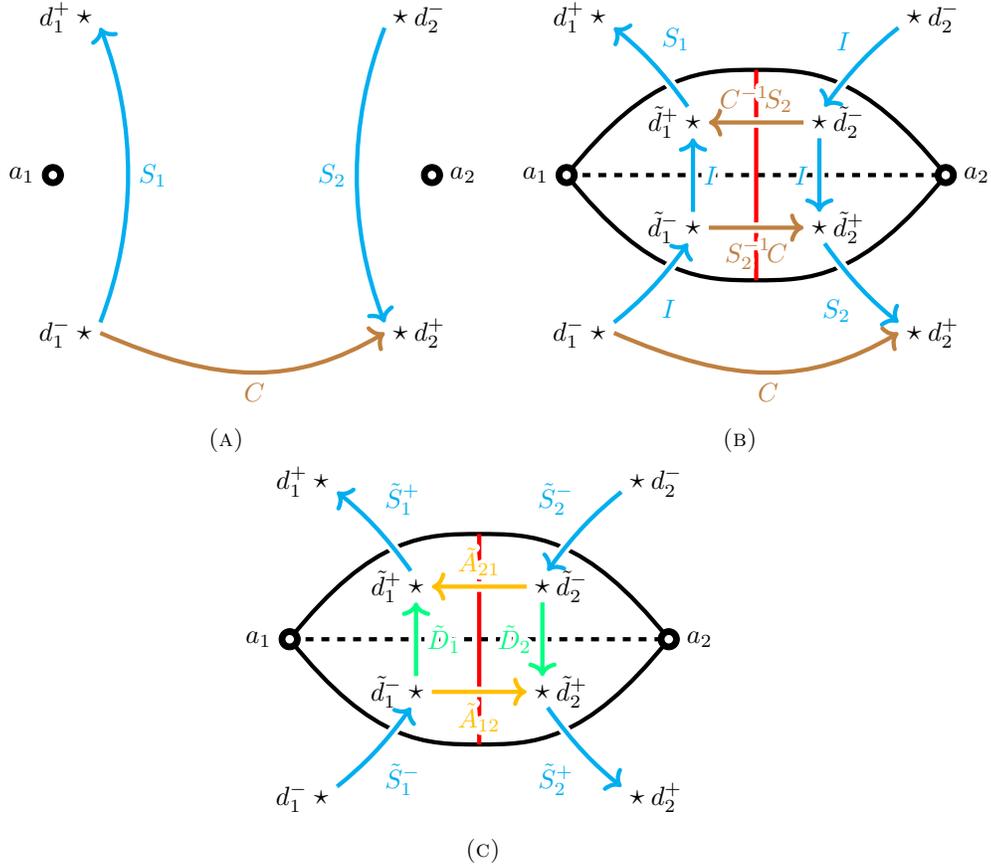
\begin{figure}
	\centering
	\begin{subfigure}[t]{0.45\textwidth}\label{figure:gaterep-a}
		\begin{tikzpicture}[scale=1.4]
			\filldraw (0.2,0) circle (0.1) node[left]{$a_1\ $}; \filldraw[color=white] (0.2,0) circle (1pt);
			\filldraw (3.8,0) circle (0.1) node[right]{$\ a_2$}; \filldraw[color=white] (3.8,0) circle (1pt);
			
			\filldraw (0.5,1.5) node{\large$\filledstar$} node[left]{$d_1^+\, $};
			\filldraw (0.5,-1.5) node{\large$\filledstar$} node[left]{$d_1^-\, $};
			\filldraw (3.5,1.5) node{\large$\filledstar$}node[right]{$\, d_2^-$};
			\filldraw (3.5,-1.5) node{\large$\filledstar$} node[right]{$\, d_2^+$};
			
			\draw[ultra thick, color=cyan,->] (0.65,-1.4) .. controls (1,-.5) and (1,.5) .. (0.65,1.4)  node[midway,right] {$S_1$};
			\draw[ultra thick, color=cyan,<-] (3.35,-1.4) .. controls (3,-.5) and (3,.5) .. (3.35,1.4)  node[midway,left] {$S_2$};
			\draw[ultra thick, color=brown,<-] (3.35,-1.5) .. controls (2.5,-2) and (1.8,-2) .. (0.65,-1.5) node[midway,below] {$C$};
		\end{tikzpicture}
		\caption{}	
	\end{subfigure}	
	\qquad	
	\begin{subfigure}[t]{0.45\textwidth}\label{figure:gaterep-b}
		\begin{tikzpicture}[scale=1.4]
			\draw[ultra thick,dashed] (0.2,0) -- (3.8,0);
			\draw[ultra thick] (0.2,0) .. controls (1,1) and (1.5,1) .. (2,1) .. controls (2.5,1) and (3,1) .. (3.8,0);
			\draw[ultra thick] (0.2,0) .. controls (1,-1) and (1.5,-1) .. (2,-1) .. controls (2.5,-1) and (3,-1) .. (3.8,0);
			\draw[ultra thick, color=red] (2,1) -- (2,-1); 
			\filldraw (0.2,0) circle (0.1) node[left]{$a_1\ $}; \filldraw[color=white] (0.2,0) circle (1pt);
			\filldraw (3.8,0) circle (0.1) node[right]{$\ a_2$}; \filldraw[color=white] (3.8,0) circle (1pt);
			
			\filldraw (1.4,0.5) node{\large$\filledstar$} node[left]{$\tilde d_1^+\, $};
			\filldraw (1.4,-0.5) node{\large$\filledstar$} node[left]{$\tilde d_1^-\, $};
			\filldraw (0.5,1.5) node{\large$\filledstar$} node[left]{$d_1^+\, $};
			\filldraw (0.5,-1.5) node{\large$\filledstar$} node[left]{$d_1^-\, $};
			\filldraw (2.6,0.5) node{\large$\filledstar$} node[right]{$\, \tilde d_2^-$};
			\filldraw (2.6,-0.5) node{\large$\filledstar$} node[right]{$\, \tilde d_2^+$};
			\filldraw (3.5,1.5) node{\large$\filledstar$}node[right]{$\, d_2^-$};
			\filldraw (3.5,-1.5) node{\large$\filledstar$} node[right]{$\, d_2^+$};

			\draw[line width=4pt, color=white] (0.65,-1.4) .. controls (0.9,-1.2) and (1.1,-1) .. (1.35,-0.65);
			\draw[ultra thick, color=cyan,->] (0.65,-1.4) .. controls (0.9,-1.2) and (1.1,-1) .. (1.35,-0.65)  node[midway,below right] {$I$};
	
			\draw[line width=4pt, color=white] (1.4,-0.35) -- (1.4,0.35);
			\draw[ultra thick, color=cyan,->] (1.4,-0.35) -- (1.4,0.35)  node[midway,right] {\contour{white}{\color{cyan}$I$}};	
	
			\draw[line width=4pt, color=white] (0.65,1.4) .. controls (0.9,1.2) and (1.1,1) .. (1.35,0.65);
			\draw[ultra thick, color=cyan,<-] (0.65,1.4) .. controls (0.9,1.2) and (1.1,1) .. (1.35,0.65)  node[midway,above right] {$S_1$};	
	
			\draw[line width=4pt, color=white] (3.35,-1.4) .. controls (3.1,-1.2) and (2.9,-1) .. (2.65,-0.65);
			\draw[ultra thick, color=cyan,<-] (3.35,-1.4) .. controls (3.1,-1.2) and (2.9,-1) .. (2.65,-0.65)  node[midway,below left] {$S_2$};	
	
			\draw[line width=4pt, color=white] (2.6,-0.35) -- (2.6,0.35);
			\draw[ultra thick, color=cyan,<-] (2.6,-0.35) -- (2.6,0.35)  node[midway,left] {\contour{white}{\color{cyan}$I$}};	
	
			\draw[line width=4pt, color=white] (3.35,1.4) .. controls (3.1,1.2) and (2.9,1) .. (2.65,0.65);
			\draw[ultra thick, color=cyan,->] (3.35,1.4) .. controls (3.1,1.2) and (2.9,1) .. (2.65,0.65)  node[midway,above left] {$I$};	
	
			\draw[line width=4pt, color=white] (3.35,-1.5) .. controls (2.5,-2) and (1.8,-2) .. (0.65,-1.5);
			\draw[ultra thick, color=brown,<-] (3.35,-1.5) .. controls (2.5,-2) and (1.8,-2) .. (0.65,-1.5) node[midway,below] {$C$};	
	
			\draw[line width=4pt, color=white] (1.55,-0.5) -- (2.45,-0.5);
			\draw[ultra thick, color=brown,->] (1.55,-0.5) -- (2.45,-0.5) node[midway,below] {\contour{white}{\color{brown}$S_2^{-1}\!C$}};	
	
			\draw[line width=4pt, color=white] (2.45,0.5) -- (1.55,0.5);
			\draw[ultra thick, color=brown,->] (2.45,0.5) -- (1.55,0.5) node[midway,above]  {\contour{white}{\color{brown}$C^{-1}\!S_2$}};
		\end{tikzpicture}
		\caption{}	
	\end{subfigure}		
	\begin{subfigure}[t]{0.49\textwidth}\label{figure:gaterep-c}	
		\begin{tikzpicture}[scale=1.4]
			\draw[ultra thick,dashed] (0.2,0) -- (3.8,0);
			\draw[ultra thick] (0.2,0) .. controls (1,1) and (1.5,1) .. (2,1) .. controls (2.5,1) and (3,1) .. (3.8,0);
			\draw[ultra thick] (0.2,0) .. controls (1,-1) and (1.5,-1) .. (2,-1) .. controls (2.5,-1) and (3,-1) .. (3.8,0);
			\draw[ultra thick, color=red] (2,1) -- (2,-1); 
			\filldraw (0.2,0) circle (0.1) node[left]{$a_1\ $}; \filldraw[color=white] (0.2,0) circle (1pt);
			\filldraw (3.8,0) circle (0.1) node[right]{$\ a_2$}; \filldraw[color=white] (3.8,0) circle (1pt);
			
			\filldraw (1.4,0.5) node{\large$\filledstar$} node[left]{$\tilde d_1^+\, $};
			\filldraw (1.4,-0.5) node{\large$\filledstar$} node[left]{$\tilde d_1^-\, $};
			\filldraw (0.5,1.5) node{\large$\filledstar$} node[left]{$d_1^+\, $};
			\filldraw (0.5,-1.5) node{\large$\filledstar$} node[left]{$d_1^-\, $};
			\filldraw (2.6,0.5) node{\large$\filledstar$} node[right]{$\, \tilde d_2^-$};
			\filldraw (2.6,-0.5) node{\large$\filledstar$} node[right]{$\, \tilde d_2^+$};
			\filldraw (3.5,1.5) node{\large$\filledstar$}node[right]{$\, d_2^-$};
			\filldraw (3.5,-1.5) node{\large$\filledstar$} node[right]{$\, d_2^+$};
			
			\draw[line width=4pt, color=white] (0.65,-1.4) .. controls (0.9,-1.2) and (1.1,-1) .. (1.35,-0.65);
			\draw[ultra thick, color=cyan,->] (0.65,-1.4) .. controls (0.9,-1.2) and (1.1,-1) .. (1.35,-0.65)  node[midway,below right] {$\tilde S_1^-$};

			\draw[line width=4pt, color=white] (1.4,-0.35) -- (1.4,0.35);		
			\draw[ultra thick, color=bgreen,->] (1.4,-0.35) -- (1.4,0.35)  node[midway,right] {\contour{white}{\color{bgreen}$\tilde D_1$}};
			
			\draw[line width=4pt, color=white] (0.65,1.4) .. controls (0.9,1.2) and (1.1,1) .. (1.35,0.65);
			\draw[ultra thick, color=cyan,<-] (0.65,1.4) .. controls (0.9,1.2) and (1.1,1) .. (1.35,0.65)  node[midway,above right] {$\tilde S_1^+$};
			
			\draw[line width=4pt, color=white] (3.35,-1.4) .. controls (3.1,-1.2) and (2.9,-1) .. (2.65,-0.65);
			\draw[ultra thick, color=cyan,<-] (3.35,-1.4) .. controls (3.1,-1.2) and (2.9,-1) .. (2.65,-0.65)  node[midway,below left] {$\tilde S_2^+$};
			
			\draw[line width=4pt, color=white] (2.6,-0.35) -- (2.6,0.35);		
			\draw[ultra thick, color=bgreen,<-] (2.6,-0.35) -- (2.6,0.35)  node[midway,left] {\contour{white}{\color{bgreen}$\tilde D_2$}};

			\draw[line width=4pt, color=white] (3.35,1.4) .. controls (3.1,1.2) and (2.9,1) .. (2.65,0.65);
			\draw[ultra thick, color=cyan,->] (3.35,1.4) .. controls (3.1,1.2) and (2.9,1) .. (2.65,0.65)  node[midway,above left] {$\tilde S_2^-$};

			\draw[line width=4pt, color=white] (1.55,-0.5) -- (2.45,-0.5);
			\draw[ultra thick, color=amber,->] (1.55,-0.5) -- (2.45,-0.5) node[midway,below] {\contour{white}{\color{amber}$\tilde A_{12}$}};

			\draw[line width=4pt, color=white] (2.45,0.5) -- (1.55,0.5);		
			\draw[ultra thick, color=amber,->] (2.45,0.5) -- (1.55,0.5) node[midway,above] {\contour{white}{\color{amber}$\tilde A_{21}$}};
		\end{tikzpicture}
		\caption{}	
	\end{subfigure}	
	\caption{Change of presentation of wild monodromy as a gate between singularities $a_1$ and $a_2$ is added. Some of the base-points $d_i^\pm$ can be equal.
	Each arrow is labeled by its representing matrix.}	
	\label{figure:gaterep}
\end{figure}

For each regular singularity $a_i(\epsilon)$, the local moduli space of equivalence classes of flag representations $\Pi_1(\sX_{a_i(\epsilon)}\smallsetminus\{a_i(\epsilon)\},\ \sD_{a_i(\epsilon)})\to\bB$ is isomorphic
to $\Pi_1(\sX_{a_i(\epsilon)}\smallsetminus\{a_i(\epsilon)\}, \sD_{\sW_\epsilon}\cap\sX_{a_i(\epsilon)})\to\bB$.
So once $\rho^\conf_\epsilon$ is equipped with compatible flag structure at the regular singularities, it can be identified with a representation of $\Pi_1(\sX\smallsetminus\Crit_\epsilon, \sD_{\sW_\epsilon})$ respecting the flag structure on the lagoons. The only difference between this and a confluent wild monodromy representation in the sense of Remark~\ref{remark:flag} lies in the condition on transversality of the pairs of flags along gates.

\begin{proposition}
For $\epsilon\in\proj_\epsilon(\sW)$, the space $\Mon^\conf_{\Delta,\sW,\epsilon}$ of confluent wild monodromy representations compatible with $\Delta_\epsilon$ is isomorphic to the subspace of $\Mon^\cluster_{\Delta,\epsilon}$ of cluster wild monodromy representations compatible with $\Delta_\epsilon$ defined by the conditions of transversality of the pairs of flags along each of the gates.
%
\end{proposition}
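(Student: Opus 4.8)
The plan is to build mutually inverse maps between $\Mon^\conf_{\Delta,\sW,\epsilon}$ and the transversality subspace of $\Mon^\cluster_{\Delta,\epsilon}$, reusing the local identification of flag moduli at regular singularities recorded just above. Throughout one uses that a confluent wild monodromy representation factors through $\Pi_1(\sX\smallsetminus\Sing_\epsilon,\sD_{\sW_\epsilon})$ by condition (3) of Definition~\ref{def:wildmonodromy}; hence both presentations are representations of fundamental groupoids of the same punctured surface $\sX\smallsetminus\Sing_\epsilon$, differing only in their base-points and in the structures attached to them. Taking the neighborhoods $\sX_{a_i(\epsilon)}$ to be the components of the complement of the fat transverse graph, I would first record that the elementary paths split into \emph{local} ones, contained in a single $\sX_{a_i(\epsilon)}$, and \emph{connecting} ones, namely the transversals of the $\alpha\omega$-zones whose two ends lie in the lagoons of two distinct singularities. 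Restricting a confluent representation to each $\Pi_1(\sX_{a_i(\epsilon)}\smallsetminus\{a_i(\epsilon)\},\,\sD_{\sW_\epsilon}\cap\sX_{a_i(\epsilon)})$ recovers the local wild monodromy at an irregular $a_i(\epsilon)$ with its $(\C^2,\textbf{flag},\textbf{splitting})$ intact, since the sepal petals and their base-points $d_\sZ^{\pm}$ are exactly the local Stokes data.

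The forward map $\Mon^\conf\to\Mon^\cluster$ keeps the local wild monodromy at each irregular singularity and, at each regular singularity, forgets the flag and splitting, retaining only the underlying $\SL_2(\C)$-monodromy on $\C^2$; by the local moduli isomorphism recalled above this descends to equivalence classes. Compatibility with $\Delta_\epsilon$ is inherited through \eqref{eq:compatible}, and because the connecting paths of $\rho^\conf$ lie in $\rotT\subset\SL_2(\C)\smallsetminus\bB$, the two flags meeting across every gate are automatically transverse, so the image lands in the prescribed subspace.

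For the backward map $\Mon^\cluster\to\Mon^\conf$ I would proceed singularity by singularity, following the change of presentation of Figure~\ref{figure:gaterep}. At an irregular $a_i(\epsilon)$ the flag and splitting already belong to the cluster datum and carry over verbatim. At a regular $a_i(\epsilon)$ the compatibility relation forces the local monodromy into $\bB$ with the subdominant eigenvalue in the $(1,1)$-slot; its invariant line provides a canonical \textbf{flag}, which I transport to the base-points of $\sD_{\sW_\epsilon}\cap\sX_{a_i(\epsilon)}$ through the local moduli isomorphism. This reduces the structure group on each lagoon from $\SL_2(\C)$ to $\bB$, turning the global representation into a flag representation in the sense of Remark~\ref{remark:flag}. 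It then remains to reorganize the outer connecting data: inserting the gate base-points $\tilde d_i^{\pm}$ in each connecting $\alpha\omega$-zone, the subspace transversality condition places the associated transversal map in $\SL_2(\C)\smallsetminus\bB=\bU\cdot\rotT\cdot\bU$, and the unique factorizations $\bB=\bT\cdot\bU$ and $\bU\cdot\rotT\cdot\bU$ of Remark~\ref{remark:flag} rigidify everything into a genuine wild monodromy representation, valued in $\bB$, $\bT$, $\rotT$ along the three types of elementary paths, with the splittings read off from the transverse pairs of flags.

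It remains to check that the two constructions are mutually inverse and intertwine the equivalence relations, which is where the main difficulty lies. The cluster equivalence group $\prod_{\mathrm{irr}}\bT^{\sD_{a_i(\epsilon)}}\times\prod_{\mathrm{reg}}\SL_2(\C)^{\sD_{a_i(\epsilon)}}$ differs from the confluent group $\bT^{\sD_{\sW_\epsilon}}$ precisely at the regular singularities, and the reconciliation is again Remark~\ref{remark:flag}: installing the canonical flag reduces $\SL_2(\C)$ to $\bB$, and the transverse flag across the adjacent gate then reduces $\bB$ to $\bT$, so the two equivalence data match. The one delicate point is the canonicity of this flag at a resonant regular singularity, where the $(1,1)$-eigenvalue cannot be followed as a single-valued function of $\epsilon$; but at the \emph{fixed} $\epsilon\in\proj_\epsilon(\sW)$ considered here the compatibility condition \eqref{eq:compatible} singles out the subdominant eigenline unambiguously, so the flag, and with it the claimed isomorphism, is well defined.
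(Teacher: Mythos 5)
Your proposal is correct and follows essentially the same route as the paper: both pass between the two presentations by the gate-by-gate change of base-points of Figure~\ref{figure:gaterep}, using the flag-only reformulation of Remark~\ref{remark:flag} and the unique factorizations $\bB=\bT\cdot\bU$, $\SL_2(\C)\smallsetminus\bB=\bU\cdot\rotT\cdot\bU$ to normalize, with transversality along the gate appearing as the non-vanishing of the $(2,1)$-entry of the connecting matrix. The only difference is one of explicitness: the bulk of the paper's proof is the concrete matrix computation (the normalized $\tilde S_i^{\pm},\tilde D_i,\tilde A_{ij}$ and the invariants $\xi_j$ in the three cases of removed base-points) which actually verifies that the correspondence descends to the quotients and reconciles the cluster equivalence group $\SL_2(\C)^{\sD_{a_i}}$ at regular singularities with the confluent $\bT^{\sD_{\sW_\epsilon}}$ — the step you correctly identify as the main difficulty but only argue structurally.
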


\begin{proof}
We assume that the vector spaces representing the base-points $d_{a_i}$ at regular singularities $a_i$  are equipped by compatible flag structure.
We proceed by adding gates one at a time, together with the base-points in the associated $\alpha\omega$-zone to which we extend the representation $\rho_\epsilon^\cluster$. 
See Figures~\ref{figure:gaterep}(a) and \ref{figure:gaterep}(b).
Assuming transversality along the gate we act by $\bB$ on the 4 base-points of the $\alpha\omega$-zone as in Remark~\ref{remark:flag} to get it to the correct form  satisfying the constraints of Definition~\ref{def:wildmonodromy}. See Figure~\ref{figure:gaterep}(c).
After that, if some of the original base-points at either of the singularity was of the extra base-points $d_{a_i}$ of a regular singularity $a_i$ we remove it. 
This way all the local base-points in the Figure~\ref{figure:gaterep}(c) will be represented by the full structure $\big(\C^2,\ \textbf{flag},\ \textbf{splitting}\big)$.
 
In explicit terms: Let 
\[S_1=\begin{psmallmatrix}\alpha_1&s_1\\[3pt]0&\frac{1}{\alpha_1}\end{psmallmatrix},\qquad
S_2=\begin{psmallmatrix}\alpha_2&s_2\\[3pt]0&\frac{1}{\alpha_2}\end{psmallmatrix},\qquad
C=\begin{psmallmatrix}a&b\\[3pt]c&d\end{psmallmatrix},
\] 
in Figure~\ref{figure:gaterep}(b), then the transversality condition along the gate is $c\neq 0$,
and the $\bB$-conjugated representation is for example
\begin{align*}
\tilde S_1^-&=\begin{psmallmatrix}1&\frac{d}{c}\\[3pt]0&1\end{psmallmatrix},\quad&
\tilde S_1^+&=\begin{psmallmatrix}1&\alpha_1s_1-\frac{\alpha_1^2d}{c}\\[3pt]0&1\end{psmallmatrix},\quad&
\tilde D_1&=\begin{psmallmatrix}\alpha_1&0\\[3pt]0&\frac{1}{\alpha_1}\end{psmallmatrix},\quad&
\tilde A_{12}&=\begin{psmallmatrix}0&-\frac{1}{c}\\[3pt]c&0\end{psmallmatrix},\\
\tilde S_2^+&=\begin{psmallmatrix}1&\frac{a}{c}\\[3pt]0&1\end{psmallmatrix},\quad&
\tilde S_2^-&=\begin{psmallmatrix}1&\frac{s_2}{\alpha_2}-\frac{a}{\alpha_2^2 c}\\[3pt]0&1\end{psmallmatrix},\quad&
\tilde D_2&=\begin{psmallmatrix}\alpha_2&0\\[3pt]0&\frac{1}{\alpha_2}\end{psmallmatrix},\quad&
\tilde A_{21}&=\begin{psmallmatrix}0&\frac{\alpha_1}{\alpha_2 c}\\[3pt]-\frac{\alpha_2c}{\alpha_1}&0\end{psmallmatrix},
\end{align*}	
so that 
\[C=\tilde S_2^+\tilde A_{12}\tilde S_1^-,\quad S_1=\tilde S_1^+\tilde D_{1}\tilde S_1^-,\quad S_2=\tilde S_2^+\tilde D_{2}\tilde S_2^-,
\quad I=\tilde A_{12}\tilde D_{1}^{-1}\tilde A_{21}\tilde D_2^{-1}.
\]	
In the case when $a_i$ is regular singularity and $d_i^+=d_i^-=d_{a_i}$ is to be removed, then the pair $\tilde S_i^+,\ \tilde S_i^-$ is replaced by the composition $\tilde S_i=\tilde S_i^-\tilde S_i^+=\begin{psmallmatrix}1&\tilde s_i\\[3pt]0&1\end{psmallmatrix}$,
\[\tilde s_1=\tfrac{\alpha_1}{c}\big(s_1c+d(\tfrac{1}{\alpha_1}-\alpha_1)\big),\qquad
\tilde s_2=\tfrac{1}{\alpha_2c}\big(s_2c+a(\alpha_2-\tfrac{1}{\alpha_2})\big).
\]
Depending whether this happens, there are 3 possibilities:

(1) Neither of $d_i^\pm$ is removed:
The local invariants associated to the paths represented by $C=\tilde S_2^+\tilde A_{12}\tilde S_1^-$, $S_1C^{-1}S_2=\tilde S_1^+\tilde A_{21}\tilde S_2^-$,
$C^{-1}S_2=(\tilde S_1^-)^{-1}\tilde A_{12}^{-1}\tilde D_2\tilde S_2^-$ and $S_1C^{-1}=\tilde S_1^+\tilde D_1\tilde A_{12}^{-1}(\tilde S_2^+)^{-1}$ are
\begin{align*}
\diag\big(\tilde A_{12}^{-1}\tilde S_2^+\tilde A_{12}\tilde S_1^-\big)&=\begin{psmallmatrix}1&0\\[3pt]0&1+\xi_0\end{psmallmatrix},\quad&
\diag\big(\tilde D_2^{-1}\tilde A_{12}(\tilde S_1^-)^{-1}\tilde A_{12}^{-1}\tilde D_2\tilde S_2^-\big)&=\begin{psmallmatrix}1&0\\[3pt]0&1+\xi_2\end{psmallmatrix},\\
\diag\big(\tilde A_{21}^{-1}\tilde S_1^+\tilde A_{21}\tilde S_2^-\big)&=\begin{psmallmatrix}1&0\\[3pt]0&1+\xi_1\end{psmallmatrix},\quad&
\diag\big(\tilde A_{12}\tilde D_1^{-1}\tilde S_1^+\tilde D_1\tilde A_{12}^{-1}(\tilde S_2^+)^{-1}\big)&=\begin{psmallmatrix}1&0\\[3pt]0&1+\xi_3\end{psmallmatrix},
\end{align*}
where
\[\xi_0=-ad,\quad\xi_1=-\tfrac{\alpha_2}{\alpha_1}\big(s_1c-d\alpha_1\big)\big(s_2c-a\tfrac{1}{\alpha_2}\big),\quad
\xi_2=\alpha_2d\big(s_2c-a\tfrac{1}{\alpha_2}\big),\quad\xi_3=\tfrac{1}{\alpha_1}a\big(s_1c-d\alpha_1\big)\]
are subject to the relation
\[\xi_0\xi_1-\xi_2\xi_3=0.\]
The codimension 1 subspace of representations with $c=0$, violating the transversality condition, is mapped to the point $(\xi_0,\xi_1,\xi_2,\xi_3)=(-1,0,0,0)$.

(2) One of the points, let's say $d_1^+=d_1^-$, is removed: 
Then the formal monodromy around $a_1$
\[(\tilde S_1\tilde D_1)^\natural=\tilde D_1,\]
determines a formal invariant $\alpha_1$, and the compositions $\tilde S_2^+\tilde A_{12}\tilde S_1$ and $\tilde S_1\tilde A_{21}\tilde S_2^-$ determine the analytic invariants
\begin{align*}
	\diag\big(\tilde A_{12}^{-1}\tilde S_2^+\tilde A_{12}\tilde S_1\big)&=\begin{psmallmatrix}1&0\\[3pt]0&1+\xi_0\end{psmallmatrix},\quad&
	\diag\big(\tilde A_{21}^{-1}\tilde S_1\tilde A_{21}\tilde S_2^-\big) &=\begin{psmallmatrix}1&0\\[3pt]0&1+\xi_1\end{psmallmatrix},
\end{align*}
where
\[\xi_0=-\alpha_1a\big(s_1c+d(\tfrac{1}{\alpha_1}-\alpha_1)\big),\qquad
\xi_1=-\tfrac{\alpha_2}{\alpha_1}\big(s_1c+d(\tfrac{1}{\alpha_1}-\alpha_1)\big)\big(s_2c-a\tfrac{1}{\alpha_2}\big).\]
The codimension 1 subspace of representations with $c=0$, violating the transversality condition, is mapped to the point $(\xi_0,\xi_2)=\big(\alpha_1-\tfrac{1}{\alpha_1},\ 0\big)$.

(3) Both $d_1^+=d_1^-$ and $d_2^+=d_2^-$ are removed:
Then the formal monodromies around $a_1$ and $a_2$
\[(\tilde S_1\tilde D_1)^\natural=\tilde D_1=\begin{psmallmatrix}\alpha_1&0\\[3pt]0&\frac{1}{\alpha_1}\end{psmallmatrix},\qquad 
(\tilde S_2\tilde D_2)^\natural=\tilde D_2=\begin{psmallmatrix}\alpha_2&0\\[3pt]0&\frac{1}{\alpha_2}\end{psmallmatrix},\]
determine formal invariants $\alpha_1$ and $\alpha_2$, 
and the compositions $\tilde S_2\tilde A_{12}\tilde S_1$ and $\tilde S_1\tilde A_{21}\tilde S_2$ determine the analytic invariants
\begin{align*}
	\diag\big(\tilde A_{12}^{-1}\tilde S_2\tilde A_{12}\tilde S_1\big)&=\begin{psmallmatrix}1&0\\[3pt]0&1+\xi_0\end{psmallmatrix},\quad&
	\diag\big(\tilde A_{21}^{-1}\tilde S_1\tilde A_{21}\tilde S_2\big) &=\begin{psmallmatrix}1&0\\[3pt]0&1+\left(\tfrac{\alpha_2}{\alpha_1}\right)^2\xi_0\end{psmallmatrix},
\end{align*}
where
\[\xi_0=-\tfrac{\alpha_1}{\alpha_2}\big(s_1c+d(\tfrac{1}{\alpha_1}-\alpha_1)\big)\big(s_2c+a(\alpha_2-\tfrac{1}{\alpha_2})\big).\]
The codimension 1 subspace of representations with $c=0$, violating the transversality condition, is mapped to the point
 $\xi_0=\big(\alpha_1-\tfrac{1}{\alpha_1}\big)\big(\alpha_2-\tfrac{1}{\alpha_2}\big)$.
\end{proof}

\subsubsection{Outer wild monodromy}

Let $\Gamma_\epsilon$ be the closed gate graph containing all singular points, and denote
\[\sX_\epsilon^\out\]
the \emph{outer component} of $\sX\smallsetminus\Gamma_\epsilon$, i.e. the one whose closure contains the boundary of $\sX$.
The \emph{outer domains} $\sZ_{\epsilon}$ are those in $\sX^\out_\epsilon$, and $\sD_{\sW_\epsilon}^\out= \sD_{\sW_\epsilon}\cap\sX_\epsilon^\out$ their basepoints.
At the limit $\proj_\epsilon(\sW)\ni\epsilon\to 0$ the gate graph shrinks to the origin, so the paths of the fundamental groupoid that persist are precisely those that lie in $\sX_\epsilon^\out$.
The \emph{outer fundamental groupoid}
\[\Pi_1(\sX_\epsilon^\out, \sD_{\sW_\epsilon}^\out)\]
is a subgroupoid of $\Pi_1(\sX\smallsetminus\Crit_\epsilon, \sD_{\sW_\epsilon})$.

\begin{definition}
A \emph{outer wild  monodromy representation}
is a contravariant functor	$\rho_\epsilon^\out:\Pi_1(\sX_\epsilon^\out, \sD_{\sW_\epsilon}^\out)\to \SL_2(\C)$
subject to the same restriction and the same equivalence relations as in Definition~\ref{def:wildmonodromy}. Only elementary paths of type a) and c) are contained in $\Pi_1(\sX_\epsilon^\out, \sD_{\sW_\epsilon}^\out)$.

It is \emph{compatible} with $\Delta(x,\epsilon)$ if either the Katz rank $\nu$ of the limit singularity is integer and \eqref{eq:compatible} holds for the loop generating the fundamental group $\pi_1(\sX_\epsilon^\out, d)$, $d\in\sD_{\sW_\epsilon}^\out$, or $\nu\in\frac12\Z\smallsetminus\Z$.

Denote $\Rep^\out_{\Delta,\sW,\epsilon}$ the space  of outer wild monodromy representations compatible with $\Delta(x,\epsilon)$, and
\[\Mon^\out_{\Delta,\sW,\epsilon}= \left.\raisebox{.2em}{$\Rep^\out_{\Delta,\sW,\epsilon}$}\Big/\raisebox{-.2em}{$\bT^{\sD_{\sW_\epsilon}^\out}$}\right.,\]
the space of their equivalence classes.
\end{definition}

\begin{figure}
	\centering
	\begin{tikzpicture}[scale=1.2]	
		\draw[ultra thick,dashed] (-2,0) -- (2,0);
		\draw[ultra thick] (0,0) -- (0,2);
		\draw[ultra thick] (-2,0) .. controls (-2,1) and (-1,3) .. (0,2);
		\draw[ultra thick] (2,0) .. controls (2,1) and (1,3) .. (0,2);
		\draw[ultra thick] (-2,0) .. controls (-4,1) and (-4,4) .. (0,4) .. controls (4,4) and (4,1) .. (2,0);
		\draw[ultra thick,dashed] (-2,0) .. controls (-2.5,1) and (-3,3) .. (0,3) .. controls (3,3) and (2.5,1) .. (2,0);
		\draw[ultra thick, color=red] (-1,0)  .. controls (-1,1) and (-1,2) .. (0,2); 
		\draw[ultra thick, color=red] (1,0)  .. controls (1,1) and (1,2) .. (0,2); 
		\draw[ultra thick, color=red] (0,2)  -- (0,4); 
		
		\filldraw (-2,0) circle (2pt) node[below]{$a_2\ $}; \filldraw[color=white] (-2,0) circle (1pt);
		\filldraw (0,0) circle (2pt) node[below]{$a_1\ $}; \filldraw[color=white] (0,0) circle (1pt);
		\filldraw (2,0) circle (2pt) node[below]{$a_0\ $}; \filldraw[color=white] (2,0) circle (1pt);
		\filldraw[color=red] (0,2) circle (2pt);
		
		\filldraw (-2.7,1.8) node{\large$\filledstar$} node[below]{$d_2\, $};
		\filldraw (-2.05,1.2) node{\large$\filledstar$} node[below]{$\tilde d_2^+\, $};
		\filldraw (-1.4,0.9) node{\large$\filledstar$} node[below]{$\tilde d_2^-\, $};
		\filldraw (-.5,0.7) node{\large$\filledstar$} node[below]{$\tilde d_1^+$};
		\filldraw (.5,0.7) node{\large$\filledstar$} node[below]{$\tilde d_1^-$};
		\filldraw (1.4,0.9) node{\large$\filledstar$} node[below]{$\,\tilde d_0^+$};
		\filldraw (2.05,1.2) node{\large$\filledstar$} node[below]{$\,\tilde d_0^-$};
		\filldraw (2.7,1.8) node{\large$\filledstar$} node[below]{$d_0\, $};
		
		\draw[line width=4pt, color=white] (-2.7,2) .. controls (-2.5,2.5) and (-2,3.4) .. (0,3.4) .. controls (2,3.4) and (2.5,2.5) .. (2.7,2);
		\draw[ultra thick, color=amber,<-] (-2.7,2) .. controls (-2.5,2.5) and (-2,3.4) .. (0,3.4) .. controls (2,3.4) and (2.5,2.5) .. (2.7,2); \draw[color=amber] (0,3.38) node[above right] {\contour{white}{\color{amber}$A_{02}$}}; 
		
		\draw[line width=4pt, color=white] (-2,1.3) .. controls (-1.7,2) and (-1.3,2.5) .. (0,2.5) .. controls (1.3,2.5) and (1.7,2) .. (2,1.3); 
		\draw[ultra thick, color=amber,->] (-2,1.3) .. controls (-1.7,2) and (-1.3,2.5) .. (0,2.5) .. controls (1.3,2.5) and (1.7,2) .. (2,1.3); \draw[color=amber] (0,2.48) node[above left] {\contour{white}{\color{amber}$\tilde A_{20}$}}; 
		
		\draw[line width=4pt, color=white]  (-.65,0.75)  -- (-1.25,0.85);
		\draw[ultra thick, color=amber,->]  (-.65,0.75)  -- (-1.25,0.85) node[midway,above] {\contour{white}{\color{amber}$\,\tilde A_{12}$}};
		
		\draw[line width=4pt, color=white]  (.65,0.75)  -- (1.25,0.85);
		\draw[ultra thick, color=amber,<-]  (.65,0.75)  -- (1.25,0.85) node[midway,above] {\contour{white}{\color{amber}$\,\tilde A_{01}$}};
		
		\draw[line width=4pt, color=white]  (-.35,0.7)  -- (.35,0.7);
		\draw[ultra thick, color=cyan,<-]  (-.35,0.7)  -- (.35,0.7) node[midway,above] {\contour{white}{\color{cyan}$\,\tilde S_{1}$}};
		
		\draw[line width=4pt, color=white]  (-1.95,1.15)  -- (-1.55,0.95);
		\draw[ultra thick, color=cyan,<-]  (-1.95,1.15)  -- (-1.55,0.95) node[midway,above] {\contour{white}{\color{cyan}$\ \tilde S_{2}$}};
		
		\draw[line width=4pt, color=white]  (1.95,1.15)  -- (1.55,0.95);
		\draw[ultra thick, color=cyan,->]  (1.95,1.15)  -- (1.55,0.95) node[midway,above] {\contour{white}{\color{cyan}$\tilde S_{0}$}};
		
		\draw[line width=4pt, color=white]  (-2.6,1.7)  -- (-2.15,1.3);
		\draw[ultra thick, color=bgreen,<-]  (-2.6,1.7)  -- (-2.15,1.3); \draw[color=bgreen]  (-2.15,1.4) node[above] {\contour{white}{\color{bgreen}$\tilde D_{2}$}};
		
		\draw[line width=4pt, color=white]  (2.6,1.7)  -- (2.15,1.3);
		\draw[ultra thick, color=bgreen,->]  (2.6,1.7)  -- (2.15,1.3); \draw[color=bgreen]  (2.2,1.45) node[above] {\contour{white}{\color{bgreen}$\tilde D_{0}$}};
	\end{tikzpicture}
	\caption{Wild monodromy near a simple saddle point (rank $-\frac32$). The inner domains are bounded by the dashed triangular gate graph. Some of the singularities $a_0,a_1,a_2$ can be equal. 
		Each arrow is labeled by its representing matrix.}	
	\label{figure:saddlerep}
\end{figure}
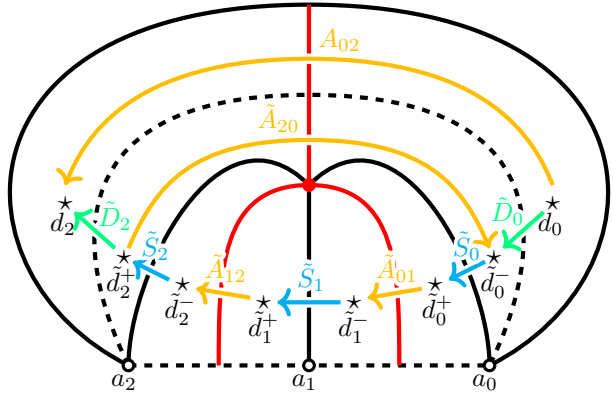

\begin{theorem}
Assume that $\Delta_\epsilon$ 
has at most one saddle point which is simple (of rank $\frac{3}{2}$) and that all singularities have integer ranks.
Then 
\[\Mon^\conf_{\Delta,\sW,\epsilon}\simeq\Mon^\out_{\Delta,\sW,\epsilon}.\]
\end{theorem}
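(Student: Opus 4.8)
The map to analyze is the \emph{restriction} functor induced by the inclusion of groupoids $\Pi_1(\sX_\epsilon^\out,\sD_{\sW_\epsilon}^\out)\hookrightarrow\Pi_1(\sX\smallsetminus\Crit_\epsilon,\sD_{\sW_\epsilon})$, sending a confluent representation $\rho_\epsilon^\conf$ to its restriction to the outer paths. First I would check this descends to moduli: an outer loop $\gamma\in\pi_1(\sX_\epsilon^\out,d)$ is also a loop of the full groupoid with the same period $\int_\gamma\sqrt{\Delta_\epsilon}$, so \eqref{eq:compatible} is inherited, and the equivalence group $\bT^{\sD_{\sW_\epsilon}}$ restricts onto $\bT^{\sD_{\sW_\epsilon}^\out}$; hence $r:\Mon^\conf_{\Delta,\sW,\epsilon}\to\Mon^\out_{\Delta,\sW,\epsilon}$ is well defined, and the content is that it is a bijection. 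Under the hypotheses the geometry is rigid: \eqref{eq:assumption} forces exactly one saddle, a simple zero with three separatrices, and the gate graph $\Gamma_\epsilon$ is a triangle joining the (at most three) adjacent equilibria, all of integer rank. Thus $\sX\smallsetminus\Gamma_\epsilon$ consists of $\sX_\epsilon^\out$ together with the inner domains of Figure~\ref{figure:saddlerep}, and $\Pi_1(\sX\smallsetminus\Crit_\epsilon,\sD_{\sW_\epsilon})$ is generated by the outer subgroupoid together with the three inner transversal morphisms (type c), the three gate morphisms (type b), and the inner Stokes morphisms (type a), subject to the single saddle relation of Definition~\ref{def:wildmonodromy}: the product of the $4|\nu|=6$ alternating $\bB$- and $\rotT$-matrices around the simple zero is $I$.

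For \textbf{injectivity} I would show that the inner matrices are recovered from the outer data up to inner $\bT$-equivalence, following the explicit scheme of the preceding proposition. Each outer connection matrix across a transversal lies in $\SL_2(\C)\smallsetminus\bB$ and so admits, by Remark~\ref{remark:flag}, a unique Gauss decomposition in $\bU\cdot\rotT\cdot\bU$; the transversality $c\neq0$ along the three inner gates guarantees these decompositions exist and, together with the gate-commutation $\rotT\cdot\bB\cdot\rotT\cap\bB=\bT$, they determine the inner transversal matrices $\tilde A_{ij}\in\rotT$ and the gate matrices $\tilde D_i\in\bT$ uniquely once we normalise by acting with $\bT$ on the inner base points. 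Two confluent representations with equivalent restrictions are therefore equivalent.

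For \textbf{surjectivity} I would reverse this construction: starting from a compatible outer representation I define the inner $\tilde A_{ij},\tilde D_i$ by the same decompositions, and then verify that (i) the saddle relation closes up — the product of the six alternating matrices telescopes to $I$ because the three inner transversals form a contractible tree and the simple zero contributes trivially — and (ii) \eqref{eq:compatible} holds on every inner loop, using that $\int\sqrt{\Delta_\epsilon}$ over a loop confined to the inner triangle reproduces the required diagonal exponential. Here the integer-rank hypothesis is essential: it makes the formalised monodromy $\rho^\natural$ land in $\bT$ rather than $\rotT$ and keeps the two branches of $\sqrt{\Delta_\epsilon}$ unramified over each equilibrium, so that the outer monodromy around the limit singularity is exactly the product of the outer matrices and matches the inner assembly.

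\textbf{Main obstacle.} The delicate point is the verification that inserting the inner triangular gate structure is information-preserving: one must check that the three $\bU\cdot\rotT\cdot\bU$ decompositions of the outer data are mutually consistent with the saddle factorization relation, so that the inner solution exists and is unique modulo inner $\bT$-equivalence. This is a finite but intricate $\SL_2(\C)$ computation, and the genuine subtlety is controlling the locus where some transversality $c=0$ fails and the Gauss decomposition degenerates; there I would argue, exactly as in cases (1)--(3) of the preceding proposition, that the degenerate strata on the two sides are carried onto one another, so that $r$ remains bijective after passing to the GIT-type identification.
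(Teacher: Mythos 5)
Your overall strategy (show the restriction functor to the outer groupoid is a bijection on equivalence classes) is the right frame, but the execution misses the one computation that actually makes the theorem true, and substitutes for it a heuristic that is incorrect. The paper's proof hinges on the following rigidity: after acting by $\bT$ on the inner base points $\tilde d_i^\pm$ one may normalize the three inner transversal matrices to $\tilde A_{ij}=\begin{psmallmatrix}0&-1\\1&0\end{psmallmatrix}$ and the three inner Stokes matrices to unipotents $\tilde S_i=\begin{psmallmatrix}1&\tilde s_i\\0&1\end{psmallmatrix}$; the saddle relation $\tilde A_{20}\tilde S_2\tilde A_{12}\tilde S_1\tilde A_{01}\tilde S_0=I$ then \emph{forces} specific values of $\tilde s_0,\tilde s_1,\tilde s_2$, so the inner data around the saddle carries no moduli whatsoever. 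Your surjectivity step asserts instead that ``the product of the six alternating matrices telescopes to $I$ because the three inner transversals form a contractible tree and the simple zero contributes trivially.'' That is not a valid justification: the loop around the saddle is nontrivial in $\sX\smallsetminus\Crit_\epsilon$, and condition (3) of Definition~\ref{def:wildmonodromy} imposes the relation as a genuine constraint rather than an automatic consequence; it is precisely the solving of this constraint that shows an extension of outer data to inner data exists and is unique up to inner $\bT$-action. Deferring this to the ``main obstacle'' paragraph as ``a finite but intricate $\SL_2(\C)$ computation'' leaves the actual content of the theorem unproved.

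A second, related problem is that your injectivity argument imports the wrong machinery. The Gauss decomposition in $\bU\cdot\rotT\cdot\bU$ and the transversality locus $c\neq0$ belong to the comparison between \emph{cluster} and \emph{confluent} representations (the preceding proposition), where one adds a gate to a representation whose connection matrix $C$ is a general element of $\SL_2(\C)$. Here the outer connection matrix $A_{02}$ across the relevant fat transversal is already constrained to $\rotT$ by Definition~\ref{def:wildmonodromy}, so there is no decomposition to perform and no degenerate stratum to control; what is actually needed is the pair of relations $\tilde D_2\tilde A_{20}^{-1}\tilde D_0=A_{02}$ (recovering $\tilde D_2^{-1}\tilde D_0$ from the outer datum) and the diagonality of the formal monodromy around the three merging points (recovering $\tilde D_2\tilde D_0$), the latter being where the integer-rank hypothesis enters --- a point you do gesture at correctly. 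As written, the proposal identifies the right objects but does not establish either the rigidity of the inner Stokes matrices or the determination of the gate matrices, so the proof has a genuine gap at its core.
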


\begin{proof}
We need to show that an outer wild monodromy $\rho_\epsilon^\out$ determines uniquely the confluent wild monodromy $\rho_\epsilon^\conf$.

When no saddle points are present all domains are outer, $\sD_{\sW_\epsilon}=\sD_{\sW_\epsilon}^\out$, and the difference between $\Pi_1(\sX_\epsilon^\out, \sD_{\sW_\epsilon}^\out)$ and $\Pi_1(\sX\smallsetminus\Crit_\epsilon, \sD_{\sW_\epsilon})$ are the paths that cross the gate graph $\Gamma_\epsilon$. The representation of the elementary paths of this kind (type b)) is fully determined  the formal outer monodromy since $\Gamma_\epsilon$ is a tree and all singularities have diagonal formal monodromy determined by $\Delta_\epsilon$.

When there is one simple saddle point, the local situation near it is as in Figure~\ref{figure:saddlerep}.
Acting by $\bT$ over the base-points $\tilde d_i^\pm$, $i=0,1,2$, one can conjugate the representation $\rho_\epsilon^\conf$ so that 
$\tilde S_i=\begin{psmallmatrix}	1&\tilde s_i\\0&1\end{psmallmatrix}$ and $\tilde A_{ij}=\begin{psmallmatrix}0&-1\\1&0	\end{psmallmatrix}$.
The circular relation $\tilde A_{20}\tilde S_2\tilde A_{12}\tilde S_1\tilde A_{01}\tilde S_0=I$ then implies $\tilde s_1=1$, $\tilde s_0=\tilde s_2=-1$.
And the relation $\tilde D_2\tilde A_{20}^{-1}\tilde D_0=A_{02}$, means that the representative $A_{02}$ of an outer path determines the product $\tilde D_2^{-1}\tilde D_0=\tilde A_{20}A_{02}$.
When the sum of the ranks of the three points $a_0$, $a_1$, $a_2$ is integer, as is in the situation when $m=1$, the formal monodromy around the three of them is diagonal,
and its outer segment, not shown in the figure, determines also the product $\tilde D_2\tilde A_{12}\tilde A_{01}\tilde D_0=-\tilde D_2\tilde D_0$.
\end{proof}

\subsection{Analytic equivalence of systems}

\begin{lemma}\label{lemma:monodromy}
	Given a wild monodromy representation $\rho$, the set of equivalences $\tau:\sD_{\sW_\epsilon}\to \bT$ that conjugate $\rho$ to itself consist of constant maps with values in
	\begin{itemize}[leftmargin=2\parindent]
		\item[(i)] $\bT$ if $\rho=\rho^\natural$ takes values in $\bN$ and all critical points have integer ranks,
		\item[(ii)] $\{I,-I\}$ otherwise.
	\end{itemize}
\end{lemma}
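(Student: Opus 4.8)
The plan is to reduce the statement to an elementary $\SL_2(\C)$ computation and then read off the two cases from the monodromy of the double cover carrying $\sqrt{\Delta_\epsilon}$. Writing each candidate self-equivalence as $\tau(d)=\begin{psmallmatrix}t_d&0\\ 0&t_d^{-1}\end{psmallmatrix}\in\bT$, the defining relation of Definition~\ref{def:wildmonodromy} with $\rho'=\rho$ becomes $\tau(d_2)=\rho(\gamma)\,\tau(d_1)\,\rho(\gamma)^{-1}$, and it is enough to impose it on the three kinds of elementary generators. I would first compute these three conjugations: for $\rho(\gamma)\in\bT$ (type b) it commutes with $\tau(d_1)$, giving $t_{d_2}=t_{d_1}$; for $\rho(\gamma)\in\rotT$ (type c) conjugation interchanges the diagonal entries, giving $t_{d_2}=t_{d_1}^{-1}$; and for $\rho(\gamma)=\begin{psmallmatrix}\alpha&s\\ 0&\alpha^{-1}\end{psmallmatrix}\in\bB$ (type a) one finds
\[\rho(\gamma)\begin{psmallmatrix}t_{d_1}&0\\ 0&t_{d_1}^{-1}\end{psmallmatrix}\rho(\gamma)^{-1}=\begin{psmallmatrix}t_{d_1}& \alpha s\,(t_{d_1}^{-1}-t_{d_1})\\ 0&t_{d_1}^{-1}\end{psmallmatrix},\]
so membership in $\bT$ forces $\alpha s\,(t_{d_1}^{-1}-t_{d_1})=0$: either $s=0$ and $t_{d_2}=t_{d_1}$, or $s\neq0$ and $t_{d_1}=\pm1$.

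Next I would introduce the parity character $\sigma\colon\Pi_1(\sX\smallsetminus\Crit_\epsilon,\sD_{\sW_\epsilon})\to\{\pm1\}$ recording the number of type-(c) crossings of a path modulo $2$. This is exactly the monodromy character of the two-sheeted cover $\tilde\sX\to\sX$ on which $\sqrt{\Delta_\epsilon}$ is single valued, a type-(c) crossing being an interchange of sheets; since $\Delta_\epsilon$ has a zero or pole of even order at a point precisely when its rank is an integer, a small loop around a critical point lifts to a loop on $\tilde\sX$ if and only if that point has integer rank (the count $2|\nu|$ implicit in Definition~\ref{def:wildmonodromy}(3) confirms this for saddles). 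Hence $\sigma$ is trivial on $\pi_1$ exactly when all critical points have integer ranks. Whenever every type-(a) generator encountered has $s=0$, the three local conditions combine to $t_{d_2}=t_{d_1}^{\sigma(\gamma)}$ along an arbitrary path $\gamma$.

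For case (i) the hypothesis $\rho=\rho^\natural\in\bN$ means every type-(a) generator has $s=0$, and integer ranks give $\sigma\equiv1$ on loops; the equation $t_{d_2}=t_{d_1}^{\sigma(\gamma)}$ is then consistent around every loop, and its solutions form a one-parameter family obtained by assigning an arbitrary $t\in\C^*$ on one component of the disconnected cover and $t^{-1}$ on the other. These are the maps constant with respect to the globally defined splitting of $\sqrt{\Delta_\epsilon}$, so the stabilizer is the full group $\bT\cong\C^*$. The point to stress is that the type-(c) condition reads as the inversion $t\mapsto t^{-1}$, not as $t=t^{-1}$: it is the disconnectedness of the cover, not rigidity, that produces the whole torus.

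For case (ii) — either some type-(a) generator has $s\neq0$, or some critical point has non-integer rank — I would produce at one basepoint the constraint $t_d=\pm1$: directly from $s\neq0$ in the first situation, and from a loop $\gamma_0$ with $\sigma(\gamma_0)=-1$ (so $t_d=t_d^{-1}$) in the second. Connectedness of $\sX\smallsetminus\Crit_\epsilon$ together with $1^{\pm1}=1$ and $(-1)^{\pm1}=-1$ then propagates this value to every basepoint, giving $\tau\equiv I$ or $\tau\equiv-I$; since $\pm I$ are central these are genuine self-equivalences, so the stabilizer is exactly $\{I,-I\}$. The main obstacle I anticipate is not the matrix algebra but getting the geometric bookkeeping exactly right: the dictionary between \emph{all ranks integer} and triviality of the cover defining $\sqrt{\Delta_\epsilon}$, and the correct reading of ``constant'' in case (i), where in the standard trivialization $\tau$ genuinely alternates between $\diag(t,t^{-1})$ and $\diag(t^{-1},t)$ across type-(c) edges and is constant only relative to the global splitting on $\tilde\sX$.
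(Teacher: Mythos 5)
Your proof is correct. The paper states Lemma~\ref{lemma:monodromy} without supplying a proof, so there is nothing to compare against, but the argument you give --- imposing $\tau(d_2)=\rho(\gamma)\,\tau(d_1)\,\rho(\gamma)^{-1}$ on the three kinds of elementary generators and then propagating via the parity character of the double cover carrying $\sqrt{\Delta_\epsilon}$ --- is surely the intended one. The three local computations check out (in particular the off-diagonal term $\alpha s\,(t^{-1}-t)$ for type-(a) generators, which forces $t=\pm1$ as soon as some Stokes multiplier is nonzero); the dictionary ``integer rank $\Leftrightarrow$ even order $2\nu+2$ of $\Delta_\epsilon$ $\Leftrightarrow$ small loop lifts to $\tilde\sX$'' is right; and the count of $2|\nu|$ type-(c) crossings around a saddle matches Definition~\ref{def:wildmonodromy}(3). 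In case (i) one can see the multiplicativity of your character $\sigma$ most cleanly as the composition of $\rho=\rho^\natural$ with the homomorphism $\bN\to\bN/\bT\cong\Z/2$, which makes its homotopy invariance automatic.

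Your closing observation deserves emphasis, since it is in effect a small correction to the letter of the statement: because conjugation by an element of $\rotT$ inverts $\bT$, a genuinely constant $\tau\equiv\diag(t,t^{-1})$ with $t\neq\pm1$ violates the naturality condition across every type-(c) edge, and the stabilizer in case (i) is the one-parameter family $\tau_t(d)=\diag(t,t^{-1})^{\sigma(d)}$, constant only on each sheet of the (trivial, under the integer-rank hypothesis) double cover. This is exactly what Corollary~\ref{corollary:monodromy} uses --- $\tau$ determined up to a $\bT$-ambiguity in case (i) and up to sign in case (ii) --- so the substance of the lemma and its applications is unaffected, but ``constant'' must indeed be read relative to the global determination of $\sqrt{\Delta_\epsilon}$, as you say.
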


\begin{corollary}\label{corollary:monodromy}
	If two wild monodromy representations $\rho,\rho'$ are equivalent by means of a conjugacy $\tau$, $\tau\cdot\rho=\rho'\cdot\tau$,
	then such $\tau$ is determined either
	\begin{itemize}[leftmargin=2\parindent]
		\item[(i)] up to multiplication by an element of $\bT$, or
		\item[(ii)] up to a sign,
	\end{itemize}
	as an analytic function of the restriction of $\rho$ and $\rho'$ to elementary paths.
\end{corollary}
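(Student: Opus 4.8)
The plan is to read the corollary off from Lemma~\ref{lemma:monodromy} via the principle that the conjugacies from $\rho$ to $\rho'$ form a torsor over the automorphism group of $\rho$, and then to write one such conjugacy down explicitly as an analytic function of the matrices attached to the elementary paths. So the first step is to establish the torsor structure. Suppose $\tau,\tau'\colon\sD_{\sW_\epsilon}\to\bT$ both satisfy $\tau(d_2)\rho(\gamma)=\rho'(\gamma)\tau(d_1)$ for every path $d_1\xrightarrow{\gamma}d_2$, and set $\sigma(d):=\tau'(d)^{-1}\tau(d)$. Rearranging the $\tau'$-relation to $\tau'(d_2)^{-1}\rho'(\gamma)=\rho(\gamma)\tau'(d_1)^{-1}$ and substituting, one finds $\sigma(d_2)\rho(\gamma)=\tau'(d_2)^{-1}\bigl(\tau(d_2)\rho(\gamma)\bigr)=\tau'(d_2)^{-1}\rho'(\gamma)\tau(d_1)=\rho(\gamma)\tau'(d_1)^{-1}\tau(d_1)=\rho(\gamma)\sigma(d_1)$, so $\sigma$ is a self-conjugacy of $\rho$. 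By Lemma~\ref{lemma:monodromy}, $\sigma$ is then a constant map valued in $\bT$ in case (i) and in $\{I,-I\}$ in case (ii); hence $\tau=\tau'\sigma$ is unique up to right multiplication by a constant element of $\bT$, respectively up to a global sign, which is exactly the dichotomy asserted.

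The second step is to exhibit one representative $\tau$ analytically. Because $\sX\smallsetminus\Crit_\epsilon$ is connected, the elementary paths of types a), b), c) generate the groupoid and connect all the base-points, so I would fix a base-point $d_0$ and a spanning tree whose edges are elementary. Writing $\tau(d)=\diag(t_d,t_d^{-1})$ and comparing the $(1,1)$-entries of the defining relation yields, for a tree edge of type a) or b) with $\rho(\gamma),\rho'(\gamma)$ having diagonal $(1,1)$-entries $\alpha,\alpha'$, the relation $t_{d_2}/t_{d_1}=\alpha'/\alpha$, and for an edge of type c) with anti-diagonal multipliers $\kappa,\kappa'$ (so $\rho(\gamma)=\bigl(\begin{smallmatrix}0&\i\kappa\\ \i\kappa^{-1}&0\end{smallmatrix}\bigr)$), the relation $t_{d_1}t_{d_2}=\kappa'/\kappa$. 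All of $\alpha,\alpha',\kappa,\kappa'$ are nonzero by the $\SL_2(\C)$-structure, so propagating along the tree expresses each $t_d$ as a Laurent monomial in the $\alpha,\alpha',\kappa,\kappa'$ times $t_{d_0}^{\pm1}$, i.e. an analytic function of the restriction of $\rho,\rho'$ to elementary paths. In case (i) the value $t_{d_0}$ stays free, realizing the residual $\bT$; in case (ii) some loop runs through an odd number of type-c edges, whose closing relation reads $t_{d_0}^2=(\text{nonvanishing analytic expression})$ and pins $t_{d_0}$ down to a sign through an analytic square root.

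The genuinely substantive point, and the main obstacle, is to know that these diagonal relations already enforce the full conjugation relation — including the off-diagonal constraints such as $s'=t_{d_1}t_{d_2}\,s$ coming from the $(1,2)$-entries of type-a) edges and the consistency conditions around the non-tree loops — and that the propagated $\tau$ remains valued in $\bT$. Rather than checking these by hand, I would invoke the standing hypothesis that $\rho\sim\rho'$: a genuine conjugacy $\tau_*$ exists, and by the torsor argument of the first step any diagonal solution of the $(1,1)$-relations agreeing with $\tau_*$ at $d_0$ must coincide with $\tau_*$ up to the stabilizer, hence equals it and therefore satisfies every remaining constraint and stays in $\bT$. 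In this way the explicit diagonal formulas compute the essentially unique conjugacy and display its analyticity, the only care required being the nonvanishing of the denominators $\alpha,\kappa,\dots$ (guaranteed by the $\SL_2(\C)$- and compatibility conditions) and the smoothness of the selected square-root branch in case (ii).
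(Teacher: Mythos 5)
Your proof is correct and is exactly the deduction the paper intends (the paper states the corollary without proof as an immediate consequence of Lemma~\ref{lemma:monodromy}): the difference of two conjugacies is a self-conjugacy of $\rho$, hence lies in the stabilizer computed by the lemma, and one representative is obtained analytically by propagating the $(1,1)$- and anti-diagonal relations along a spanning tree. The only imprecision is your claim that in case~(ii) the value $t_{d_0}$ is always pinned down by a loop with an odd number of type-c edges: when all ranks are integer but $\rho$ fails to be $\bN$-valued, the sign is instead fixed by the $(1,2)$-entry relation $t_{d_1}t_{d_2}=s'/s$ of a type-a edge with $s\neq 0$ --- a relation you do list among the ``off-diagonal constraints'' and which is equally analytic, so the conclusion is unaffected.
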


\begin{theorem}\label{theorem:unfoldedclassification}
Assume $m<2k$ and \eqref{eq:ass}. Two germs of analytic parametric systems \eqref{eq:unfoldedQ} satisfying \eqref{eq:reducedDelta}, hence with the same associated quadratic differential \eqref{eq:quaddiffomega},
are analytically gauge equivalent if and only if for some connected component $\sW$ \eqref{eq:sW} their (confluent) wild monodromy representations are conjugated.
\end{theorem}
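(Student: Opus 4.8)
The plan is to read this as an unfolded Riemann--Hilbert correspondence and to establish the two implications separately, the substantial one being the reconstruction of a gauge transformation from a conjugacy of representations. Throughout I work over a fixed connected component $\sW$ \eqref{eq:sW} and $\epsilon\in\proj_\epsilon(\sW)$.

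For the necessity, I would start from an analytic gauge equivalence $y'=T(x,\epsilon)y$, $\det T(0,0)\neq0$. On every enlarged petal $\sZ_\epsilon$ the matrix $T$ carries the normalized mixed bases $Y_\sZ^{\pm}$ \eqref{eq:Y+-} of the first system to fundamental solutions of the second, so $T\,Y_\sZ^{\pm}=Y_\sZ^{\prime\pm}\,\tau(d_\sZ^{\pm})$ for constant matrices $\tau(d_\sZ^{\pm})$. Since $T$ is bounded and invertible at the singular points it preserves the growth-rate filtrations at both ends of each petal, hence the flag and the splitting attached to the base points; this forces $\tau(d_\sZ^{\pm})\in\bT$. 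Plugging the continuation rules $\cont_\gamma Y_\sZ^{*}=Y_{\tilde\sZ}^{\tilde*}\rho(\gamma)$ and $\cont_\gamma Y_\sZ^{\prime*}=Y_{\tilde\sZ}^{\prime\tilde*}\rho'(\gamma)$ into the single-valued identity $T\,Y_\sZ^{*}=Y_\sZ^{\prime*}\tau(d_\sZ^{*})$ yields $\tau(d_{\tilde\sZ}^{\tilde*})\,\rho(\gamma)=\rho'(\gamma)\,\tau(d_\sZ^{*})$ for every elementary path $d_\sZ^{*}\xrightarrow{\gamma}d_{\tilde\sZ}^{\tilde*}$, i.e.\ $\rho\sim\rho'$.

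For the sufficiency, suppose $\tau\cdot\rho=\rho'\cdot\tau$ for some $\tau\in\bT^{\sD_{\sW_\epsilon}}$. I would define, on each enlarged petal,
\[
T_\sZ^{*}(x,\epsilon):=Y_\sZ^{\prime*}(x,\epsilon)\,\tau(d_\sZ^{*})\,\bigl(Y_\sZ^{*}(x,\epsilon)\bigr)^{-1},
\]
which is an $\SL_2(\C)$-valued gauge transformation carrying the first companion system to the second on $\sZ_\epsilon$ (this is the gauge equation $P\tdd{x}T=B'T-TB$ between the two companion matrices $B,B'$, obtained because $Y_\sZ^{*},Y_\sZ^{\prime*}$ are fundamental matrices). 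The decisive formal computation is that these local pieces agree on overlaps: continuing $T_\sZ^{*}$ along a path $d_\sZ^{*}\xrightarrow{\gamma}d_{\tilde\sZ}^{\tilde*}$ and inserting the continuation rules together with the conjugacy relation gives $\cont_\gamma T_\sZ^{*}=T_{\tilde\sZ}^{\tilde*}$. Because the enlarged petals cover a full punctured neighborhood of each singularity (Proposition~\ref{prop:covering}) and the factoring/closure condition of Definition~\ref{def:wildmonodromy} makes the loops around the saddles act trivially, $T$ is a single-valued analytic gauge transformation on $\sX_\epsilon\smallsetminus\Sing_\epsilon$. It then remains to extend $T$ across $\Sing_\epsilon$, and this is where the hypothesis \eqref{eq:reducedDelta} enters: since both systems carry the same quadratic differential $\Delta$, Proposition~\ref{prop:weakformal} makes them weakly formally gauge equivalent, so $Y_\sZ^{*}$ and $Y_\sZ^{\prime*}$ share the leading asymptotics of \eqref{eq:Y+-}, while the compatibility condition \eqref{eq:compatible} matches the diagonal parts of the formal monodromies. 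Consequently, in $T_\sZ^{*}=Y_\sZ^{\prime*}\,\tau(d_\sZ^{*})\,(Y_\sZ^{*})^{-1}$ the exponential factors $\e^{\pm\int\sqrt\Delta}$ and the algebraic factors $\tilde Q^{\mp1/4}$ cancel between $Y_\sZ^{\prime*}$ and $(Y_\sZ^{*})^{-1}$, leaving $T$ and $T^{-1}$ bounded as $x\to a_i(\epsilon)$; Riemann's removable-singularity theorem then extends $T$ holomorphically across each $a_i$ with $\det T$ bounded away from $0$.

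The principal obstacle, in my view, is the final passage to analyticity in $\epsilon$ on a \emph{full} neighborhood of the origin. A priori $\tau$, and hence $T$, lives only over the ramified sectorial set $\proj_\epsilon(\sW)$; one chooses $\tau$ analytically there by Corollary~\ref{corollary:monodromy} (modulo the harmless $\bT$/sign ambiguity of Lemma~\ref{lemma:monodromy}) and feeds in the analytic dependence of $Y_\sZ^{\pm}$ on $\epsilon$ from Theorem~\ref{prop:subdominant}. The delicate step is to show that the uniformly bounded family $T(x,\epsilon)$ extends across the confluence divisor $\sC$, and that the pieces attached to the various components $\sW$ --- whose projections cover $\sE$ by Proposition~\ref{prop:covering} and which coincide up to the stabilizer ambiguity on overlaps --- recombine into a single analytic $T(x,\epsilon)$ on $\sX\times\sE$. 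I expect this bounded-extension-in-parameter argument, resting on the uniform estimates behind Proposition~\ref{prop:covering} and Theorem~\ref{prop:subdominant}, to be the crux.
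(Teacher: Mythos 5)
Your construction on a single component $\sW$ --- defining $T$ petalwise by $Y_\sZ^{\prime\ast}\,\tau(d_\sZ^{\ast})\,(Y_\sZ^{\ast})^{-1}$, checking agreement on overlaps via the conjugacy relation, and removing the singularities in $x$ by boundedness of the cancelled exponential and algebraic factors --- is exactly the paper's argument, and your necessity direction is also fine. The gap is in the final step, which you correctly identify as the crux but then mischaracterize and leave undone. The obstruction to recombining the pieces $T_\sW$ over the various components is not a boundedness or uniform-estimate issue: it is that the conjugacy $\tau_\sW$ is in general not canonically determined by the representations. The paper splits into the two cases of Lemma~\ref{lemma:monodromy}. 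In case (ii) the stabilizer is $\{\pm I\}$, so $\tau_\sW$ is analytically determined up to sign by $\rho,\rho'$ via Corollary~\ref{corollary:monodromy}; consequently on an overlap $\proj_\epsilon(\sW)\cap\proj_\epsilon(\tilde\sW)$ the induced conjugacy for $\tilde\sW$ is forced, extends analytically to all of $\proj_\epsilon(\tilde\sW)$, and the pieces match automatically --- no estimate is needed. In case (i), however, each $\tau_\sW$ is only defined up to multiplication by an arbitrary analytic map $\proj_\epsilon(\sW)\to\bT$, so the $T_\sW$ need not agree on overlaps at all; one must correct the $\tau_\sW$ by $\bT$-valued analytic functions so that the $\bT\simeq\C^*$-valued discrepancy cocycle trivializes. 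This is a multiplicative Cousin problem on the sectorial covering of $\sE$, solved by Cauchy integrals (cf.\ \cite{Hurtubise-Lambert-Rousseau}, \cite{Balser}, \cite{Ilashenko-Yakovenko}), after which a codimension-1 subset of the confluence divisor (codimension 2 in $\sE$) is filled in by Hartogs' theorem. Without this cohomological step your $T(x,\epsilon)$ does not exist as a single-valued object over $\sE$, so there is nothing for a ``bounded extension in parameter'' argument to extend.
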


\begin{proof}
Let us show that if the wild monodromies $\rho,\rho'$ of the two systems on $\sW$ are conjugated, then the systems are analytically gauge equivalent.

Assume first $\rho$ is of the type (ii) of Lemma~\ref{lemma:monodromy}.
By Corollary~\ref{corollary:monodromy} the conjugacy $\tau$ is uniquely determined up to a sign by $\rho,\rho'$ and analytic over the projection $\proj_\epsilon(\sW)$ of $\sW$ on the $\epsilon$-coordinate. 
If $\big\{ Y_{\sZ}^\pm\big\}$ are the fundamental solution matrices of the first system determining $\rho$, and  $\big\{ {Y'}_{\sZ}^\pm\big\}$ of the second system determining $\rho'$, then the modified fundamental solution matrices of the second system  $\big\{ {Y'}_{\sZ}^\pm\cdot\tau(d_{\sZ}^\pm)\big\}$ determine $\rho$.
The gauge transformation $T(x,\epsilon)$ given by
\[{Y'}_{\sZ}^\pm\cdot\tau(d_{\sZ}^\pm)=T\cdot {Y}_{\sZ}^\pm\]
is well-defined and univalued on $\coprod_{\epsilon\in\proj_\epsilon(\sW)}\sX\smallsetminus\Sing_\epsilon$.
It is bounded near all the singular points, therefore extends analytically on $\sX\times\proj_\epsilon(\sW)$.

Now let $\tilde\sW$ be a different connected component \eqref{eq:sW} whose projection $\proj_\epsilon(\tilde\sW)$ intersects $\proj_\epsilon(\sW)$, and let $\big\{Y_{\tilde\sZ}^\pm\big\}$, $\big\{ {Y'}_{\tilde\sZ}^\pm\big\}$ be the mixed-basis fundamental solution matrices determining wild monodromy representations $\tilde\rho$, $\tilde\rho'$.
Over the intersection $\proj_\epsilon(\sW)\cap\proj_\epsilon(\tilde\sW)$, one has also the mixed-basis fundamental solutions  $\big\{T\cdot Y_{\tilde\sZ}^\pm\big\}$ which means that
\begin{equation}\label{eq:Ttau}
	T\cdot Y_{\tilde\sZ}^\pm={Y'}_{\tilde\sZ}^\pm\cdot\tilde\tau(d_{\tilde\sZ}^\pm) 
\end{equation}
for some $\tilde\tau:\sD_{\tilde\sW_\epsilon}\to\bT$.
By Corollary~\ref{corollary:monodromy} this conjugacy $\tilde\tau$ is analytically determined by $\tilde\rho$ and $\tilde\rho'$, therefore it is analytic over whole $\proj_\epsilon(\tilde\sW)$.
By the same reasoning as before there is an analytic gauge transformation $\tilde T(x,\epsilon)$ on  $\sX\times\proj_\epsilon(\tilde\sW)$ between the systems, which must agree with $T(x,\epsilon)$ over  $\proj_\epsilon(\sW)\cap\proj_\epsilon(\tilde\sW)$.

Continuing like this, one extend the analytic gauge transformation $T(x,\epsilon)$ to the whole polydisc $\sX\times\sE$.

Assume now instead that $\rho$ is of the type (i) of Lemma~\ref{lemma:monodromy}. 
One can proceed in a similar manner, except this time the conjugacy $\tau=\tau_\sW$ analytic over $\proj_\epsilon(\sW)$ is defined only up to a multiplication by an analytic function $\proj_\epsilon(\sW)\to\bT$.
The conjugacy $\tilde\tau$ \eqref{eq:Ttau} between $\tilde\rho$ and $\tilde\rho'$ is defined only  over  $\proj_\epsilon(\sW)\cap\proj_\epsilon(\tilde\sW)$ and might not extend to $\proj_\epsilon(\tilde\sW)$.
However, as $\tilde\rho$ is again of type  (i) of Lemma~\ref{lemma:monodromy}, and $\tilde\rho$, $\tilde\rho'$ are conjugated over $\proj_\epsilon(\sW)\cap\proj_\epsilon(\tilde\sW)$,
then they are conjugated over the whole $\proj_\epsilon(\tilde\sW)$ by some other conjugacy  $\tau_{\tilde\sW}$.

So one obtains a collection of gauge transformations $T_\sW$ associated to conjugacies $\tau_\sW$, over the projections $\proj_\epsilon(\sW)$ of the different $\sW$'s.
One now has to correct the $\tau_\sW$'s by maps $\proj_\epsilon(\sW)\to\bT$ so that the corrected   $T_\sW$'s would match. This is a Cousin problem in the abelian Lie group $\bT\simeq\C^*$ which can be solved by means of Cauchy integrals, cf. \cite[\S6]{Hurtubise-Lambert-Rousseau}, \cite[\S7.2]{Balser}, \cite[Lemma 17.14]{Ilashenko-Yakovenko}.
If needed the $\proj_\epsilon(\sW)$'s can be restricted so that that their intersections are simply connected, and a codimension 1 subset of the confluence divisor (codimension 2 in $\sE$) can also be omitted by the Hartogs' theorem.
\end{proof}

\begin{theorem}\label{prop:unfoldedpointequivalence}
	Assume $m<2k$ and \eqref{eq:ass}.
	Two parametric families of systems \eqref{eq:unfoldedQ} with $P(x,\epsilon)$, $Q(x,\epsilon)$, and $P'(x',\epsilon)$, $Q'(x',\epsilon)$,  are 	
	\emph{analytically gauge--coordinate equivalent}
	if and only if they are \emph{analytically point equivalent}.
\end{theorem}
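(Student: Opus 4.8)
The implication from point equivalence to gauge--coordinate equivalence is immediate, since by \eqref{eq:parametricpoint} every parametric point transformation is in particular a parametric gauge--coordinate transformation. The whole content is the converse, and the plan is to follow the same three-step scheme used for individual singularities in Theorem~\ref{theorem:pointclassification}, now carried out in the parametric and confluent setting.

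First I would reduce to a pure gauge equivalence. Suppose $(\phi,T)$ is an analytic gauge--coordinate equivalence from the family with $(P,Q)$ to the one with $(P',Q')$. A gauge transformation does not move the pole locus and $\phi$ carries $\{P=0\}$ onto $\{P'=0\}$, so $P$ divides $P'\circ\phi$ and the associated parametric point transformation $(\phi,G_\phi)$ of \eqref{eq:parametricpoint} is well defined; it sends the first companion family to a companion family in the $x'$-coordinate with divisor $P'$. The residual transformation to $(P',Q')$ then fixes the coordinate and is a pure parametric gauge transformation. Since the three kinds of parametric transformations each form a group, it suffices to prove: \emph{an analytic parametric gauge equivalence between two companion families \eqref{eq:unfoldedQ} with the same divisor $P'$ is realized by a parametric point transformation.}

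For the formal construction, restricting the gauge equivalence to each fixed $\epsilon$ and to each zero of $P'(\cdot,\epsilon)$ shows via Theorem~\ref{theorem:formalgaugeequivalence} that the local formal invariants agree at every singularity, so the families are weakly formally gauge equivalent in the sense of Proposition~\ref{prop:weakformal} and the reduced differentials \eqref{eq:quaddiffomega} coincide. I would then run the inductive scheme of Lemma~\ref{lemma:sufficiency}, with $x^{k+1}\partial_x$ replaced throughout by $P'(x,\epsilon)\partial_x$ and the relevant data taken as Weierstrass remainders, to produce a formal diffeomorphism $\hat\chi(x,\epsilon)=x+\hot$, tangent to identity with coefficients formal in $x$ and analytic in $\epsilon$, whose associated point transformation solves the transformation rule \eqref{eq:coordinateQ}. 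This $\hat\chi$ is unique except in the even-$m$ situation, where by Proposition~\ref{prop:flow} it is pinned down only up to the one-parameter symmetry flow $\hat\phi_s$; that ambiguity is fixed once analyticity is in hand.

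The hard part is convergence of $\hat\chi$. Following the composition device of Lemma~\ref{lemma:formalanalytic}, the pair $(\hat\chi,G_{\hat\chi})$ together with the given analytic gauge transformation compose (after the usual normalization) to a formal gauge--coordinate transformation preserving one of the companion families, of the shape $(\hat\chi,\hat T\,G_{\hat\chi})$ in which the gauge part $\hat T$ is a reparametrization of $T^{\pm1}$, hence analytic. Convergence of $\hat\chi$ thus reduces to the parametric analogue of Lemma~\ref{lemma:formalanalytic}: that the unique formal point transformation between companion families is analytic in $(x,\epsilon)$, which I would establish along the lines of \cite[Theorem 1.1]{Klimes6} (treated there for $m=0,1$ and going through unchanged for general traceless $m$). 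Concretely, for each fixed $\epsilon$ the restriction of $\hat\chi$ agrees near every singular point with the genuine analytic point transformation furnished by the non-parametric Theorem~\ref{theorem:pointclassification}, giving analyticity in $x$; analytic dependence on $\epsilon$ then follows from the analyticity in $\epsilon$ of the normalized subdominant solutions (Theorem~\ref{prop:subdominant}), which determine $\hat\chi$ through Lemma~\ref{lemma:equivalenceofy11}, combined with a Cousin/Cauchy-integral gluing across the components $\sW$ and a Hartogs extension across the confluence divisor $\sC$, exactly as in the proof of Theorem~\ref{theorem:unfoldedclassification}. The principal obstacle is precisely this uniform analyticity through confluence---keeping the transformation bounded as singular points collide and the domains $\sX_\epsilon$, $\sZ_\epsilon$ degenerate---together with using the residual freedom of the flow in Proposition~\ref{prop:flow}, in the even-$m$ case, to select an $\epsilon$-analytic representative.
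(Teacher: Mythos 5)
Your overall reduction (gauge equivalence implies point equivalence, since point transformations form a group) matches the paper, but the mechanism you propose --- build a formal tangent-to-identity diffeomorphism by the inductive scheme of Lemma~\ref{lemma:sufficiency} with $P(x,\epsilon)\partial_x$ in place of $x^{k+1}\partial_x$, then prove convergence --- is not the paper's, and it has two genuine gaps. First, the parametric formal induction is only available for $m=0,1$: that is precisely the content of Theorem~\ref{thm:unfoldedpointequivalence} in the appendix, where solvability of the homological equation at each step is verified only in those cases; for general $m<2k$ the analogous equation is obstructed, which is why the paper retreats to \emph{weak} formal gauge equivalence (Proposition~\ref{prop:weakformal}) rather than a genuine parametric formal normal form. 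Second, your convergence step is circular: the ``parametric analogue of Lemma~\ref{lemma:formalanalytic}'' for general $m$ \emph{is} the theorem to be proven, and restricting a formal series based at $x=0$ to a fixed $\epsilon\neq 0$ does not connect it to the local analytic point transformations at the displaced singular points $a_i(\epsilon)$. The Cousin/Hartogs gluing you import from Theorem~\ref{theorem:unfoldedclassification} is also inapplicable here: there the ambiguity to be resolved is a $\bT$-valued (abelian) cocycle correcting the gauge transformations, whereas a coordinate change admits no such correction --- and, as it turns out, needs none.

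The paper's actual proof bypasses formal series entirely. Given the analytic gauge equivalence $T$, it takes a mixed solution basis $Y_\sZ$ on each enlarged petal $\sZ$ (Theorem~\ref{proposition:mixedbasis}), sets $Y'_\sZ=TY_\sZ$, forms the ratios $f_\sZ=y_{\sZ,11}/y_{\sZ,12}$ and $f'_\sZ$, and solves the single functional equation $f'_\sZ=f_\sZ\circ\phi_\sZ$ for $\phi_\sZ=x+P\,g_\sZ$ by the implicit function theorem, the nondegeneracy coming from the exponential asymptotics of the mixed basis on $\sZ$. Because $T$ preserves all connection matrices, $f_{\tilde\sZ}$ and $f'_{\tilde\sZ}$ on any other petal differ from $f_\sZ$, $f'_\sZ$ by the \emph{same} M\"obius transformation, so $\phi_{\tilde\sZ}=\phi_\sZ$ on overlaps with no gluing needed; the resulting $\phi$ is bounded near the divisor and extends analytically by Proposition~\ref{prop:covering}. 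This direct construction is the missing idea in your outline: uniformity through confluence is obtained not by controlling a formal series but by working on the petals where the mixed bases have controlled asymptotics.
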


\begin{proof} 
	We need to show that two systems \eqref{eq:unfoldedQ} which are analytically gauge equivalent are also analytically point equivalent.
	Let $y'=T(x,\epsilon)y$ be an analytic gauge transformation between the two systems and let 
	$Y_\sZ(x,\epsilon)=\big(y_{\sZ,ij}(x,\epsilon)\big)_{i,j}$ be a mixed basis fundamental solution matrix for the first system on a domain $\sZ=\coprod_{\epsilon\in\proj_\epsilon(\sW)}\sZ_{\epsilon}$ of Definition~\ref{def:enlargedpetal}, and $Y_\sZ'(x,\epsilon)=T(x,\epsilon)Y_\sZ(x,\epsilon)$ the corresponding fundamental solution matrix for the second system over the same domain.
	Denote $f_\sZ(x,\epsilon)=\frac{y_{\sZ,11}}{y_{\sZ,12}}$ and likewise $f_\sZ'(x,\epsilon)=\frac{y_{\sZ,11}'}{y_{\sZ,12}'}$, which are locally invertible away from the singular divisor $\{P(x,\epsilon)=0\}$.
	We are looking for an analytic coordinate transformation $\phi(x,\epsilon)$ such that
	\begin{equation}\label{eq:rel}
		f'_\sZ=f_\sZ\circ\phi.
	\end{equation}
	This equation defines a unique solution $\phi_\sZ$ on $\sZ$.
	More precisely, writing $T(x,\epsilon)=\big(t_{ij}(x,\epsilon)\big)_{i,j}$, then
\[f_\sZ'=f_\sZ\cdot\frac{t_{11}+t_{12}\frac{y_{\sZ,21}}{y_{\sZ,11}}}{t_{11}+t_{12}\frac{y_{\sZ,22}}{y_{\sZ,12}}}\sim f_\sZ\cdot\frac{t_{11}+t_{12}\sqrt{\tilde Q}(1+o(1))}{t_{11}-t_{12}\sqrt{\tilde Q}(1+o(1))}.\]
As	$T(x,\epsilon)=t_{11}(x,\epsilon)I+t_{12}(x,\epsilon)\begin{psmallmatrix}0&1\\ \tilde Q(x,\epsilon)&0\end{psmallmatrix}\mod P(x,\epsilon)$, then
\[t_{11}(x,\epsilon)^2-t_{12})(x,\epsilon)^2\tilde Q(x,\epsilon)=\det T(x,\epsilon)\mod P(x,\epsilon)\neq 0,\]
and
$f_\sZ'=f_\sZ\cdot \e^{2\sqrt{\tilde Q} \cdot h_\sZ}$ for some function $h_\sZ(x,\epsilon)$ bounded on $\sZ$ (if $m\neq 0$ then $t_{11}\neq 0$,   and if $m=0$ then $\tilde Q\neq 0$).
We search for $\phi_\sZ(x,\epsilon)=x+P(x,\epsilon)g-\sZ(x,\epsilon)$ fixing the singularities.
The relation \eqref{eq:rel}becomes
	\[F_\sZ(g_\sZ,x,\epsilon)=h_\sZ,\qquad\text{where}\quad F_\sZ(g_\sZ,x,\epsilon)=\tfrac{1}{2\sqrt{\tilde Q}}\big(\log f_\sZ\circ\phi_\sZ-\log f_\sZ\big).\]
As $f_\sZ(x,\epsilon)\sim \e^{-2\int\sqrt{\tilde Q\cdot(1+\tilde b^2)}\frac{\d x}{P}}$
$\dd{g} F_\sZ(g,x,\epsilon)\big|_{g=0}\big|\sim -\sqrt{(1+\tilde b^2)}\neq 0$ on $\sZ$,
so by the implicit function theorem the equation there exists a unique  solution $g_\sZ(x,\epsilon)$, and it is bounded on $\sZ$.	

If  $\tilde\sZ$ is another domain, which has a non-empty connected intersection with $\sZ$, 
If $Y_{\tilde\sZ}(x,\epsilon)$ is a mixed basis  
then the corresponding fundamental solution matrices $Y_{\tilde\sZ}$ and $Y_{\tilde\sZ}'$ are related by the same connection matrix $M_{\sZ,\tilde\sZ}(\epsilon)$:
 $Y_{\tilde\sZ}=Y_{\sZ}M_{\sZ,\tilde\sZ}$ and $Y_{\tilde\sZ}'=Y_{\sZ}'M_{\sZ,\tilde\sZ}$ .
Therefore $f_{\tilde\sZ}=m_{\sZ,\tilde\sZ}\circ f_{\sZ}$ and $f_{\tilde\sZ}'=m_{\sZ,\tilde\sZ}\circ f_{\sZ}'$ for the projective transformation $m_{\sZ,\tilde\sZ}$
associated to $M_{\sZ,\tilde\sZ}$.
This means that $\phi_{\tilde\sZ}=\phi_\sZ$.
	
So the equation \eqref{eq:rel} defines a solution $\phi$ on the union of all such domains, bounded near the divisor $\{P(x,\epsilon)=0\}$,
so by Proposition~\ref{prop:covering} it extends analytically on the full neighborhood of the origin $\sX\times\sE$.	
\end{proof}

\section{Families of isomonodromic deformations}\label{sec:confluentisomonodromy}

A parametric family of meromorphic connections on a Riemann surface $\sX$
\[\nabla_{t,\epsilon}(x)=\d -\Omega_0(x,t,\epsilon),\qquad (t,\epsilon)\in\sT\times\sE,\]
with $\Omega_0$ a germ of meromorphic $\sl_2(\C)$-valued $\d x$-form
depending on a deformation variable $t=(t_1,\ldots,t_n)$ and a parameter $\epsilon$, is \emph{isomonodromic in $t$}
if it comes from a parametric family of \emph{flat} meromorphic connections
\begin{equation}\label{eq:nablaepsilon}
	\nabla_\epsilon(x,t)=\d -\Omega(x,t,\epsilon),\qquad \Omega(x,t,\epsilon)=\Omega_0(x,t,\epsilon)+\sum_{i=1}^n \Omega_i(x,t,\epsilon),
\end{equation}
where $\d=\d_{x,t}$ is the exterior derivative in the fibers $\{\epsilon=\const\}$ and $\Omega_i$ are germs of meromorphic $\sl_2(\C)$-valued $\d t_i$-forms, $\d\Omega=\Omega\wedge\Omega$.

We shall restrict our attention to a local situation where $\sX$, $\sT$, $\sE$ are germs of (poly)discs.
Then 
\[\Omega_0=A(x,t,\epsilon)\tfrac{\d x}{P(x,t,\epsilon)}\]
for some analytic $\sl_2(\C)$-valued function germ $A(x,t,\epsilon)$ with $A(0,0,0)\neq0$ and 
\[P(x,t,\epsilon)=x^{k+1}+P_k(t,\epsilon)x^k+\ldots+ P_0(t,\epsilon),\qquad P(x,0,0)=x^{k+1},\qquad k\geq 0.\]


\begin{definition}\label{assumptions}
\begin{enumerate}
\item The connection $\nabla_\epsilon(x,t)=\d-\Omega(x,t,\epsilon)$ satisfies \emph{transversality condition} if the polar divisors of $\Omega$ and $\d\Omega$ are bounded by the polar divisor of $\Omega_0(x,t,\epsilon)$ and are transverse to the fibration $\{(t,\epsilon)=\const\}$.

\item The divisor $\{P(x,t,\epsilon)=0\}$ satisfies \emph{non-crossing condition} if its restriction to each fiber $\{\epsilon=\const\}$ has no crossings.
Factoring $P(x,t,\epsilon)=\prod_j P_j(x,t,\epsilon)^{l_j}$ into product of irreducible polynomials in $x$, denote $p(x,t,\epsilon)=\prod_j P_j(x,t,\epsilon)$, which for generic values of $(t,\epsilon)$ has only simple roots.
Then the non-crossing is equivalent to the zeroes of the discriminant of $p(x,t,\epsilon)$ being independent of $t$, i.e. 
\begin{equation}\label{eq:disc}
\Disc_x p(x,t,\epsilon)=D(\epsilon)U(t,\epsilon),\qquad U(0,0)\neq0.	
\end{equation}
\end{enumerate}	
\end{definition}

\begin{lemma}\label{lemma:straightening}
A divisor $\{P(x,t,\epsilon)=0\}$ satisfies the non-crossing condition if and only if there exists an analytic coordinate change $\tilde x=\phi(x,t,\epsilon)$ which straightens it in $t$, i.e. under which it becomes $\{\tilde P(\tilde x,\epsilon)=0\}$.
\end{lemma}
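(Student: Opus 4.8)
The plan is to prove the two implications separately; the substantive one constructs the straightening coordinate as the flow of a family of vector fields tangent to the divisor. Suppose first that $\tilde x=\phi(x,t,\epsilon)$ straightens the divisor, so that the roots $x_j(t,\epsilon)$ of the reduced polynomial $p=\prod_jP_j$ are carried to roots $\tilde x_j(\epsilon)$ of $\tilde p$ that are independent of $t$. Since $\tilde x_j-\tilde x_k=\phi(x_j,t,\epsilon)-\phi(x_k,t,\epsilon)=(x_j-x_k)\,\theta_{jk}(t,\epsilon)$ with $\theta_{jk}$ an analytic, nonvanishing divided difference (using $\partial_x\phi(0,0,0)\neq0$), the monic discriminants satisfy $\Disc_x p=\Disc_{\tilde x}\tilde p\cdot\prod_{j<k}\theta_{jk}^{-2}$. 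The first factor depends only on $\epsilon$, and the second is a symmetric function of the roots, hence an analytic unit $U(t,\epsilon)$ with $U(0,0)\neq0$; this is precisely \eqref{eq:disc}.

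Conversely, assume the non-crossing condition \eqref{eq:disc}. I reduce to the reduced polynomial $p$: a local biholomorphism carrying $\{p=0\}$ to a $t$-independent divisor automatically carries $\{P=0\}$ to one as well, because the multiplicities $l_j$ are biholomorphic invariants of the irreducible components. It then suffices to construct, for each $i=1,\dots,n$, an analytic vector field $X_i=\partial_{t_i}+w_i(x,t,\epsilon)\partial_x$ tangent to $\{p=0\}$, i.e. with $X_ip\in(p)$, and to flow the resulting fields back to $t=0$. Tangency reads $\partial_{t_i}p+w_i\,\partial_x p\in(p)$, so the entire problem becomes the analytic solvability of this equation for $w_i$, equivalently the ideal membership $\partial_{t_i}p\in(p,\partial_x p)$ in $\C\{x,t,\epsilon\}$.

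The key step is to show that \eqref{eq:disc} forces this membership. Away from the confluence locus $\partial_x p$ is invertible and $w_i=-\partial_{t_i}p/\partial_x p$ is analytic. At a confluence point one works on the critical set $C=\{\partial_x p=0\}$, where $(\partial_{t_i}p)|_C=\partial_{t_i}(p|_C)$, so the membership reduces to the divisibility $p|_C\mid\partial_{t_i}(p|_C)$ along each sheet of $C$; here $\Disc_x p$ equals, up to a unit, the norm of $p|_C$ over the base. The zero locus of $p|_C$ on a given sheet is a hypersurface contained in $\{\Disc_x p=0\}=\{D(\epsilon)=0\}$, hence a union of components of the latter, each a cylinder in the $t$-directions; therefore $p|_C=\tilde D(\epsilon)\cdot(\text{unit})$ depends on $t$ only through a unit and divides its own $t_i$-derivative. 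Conceptually this is the statement that $\partial_{t_i}$ is everywhere tangent to the $t$-cylindrical discriminant $\{D(\epsilon)=0\}$, hence a logarithmic vector field of the discriminant (Saito), and so lifts to a field tangent to $\{p=0\}$; the double-point model $p=x^2-\delta(t,\epsilon)$, where $\partial_{t_i}p\equiv-\tfrac12\partial_{t_i}\delta\pmod{(p,\partial_x p)}$ and $\delta=D(\epsilon)U$, already displays the mechanism.

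The main obstacle is exactly this solvability across the confluence divisor, where the naive candidate for $w_i$ acquires a pole; the function of the non-crossing hypothesis \eqref{eq:disc} is precisely to cancel that pole. Once the analytic lifts $w_i$ exist, the remaining technical points are to organize them into an integrable system—they are all tangent to the single foliation whose leaves are the divisor components followed in $t$, so the spanned distribution is integrable and the $\partial_{t_i}$-projections commute—and to verify that the resulting time-$t$ flow, which is bounded near $\{p=0\}$ because the fields are tangent there, extends analytically over the whole polydisc $\sX\times\sT\times\sE$ including the confluence divisor, yielding the sought coordinate change $\tilde x=\phi(x,t,\epsilon)$.
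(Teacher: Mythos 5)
Your forward implication (discriminants transform by the square of a symmetric product of divided differences, hence by a unit) is correct, and so is the reduction to the reduced polynomial $p$. The converse, however, rests entirely on the solvability claim $\partial_{t_i}p\in(p,\partial_x p)$, and the argument you offer for it --- restricting to $C=\{\partial_x p=0\}$ and checking sheet by sheet that $p|_C$ divides $\partial_{t_i}(p|_C)$ --- does not establish ideal membership: $(p,\partial_xp)$ is in general non-radical, and vanishing/divisibility on the reduced support of $V(p,\partial_xp)$ is strictly weaker than membership. More importantly, the criterion fails to detect a real obstruction. Take $n=1$, $k=2$ and
\[
p(x,t,\epsilon)=x\,(x-2\epsilon)\,\bigl(x-\epsilon(1+t)\bigr),\qquad \Disc_x p=4\,\epsilon^{6}\,(t^{2}-1)^{2},
\]
which satisfies \eqref{eq:disc} with $D=4\epsilon^{6}$ and $U=(t^{2}-1)^{2}$, $U(0,0)=1$. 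On each of the two sheets of $C$ one computes $p|_C=\epsilon^{3}\cdot(\text{unit})$, so your divisibility test is passed. Yet a tangent lift $\partial_t+w\,\partial_x$ would force the analytic germ $w$ to vanish on $\{x=0\}$ and on $\{x=2\epsilon\}$ while equalling $-\epsilon$ on $\{x=\epsilon(1+t)\}$; expanding these three conditions to first order in $\epsilon$ gives $\partial_\epsilon w(0,t,0)=0$ and $\partial_x w(0,t,0)=0$, hence $w(\epsilon(1+t),t,\epsilon)=O(\epsilon^{2})$, contradicting the required value $-\epsilon$. So $\partial_t p\notin(p,\partial_xp)$ and your construction stops here. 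The underlying reason is that the Saito-type principle you invoke (tangency to the discriminant implies liftability) holds for the \emph{versal} family; for an induced family the module of liftable fields is strictly smaller than $\mathrm{Der}(-\log\Disc)$, and that is exactly the gap.

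This is not a slip you can patch within the stated hypotheses: the same example admits no analytic straightening at all. Indeed, writing $\phi(x,t,\epsilon)=\phi(0,t,\epsilon)+x\,\Phi(x,t,\epsilon)$, the requirements that $\phi(0,\cdot)$, $\phi(2\epsilon,\cdot)$, $\phi(\epsilon(1+t),\cdot)$ be $t$-independent force $\Phi(0,t,0)$ to be simultaneously constant in $t$ and proportional to $1/(1+t)$, hence zero, contradicting $\partial_x\phi(0,0,0)\neq0$. So the discriminant form \eqref{eq:disc} of the non-crossing condition is genuinely weaker than straightenability, and any correct proof must use more than the $t$-cylindricity of $\{\Disc_x p=0\}$. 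For comparison, the paper's proof is entirely different and elementary: it writes down the degree-$k$ interpolation polynomial $\phi=x+\sum_j\frac{P_j(x)}{P_j(a_j)}\bigl(a_j(0,\epsilon)-a_j(t,\epsilon)\bigr)$ moving each root back to its $t=0$ position and argues single-valuedness by symmetry; but its analyticity across $\{D(\epsilon)=0\}$ requires each increment $a_j(t,\epsilon)-a_j(0,\epsilon)$ to be divisible by $\prod_{i\neq j}(a_j-a_i)^{l_i}$, which is precisely what the example violates. In short: your strategy (lifting $\partial_{t_i}$ to a field tangent to the divisor and flowing) is a legitimate alternative route, but the key analytic-solvability step is unproved, and as the hypothesis stands it is unprovable; the fix has to come from strengthening the non-crossing condition, not from refining the sheet-wise divisibility argument.
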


\begin{proof}
	For each fixed $\epsilon\in\sE\smallsetminus\{D(\epsilon)=0\}$ let $P(x,t,\epsilon)=\prod_i\big(x-a_i(t,\epsilon)\big)^{l_i(\epsilon)}$ be the root factorization, $\sum_i l_i(\epsilon)=k+1$,
	and denote $P_j(x,t,\epsilon)=\prod_{i\neq j}\big(x-a_i(t,\epsilon)\big)^{l_i(\epsilon)}$.
	Then 
	\[\phi(x,t,\epsilon)=x+\sum_j\mfrac{P_j(x,t,\epsilon)}{P_j(a_j(t,\epsilon),t,\epsilon)}\big(a_j(0,\epsilon)-a_j(t,\epsilon)\big)\]
	is the unique polynomial of order $k$ that sends each root $a_j(t,\epsilon)$ of multiplicity $l_j(\epsilon)$ to $a_j(0,\epsilon)$ with the same multiplicity.
	Analytic continuation along loops in $\pi_1\big(\sT\times(\sE\smallsetminus\{D(\epsilon)=0\}),(0,\epsilon_0)\big)$ acts on the roots $a_i(t,\epsilon)$ by permutations,
	leaving $\phi$ invariant, which means that it is single-valued in $(t,\epsilon)$.
	It extends to $\{D(\epsilon)=0\}$ following the same  prescription.
\end{proof}

\begin{lemma}\label{lemma:transversalitycondition}
Assume the divisor $\{P(x,\epsilon)=0\}$ is independent of $t$, let $P(x,\epsilon)=\prod_j P_j(x,\epsilon)^{l_j}$ be the unique factorization into irreducible polynomials, and denote $p(x,\epsilon)=\prod_j P_j(x,\epsilon)$.
A connection \eqref{eq:nablaepsilon} satisfies the transversality condition if and only if it has the form
\[\nabla_\epsilon(x,t)=\d -A(x,t,\epsilon)\mfrac{\d x}{P(x,\epsilon)}-\sum_{i=1}^n B_i(x,t,\epsilon)\mfrac{p(x,\epsilon)\d t_i}{P(x,\epsilon)}\]
with $A$ and $B_i$ analytic.
\end{lemma}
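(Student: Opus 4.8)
The plan is to turn the statement into a pole-order computation along the smooth locus of the divisor, and then to recover analyticity across the singular locus by Hartogs' extension theorem. First I would write $\Omega_i = C_i(x,t,\epsilon)\,\d t_i$ with $C_i$ a meromorphic $\sl_2(\C)$-valued germ, and record the elementary but decisive bookkeeping: since $\{P=0\}$ is $t$-independent and $P$ is the monic Weierstrass polynomial of that divisor, one has $\partial_{t_i}P=0$, hence $\partial_{t_i}(A/P)=(\partial_{t_i}A)/P$. Computing the exterior derivative $\d=\d_{x,t}$ directly gives the two relevant components
\[
(\d\Omega)_{\d x\wedge\d t_i}=\partial_x C_i-\partial_{t_i}(A/P),\qquad
(\d\Omega)_{\d t_i\wedge\d t_j}=\partial_{t_i}C_j-\partial_{t_j}C_i,
\]
which I can use without invoking flatness (flatness merely rewrites them as $[A/P,C_i]$ and $[C_i,C_j]$, a harmless consistency check). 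The structural fact I would isolate is that, at a smooth point of an irreducible component $\{P_j=0\}$, in a local coordinate transverse to the divisor through which $\partial_x$ acts with a unit coefficient, the operator $\partial_x$ raises the pole order by exactly one whenever the top Laurent coefficient is nonzero, whereas $\partial_{t_i}$ never raises the pole order.

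For the direction ($\Rightarrow$) I would argue as follows. Transversality of $\Omega$ forces the poles of each $C_i$ to lie on $\{P=0\}$ with $\ord_{P_j}C_i\le l_j$. Transversality of $\d\Omega$ applied to the mixed component gives $\ord_{P_j}\big(\partial_x C_i-\partial_{t_i}(A/P)\big)\le l_j$; since $\ord_{P_j}\partial_{t_i}(A/P)\le l_j$, this yields $\ord_{P_j}\partial_x C_i\le l_j$. By the structural fact, a pole of exact order $l_j$ in $C_i$ would produce a pole of order $l_j+1$ in $\partial_x C_i$, a contradiction; hence $\ord_{P_j}C_i\le l_j-1$ at every smooth point of $\{P_j=0\}$. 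Consequently $B_i:=(P/p)\,C_i=\big(\prod_k P_k^{\,l_k-1}\big)C_i$ is holomorphic off $\{P=0\}$ and across the smooth locus of $\{P=0\}$; the complement of this set inside the total space (the crossings and multiple-root locus, governed by $\Disc_x p$) is analytic of codimension at least two, so the second Riemann extension (Hartogs) theorem makes $B_i$ holomorphic throughout, giving $\Omega_i=B_i\,\tfrac{p\,\d t_i}{P}$.

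For the converse ($\Leftarrow$) I would simply read off pole orders from the assumed form. Here $C_i=B_i\,p/P$ satisfies $\ord_{P_j}C_i\le l_j-1$, so $\Omega_0$ has order $l_j$ and each $\Omega_i$ order $l_j-1\le l_j$ along $P_j$, whence the polar divisor of $\Omega$ is bounded by that of $\Omega_0$. For $\d\Omega$, the mixed component $\partial_x C_i-\partial_{t_i}(A/P)$ has order $\le l_j$ because $\partial_x$ raises $C_i$'s order from $l_j-1$ to at most $l_j$, while the pure component $\partial_{t_i}C_j-\partial_{t_j}C_i$ has order $\le l_j-1$ because $\partial_{t}$ preserves order; thus $\d\Omega$ is also bounded by $\Omega_0$. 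Transversality of both polar divisors to the fibration $\{(t,\epsilon)=\const\}$ is automatic, since they coincide with the $t$-independent divisor $\{P(x,\epsilon)=0\}$, which is a union of graphs over $(t,\epsilon)$ on its smooth locus.

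I expect the main obstacle to be the clean bookkeeping at the end rather than any deep difficulty: carrying out the smooth-point pole analysis intrinsically (so that the drop from order $l_j$ to $l_j-1$ is independent of the chosen transverse coordinate), and verifying that the locus where $\{P=0\}$ fails to be smooth is genuinely of codimension two, so that the Hartogs extension of $B_i$ is legitimate. Once the opposite roles of $\partial_x$ (order-raising) and $\partial_{t_i}$ (order-preserving) along the $t$-independent divisor are pinned down, the surrounding estimates are routine.
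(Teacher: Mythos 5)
Your argument is correct, but it follows a genuinely different route from the paper's. The paper works algebraically and globally: writing $\Omega_i=C_i\,\frac{\d t_i}{P}$ with $C_i$ analytic (this is exactly the bound on the polar divisor of $\Omega$), it computes the $\d x\wedge\d t_i$-component of $P\,\d\Omega$ as $\partial_x C_i-C_i\frac{\partial_x P}{P}$ up to an analytic term, and since $\frac{\partial_x P}{P}=\sum_j l_j\frac{\partial_x P_j}{P_j}$ with each $P_j$ irreducible and $P_j\nmid\partial_x P_j$, analyticity of this component is equivalent to the Weierstrass divisibility $p\mid C_i$ --- the key observation being that $p$ is the smallest polynomial with $P\mid p\,\partial_x P$. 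That is a one-line divisibility argument requiring no extension step. You instead work locally with Laurent expansions at smooth points of the divisor, using that $\partial_x$ raises the pole order along a $t$-independent divisor while $\partial_{t_i}$ preserves it, and then recover $B_i=(P/p)C_i$ globally by the second Riemann extension theorem across the codimension-two bad locus. Both proofs are sound; yours is more geometric and would survive if the divisor were not presented by a Weierstrass polynomial, whereas the paper's exploits the polynomial structure in $x$ to get the drop in pole order as a single divisibility statement, avoiding both the smooth-point reduction and the Hartogs step. Two details worth pinning down in your version: the exact-order-raising property of $\partial_x$ needs $\partial_x P_j\neq0$, so you must excise the ramification locus $\{P_j=\partial_x P_j=0\}$ (codimension two because $P_j$, being irreducible, has nonvanishing discriminant in $x$) and not merely the singular points of the reduced divisor; and the ``transverse to the fibration'' clause of the transversality condition is, as you say, automatic for a $t$-independent Weierstrass divisor, which is also the paper's implicit stance.
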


\begin{proof}
The polynomial $p(x,\epsilon)$ is the smallest polynomial that makes $p\tdd{x}P$ divisible by $P$.	
Writing $\Omega=A(x,t,\epsilon)\frac{\d x}{P(x,\epsilon)}+\sum_i C_i(x,t,\epsilon)\frac{\d t_i}{P(x,\epsilon)}$,
then the $\d x\wedge\d t_i$-component of $P\d\Omega$ is $\big(\tdd{x}C_i-C_i\frac{\tdd{x}P}{P}\big)\d x\wedge\d t_i$ which is analytic if and only if $C_i$ is divisible by $p$.
\end{proof}

We can now prove a parametric version of Proposition~\ref{proposition:Heu}.

\begin{theorem}\label{theorem:confluentisomonodromy}
	Let $\nabla_\epsilon(x,t)$ be a germ of a parametric family of meromorphic flat connections \eqref{eq:nablaepsilon}	satisfying the transversality and non-crossing conditions.
	Then there exists a local analytic gauge--coordinate transformation $(x,y,t,\epsilon)\mapsto\big(\phi(x,t,\epsilon),\, T(x,t,\epsilon)y,\, t,\, \epsilon\big)$
	which brings it to a constant deformation
	\[\tilde\nabla_\epsilon(\tilde x,t)=\d-\begin{psmallmatrix}0&1\\[3pt]\tilde Q(\tilde x,\epsilon)&0\end{psmallmatrix}\mfrac{\d \tilde x}{\tilde P(\tilde x,\epsilon)}.\]
\end{theorem}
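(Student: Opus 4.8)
The plan is to trivialize the deformation by \emph{flat transport} in $t$, reducing the statement to Heu's non-parametric Proposition~\ref{proposition:Heu} made uniform through the confluence. First I would use the non-crossing hypothesis: by Lemma~\ref{lemma:straightening} there is an analytic coordinate change $\tilde x=\phi_0(x,t,\epsilon)$ straightening the polar divisor to $\{\tilde P(\tilde x,\epsilon)=0\}$, independent of $t$. Flatness is intrinsic and the transversality condition only concerns polar divisors, so both survive this change; renaming $\tilde x$ as $x$ (and $\tilde P$ as $P$), Lemma~\ref{lemma:transversalitycondition} writes the connection as $\nabla_\epsilon(x,t)=\d-A\tfrac{\d x}{P(x,\epsilon)}-\sum_i B_i\tfrac{p(x,\epsilon)\,\d t_i}{P(x,\epsilon)}$ with $A,B_i$ analytic. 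A $t$-analytic version of Lemma~\ref{lemma:systemQepsilon} then gauges the $\d x$-part into companion form $\begin{psmallmatrix}0&1\\ Q(x,t,\epsilon)&0\end{psmallmatrix}\tfrac{\d x}{P}$, the $\d t_i$-components remaining of the transversality shape (the extra analytic terms produced are absorbed into the $p/P$-form).

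Second, let $Y(x,t,\epsilon)$ be a fundamental matrix of horizontal sections of the (still flat) connection, so that $P\,\tdd{x}Y=\begin{psmallmatrix}0&1\\ Q&0\end{psmallmatrix}Y$ and $P\,\tdd{t_i}Y=p\,B_i\,Y$, and set $T(x,t,\epsilon):=Y(x,0,\epsilon)\,Y(x,t,\epsilon)^{-1}$. Since $Y(\cdot,0,\epsilon)$ is a fundamental matrix of the $t$-constant companion system $\nabla_{0,\epsilon}$, the gauge transformation $y\mapsto T y$ carries $Y(\cdot,t,\epsilon)$ to $Y(\cdot,0,\epsilon)$; as the latter has no $t$-dependence, the transformed connection is forced to equal $\tilde\nabla_\epsilon(x,t)=\d-\begin{psmallmatrix}0&1\\ Q(x,0,\epsilon)&0\end{psmallmatrix}\tfrac{\d x}{P}$, that is, the flat transport simultaneously freezes the $\d x$-part at its $t=0$ value \emph{and} annihilates every $\d t_i$-component. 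Thus, after composing with $\phi_0$, everything reduces to proving that $T$ is a single-valued analytic gauge transformation on a full neighbourhood of the origin, with reduced differential $\tilde Q(x,\epsilon):=Q(x,0,\epsilon)$ in the straightened coordinate.

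Third, single-valuedness in $x$ is the parametric iso-Stokes property. The transport coefficient $\tfrac pP B_i$ is single-valued meromorphic in $x$, so differentiating $\cont_\gamma Y=Y\,M_\gamma(t,\epsilon)$ with respect to $t_i$ and using $\cont_\gamma(\tfrac pP B_i)=\tfrac pP B_i$ gives $\tdd{t_i}M_\gamma=0$ for \emph{every} path $\gamma$; hence all monodromy, Stokes and connection matrices are $t$-constant, and $\cont_\gamma T=Y(x,0)M_\gamma(0)M_\gamma(t)^{-1}Y(x,t)^{-1}=T$. This is the analogue of Corollary~\ref{cor:Heu}. Combined with the standard Jimbo--Miwa--Ueno computation on the transversality form, which keeps the irregular type $t$-constant, it also yields iso-formality: the reduced quadratic differential satisfies $\tilde Q(x,t,\epsilon)=\tilde Q(x,0,\epsilon)$.

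The main obstacle is then to show that the univalued $T$, which equals $I$ at $t=0$, \emph{extends analytically across the polar divisor} $\{P=0\}$; this is exactly where transversality is used and is the heart of Proposition~\ref{proposition:Heu}, now needed uniformly in $(t,\epsilon)$ through the confluence. I would control $T$ on the enlarged petals and lagoons of $\Delta_{t,\epsilon}$ (\S\ref{sec:flagsplitting}): on each of these, the normalized mixed-basis and subdominant solutions of Theorems~\ref{proposition:mixedbasis} and~\ref{prop:subdominant} relate $Y(\cdot,t,\epsilon)$ and $Y(\cdot,0,\epsilon)$ to $x$-independent, $t$-analytic matrices, and because iso-formality makes the exponential factors $\e^{\pm\int\sqrt{\tilde Q}\,\d x/P}$ of the two agree near each equilibrium, the competing exponentials cancel and $T$ stays bounded as $x$ tends to $\Sing$. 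Since the petals and lagoons cover a full punctured neighbourhood of every singularity (Proposition~\ref{prop:covering}), $T$ is bounded throughout $\sX\smallsetminus\Sing$, and the boundedness-plus-removable-singularity argument from the proof of Theorem~\ref{theorem:unfoldedclassification} extends it analytically over $\{P=0\}$. Composing this gauge transformation with the straightening $\phi_0$ produces the required analytic gauge--coordinate transformation to $\tilde\nabla_\epsilon$.
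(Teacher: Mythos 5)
Your reduction steps (straightening the divisor via Lemma~\ref{lemma:straightening}, invoking Lemma~\ref{lemma:transversalitycondition} and a parametric Lemma~\ref{lemma:systemQepsilon}) match the paper, but the core of your argument --- trivializing the $t$-dependence by the flat-transport gauge $T(x,t,\epsilon)=Y(x,0,\epsilon)\,Y(x,t,\epsilon)^{-1}$ and then extending $T$ analytically across $\{P=0\}$ --- contains a fatal gap. After the divisor is straightened, the systems $\nabla_{t,\epsilon}$ and $\nabla_{0,\epsilon}$ are in general \emph{not} analytically gauge equivalent, only gauge--coordinate equivalent: along a nontrivial isomonodromic deformation the formal gauge invariants (the jet $\jet^{k+N}Q$ of the quadratic differential) genuinely vary in $t$. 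This is exactly the content of Theorem~\ref{theorem:isomonodromic}, whose parameter space is the quotient of gauge--coordinate classes by gauge classes, and it is visible in the explicit example following the corollary in \S\,\ref{sec:confluentisomonodromy}: there the polar divisor $\{x^2=0\}$ is already $t$-independent, transversality holds, yet the leading coefficient of the irregular part is $\e^tA_0$, so the square residue of $\Delta_{t,\epsilon}$ changes with $t$ and $\nabla_t$ is not even formally gauge equivalent to $\nabla_0$. Consequently your $T$ is unbounded at the singularity and cannot extend. The step where this enters your write-up is the claimed ``iso-formality'' $\tilde Q(x,t,\epsilon)=\tilde Q(x,0,\epsilon)$, which you attribute to Jimbo--Miwa--Ueno; but the JMU constancy concerns a setup where the irregular type is itself among the deformation times, and the transversality condition here does not freeze the irregular type. (Your computation $\tdd{t_i}M_\gamma=0$ for genuine monodromy/connection matrices of a global flat frame is fine; it is the boundedness at the divisor, not single-valuedness, that fails.)

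The repair is precisely the $t$-dependent, divisor-preserving coordinate change that your scheme forbids after Step~1, and it is how the paper proceeds (following Heu): writing $\Omega=\begin{psmallmatrix}\alpha&\beta\\ \gamma&-\alpha\end{psmallmatrix}$ in companion form, the $(1,2)$-entry $P\beta$ is a nonsingular $1$-form transverse to the fibers $\{(t,\epsilon)=\const\}$, so a rectification $x\mapsto x+p(x,\epsilon)f(x,t,\epsilon)$ preserving $\{P=0\}$ reduces $\beta$ to $\frac{\d x}{P}$; the flatness relations $\d\beta=2\alpha\wedge\beta$, $\d\alpha=\beta\wedge\gamma$, $\d\gamma=2\gamma\wedge\alpha$ then force successively $\alpha=0$, $\gamma=Q\frac{\d x}{P}$ and $\tdd{t_i}Q=0$. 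No Stokes analysis, petals, or removable-singularity arguments are needed. If you want to keep a transport-style proof, you must transport along the leaves of $\{P\beta=0\}$ in the $(x,t)$-space rather than along $\{x=\const\}$; that is equivalent to the rectification above.
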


\begin{proof}
The proof follows the same steps as in Heu \cite[Proposition 2.5]{Heu}.

Step 1: Apply an analytic coordinate transformation of Lemma~\ref{lemma:straightening} to straighten the divisor to $\{P(x,\epsilon)=0\}$.

Step 2: Apply an analytic gauge transformation of Lemma~\ref{lemma:systemQepsilon} to bring the system $P(x,\epsilon)\frac{\d y}{\d x}=A(x,t,\epsilon)y$ to one with $A(x,\epsilon,t)=\begin{psmallmatrix}0&1\\[3pt] Q(x,t,\epsilon)&0\end{psmallmatrix}$.

Using Lemma~\ref{lemma:transversalitycondition}, write
$\Omega=\begin{psmallmatrix} \alpha & \beta \\ \gamma &-\alpha\end{psmallmatrix}$
with
\begin{align*}
	\alpha&=\sum_i a_i(x,t,\epsilon)\mfrac{p(x,\epsilon)}{P(x,\epsilon)}\d t_i,\qquad
	\beta=\mfrac{\d x}{P(x,\epsilon)}+\sum_i b_i(x,t,\epsilon)\mfrac{p(x,\epsilon)}{P(x,\epsilon)}\d t_i,\\
	\gamma&=Q(x,t,\epsilon)\mfrac{\d x}{P(x,\epsilon)}+\sum_i c_i(x,t,\epsilon)\mfrac{p(x,\epsilon)}{P(x,\epsilon)}\d t_i.
\end{align*}
The integrability condition $\d\Omega=\Omega\wedge\Omega$ is equivalent to
\[\d\alpha=\beta\wedge\gamma,\qquad \d\beta=2\alpha\wedge\beta,\qquad \d\gamma=2\gamma\wedge\alpha.\]

Step 3: The foliation $P(x,\epsilon)\beta=0$ is non-singular and transverse to the fibers $\{(t,\epsilon)=\const\}$, so there exists a rectifying coordinate transformation of the form $x\mapsto \phi(x,t,\epsilon)=x+p(x,\epsilon)f(x,t,\epsilon)$ that straightens it to $\d x=0$ and preserves the divisor $\{P(x,\epsilon)=0\}$.
Applying the associated point transformation \eqref{eq:parametricpoint} to the connection, one gets $\beta=\frac{\d x}{P(x,\epsilon)}$.
Now the condition $0=\d\beta=2\alpha\wedge\beta$ means that $\alpha=0$, so the condition $0=\d\alpha=\beta\wedge\gamma$ means that $\gamma=Q(x,t,\epsilon)\frac{\d x}{P(x,\epsilon)}$, and the condition $0=2\gamma\wedge\alpha=\d\gamma$ means that $Q=Q(x,\epsilon)$.
\end{proof}

\begin{definition}
A family of meromorphic quadratic differentials $\Delta_{t,\epsilon}(x)$ is iso-residual with respect to $t$
if for every fixed $\epsilon$ the multiplicities of critical points are constant in $t$ and so are their square residues. 
\end{definition}

\begin{corollary}
	Let $\nabla_{t,\epsilon}=\d-\begin{psmallmatrix}0&1\\[3pt]Q(x,t,\epsilon)&0\end{psmallmatrix}\frac{\d x}{P(x,t,\epsilon)}$ be a germ of parametric family of isomonodromic deformations satisfying the transversality and non-crossing conditions. Then:
\begin{enumerate}[leftmargin=1.5\parindent]
	\item The associated family of quadratic differentials $\Delta_{t,\epsilon}(x)=Q(x,t,\epsilon)\left(\frac{\d x}{P(x,t,\epsilon)}\right)^2$ is iso-residual with respect to $t$.
	\item The associated family of  confluent wild monodromy representations is constant with respect to $t$.
\end{enumerate}	
\end{corollary}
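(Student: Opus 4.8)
The plan is to obtain both assertions from Theorem~\ref{theorem:confluentisomonodromy}. That theorem furnishes, after the straightening of Lemma~\ref{lemma:straightening}, an analytic gauge--coordinate transformation $\big(\phi(x,t,\epsilon),\,T(x,t,\epsilon)\big)$ depending analytically on $(t,\epsilon)$ and carrying the family $\nabla_{t,\epsilon}$ to the $t$-independent companion system $\tilde\nabla_\epsilon$ with entry $\tilde Q(\tilde x,\epsilon)$ and divisor $\tilde P(\tilde x,\epsilon)$. By Theorem~\ref{prop:unfoldedpointequivalence} I may take this transformation to be a parametric point transformation \eqref{eq:parametricpoint}. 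In particular, for each fixed $\epsilon$ and any two values $t_1,t_2$, the restrictions $\nabla_{t_1,\epsilon}$ and $\nabla_{t_2,\epsilon}$ are analytically point equivalent (hence gauge--coordinate equivalent) to one another. This is the confluent parametric analogue of the iso-Stokes Corollary~\ref{cor:Heu}, and the whole proof amounts to transporting the relevant structure along $\phi$.

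For (1) I would argue as follows. After straightening, the divisor $\{P(x,\epsilon)=0\}$ is independent of $t$, so the Poincar\'e ranks $k_i$ are $t$-constant, and the vanishing orders $m_i=\ord_{a_i}Q$ are preserved by the coordinate change, hence also $t$-constant. By Theorem~\ref{theorem:formalgaugecoordinateequivalence} the square residue $\mu_i=\res^2_{a_i}\Delta_{t,\epsilon}$ at each singularity is a formal gauge--coordinate invariant; since the systems are pairwise gauge--coordinate equivalent as $t$ varies, each $\mu_i$ is independent of $t$. Thus every equilibrium of $\Delta_{t,\epsilon}$ keeps its pole order $2k_i-m_i+2$ and its square residue $\mu_i$; the multiplicities of the remaining critical points (the zeros of $\Delta_{t,\epsilon}$) then stay constant by continuity and integrality of the zero count over the fixed $\epsilon$-slice, merging of critical points being confined to the confluence divisor $\sC\subset\sE$. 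Hence $\Delta_{t,\epsilon}$ is iso-residual.

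For (2) I would transport the construction of \S\ref{sec:wildmonodromy} along $\phi$. The point transformation carries the normalized mixed-basis fundamental solutions \eqref{eq:Y+-} of $\tilde\nabla_\epsilon$ to those of $\nabla_{t,\epsilon}$, and maps the critical set, the enlarged petals and lagoons, the base points $\sD_{\sW_\epsilon}$ and the fundamental groupoid $\Pi_1(\sX\smallsetminus\Crit_\epsilon,\sD_{\sW_\epsilon})$ attached to $\tilde\Delta_\epsilon$ onto the corresponding objects for $\Delta_{t,\epsilon}$; under this identification the connection matrices relating the bases are unchanged. Consequently $\rho^\conf_{t,\epsilon}$ coincides, as a class in $\Mon^\conf_{\Delta,\sW,\epsilon}$, with the transport of the manifestly $t$-independent $\tilde\rho^\conf_\epsilon$, so its equivalence class is constant in $t$. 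Alternatively, by Theorem~\ref{theorem:unfoldedclassification} the confluent wild monodromy is a complete analytic gauge invariant, and $\phi$ is exactly the intertwiner of the two differentials.

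The hard part will be the Schwarzian anomaly in \eqref{eq:coordinateQ}: a point transformation does not pull $\tilde\Delta_\epsilon$ back to $\Delta_{t,\epsilon}$ exactly, but only up to the holomorphic correction appearing there. I would verify that this correction stays within the weak formal gauge equivalence class of Proposition~\ref{prop:weakformal} (it alters $\tilde Q$ only by a factor $\equiv 1 \bmod P$), so that it changes neither the combinatorics of the separating, gate and transverse graphs nor the leading asymptotics fixing the subdominant normalizations \eqref{eq:Y+-}. Granting this, the base points and the connection matrices correspond, and the identification of $\rho^\conf_{t,\epsilon}$ with $\tilde\rho^\conf_\epsilon$ is exactly a $\bT^{\sD_{\sW_\epsilon}}$-equivalence in the sense of Definition~\ref{def:wildmonodromy}. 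The remaining bookkeeping --- matching the components $\sW$ of rotational stability and the branch of $\sqrt{\tilde Q}$ analytically in $t$ --- should be routine given the analytic dependence already established in Theorem~\ref{prop:subdominant}.
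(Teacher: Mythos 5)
Your overall strategy is the paper's own: the corollary is stated there without proof, as an immediate consequence of Theorem~\ref{theorem:confluentisomonodromy} --- once the family is analytically gauge--coordinate conjugated to a $t$-independent one, every gauge--coordinate invariant (the local formal data for (1), the equivalence class of the confluent wild monodromy for (2)) is automatically constant in $t$. Your fleshed-out version is right in outline, and part (2) in particular is sound: connection matrices between flat sections are intrinsic, the petal/lagoon combinatorics is transported by $\phi$, and the residual ambiguity in the normalizations \eqref{eq:Y+-} is exactly a $\bT^{\sD_{\sW_\epsilon}}$-equivalence in the sense of Definition~\ref{def:wildmonodromy}.

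Two of your supporting steps do not hold up as written, though both are repairable. First, in (1) you dispose of the critical points of $\Delta_{t,\epsilon}$ that are zeros (saddles) by ``continuity and integrality of the zero count''; this is not an argument --- a double zero can split into two simple zeros continuously while the total count is preserved. What actually controls the multiplicities at the polar divisor is that $m_i=\ord_{a_i}Q$ is a formal gauge--coordinate invariant (equivalently, the pole order $2k_i+2-m_i$, i.e.\ the Katz rank, is, by Theorem~\ref{theorem:formalgaugecoordinateequivalence} applied at each fixed $(t,\epsilon)$); for zeros of $Q$ away from the polar divisor one should work with the reduced differential \eqref{eq:quaddiffomega} rather than the raw $Q$, since the additive Schwarzian correction in \eqref{eq:parametriccoordinateQ} can in principle move and split free zeros of $Q$. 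Second, your claim that this correction ``alters $\tilde Q$ only by a factor $\equiv 1\bmod P$'' is unjustified: the correction is additive of the form $P\cdot(\text{analytic})$, not multiplicative of the form $\tilde Q\cdot(1+Pf)$, so Proposition~\ref{prop:weakformal} does not apply directly. The conclusion you need --- that the formal invariants, hence the residues and the graph combinatorics underlying the groupoid, are unchanged --- is still true, but it follows from the pointwise classification (Theorems~\ref{theorem:formalgaugeequivalence} and~\ref{theorem:formalgaugecoordinateequivalence}) at each singularity for each fixed $(t,\epsilon)$, not from the multiplicative criterion of Proposition~\ref{prop:weakformal}.
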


\begin{example}
	A connection with an irregular singularity of Poincar\'e rank 1
	\[\nabla_{0,0}(x)=\d-\frac{A_0+xA_1}{x^2}\d x+\holom,\]
	where ``hol'' stands  for terms holomorphic at $0$, has an isomonodromic deformation of the form
	\[\nabla_{0}(x,t)=\d-\frac{\e^tA_0+xA_1}{x^2}\d x+\frac{\e^tA_0}{x}\d t+\holom=\d+A_0\,\d\frac{\e^t}{x}-A_1\d\log x+\holom.\]
	The parametric unfolding
	\[\nabla_{0,\epsilon}(x)=\d-\frac{A_0+xA_1}{x^2-\epsilon}\d x+\holom\]
	deforms isomonodromicaly in dependence of the position of the singularities as
	\[\begin{split}
		\nabla_{\epsilon}(x,t)&=\d-\frac{\e^tA_0+xA_1}{x^2-\e^{2t}\epsilon}\d x+\frac{x\,\e^tA_0}{x^2-\e^{2t}\epsilon}\d t+\holom\\
		&=\d - \tfrac12(\tfrac{1}{\sqrt\epsilon}A_0+A_1)\,\d\log(x-\e^t\!\sqrt\epsilon)+\tfrac12(\tfrac{1}{\sqrt\epsilon}A_0-A_1)\,\d\log(x+\e^t\!\sqrt\epsilon)+\holom,
	\end{split}\]
	with the matrices $A_i=A_i(t,\epsilon)$, $i=1,2$, isospectral w.r.t. $t$.
	The coordinate change $x=\e^{t}\tilde x$ straightens the polar divisor and kills the polar part of the $\d t$-terms:	\[\tilde\nabla_{\epsilon}(\tilde x,t)=\d-\frac{A_0+\tilde xA_1}{\tilde x^2-\epsilon}\d\tilde x+\holom.\]	
\end{example}

\begin{proposition}
A family of meromorphic quadratic differentials $\Delta_{t,\epsilon}(x)$ is iso-residual with respect to $t$
if and only if there exists an analytic coordinate change $x\mapsto\phi(x,t,\epsilon)$ s.t. $\Delta_{0,\epsilon}(x)=\phi^*\Delta_{t,\epsilon}(x)$.
\end{proposition}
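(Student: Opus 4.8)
The forward implication is immediate. If $\Delta_{0,\epsilon}(x)=\phi^*\Delta_{t,\epsilon}(x)$ for an analytic coordinate change $\phi=\phi_{t,\epsilon}$, then for each fixed $(t,\epsilon)$ the biholomorphism $\phi$ carries the critical points of $\Delta_{t,\epsilon}$ to those of $\Delta_{0,\epsilon}$ while preserving their orders, so the multiplicities are independent of $t$; and since the residue of the $1$-form $\sqrt{\Delta}$ is invariant under biholomorphism, the square residues at corresponding critical points coincide, hence are likewise independent of $t$. Thus the family is iso-residual.

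For the converse I would run a Moser-type homotopy argument. Fix $\epsilon$ and set $\Delta_s:=\Delta_{st,\epsilon}$ for $s\in[0,1]$, so that $\Delta_s$ interpolates analytically between $\Delta_0=\Delta_{0,\epsilon}$ and $\Delta_1=\Delta_{t,\epsilon}$ and is iso-residual along the path (using a radial homotopy in $s$ sidesteps any integrability issue coming from the several variables $t_i$). The plan is to construct a family of analytic coordinate changes $\phi_s$, with $\phi_0=\id$, satisfying $\phi_s^*\Delta_s=\Delta_0$; then $\phi:=\phi_1$ is the sought transformation. Writing $\Delta_s=g_s(x)^2(\d x)^2$ and letting $W_s=w_s\partial_x$ be the infinitesimal generator $W_s=(\partial_s\phi_s)\circ\phi_s^{-1}$, differentiation of $\phi_s^*\Delta_s=\Delta_0$ in $s$ gives the cohomological equation
\[
\mathcal{L}_{W_s}\Delta_s=-\partial_s\Delta_s .
\]
Since $\mathcal{L}_{w\partial_x}\big(g^2(\d x)^2\big)=2g\,(wg)'(\d x)^2$ (with ${}'=\partial_x$), this reduces to $(w_sg_s)'=-\partial_s g_s$. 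Integrating once and selecting the solution anti-invariant under the sheet exchange of the double cover on which $\sqrt{\Delta_s}$ lives yields
\[
W_s=-\big(\partial_s\bt_s\big)\,\Delta_s^{-\frac12},\qquad \bt_s=\int\sqrt{\Delta_s} .
\]

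It then remains to verify that $W_s$ is a genuine holomorphic vector field on a fixed neighbourhood of the origin, analytic in $(t,\epsilon)$, whose flow exists for $s\in[0,1]$. For single-valuedness one uses that the monodromy of the primitive $\bt_s$ around any loop equals $2\pi\i$ times the sum of the enclosed residues of $\sqrt{\Delta_s}$: around an odd-order zero or pole this residue vanishes on the double cover (a ramification point, around which a loop lifts to a contractible one), while around an even-order pole it equals $\pm 2\pi\i\sqrt{\mu_j}$ with $\mu_j$ the square residue. As the germ domain is a disc, $H_1$ of the punctured double cover is generated by such loops, so constancy of the $\mu_j$ in $t$ (iso-residuality) makes $\partial_s\bt_s$ single-valued; being anti-invariant under the sheet exchange, it combines with $\Delta_s^{-\frac12}$ into a single-valued $W_s$ on the base. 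For holomorphy, constancy of the multiplicities guarantees that each critical point moves analytically as $a_j=a_j(s)$ without collision, and a local computation shows that the apparent poles of $\partial_s\bt_s$ and of $\Delta_s^{-\frac12}$ cancel, so that $W_s$ extends holomorphically across $a_j$ with $W_s(a_j)=\dot a_j\,\partial_x$, i.e. it tracks the moving critical point. Integrating the flow for small $t$ and setting $\phi=\phi_1$ gives an analytic coordinate change, depending analytically on $(t,\epsilon)$, with $\Delta_{0,\epsilon}=\phi^*\Delta_{t,\epsilon}$.

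I expect the holomorphic extension of $W_s$ across the critical points to be the main obstacle: one must check that, under the constant-multiplicity and constant-square-residue hypotheses, the singular ($\log$- and pole-) parts of $\bt_s$ that could obstruct single-valuedness or holomorphy of $W_s$ all carry $t$-constant coefficients and hence drop out under $\partial_s$. This is precisely the infinitesimal, parametric form of Strebel's statement (recalled above) that the analytic type of a meromorphic quadratic differential at a critical point is determined by its order and square residue; the homotopy argument then upgrades that pointwise classification to an equivalence analytic in all parameters.
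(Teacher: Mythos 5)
Your argument is essentially the paper's own proof: both reduce the converse to the cohomological equation $\mathcal{L}_{W_s}\Delta_s=-\partial_s\Delta_s$, solve it by the generator $W_s=-(\partial_s\bt_s)\,\Delta_s^{-\frac12}$, and invoke iso-residuality (vanishing of the residue of $\sqrt{\Delta_s}-\sqrt{\Delta_0}$ at each critical point) together with constancy of multiplicities to make this vector field single-valued and holomorphic so that its flow yields $\phi$. The only cosmetic differences are that the paper first straightens the critical divisor via Lemma~\ref{lemma:straightening} so the critical points do not move with $t$ (avoiding your pole-cancellation check at moving $a_j(s)$), and that it parametrizes the homotopy by interpolating the primitive linearly, $\bt_s=\bt_0+s\alpha$, rather than by the radial substitution $t\mapsto st$, packaging the flow argument through the commutation $[Y_{t,\epsilon},X_{t,\epsilon}]=0$.
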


\begin{proof}
Use Lemma~\ref{lemma:straightening} to straighten the critical divisor.
Let $\bt_{t,\epsilon}=\int\sqrt{\Delta_{t,\epsilon}}$, and denote $\alpha_{t,\epsilon}=\bt_{t,\epsilon}-\bt_{0,\epsilon}$.	
Then 
\[ \Delta_{t,\epsilon}^{-\frac12}=\mfrac{\Delta_{0,\epsilon}^{-\frac12}}{1+\Delta_{0,\epsilon}^{-\frac12}.\,\alpha_{t,\epsilon}},\]
where the Lie derivative $\Delta_{0,\epsilon}^{-\frac12}.\,\alpha_{t,\epsilon}$ is an analytic function.
Likewise the vector field  $\alpha_{t,\epsilon}\cdot\Delta_{0,\epsilon}^{-\frac12}$ is analytic.
Consider the vector fields
\[ Y_{t,\epsilon}(x,s)=\dd{s}-\mfrac{\alpha_{t,\epsilon}\cdot\Delta_{0,\epsilon}^{-\frac12}}{1+s\,\Delta_{0,\epsilon}^{-\frac12}.\,\alpha_{t,\epsilon}},\qquad
X_{t,\epsilon}(x,s)=\mfrac{\Delta_{0,\epsilon}^{-\frac12}}{1+s\,\Delta_{0,\epsilon}^{-\frac12}.\,\alpha_{t,\epsilon}},\]
and let $\varphi_{t,\epsilon}(x,s)=x\circ\exp(Y_{t,\epsilon})(x,s)$ be the $x$-coordinate of the time-1-flow map of $Y_{t,\epsilon}$.
Since $[Y_{t,\epsilon},X_{t,\epsilon}]=0$, it follows that $\phi(x,t,\epsilon)=\varphi_{t,\epsilon}(x,0)$ is such that $\Delta_{0,\epsilon}(x)=\phi^*\Delta_{t,\epsilon}(x)$, see \cite[Lemma 2.6]{Klimes-Rousseau3}.
\end{proof}

\goodbreak

\section{Appendix}

\subsection{Formal invariants of unfoldings for \texorpdfstring{$m=0,1$}{m=0,1}}

\begin{theorem}\label{thm:unfoldedformal}
Assume $k>0$, $m=0,1$. Two parametric families of systems \eqref{eq:unfoldedsystem}, resp. \eqref{eq:unfoldedQ}, with the same polynomial $P(x,\epsilon)=P'(x,\epsilon)$ are formally gauge equivalent if and only if 
\[\det A(x,\epsilon)=\det A'(x,\epsilon)\mod P(x,\epsilon), \quad\text{resp.} \ \ Q(x,\epsilon)=Q'(x,\epsilon)\mod P(x,\epsilon).\]
\end{theorem}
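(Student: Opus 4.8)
The plan is to reduce both statements to the companion form and then argue the two implications separately. First I would apply Lemma~\ref{lemma:systemQepsilon} to bring both families to the form \eqref{eq:unfoldedQ} with coefficients $Q,Q'$; since the normalizing transformations are formal gauge transformations, formal gauge equivalence of the systems \eqref{eq:unfoldedsystem} is equivalent to that of the associated companion systems. Writing the parametric analogue of \eqref{eq:Q} (with $x^{k+1}\tdd{x}$ replaced throughout by $D:=P\tdd{x}$), every summand other than $-\det A$ carries a factor $D$ and is therefore divisible by $P$, so $Q\equiv-\det A\bmod P$ and likewise $Q'\equiv-\det A'\bmod P$. This identifies the two displayed conditions, and it suffices to prove the companion version.

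\emph{Necessity.} For a formal gauge transformation $T$ I would expand it as in \eqref{eq:T} and use the parametric analogue of the relations preceding and comprising \eqref{eq:ab} (the same computation with $D=P\tdd{x}$). Put $R:=Q'-Q$. The first line of \eqref{eq:ab} reads $Da=\tfrac12Rb$, whose left-hand side lies in the ideal $(P)$, so $Rb\equiv0\bmod P$; the whole left-hand side of the second line of \eqref{eq:ab} lies in $(P)$ (each summand carries a $D$), so $Ra\equiv0\bmod P$; and from $d=-\tfrac12Db$, $c=\tfrac12Rb-\tfrac12D^{2}b$ one gets $Rc,Rd\equiv0\bmod P$ as well. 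Hence $R\cdot T\equiv0\bmod P$ entrywise, and since $\det T(0,0)\neq0$ makes $T$ invertible over $\C[[x,\epsilon]]$, the identity $R\,I=(RT)T^{-1}$ forces $R\equiv0\bmod P$. This part is insensitive to the value of $m$.

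\emph{Sufficiency (where $m=0,1$ enters).} Given $Q'=Q+Pg$, I would connect $Q$ to $Q'$ through $Q_s=Q+sPg$ and look for tangent-to-identity formal gauge transformations realizing $Q\rightsquigarrow Q_s$. Differentiating in $s$ turns this into the cohomological equation $D\Xi-[M_s,\Xi]=PgN$ for a traceless $\Xi$, where $M_s=\left(\begin{smallmatrix}0&1\\Q_s&0\end{smallmatrix}\right)$ and $N=\left(\begin{smallmatrix}0&0\\1&0\end{smallmatrix}\right)$. Expanding $\Xi$ in the basis of \eqref{eq:T} and eliminating the $H$- and $N$-components via $\delta=-\tfrac12D\beta$, $\gamma=-\tfrac12D^{2}\beta$, exactly as in the necessity computation, reduces it after division by $P$ to the single scalar equation
\[\bar{\Cal L}\beta:=\tdd{x}\big[(P\tdd{x})^{2}\beta\big]-4Q_s\,\tdd{x}\beta-2(\tdd{x}Q_s)\,\beta=-2g,\]
where $P\bar{\Cal L}$ is precisely the symmetric-square operator \eqref{eq:thirdorderODE} attached to the scalar ODE \eqref{eq:unfoldedODE}. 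The key computation is that at $\epsilon=0$ one finds $\bar{\Cal L}_0(x^{n})=c_n x^{n-1}+\hot$ with $c_n=-4q_0n$ ($n\geq1$) when $m=0$, where $q_0=Q(0,0)\neq0$, and $\bar{\Cal L}_0(x^{n})=c_n x^{n}+\hot$ with $c_n=-2q_1(2n+1)$ ($n\geq0$) when $m=1$, where $q_1\neq0$ is the leading coefficient of $Q$. In either case $c_n\neq0$, so by triangularity $\bar{\Cal L}_0$ is onto $\C[[x]]$, and an isomorphism when $m=1$. Because $Q_s\equiv Q\bmod P$ the $\epsilon=0$ operator is the same for all $s$, while $\bar{\Cal L}-\bar{\Cal L}_0$ raises the $\epsilon$-order by at least one; an $\epsilon$-adic successive approximation then yields a formal solution $\beta\in\C[[x,\epsilon]]$ for every $s$. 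Integrating the flow of the resulting $\Xi$ (which remains in $\SL_2$ as $\Xi$ is traceless) from $s=0$ to $s=1$ produces the required formal gauge transformation.

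The main obstacle is the sufficiency, and specifically the surjectivity of the singular operator $\bar{\Cal L}$: this is exactly where the hypothesis $m=0,1$ is used. It guarantees that the leading coefficients $c_n$ never vanish, so that no residue/resonance obstruction of the type encountered for larger even $m$ in Lemma~\ref{lemma:sufficiency} and Proposition~\ref{prop:flow} can appear. Once the model operator $\bar{\Cal L}_0$ is onto, the parametric equation is solvable by perturbation off $\epsilon=0$, completing the converse to the $m$-free necessity argument and matching the individual formal classification of Theorem~\ref{theorem:formalgaugeequivalence}, which for $m=0,1$ reduces to $\jet^{k}Q=\jet^{k}Q'$.
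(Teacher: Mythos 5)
Your proof is correct and in substance follows the same route as the paper's: the necessity rests on the same component equations (the parametric form of \eqref{eq:ab}), and the sufficiency rests on the same key fact, namely that the third-order operator \eqref{eq:thirdorderODE} divided by $P$ is surjective on formal power series in $x$ at $\epsilon=0$ precisely because the diagonal coefficients $-4q_0n$ (for $m=0$), resp. $-2q_1(2n+1)$ (for $m=1$), never vanish, combined with an $\epsilon$-adic perturbation off that model operator. The differences are in packaging. Your necessity argument, concluding from $R\,T\equiv 0\bmod P$ entrywise together with invertibility of $T$ over $\C[[x,\epsilon]]$, streamlines the paper's case distinction on whether $a(0,0)$ or $b(0,0)$ is a unit. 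Your sufficiency uses a Moser-type path method (cohomological equation for the infinitesimal generator $\Xi$, then integration of its flow), whereas the paper composes an infinite sequence of gauge transformations $\equiv I\bmod\Cal I^{n}$ and invokes Theorem~\ref{theorem:formalgaugeequivalence} to settle the base case $\epsilon=0$; your scheme absorbs that base case into the degree-zero step of the successive approximation, which is a mild unification. One small imprecision to fix: since $P(x,0)=x^{k+1}$, the restriction $Q_s(x,0)=Q(x,0)+s\,x^{k+1}g(x,0)$ does depend on $s$, so the $\epsilon=0$ operator is not literally the same for all $s$; what is $s$-independent is its triangular leading part (the coefficients $c_n$ depend only on $\jet^{m}Q(x,0)$, and $k+1>m$), which is all the surjectivity argument needs and which also keeps the chosen right inverse --- hence $\beta$ and the flow of $\Xi$ --- polynomial in $s$ at each finite jet, as required to evaluate at $s=1$.
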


\begin{proof}
	Let $y'=\hat T(x,\epsilon)y$ be a formal gauge transformations between two  parametric systems \eqref{eq:unfoldedQ}.
	Write $T(x,\epsilon)$ in the form \eqref{eq:T}. All the equations \eqref{eq:conjugationT}, \eqref{eq:abcd}  stay the same except for replacing $x^{k+1}\tdd{x}$ by $P(x,\epsilon)\tdd{x}$. Namely \eqref{eq:ab} becomes:
	\begin{equation}\label{eq:ab1}
		\begin{aligned}
			P\tdd{x} a&=\tfrac12 (Q'-Q)b,\\
			(Q'+Q)P\tdd{x} b+\tfrac12b P\tdd{x}(Q'+Q)-\tfrac12\big(P\tdd{x}\big)^3b&=(Q'-Q)a.
		\end{aligned}
	\end{equation}
	If $m=0$ then either $a(0,0)\neq0$ or $b(0,0)\neq0$, and if $m>0$ then $a(0,0)\neq0$, both mean that $Q'-Q$ is divisible by $P$.
	
	Conversely, if $Q'(x,\epsilon)=Q(x,\epsilon)+P(x,\epsilon)R(x,\epsilon)$, we want to construct a formal gauge equivalence $y'=\hat T(x,\epsilon)y$. Denote $\Cal I$ the ideal of formal series in $(x,\epsilon)$ that vanish when $\epsilon=0$.
	By Theorem~\ref{theorem:formalgaugeequivalence} one can assume that $R\in\Cal I$.
	Let us show that if $R\in\Cal I^n$ for $n\geq 1$, then there exists $\hat T(x,\epsilon)=I\mod \Cal I^n$ of the form \eqref{eq:T}, that conjugates the two systems modulo $\Cal I^{n+1}$, i.e. such that the new system has $R\in\Cal I^{n+1}$. The infinite composition of such transformation is a well defined formal gauge transformation (the partial compositions converge in the Krull topology). 
	The equations \eqref{eq:ab1} where $a=1\mod\Cal I^{n}$, $b=0\mod\Cal I^{n}$, are considered modulo $\Cal I^{n+1}$: 
	\begin{equation*}
		\begin{aligned}
			x^{k+1}\!\tdd{x} a&=0\mod\Cal I^{n+1},\\
			2Q_0x^{k+1}\!\tdd{x} b+b x^{k+1}\!\tdd{x}Q_0-\tfrac12\big(x^{k+1}\tdd{x}\big)^3b&=x^{k+1}R\mod\Cal I^{n+1},
		\end{aligned}
	\end{equation*}
	where $Q_0(x)=Q(x,0)$.
	If $k>0$ and $m=0,1$ then this equation has a formal solution $b(x,\epsilon)=\sum_{n=0}^{+\infty}b_n(\epsilon)x^n\in \Cal I^{n}$ for any formal series $R(x,\epsilon)\in\Cal I^{n}$.
	The formal transformation $T(x,\epsilon)$ \eqref{eq:T} obtained this way is corrected by a post-composition with the formal gauge transformation of Lemma~\ref{lemma:systemQ} to make it preserve the companion form of the system.
\end{proof}


\begin{theorem}\label{cor:weakformal}
	For $k>0$, $m=0,1$, formal gauge equivalence agrees with weak formal gauge equivalence.
\end{theorem}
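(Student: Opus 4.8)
The plan is to reduce everything to the algebraic characterization of formal gauge equivalence supplied by Theorem~\ref{thm:unfoldedformal}: for $k>0$, $m=0,1$ and a common divisor $P$, two systems are formally gauge equivalent if and only if $Q\equiv Q'\pmod{P(x,\epsilon)}$ in $\C\{x,\epsilon\}$; one then matches this divisibility against the fiberwise invariant agreement that defines weak formal gauge equivalence. The whole argument hinges on one numerical fact special to $m\le 1$: for every sufficiently small $\epsilon$ and every zero $a_i(\epsilon)$ of $P$ the order $m_i=\ord_{a_i}Q$ is at most $1$, whence the shift $N_i$ of \eqref{eq:jetequality} is always $0$. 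Indeed, for $m=0$ the germ $Q(x,0)$ is a unit, so $Q(a_i(\epsilon),\epsilon)\ne 0$ and $m_i=0$; for $m=1$ Weierstrass preparation writes $Q(x,\epsilon)=(x-b(\epsilon))\,u(x,\epsilon)$ with $u$ a unit near the origin, so $Q$ has a single simple zero there and $m_i\le 1$. Checking the three cases of \eqref{eq:jetequality} (and separating $k_i=0$ from $k_i\ge1$) gives $N_i=0$ throughout, so by Theorem~\ref{theorem:formalgaugeequivalence} the formal gauge invariant at each non-resonant regular singularity $a_i(\epsilon)$ is determined exactly by the $k_i$-jet $\jet_{a_i}^{k_i}Q$ --- precisely the data controlled by $Q\bmod P$.

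For the implication formal $\Rightarrow$ weak I would start from $Q\equiv Q'\pmod P$, which forces $\jet_{a_i}^{k_i}Q=\jet_{a_i}^{k_i}Q'$ at every root of $P$ for every $\epsilon$. Since $N_i=0$, Theorem~\ref{theorem:formalgaugeequivalence} yields equality of the formal gauge invariants at all non-resonant regular singularities, which is exactly weak formal gauge equivalence. At such a singularity the resonance caveat \eqref{eq:strongresonance} is vacuous, so the clean jet criterion applies and no square-residue or $\rho$-type correction enters.

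For the converse, weak $\Rightarrow$ formal, I would invoke the Weierstrass division theorem to write $Q-Q'=P\cdot h+r$ with $r(x,\epsilon)$ a polynomial of degree $\le k$ in $x$ whose coefficients are analytic in $\epsilon$; the goal is $r\equiv 0$. Fix $\epsilon$ outside the thin locus $\Theta\subset\sE$ (a countable union of divisors) on which some zero of $P$ is resonant regular. For such $\epsilon$ all zeros of $P$ are either irregular or non-resonant regular, so weak equivalence together with $N_i=0$ and Theorem~\ref{theorem:formalgaugeequivalence} gives $\jet_{a_i}^{k_i}(Q-Q')=0$ at every root; by Hermite interpolation the fiber remainder $r(\cdot,\epsilon)$ then vanishes. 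Thus $r$ vanishes on the dense set $\sE\smallsetminus\Theta$, and analyticity of its coefficients forces $r\equiv 0$, i.e. $Q\equiv Q'\pmod P$; Theorem~\ref{thm:unfoldedformal} concludes formal gauge equivalence of the parametric families.

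The case bookkeeping that $N_i=0$ is routine. The two places needing care are the use of Theorem~\ref{theorem:formalgaugeequivalence} to translate between fiberwise invariants and $k_i$-jets, and the final analytic-continuation step that recovers the resonant fibers. I expect the latter to be the real, if mild, obstacle: it is exactly here that the merely generic-in-$\epsilon$ ``weak'' hypothesis is upgraded to an honest equivalence of parametric families, and it is the reason the conclusion is restricted to $m\le 1$, where $N_i=0$ prevents any loss of jet information.
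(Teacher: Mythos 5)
Your proposal is correct and follows the same overall strategy as the paper: both reduce the statement to showing that the Weierstrass remainder $Q\bmod P$ and the collection of local formal invariants at the zeros of $P$ that are not resonant regular determine one another, the key point in both cases being that $m\leq 1$ forces every local order $m_i\leq 1$ and hence $N_i=0$ in \eqref{eq:jetequality}. The mechanics differ in two places, one to your credit and one to your debit. To your credit: where the paper proves the ``local data determines the global remainder'' direction by an explicit partial-fraction decomposition of $1/P(x,\epsilon)^2$ over the splitting field $\K$ of $P$, you use Weierstrass division plus a degree count (a polynomial $r$ of degree $\leq k$ in $x$ vanishing to order $k_i+1$ at each root of $P$ must vanish), and you make explicit the analytic-continuation step across the resonant locus $\Theta$, which the paper leaves implicit; both points are sound. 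To your debit: the step you elide is precisely the content of the paper's computation in the other direction. Theorem~\ref{theorem:formalgaugeequivalence} is stated for systems already in companion form, and the restriction of the parametric system to a neighborhood of $a_i(\epsilon)$ is \emph{not} in companion form (writing $P=(x-a_i)^{k_i+1}\tilde P_i$, the off-diagonal entry becomes $1/\tilde P_i$), so the local formal invariant is a priori the jet $\jet_{a_i}^{k_i}Q_i$ of the locally reduced coefficient $Q_i$, not of the global $Q$. The paper's identity $Q_i\equiv Q\,R_i \bmod (x-a_i)^{k_i+1}$, with $R_i$ coming from the partial fractions of $1/P^2$ and satisfying $R_i(a_i)=\tilde P_i(a_i)^{-2}\neq 0$, is exactly what lets one pass between $\jet_{a_i}^{k_i}Q_i$ and $\jet_{a_i}^{k_i}Q$ given $P$; as written, your pivotal sentence that the local invariant ``is determined exactly by the $k_i$-jet $\jet_{a_i}^{k_i}Q$'' is asserted rather than proved. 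This is a short computation, not a fatal gap, but it must be supplied --- with it, your argument goes through and is, on the continuation step, somewhat more careful than the paper's.
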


\begin{proof}
Indeed, if $m=0,1$, $k>0$, then the formal invariant of the confluent family \eqref{eq:unfoldedsystem} is
\[\frac{Q(x,\epsilon)}{P(x,\epsilon)^2}(\d x)^2=-\det\Big(A(x,\epsilon)\frac{\d x}{P(x,\epsilon)}\Big)\mod\frac{(\d x)^2}{P(x,\epsilon)}\C\{x,\epsilon\},\]
with $Q(x,\epsilon)$ a polynomial in $x$ of order $\leq k$.
Let $P(x,\epsilon)=\prod_{i}(x-a_i(\epsilon))^{k_i+1}$ be a factorization of $P$ in the field extension $\K$ of the fraction field of $\C\{\epsilon\}$ by the roots $a_i(\epsilon)$.
Express
\[\frac{1}{P(x,\epsilon)^2}=\sum_{i}\frac{R_i(x,\epsilon)+(x-a_i(\epsilon)S_i(x,\epsilon))^{k_i+1}}{(x-a_i(\epsilon))^{2k_i+2}}\]
with $R_i(x,\epsilon),\ S_i(x,\epsilon)\in\K[x]$ polynomials of order $\leq k_i$ in $x$.
Then for $\epsilon$ for which $a_i(\epsilon)\neq a_j(\epsilon)$, $i\neq j$, the local formal invariant at $a_i(\epsilon)$ (unless resonant regular) is given by
\[
\begin{aligned}
	\frac{Q_i\big(x-a_i(\epsilon),\,\epsilon\big)}{(x-a_i(\epsilon))^{2k_i+2}}(\d x)^2
	&=-\det\Big(A(x,\epsilon)\frac{\d x}{P(x,\epsilon)}\Big)\mod\frac{(\d x)^2}{(x-a_i(\epsilon))^{k_i+1}}\K\{x-a_i(\epsilon)\}\\
	&=\frac{Q(x,\epsilon)}{P(x,\epsilon)^2}(\d x)^2\mod\frac{(\d x)^2}{(x-a_i(\epsilon))^{k_i+1}}\K\{x-a_i(\epsilon)\},
\end{aligned}
\]
which means that 
\begin{equation}
	Q_i\big(x-a_i(\epsilon),\,\epsilon\big)=Q(x,\epsilon)R_i(x,\epsilon)\mod (x-a_i(\epsilon))^{k_i+1}\K\{x-a_i(\epsilon)\}.
\end{equation}
Reciprocally,
\begin{equation}
	\frac{Q(x,\epsilon)}{P(x,\epsilon)^2}(\d x)^2=\sum_i \frac{Q_i\big(x-a_i(\epsilon),\,\epsilon\big)}{(x-a_i(\epsilon))^{2k_i+2}}(\d x)^2 \mod\frac{(\d x)^2}{P(x,\epsilon)}\K\{x\},
\end{equation}
since
\[\frac{Q}{P^2}-\sum_i \frac{Q_i}{(x-a_i)^{2k_i+2}}
=\frac{1}{P}\sum_{i}\left(\frac{QR_i-Q_i}{(x-a_i)^{k_i+1}}+QS_i\right)\frac{P}{(x-a_i)^{k_i+1}}\in\frac{1}{P}\K[x],\]
where each term of the sum on the right hand side is a polynomial in $(x-a_i)$ and hence also in $x$.
\end{proof}

\begin{theorem}\label{thm:unfoldedpointequivalence}
	Assume $k>0$, $m=0,1$.
	Two parametric families of systems \eqref{eq:unfoldedQ} with $P(x,\epsilon)$, $Q(x,\epsilon)$, and $P'(x',\epsilon)$, $Q'(x',\epsilon)$, are \emph{formally point equivalent}
	if and only if the two meromorphic quadratic differentials
	$\frac{Q(x,\epsilon)}{P(x,\epsilon)^2}(\d x)^2$, $\frac{Q'(x,\epsilon)}{P'(x,\epsilon)^2}(\d x)^2$ are formally equivalent.
\end{theorem}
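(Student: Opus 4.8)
The plan is to funnel both sides of the asserted equivalence through a single condition: the existence of a formal, divisor-preserving coordinate change $\phi(x,\epsilon)$ with
\[
\phi^*\Delta'\equiv\Delta\pmod{P},
\]
where $\Delta=\tfrac{Q}{P^2}(\d x)^2$, $\Delta'=\tfrac{Q'}{P'^2}(\d x')^2$, and the congruence is read on the numerators after Weierstrass division by $P$. This remainder $Q\bmod P$ is the right invariant in families: it survives confluence and, by Theorem~\ref{cor:weakformal}, encodes the weak formal class when $m=0,1$. I would then prove two claims: (A) the systems are formally point equivalent iff such a $\phi$ exists; (B) the quadratic differentials are formally equivalent iff such a $\phi$ exists. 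The theorem is their conjunction.

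For (A) the key is the discrepancy between how a point transformation and a pullback act on $Q$. By the parametric form of \eqref{eq:coordinateQ}, the point transformation attached to $\phi$ relates $Q$ and $Q'$ through
\[
Q=\psi^2\,Q'\!\circ\phi-\tfrac12\Big[P\tdd{x}\Big(\tfrac{P\tdd{x}\psi}{\psi}\Big)-\tfrac12\Big(\tfrac{P\tdd{x}\psi}{\psi}\Big)^2\Big],\qquad \psi=\tfrac{P\,\tdd{x}\phi}{P'\!\circ\phi},
\]
whereas the tensorial pullback contributes only the first summand $\psi^2\,Q'\!\circ\phi$, which is exactly the numerator of $\phi^*\Delta'$ relative to $P^2$. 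Since the transformation relates two companion systems, $P'\!\circ\phi=u\,P$ for a unit $u$, so $\psi$ is a unit and $\tfrac{P\tdd{x}\psi}{\psi}=P\cdot\tdd{x}\log\psi$ is divisible by $P$; a one-line check then shows the whole Schwarzian correction is divisible by $P$. Hence point transformation and pullback agree modulo $P$. A formal point equivalence forces the exact identity above, and reducing modulo $P$ yields $\phi^*\Delta'\equiv\Delta\pmod P$. Conversely, given such $\phi$, applying its point transformation to the $Q'$-system produces a companion system whose potential $\tilde Q$ satisfies $\tilde Q\equiv\psi^2Q'\!\circ\phi\equiv Q\pmod P$; by Theorem~\ref{thm:unfoldedformal} it is formally gauge equivalent to the $Q$-system, and by the parametric form of Theorem~\ref{theorem:pointclassification}, whose proof reduces formal gauge equivalence to point equivalence via Lemma~\ref{lemma:Heu}, it is point equivalent to it. Composing with $\phi$ gives point equivalence of the original systems.

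For (B) I would upgrade the congruence to an exact equality $\phi^*\Delta'=\Delta$. Starting from $\phi_0$ with $\phi_0^*\Delta'\equiv\Delta\pmod P$, the difference of numerators is divisible by $P$, and I would remove it order by order by post-composing with divisor-fixing germs $x\mapsto x+a_l\,x^{l+1}$, exactly as in Lemma~\ref{lemma:sufficiency}. This is strictly easier for differentials than for systems: the pullback rule $Q\mapsto\psi^2Q'\!\circ\phi$ has no Schwarzian term, the relevant coefficient $(2l-2k+m)$ never vanishes for the orders $m+l\ge k+1$ that lie beyond the remainder, and since $m=0,1<2k$ there is no resonant obstruction. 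The infinite composition converges in the Krull topology and defines the desired $\phi$ with $\phi^*\Delta'=\Delta$. The reverse implication of (B) is immediate, since an exact equality gives the congruence a fortiori.

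The main obstacle is not any single computation but uniformity in $\epsilon$: both the reduction modulo $P$ in (A) and the inductive coordinate change in (B) must be carried out with coefficients in $\C\{\epsilon\}$ and must pass continuously through the confluence locus where the roots of $P$ collide. This is precisely why the whole argument is phrased through the global remainder $Q\bmod P$ rather than the pointwise data $m$ and $\mu$ \eqref{eq:mu}, whose description jumps at confluence. The one coefficient requiring care is the term carrying the square residue (order $m+l=k$ when $m=0$): it cannot be removed by any divisor-fixing diffeomorphism, but it is already pinned down by the congruence modulo $P$, so the delicate check is that the induction in (B) only ever acts on orders $>k$ and therefore never disturbs it.
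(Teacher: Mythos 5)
Your reduction of both sides to the single congruence $\phi^*\Delta'\equiv\Delta\pmod P$ is the right organizing idea, and your claim (A) forward --- that the Schwarzian correction in \eqref{eq:parametriccoordinateQ} is divisible by $P$, so a point transformation and a tensorial pullback agree modulo $P$ --- is exactly the paper's forward computation. But both of your closing steps have the same genuine gap: uniformity in $\epsilon$, which you name as the main obstacle but do not resolve. In (A) reverse you invoke ``the parametric form of Theorem~\ref{theorem:pointclassification}'' to pass from formal gauge equivalence (Theorem~\ref{thm:unfoldedformal}) to formal point equivalence. No such parametric statement is available at this stage: in the paper the implication ``parametric formal gauge equivalence $\Rightarrow$ parametric formal point equivalence'' is deduced \emph{from} Theorem~\ref{thm:unfoldedpointequivalence} (see the Corollary that follows it), so your argument is circular unless you supply the mechanism. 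The paper supplies it directly: an infinite sequence of divisor-preserving point transformations $\phi_n(x,\epsilon)=x+P(x,\epsilon)f_n(x,\epsilon)$ with $f_n\in\Cal I^n$, where $\Cal I$ is the ideal of series vanishing at $\epsilon=0$; the base case $n=0$ is the non-parametric Lemma~\ref{lemma:sufficiency} applied at $\epsilon=0$, and the inductive step is a solvable linear equation modulo $\Cal I^{n+1}$. The induction runs on the vanishing order in $\epsilon$, not on the order in $x$.

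In (B) reverse the same issue recurs in a sharper form: the germs $x\mapsto x+a_lx^{l+1}$ you propose do not fix the divisor $\{P(x,\epsilon)=0\}$ once $\epsilon\neq0$ (they only fix $x=0$), so each step of your induction destroys the normalization it is meant to preserve, and the non-resonance count $(2l-2k+m)\neq0$ is a statement about the limit $\epsilon=0$ only --- for $\epsilon\neq0$ the pole splits and each branch carries its own residue, so the resonant orders are not controlled by a single $l$. The paper avoids order-by-order elimination here entirely: Lemma~\ref{lemma:a1} removes the $P\cdot R$ discrepancy by the homotopy method (a vector field $Y$ commuting with the dual field $\Delta_t^{-\frac12}$, obtained by solving the explicit first-order equation \eqref{eq:alpha1}), which is manifestly analytic in $\epsilon$ and divisor-preserving. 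If you replace your two closing steps by (i) the $\Cal I$-adic induction with corrections of the form $x+P(x,\epsilon)f_n(x,\epsilon)$ and (ii) Lemma~\ref{lemma:a1}, your outline becomes the paper's proof.
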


\begin{proof}
	Assume that a formal point transformation \eqref{eq:parametricpoint} associated to a map $x'=\phi(x,\epsilon)$ provides a point equivalence between the two systems. 
	Then
	\begin{equation}\label{eq:parametriccoordinateQ}
		Q(x,\epsilon)=\psi^2\cdot Q'\circ\phi-\tfrac12\Big[P(x,\epsilon)\!\tdd{x}\Big(\tfrac{P(x,\epsilon)\!\tdd{x}\psi}{\psi}\Big)-\tfrac{1}{2}\Big(\tfrac{P(x,\epsilon)\!\tdd{x}\psi}{\psi}\Big)^2\Big],
	\end{equation}
	where $\psi(x,\epsilon)$ is as in \eqref{eq:parametricpoint}.
	Hence $\phi^*\Big(\frac{Q'(x',\epsilon)}{P'(x',\epsilon)^2}(\d x')^2\Big)=\frac{Q(x,\epsilon)+P(x,\epsilon)R(x,\epsilon)}{P(x,\epsilon)^2}(\d x)^2$, for 
	\begin{equation}\label{eq:R}
		R(x,\epsilon)=\tfrac12\Big[\tdd{x}\Big(\tfrac{P(x,\epsilon)\!\tdd{x}\psi}{\psi}\Big)-\tfrac{1}{2}\Big(\tfrac{\tdd{x}\psi}{\psi}\Big)^2\Big].
	\end{equation}
	When $m=0,1$, then $Q(x,\epsilon)$ has at most one local zero, and by Lemma~\ref{lemma:a1} in the Appendix the quadratic differentials $\frac{Q(x,\epsilon)}{P(x,\epsilon)^2}(\d x)^2$ and $\frac{Q'(x',\epsilon)}{P'(x',\epsilon)^2}(\d x')^2$ are formally equivalent.

	Vice-versa, let $\phi(x,\epsilon)$ be a formal diffeomorphism such that 
	\[\frac{Q(x,\epsilon)}{P(x,\epsilon)^2}(\d x)^2=\phi^*\Big(\frac{Q'(x,\epsilon)}{P'(x,\epsilon)^2}(\d x)^2\Big).\] 
	The associated point transformation pulls back the system $P'\frac{\d y}{\d x}=\begin{psmallmatrix} 0&1\\Q'&0\end{psmallmatrix}y$ to $P\frac{\d y}{\d x}=\begin{psmallmatrix} 0&1\\ Q_0&0\end{psmallmatrix}y$ with
	$Q_0=Q-PR$ and $R$ as in \eqref{eq:R}.
	We shall construct an infinite sequence of point transformations associated with germs
	\[\phi_n(x,\epsilon)=x+P(x,\epsilon)f_n(x,\epsilon),\qquad f_n\in\Cal I^n,\quad n\geq 0,\] 
	whose formal composition will be the desired formal transformation.
	We want $\phi_n$ to transform a system $P\tdd{x}y=\begin{pmatrix}0&1\\Q_n&0\end{pmatrix}y$ with  $\frac{Q_{n}-Q}{P}\in \Cal I^{n}$ to a one 
	$Q_{n+1}$ such that $\frac{Q_{n+1}-Q}{P}\in \Cal I^{n+1}$.
	For $n=0$ we put $f_0(x,\epsilon)=\tilde f_0(x)$ where $\tilde\phi_0=x+x^{k+1}\tilde f_0$ is the formal point transformation between the system for $\epsilon=0$ (Lemma~\ref{lemma:sufficiency}).
	For $n>0$, calculating modulo $\Cal I^{n+1}$, we have $\psi_n=1+x^{k+1}\tdd{x}f_n\mod\Cal I^{n+1}$ and
	\begin{align*}
		\tfrac{Q_n-Q_{n+1}}{P}&=\tfrac{\psi_n^2\cdot Q_{n+1}\circ\phi_n-Q_{n+1}}{P}+R_n \mod\Cal I^{n+1}\\
		&=2Q_{n+1}\tdd{x}f_n+f_n\tdd{x}Q_{n+1}\\
		&\hphantom{=}-\tfrac12x^{2k}\Big((k\!+\!1)(2k\!+\!1)\tdd{x}f_n+3(k\!+\!1)x\big(\tdd{x}\big)^2f_n+x^2\big(\tdd{x}\big)^3f_n\Big)\!\!\! \mod\Cal I^{n+1}
	\end{align*}
	If $m=0$ or $m=1$, the above  equation has a formal solution $f_n\in\Cal I_n$ for any prescribed $P$, $Q_n$, $Q_{n+1}$ with $\frac{Q_{n+1}-Q_n}{P}\in\Cal I^n$.  
\end{proof}

\begin{corollary}
	Assume $k>0$, $m=0,1$.
	Two parametric families of systems \eqref{eq:unfoldedQ} with $P(x,\epsilon)$, $Q(x,\epsilon)$, and $P'(x',\epsilon)$, $Q'(x',\epsilon)$,  are 	
	\emph{formally gauge--coordinate equivalent}
	if and only if they are \emph{formally point equivalent}, that is if 
	the two meromorphic quadratic differentials
	$\frac{Q(x,\epsilon)}{P(x,\epsilon)^2}(\d x)^2$, $\frac{Q'(x,\epsilon)}{P'(x,\epsilon)^2}(\d x)^2$ are formally equivalent.
\end{corollary}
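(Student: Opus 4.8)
The plan is to reduce the statement to the formal results of the Appendix, following the pattern of the non-parametric Theorem~\ref{theorem:pointclassification}. One implication is immediate: a parametric point transformation \eqref{eq:parametricpoint} is in particular a parametric gauge--coordinate transformation, so formal point equivalence trivially implies formal gauge--coordinate equivalence. For the converse I would use that the three kinds of parametric transformations form groups. Every gauge--coordinate transformation $(x,y)\mapsto\big(\phi(x,\epsilon),T(x,\epsilon)y\big)$ factors as the point transformation associated with $\phi$, which again carries a companion system \eqref{eq:unfoldedQ} to a companion system, followed by a transformation fixing the coordinate, i.e.\ a pure gauge transformation. Since point transformations form a group, it therefore suffices to show that a \emph{formal gauge} equivalence between two companion systems \eqref{eq:unfoldedQ} with a common singular divisor $P$ is always realized by a \emph{formal point} equivalence.

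To establish this last reduction, suppose the companion systems with coefficients $Q$ and $Q'$ and common $P=P'$ (a gauge transformation fixes the pole divisor) are formally gauge equivalent. Because $k>0$ and $m=0,1$, Theorem~\ref{thm:unfoldedformal} identifies this equivalence with the congruence $Q\equiv Q'\ \mathrm{mod}\ P$. Still under $m=0,1$, the coefficient $Q$ has at most one local zero, so Lemma~\ref{lemma:a1} applies and shows that the quadratic differentials $\frac{Q}{P^2}(\d x)^2$ and $\frac{Q'}{P^2}(\d x)^2$ are formally equivalent. Theorem~\ref{thm:unfoldedpointequivalence} then converts this formal equivalence of quadratic differentials into a formal point equivalence of the two systems. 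Composing with the point part extracted in the first step, and invoking the group property of point transformations once more, upgrades the original gauge--coordinate equivalence to a point equivalence.

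The ``that is'' clause of the statement---the identification of formal point equivalence with formal equivalence of the associated quadratic differentials---is precisely Theorem~\ref{thm:unfoldedpointequivalence}, so nothing further is required there. I expect the only delicate point to be the bookkeeping of the factorization ``gauge--coordinate $=$ gauge $\circ$ point'' in the formal parametric category: one must check that the point transformation associated with $\phi$ is admissible, i.e.\ that $\phi$ respects the divisibility $P\mid P'\circ\phi$ imposed in \eqref{eq:parametricpoint}. This is automatic, since $\phi$ is the coordinate part of a transformation intertwining the two systems and hence already maps the divisor $\{P=0\}$ onto $\{P'=0\}$.
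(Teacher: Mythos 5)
Your argument is correct and follows essentially the same route as the paper's proof: both factor the gauge--coordinate transformation as a point transformation composed with a gauge transformation, use Theorem~\ref{thm:unfoldedformal} together with Lemma~\ref{lemma:a1} to see that the gauge part preserves the formal class of the quadratic differential, and then conclude via Theorem~\ref{thm:unfoldedpointequivalence}. The extra check that $P\mid P'\circ\phi$ is a reasonable remark but, as you note, automatic.
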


\begin{proof}
	Any gauge--coordinate transformation can be written as a composition of a point transformation and a gauge transformation. A gauge transformation preserves the divisor $P(x,\epsilon)$ and the Weierstrass remainder $Q(x,\epsilon)\mod P(x,\epsilon)$, which by Lemma~\ref{lemma:a1} implies that it also preserves the equivalence class of  $\frac{Q(x,\epsilon)}{P(x,\epsilon)^2}(\d x)^2$. The rest is Theorem~\ref{thm:unfoldedpointequivalence}.	
\end{proof}

\subsection{Normal form of an unfolded meromorphic quadratic differential}

\begin{theorem}[Normal form of an unfolded meromorphic quadratic differential]\label{prop:unfoldequadraticdifferentials} 
	Assume that a parametric family of meromorphic quadratic differentials $\Delta(x,\epsilon)=\frac{Q(x,\epsilon)}{P(x,\epsilon)^2}(\d x)^2$, with $P(x,0)=x^{k+1}$, has at most single saddle point of rank $m+1$, $0\leq m<2k$.
	Then it is analytically equivalent by a coordinate change $x\mapsto\phi(x,\epsilon)$ to a \emph{normal form}
		\begin{equation*}\label{eq:qdnormal}
		\Delta_\nf(x,\epsilon)=\frac{Q_\nf(x,\epsilon)}{P_\nf(x,\epsilon)^2}(\d x)^2=\big(x-s(\epsilon)\big)^m\left(\frac{1+\sqrt{\mu(\epsilon)} x^{k-\frac{m}{2}}}{x^{k+1}+p_{k-1}(\epsilon)x^{k-1}+\ldots+p_0(\epsilon)}\d x\right)^2,
	\end{equation*}
	where $\mu(\epsilon)=0$ for $m$ odd.	
	This form is unique up to the action of $\Z_{2k-m}$ by rotations $x\mapsto \e^{\frac{2j\pi\i }{2k-m}}x$, $j\in\Z_{2k-m}$.
\end{theorem}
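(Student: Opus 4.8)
The plan is to build the normalizing coordinate change through the flat (time) coordinate $\bt=\int\sqrt{\Delta}$ and to read off the normal-form parameters as coordinate invariants. First I would apply the Weierstrass preparation theorem to make $P$ monic of degree $k+1$, and then the translation $x\mapsto x-\tfrac{P_k(\epsilon)}{k+1}$ to remove the $x^k$-coefficient, bringing the polar divisor to the centered shape $x^{k+1}+p_{k-1}(\epsilon)x^{k-1}+\ldots+p_0(\epsilon)$; this is the analytic analogue of the straightening in Lemma~\ref{lemma:straightening}. Since $\Delta$ has a single saddle, $Q$ has exactly one zero, of order $m$, at an analytic point $s(\epsilon)$, so $Q=(x-s)^mR$ with $R$ analytic and nowhere vanishing on the polydisc; hence $\sqrt{\Delta}=\tfrac{(x-s)^{m/2}\sqrt{R}}{P}\,\d x$ is, on the two-sheeted cover of \S\ref{sec:foliation}, a meromorphic $1$-form whose only branch point is the saddle. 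The candidate parameters are then $s(\epsilon)$, the $p_i(\epsilon)$, and $\mu(\epsilon)=\res_0^2\Delta$ \eqref{eq:mu}, all analytic in $\epsilon$ away from the confluence locus, with $\mu\equiv0$ for odd $m$ exactly as recorded after \eqref{eq:mu}.

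The transformation itself comes from comparing time coordinates: writing $\bt_\nf=\int\sqrt{\Delta_\nf}$, the relation $\phi^*\Delta_\nf=\Delta$ is equivalent to $\bt=\pm\,\bt_\nf\circ\phi$ up to an additive constant, so formally $\phi=\bt_\nf^{-1}\circ(\pm\bt)$. For this composition to be a single-valued analytic diffeomorphism the multivaluedness of $\bt$ and $\bt_\nf$ must coincide: the residues at the equilibria and the $\bt$-periods of the finite saddle connections must match, together with the order-$\tfrac12$ branching at the saddle. The single-saddle hypothesis is precisely what makes this solvable, since the core/periodgon of $\Delta_\epsilon$ (Definition~\ref{def:periodicparabolic}) is assembled from one saddle, and the coordinate-invariant data reduce to the total square residue $\mu$ together with the relative configuration of the $k+1$ equilibria and the saddle, which is exactly what $(s,p_0,\ldots,p_{k-1})$ encode. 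I would make this precise fiberwise via Strebel's analytic classification (\cite[Theorems~6.1, 6.3, 6.4]{Strebel}), whose invariants $2k-m$ and $\mu$ determine the germ, and then check that the parameter-to-residue map $(s,\mu,p_i)\mapsto\big(\res_{a_j}\sqrt{\Delta_\nf}\big)_j$ is dominant onto the locus $\sum_j\res_{a_j}\sqrt{\Delta_\nf}=\sqrt{\mu}$, so that parameters matching the residues of $\Delta$ exist.

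The main obstacle is upgrading this fiberwise picture to one analytic in $(x,\epsilon)$ and, above all, extending it across the confluence divisor $\epsilon=0$, where the $k+1$ equilibria coalesce and both $\bt$ and $\bt_\nf^{-1}$ acquire moving singularities. Here I would use that a single saddle forces rotational stability (Proposition~\ref{prop:rotstability}) and the hypotheses of Proposition~\ref{prop:covering}, so that the enlarged petals vary continuously with $(\vartheta,\epsilon)\in\sW$ and cover a full neighborhood of each singularity uniformly in $\epsilon$. On each petal $\phi=\bt_\nf^{-1}\circ\bt$ is well defined and bounded near the polar divisor, and because it intertwines the horizontal fibrations it agrees over the $\alpha\omega$-zone overlaps, exactly as in the gluing argument of Theorem~\ref{prop:unfoldedpointequivalence}; the resulting germ is bounded near $\{P=0\}$ and single-valued on $\coprod_\epsilon(\sX\smallsetminus\Sing_\epsilon)$, hence extends analytically by Riemann's removable-singularity theorem, and the analyticity of $\mu(\epsilon)$ and $\phi$ across $\epsilon=0$ follows from boundedness together with the Hartogs and Cousin arguments already used in \S\ref{sec:2}. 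Alternatively, once the divisor is straightened, one can realize $\phi$ directly by integrating a pair of commuting vector fields interpolating between $\Delta$ and $\Delta_\nf$, as in \cite[Lemma~2.6]{Klimes-Rousseau3} and the iso-residual proposition above, which produces an analytic $\phi$ automatically.

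For uniqueness I would analyze the stabilizer of the normal form. A coordinate change between two normal forms preserves the polar divisor and the saddle; at $\epsilon=0$ it fixes the origin and, matching the leading term $x^{m-2k-2}(\d x)^2$ of $\Delta_\nf|_{\epsilon=0}$, its linear part $c$ satisfies $c^{2k-m}=1$, i.e. $c=\e^{2\pi\i j/(2k-m)}$. These rotations preserve the family: those with $c^{k-m/2}=1$ fix $\sqrt{\mu}$, while the others flip its sign, which is immaterial since $\sqrt{\mu}$ is defined only up to sign, and for odd $m$ one has $\mu=0$ and the whole $\Z_{2k-m}$ acts on $x^m\big(\tfrac{\d x}{x^{k+1}}\big)^2$. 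Rigidity of the normal form, namely the absence of tangent-to-identity symmetries, follows from the same period computation as in Lemma~\ref{lemma:strongresonance} and Proposition~\ref{prop:flow}, and forces any such $\phi$ to coincide with one of these rotations to all orders, giving uniqueness up to the action of $\Z_{2k-m}$.
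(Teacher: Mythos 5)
Your route (matching conformal invariants and setting $\phi=\bt_\nf^{-1}\circ(\pm\bt)$) is genuinely different from the paper's, which is a Moser-type path argument: Lemma~\ref{lemma:a1} first removes the part of $Q$ divisible by $P$ by solving the explicit ODE \eqref{eq:alpha1}, and then a second homotopy kills the remaining non-normal-form part $w(x,\epsilon)$ of $\tilde Q$ by the flow of a vector field $Y$ that acts \emph{simultaneously on $x$ and on the divisor coefficients $\bm p$}, solvability reducing to the invertibility of an explicit linear system \eqref{homological_eq}. Unfortunately your version has a genuine gap at its center: you assume that normal-form parameters $\big(s(\epsilon),\mu(\epsilon),p_i(\epsilon)\big)$ exist, analytic through $\epsilon=0$, whose full set of conformal invariants (square residues at all $k+1$ equilibria \emph{and} the relative periods of the saddle connections) matches that of $\Delta_\epsilon$. ``Check that the parameter-to-residue map is dominant'' does not do this: residues alone do not classify the germ on the disc, the period data degenerate at the confluence, and producing the matching parameters analytically across $\epsilon=0$ is precisely the content of the theorem. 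Your preliminary step also forecloses the paper's mechanism: you fix the divisor once and for all by a translation killing the $x^k$-coefficient, whereas in the normal form the $p_i(\epsilon)$ are \emph{outputs} of the normalization --- the divisor must be re-deformed while $w$ is removed (the $\xi_j\dd{p_j}$ components of $Y$), otherwise the homological equation is not solvable. A smaller but real error: a single saddle does not force rotational stability; Proposition~\ref{prop:rotstability} only gives a countable set of unstable directions, so homoclinic saddle trajectories can occur.

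The uniqueness argument is also flawed. You claim the absence of tangent-to-identity symmetries ``follows from Proposition~\ref{prop:flow}'', but that proposition asserts the \emph{existence} of a one-parameter family of tangent-to-identity symmetries when $m<2k$ is even (for the quadratic differential this is simply the flow of $\Delta^{-\frac12}$, which is single-valued for even $m$ and preserves $\Delta$), so fiberwise rigidity at $\epsilon=0$ fails in that case. The paper's quasi-uniqueness is instead an infinite descent in the ideal $\Cal I$ of functions vanishing at $\epsilon=0$, carried out for the whole parametric family, and it uses the oddness of $m$ at the final step to conclude $c(\epsilon)=0$ (single-valuedness of $x^{k-\frac m2}$); the even case is delegated to \cite{Klimes-Rousseau}. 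As written, your uniqueness paragraph proves nothing for even $m$ and cites the wrong direction of Proposition~\ref{prop:flow}.
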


The case $m=0$ is equivalent to the normal form of a deformation (unfolding) $\frac{P(x,\epsilon)}{Q(x,\epsilon)^{\frac12}}\frac{\partial}{\partial x}$ of an analytic vector field  $\frac{x^{k+1}}{Q(x,0)^{\frac12}}\frac{\partial}{\partial x}$ of Kostov \cite{Kostov, Klimes-Rousseau}.
The proof for general $m=1$ the same lines as in \cite{Klimes-Rousseau}.

First, using the Weierstrass division theorem write $Q(x,\epsilon)=\tilde Q(x,\epsilon)+P(x,\epsilon)R(x,\epsilon)$ for some $\tilde Q(x,\epsilon)=\tilde q_k(\epsilon)x^k+\ldots+\tilde q_0(\epsilon)$ and $R(x,\epsilon)$, and by Lemma~\ref{lemma:a1} bring the differential to the form $\frac{\tilde Q}{P^2}(\d x)^2$.
If $m=0$, the polynomial $\tilde Q(x,\epsilon)$ has a unique zero $x=s(\epsilon)$, $s(0)=0$, which we can bring to the origin by the translation $x\mapsto x-s(\epsilon)$. 
Up to a scaling we can easily assume that $\tilde q_1(\epsilon)=1$, i.e. that $\tilde Q=x(1+w(x,\epsilon))$.
The rational quadratic differential  $\frac{x^m W(x,\epsilon)^2}{P(x,\epsilon)^2}(\d x)^2$ is now defined on  $x\in\CP^1$. Let $\sqrt{\mu(\epsilon)}:=w_{k-\frac{m}{2}}(\epsilon)$, then
\[\mu(\epsilon)=\res_\infty^2 \frac{x^m W(x,\epsilon)^2}{P(x,\epsilon)^2}(\d x)^2,\]
and write $W(x,\epsilon)=1+\sqrt{\mu(\epsilon)}x^{k-\frac{m}{2}}+w(x,\epsilon)$, where $w(x,\epsilon)=\sum_{\substack{1\leq j\leq k\\ j\neq k-\frac{m}{2}}} w_j(\epsilon)x^j$. 
We will now deform the divisor $P(x,\epsilon)$ in a way to make the part $w(x,\epsilon)$ of $\tilde Q(x,\epsilon)$ disappear. 

\begin{lemma}\label{lemma:a1}
	Assume $Q(x,\epsilon)=(x-s(\epsilon))^mU(x,\epsilon)^2$, $U(0,0)\neq 0$, $0\leq m$. Two quadratic differentials of the form $\Delta_0(x,\epsilon)=\frac{(x-s(\epsilon))^mU(x,\epsilon)^2}{P(x,\epsilon)^2}(\d x)^2$ and $\Delta_1(x,\epsilon)=\frac{(x-s(\epsilon))^m\big(U(x,\epsilon)+P(x,\epsilon)R(x,\epsilon)\big)^2}{P(x,\epsilon)^2}(\d x)^2$, for some analytic germ $R(x,\epsilon)$, are analytically equivalent.
\end{lemma}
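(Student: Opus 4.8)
The plan is to run a homotopy (Moser/path) argument: interpolate linearly between the two differentials and realize the equivalence as the time-one flow of a suitable vector field. First I would introduce the family
\[
\Delta_\alpha(x,\epsilon)=\frac{(x-s(\epsilon))^m\big(U(x,\epsilon)+\alpha P(x,\epsilon)R(x,\epsilon)\big)^2}{P(x,\epsilon)^2}(\d x)^2,\qquad \alpha\in[0,1],
\]
so that $\Delta_0$ and $\Delta_1$ are its endpoints, and look for a path of germs of diffeomorphisms $\phi_\alpha$ with $\phi_0=\id$ and $\phi_\alpha^*\Delta_\alpha=\Delta_0$. Writing $X_\alpha=v_\alpha(x,\epsilon)\,\partial_x$ for the generator of $\phi_\alpha$ and differentiating $\phi_\alpha^*\Delta_\alpha$ in $\alpha$, the problem reduces to the cohomological equation $\Cal L_{X_\alpha}\Delta_\alpha=-\partial_\alpha\Delta_\alpha$.

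The next step is to integrate this equation, which is clean because we are in one spatial variable. Passing to the square-root form $\omega_\alpha:=\Delta_\alpha^{\frac12}=\tfrac{(x-s)^{m/2}(U+\alpha PR)}{P}\,\d x$ (defined on the two-sheeted cover when $m$ is odd), one has $\Cal L_{X_\alpha}\omega_\alpha=\d(\iota_{X_\alpha}\omega_\alpha)$ since $\d\omega_\alpha=0$ on a curve, and the equation collapses to $\Cal L_{X_\alpha}\omega_\alpha=-\partial_\alpha\omega_\alpha$, that is
\[
\partial_x\!\left(\frac{v_\alpha\,(x-s)^{m/2}(U+\alpha PR)}{P}\right)=-(x-s)^{m/2}R .
\]
Integrating in $x$ and choosing the antiderivative $F(x,\epsilon)=\int (x-s)^{m/2}R\,\d x$ normalized so that $F$ vanishes to order $\tfrac{m}{2}+1$ at $x=s$, I obtain
\[
v_\alpha=-\frac{P\,F}{(x-s)^{m/2}(U+\alpha PR)}=-\frac{P\,(x-s)\,\Phi}{U+\alpha PR},
\]
where $F=(x-s)^{\frac m2+1}\Phi$ with $\Phi$ an ordinary (integer) power series in $(x-s)$.

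The crux of the argument is verifying that this $v_\alpha$ is genuinely single-valued and analytic in $(x,\epsilon)$, and that it preserves the geometry. The apparent fractional branching coming from $(x-s)^{m/2}$ when $m$ is odd cancels exactly against the factor $(x-s)^{m/2}$ in $\omega_\alpha$ (this is precisely what the normalization $F(s,\epsilon)=0$ arranges), leaving the integer power $(x-s)\Phi$; and since $U(0,0)\neq0$, the denominator $U+\alpha PR$ is nonvanishing for $(x,\epsilon,\alpha)$ small, so $v_\alpha$ is analytic. Moreover $v_\alpha$ carries the factor $P$, hence $X_\alpha$ is tangent to the polar divisor $\{P=0\}$ and (through the factor $x-s$) fixes the saddle; at $\epsilon=0$ one has $P=x^{k+1}$ and $s(0)=0$, so $v_\alpha=O(x^{k+2})$ and the flow is tangent to the identity.

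Finally I would integrate $X_\alpha$ from $\alpha=0$ to $\alpha=1$: uniform analyticity on a fixed polydisc and boundedness of $v_\alpha$ guarantee the flow $\phi_\alpha$ exists for all $\alpha\in[0,1]$ on a slightly smaller neighborhood, and $\phi:=\phi_1$ is then a germ of diffeomorphism preserving $\{P=0\}$ and the saddle, with $\phi^*\Delta_1=\Delta_0$, which is the asserted analytic equivalence. The only delicate point is the analyticity and single-valuedness of $v_\alpha$ just discussed; everything else is the routine Moser bookkeeping.
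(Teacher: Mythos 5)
Your proof is correct and follows essentially the same route as the paper: the paper also embeds the two differentials in the linear homotopy $\Delta_t$ and realizes the conjugacy as the time-one flow of a vector field $Y=\partial_t+v\,\partial_x$ obtained by solving the same first-order ODE, and the resulting coefficient $v=-\frac{P\,(x-s)\,\Phi}{U+tPR}$ coincides with your $v_\alpha$. The only cosmetic difference is that the paper states the infinitesimal condition as the commutation $[Y,\Delta_t^{-1/2}]=0$ with the dual vector field rather than as the Moser equation for the square-root $1$-form; the key point in both versions is the same cancellation of the fractional powers $(x-s)^{\pm m/2}$ making the generator analytic and single-valued.
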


\begin{proof}
	Let us embed the two differentials into a family  \[\Delta_t(x,\epsilon)=\frac{(x-s(\epsilon))^m\big(U(x,\epsilon)+tP(x,\epsilon)R(x,\epsilon)\big)^2}{P(x,\epsilon)^2}(\d x)^2\]
	 and denote $\Delta_t^{-\frac12}$
	the dual family of vector fields. We want to find an analytic vector field
	\[Y=\tdd{t}+\frac{P(x,\epsilon)}{U(x,\epsilon)+tP(x,\epsilon)R(x,\epsilon)}\alpha(x,\epsilon,t)\tdd{x},\]
	that commutes with $\Delta_t^{-\frac12}$ (when considered as vector fields in $(x,t)$).
	Then the flow $\exp(sY):(x,t)\mapsto(\Phi_s(x,t),t+s)$ of $Y$ preserves the family: $\Delta_t^{-\frac12}=\Phi_s^*\Delta_{t+s}^{-\frac12}$, and hence also $\Delta_t=\Phi_s^*\Delta_{t+s}$. In particular $\phi(x):=\Phi_1(x,0)=x\circ\exp(Y)\big|_{t=0}$, is such that $\phi^*\Delta_1=\Delta_0$.
	
	The commutation relation  $[Y,\Delta_t^{-\frac12}]=0$  is equivalent to $\alpha(x,\epsilon,t)$ being a solution to the non-homogeneus linear ODE
	\begin{equation}\label{eq:alpha1}
		(x-s(\epsilon))\tdd{x}\alpha+\tfrac{m}{2}\alpha\tdd{x}+R=0,
	\end{equation}
	with an analytic solution $\alpha(x,\epsilon,t)=-(x-s(\epsilon))^{-m/2}\int_{s(\epsilon)}^x (z-s(\epsilon))^{m/2}R(z,\epsilon)\d z$.
\end{proof}

\begin{lemma}
	Consider two families of meromorphic quadratic differentials $\Delta_0$ and $\Delta_1$ of the form 
	\begin{equation} 
	\Delta_t=x^m\left(\frac{1+\sqrt{\mu(\epsilon)}x^{k-\frac{m}{2}}+tw(x)}{x^{k+1}+p(x)}\d x\right)^2,
	\label{pol_form2}
	\end{equation}
	\[p(x)=p_kx^k+\ldots+p_0,\quad w(x)=w_kx^k+\ldots+w_0,\]
	where $w_{k-\frac{m}{2}}=0$ and $\mu(\epsilon)=0$ if $m$ is odd, depending on parameters $\bm p=(p_0,\ldots,p_k)$ and $\bm w=(w_1,\ldots,\widehat{w_{k-\frac{m}{2}}},\ldots,w_k)$.
	Then there exists an analytic transformation $(x,\bm p)\mapsto\big(\phi(x,\bm p,\bm w),\psi(\bm p,\bm w)\big)$ tangent at identity at $(x,\bm p)=0$ that  conjugates
	$\Delta_1$ to $\Delta_0$.
\end{lemma}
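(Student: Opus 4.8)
The plan is to realize the conjugacy by the \emph{path method}, as the time-one map of an analytic flow that moves the coordinate $x$ and the divisor parameters $\bm p$ simultaneously. Writing the common form \eqref{pol_form2} as $\sqrt{\Delta_t}=\omega_t=\frac{N_t(x)}{P(x)}\,\d x$ with $N_t(x)=x^{m/2}n_t(x)$, $n_t(x)=1+\sqrt{\mu(\epsilon)}\,x^{k-\frac m2}+t\,w(x)$ and $P(x)=x^{k+1}+p(x)$, I look for an analytic vector field
\[
Y=\dd{t}+\sum_{i=0}^{k}\dot p_i(\bm p,\bm w,t)\,\dd{p_i}+h(x,\bm p,\bm w,t)\,\dd{x}
\]
on the total space of $(x,\bm p,t)$, lifting $\dd{t}$, whose flow drags $\Delta_t$ onto $\Delta_0$ while transporting $\bm p$ along the integral curves of $\dot{\bm p}$. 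Its time-one map, restricted to $t=0$, then furnishes the pair $\big(\phi(x,\bm p,\bm w),\,\psi(\bm p,\bm w)\big)$, and tangency to the identity at $(x,\bm p)=0$ will come for free from the vanishing of the data there.

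The requirement that the flow of $Y$ trivialize the family is $L_Y\omega_t=0$ read along the fibers $t=\const$, i.e., with $v=h\,\dd{x}$,
\[
L_v\omega_t+\dd{t}\omega_t+\sum_i\dot p_i\,\dd{p_i}\omega_t=0 .
\]
Using $\dd{t}\omega_t=\frac{x^{m/2}w}{P}\,\d x$ and $\dd{p_i}\omega_t=-\frac{x^{i}}{P}\,\omega_t$ and clearing denominators, this collapses to the single identity
\[
N_t\big(P h'-hP'\big)+P\,h\,N_t'+P\,x^{m/2}w-N_t\,\dot p=0,\qquad \dot p(x)=\sum_{i=0}^k\dot p_i x^{i},
\]
a \emph{linear} equation for the analytic germ $h$ and the $k+1$ numbers $\dot p_0,\dots,\dot p_k$. (For $m$ odd one has $\mu=0$, so $N_t=x^{m/2}(1+tw)$, and after dividing by $x^{m/2}$ all exponents are integral; the half-integer powers only reflect that $\omega_t$ should be read on the two-sheeted cover where $\sqrt{\Delta_t}$ lives.)

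The mechanism for solving is that the $k+1$ divisor directions $\dot{\bm p}$ are exactly what is needed to cancel the polar obstruction of $h$ along $P$. Reducing the identity modulo $P$ forces $N_t\,\dot p\equiv-N_t\,hP'\pmod P$, so by Weierstrass division $\dot p$ is essentially the remainder of $-hP'$ upon division by $P$; substituting this back clears the factor $P$ and leaves the pole-free linear equation $(N_th)'+x^{m/2}w+N_t\,Q=0$ for $h$ alone, $Q$ being the corresponding quotient. The normalization $w_{k-\frac m2}=0$ enters decisively here: it makes the coefficient of $x^{k}$ in the numerator of $\omega_t$, hence the square residue at infinity, independent of $t$, so that $\dd{t}\omega_t$ carries no residue at $\infty$; the deformation neither needs nor is allowed to touch $\mu(\epsilon)$, and the parameters $\dot{\bm p}$ attached to the finite poles suffice.

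Finally, integrating $Y$ from $t=0$ to $t=1$ produces $\phi$ and $\psi$; their analyticity in $(x,\bm p,\bm w)$ is automatic since $Y$ is analytic and the flow of an analytic vector field is analytic, exactly as in Lemma~\ref{lemma:a1} and \cite[Lemma~2.6]{Klimes-Rousseau3}, while tangency to the identity at $(x,\bm p)=0$ follows because the inhomogeneous term $Px^{m/2}w$, and therefore $h$ and $\dot{\bm p}$, vanish where the whole divisor collapses to $x=0$. I expect the genuine obstacle to be the \emph{uniform} solvability of the homological equation across the confluence locus $\bm p=0$: there $P=x^{k+1}$ and $N_t=x^{m/2}n_t$ share a nontrivial common power of $x$, so the clean ``remainder modulo $P$'' description of $\dot{\bm p}$ degenerates and the splitting into quotient and remainder must be controlled analytically through the collision. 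This is precisely the point where the normal-form machinery of Kostov and of Klime\v{s}--Rousseau \cite{Kostov,Klimes-Rousseau} is required; away from $\bm p=0$ the roots of $P$ are simple and the argument reduces to Lagrange interpolation of $\dot p$ through them.
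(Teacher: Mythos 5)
Your setup is the same as the paper's: both proofs run the path (Moser) method, seeking a vector field $Y=\partial_t+\sum_i\dot p_i\,\partial_{p_i}+(\cdot)\,\partial_x$ lifting $\partial_t$ whose flow conjugates the family, and your homological identity $N_t(Ph'-hP')+PhN_t'+Px^{m/2}w-N_t\dot p=0$ is exactly the paper's commutation relation $[Y,X_t]=0$ written for the form $\sqrt{\Delta_t}$ instead of the dual vector field. Your observation that $w_{k-\frac m2}=0$ freezes the residue at infinity, so that $\partial_t\omega_t$ carries no residual obstruction, is also the correct reason the construction can work at all.

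The gap is that you never solve the homological equation. After the Weierstrass reduction you are left with a linear functional equation $(N_th)'+x^{m/2}w+N_tQ[h]=0$ in which $Q[h]$ (the quotient of $-hP'$ by $P$) still depends on $h$, and you then write that its uniform solvability across $\bm p=0$ ``is precisely the point where the normal-form machinery of Kostov and of Klime\v{s}--Rousseau is required.'' That is circular: this lemma \emph{is} that machinery -- for $m=0$ it is literally Kostov's theorem, and the whole point of the lemma is to extend it to $m\geq 0$. The fallback you offer away from the confluence (Lagrange interpolation of $\dot p$ through the simple roots of $P$) cannot give analyticity through $\bm p=0$, which is the only nontrivial issue. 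The paper closes exactly this gap by a finite-dimensional ansatz: it takes the $\partial_x$-component of $Y$ to be $\tfrac{x}{n_t}h$ with $h=h_0+\ldots+h_kx^k$ a polynomial of degree $k$ in which the coefficient $h_{k-\frac m2}$ is deleted (for $m$ even), so that the homological identity becomes a square linear system $A(t,\bm p,\bm w)\begin{psmallmatrix}\bm\xi\\ \bm h\end{psmallmatrix}=\bm b(\bm p,\bm w)$; at $\bm p=\bm w=0$ the matrix $A$ is diagonal with entries $1,\ldots,1,\,k-\tfrac m2,\ldots,-\tfrac m2$ (the single zero entry being removed together with its row, which is where $w_{k-\frac m2}=0$ is used), hence $A$ is invertible for $(\bm p,\bm w)$ small uniformly for $t$ in compacts, and since $\bm b(0,0)=0$ the flow of $Y$ exists up to time $1$. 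Without some such explicit inversion of the linearized problem at the confluence point, your argument does not establish the lemma.
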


\begin{proof}
	Let $X_t=\frac{x^{k+1}+p(x)}{x^{\frac{m}{2}}(1+\sqrt{\mu(\epsilon)}x^{k-\frac{m}{2}}+tw(x))}\tdd{x}$ be the dual vector field to \eqref{pol_form2}.	
	We want to construct a family of transformations depending analytically on $t\in[0,1]$ between $X_0$ and $X_t$, defined by a flow of a vector field $Y$ of the form
	\[
	Y=\tdd{t}+\sum_{j=0}^{k}\xi_j(t,\bm p,\bm w)\tdd{p_j}+\frac{x}{1+\sqrt{\mu(\epsilon)}x^{k-\frac{m}{2}}+tw(x,\epsilon)}h(x,t,\bm p,\bm w)\tdd{x},
	\]
	for some $\xi_j$ and $h$, such that $[Y,X_t]=0$. This is equivalent to
	\begin{equation} \label{homological_eq}
	(1+tw+\sqrt{\mu}x^{k-\frac{m}{2}})\xi+xh\tdd{x}(x^{k+1}+p)-(x^{k+1}+p)x\tdd{x}h-\tfrac{m+2}2(x^{k+1}+p)h=(x^{k+1}+p)w,
	\end{equation}
	where $\xi(x,t,\bm p,\bm w)=\xi_0(t,\bm p,\bm w)+\ldots+\xi_{k-1}(t,\bm p,\bm w)x^{k-1}$.
	We see that we can choose $h(x,t,\bm p,\bm w)$ as a polynomial of order $k$ in $x$:
	\[h(x,t,\bm p,\bm w)=h_0(t,\bm p,\bm w)+\ldots+h_k(t,\bm p,\bm w)x^k,\qquad\text{with $h_{k-\frac{m}{2}}=0$}.\]
	Write $(x^{k+1}+p(x))w(x)=b_0(\bm p,\bm w)+\ldots+b_{2k+1}(\bm p,\bm w)x^{2k+1}$, and denote 
	$\bm b(\bm p,\bm w)=\big(b_0(\bm p,\bm w),\ldots,b_{2k+1}(\bm p,\bm w)\big)$.
	Then the equation \eqref{homological_eq} takes the form of a non-homogeneous linear system for the coefficients $(\bm \xi,\bm h)=(\xi_0,\ldots,\xi_{k-1},h_0,\ldots\widehat{h_{k-\frac{m}{2}}}\ldots,h_k)$:
	\[
	A(t,\bm p,\bm w)\begin{pmatrix}\bm \xi\\ \bm h\end{pmatrix}=\bm b(\bm p,\bm w).
	\]
	For $\bm p=\bm w=0$ the equation \eqref{homological_eq} is
	\[
	\xi+x^{k+1}\left(k-\tfrac{m}2-x\tdd{x}\right)h=0,
	\]
	hence
	\[
	A(t,0,0)=
	\begin{pmatrix} 1&&&&&&\\ &\ddots&&&&&\\ &&1&&&&\\ &&&\hskip-.5em k-\tfrac{m}2 \hskip-1em&&&\\ &&&&\hskip-.5em k-1-\tfrac{m}2 \hskip-1em&&\\ &&&&&\ddots&\\ &&&&&&\hskip-.5em -\tfrac{m}2 \end{pmatrix},
	\]
	where if $m=2j$ is even then the $2k+1-j$-th row and column corresponding to the diagonal term $j-\frac{m}{2}$ are omitted.
	Hence $A(t,\bm p,\bm w)$ is invertible for $t$ from any compact in $\C$ if $\bm p,\ \bm w$ are small enough.
	Since $\bm b(0,0)=0$, the constructed vector field $Y(x,t,\bm p,\bm w)$ is such that $Y(0,t,0,0)=\tdd{t}$ and its flow is well-defined for all $|t|\leq 1$ as long as  $\bm p,\ \bm w$ are small enough.
\end{proof}

This proves the existence of the normalizing transformation. Let us prove the quasi-unicity of the normal form for $m=1$. The case $m=0$ is similar and can be found in \cite{Klimes-Rousseau}.

\begin{lemma}
	Assume $x\mapsto\phi(x,\epsilon)$ is a formal/analytic coordinate transformation between two quadratic differentials of the form \eqref{eq:qdnormal},
	with $m$ odd then $\phi(x,\epsilon)=\zeta x$ with $\zeta^{2k-m}=1$.
\end{lemma}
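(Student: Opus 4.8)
The plan is to turn the equivalence $\phi^*\Delta_\nf' = \Delta_\nf$ into a first‑order ODE for $\phi$, to solve it explicitly at $\epsilon=0$ where the oddness of $m$ is decisive, and then to transport the conclusion to $\epsilon\neq 0$ by a global rigidity argument. Since $m$ is odd we have $\mu\equiv 0$, so both normal forms \eqref{eq:qdnormal} read $\Delta_\nf=\frac{(x-s(\epsilon))^m}{P(x,\epsilon)^2}(\d x)^2$ with $P$ monic of degree $k+1$ and no $x^k$‑term, and at $\epsilon=0$ (where $s(0)=0$ and $P(x,0)=x^{k+1}$) both specialize to $x^{-(2k-m+2)}(\d x)^2$. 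Writing $\omega=\sqrt{\Delta_\nf}=\frac{(x-s)^{m/2}}{P}\,\d x$, the relation $\phi^*\Delta_\nf'=\Delta_\nf$ is equivalent to $\phi^*\omega'=\pm\omega$, i.e. to the separable equation $\frac{(\phi-s')^{m/2}}{P'(\phi)}\,\d\phi=\pm\frac{(x-s)^{m/2}}{P}\,\d x$; squaring clears the half‑powers and yields the polynomial identity $(\phi-s')^m P(x)^2(\phi')^2=(x-s)^m P'(\phi)^2$.

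First I treat $\epsilon=0$, where the ODE becomes $\frac{\d\phi}{\phi^{\nu+1}}=\pm\frac{\d x}{x^{\nu+1}}$ with $\nu=k-\tfrac{m}{2}=\tfrac{2k-m}{2}$. Integrating gives $\phi^{-\nu}=\pm x^{-\nu}+C$. Here the oddness of $m$ enters essentially: $\nu$ is a half‑integer, so a nonzero $C$ would produce a non‑analytic correction $\phi\sim\zeta x+(\mathrm{const})\,x^{1+\nu}$ with $1+\nu$ half‑integral. Analyticity of $\phi$ therefore forces $C=0$, whence $(\phi/x)^{\nu}=\pm1$; thus $\phi/x$ is an analytic function whose $(2\nu)$‑th power is $1$, i.e.
\[
\phi(x,0)=\zeta x,\qquad \zeta^{2k-m}=1.
\]

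For general $\epsilon$ the local equation no longer determines $\phi$ (the origin is now a regular point of $\Delta_\nf$), so I pass to the global picture: $\Delta_\nf$ and $\Delta_\nf'$ are rational quadratic differentials on $\CP^1$ and $\phi$ conjugates the associated half‑translation structures. By the rigidity of these structures, exactly as in the $m=0$ case of \cite{Klimes-Rousseau} — matching the singular divisors and comparing the numbers of equilibria of $\Delta_\nf=\phi^*\Delta_\nf'$ forces the degree of $\phi$ to be one — the germ $\phi(\cdot,\epsilon)$ extends to a Möbius transformation $\phi_\epsilon$ of $\CP^1$. This $\phi_\epsilon$ fixes $0$, and by continuity from $\phi_0=\zeta x$ it also fixes the saddle at $\infty$; being Möbius and fixing $0$ and $\infty$, it is $\phi_\epsilon(x)=\zeta(\epsilon)\,x$. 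Substituting back into the squared identity gives $\zeta(\epsilon)^{2k-m}=1$, and since $\zeta(\epsilon)$ is analytic with values in the finite set of $(2k-m)$‑th roots of unity it is constant, equal to $\zeta(0)=\zeta$.

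The main obstacle is precisely this global extension step: a priori $\phi$ is only a germ at the origin, and for $\epsilon\neq0$ the origin carries no local rigidity, so all the information must be imported from the behaviour at the poles and at $\infty$. Showing that the analytic continuation of $\phi$ is single‑valued on $\CP^1$ — so that the degree count applies and $\phi_\epsilon$ is genuinely Möbius — is the delicate point, and I would handle it by matching the periods and residues of the flat structure of $\omega$, following the $m=0$ argument referenced above.
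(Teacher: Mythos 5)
Your computation at $\epsilon=0$ is correct and uses the oddness of $m$ in essentially the same way the paper does (a nonzero integration constant, equivalently a nonzero homogeneous solution, would force a half-integer power of $x$ into $\phi$). But the actual content of the lemma is the statement for the whole parametric family, and there your argument has a genuine gap. You reduce the case $\epsilon\neq 0$ to the claim that the germ $\phi(\cdot,\epsilon)$ extends to a single-valued rational self-map of $\CP^1$, so that a degree count makes it M\"obius; you then explicitly defer the proof of this extension ("I would handle it by matching the periods and residues\ldots"). That is precisely the hard step, and it is not automatic: the continuation of $\phi$ away from $x=0$ is governed by $\bt'\circ\phi=\pm\bt+C$ with $\bt=\int\sqrt{\Delta_\nf}$, and single-valuedness of this continuation around the (for $\epsilon\neq0$ separated) poles of $\Delta_\nf$ requires the local residues and periods of the two differentials to match and the inversion of $\bt'$ to cause no ramification along the way — none of which is established. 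Moreover, the lemma is asserted for \emph{formal} transformations $\phi(x,\epsilon)=\sum\phi_{i,j}x^i\epsilon^j$ as well, and a fixed-$\epsilon$ global-geometric argument cannot even be formulated in that setting.

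The paper avoids all of this by staying entirely local and formal: after normalizing $\zeta=1$ it writes $\phi=x+xf$ and runs an infinite descent in the ideal $\Cal I$ of functions vanishing at $\epsilon=0$. Assuming $f\in\Cal I^{n}$, expanding the first-order relation $\tfrac{x^{k+1}+p}{x^{m/2}}\tdd{x}\phi=\tfrac{\phi^{k+1}+p'\circ\phi}{\phi^{m/2}}$ modulo $\Cal I^{n+1}$ yields
\[
p-p'=x^{k+1}\Big(\tfrac{k-\frac{m}{2}}{1}\,f-x\tdd{x}f\Big)\bmod \Cal I^{n+1},
\]
and comparing degrees (polynomial of degree $\le k$ on the left, valuation $\ge k+1$ on the right) forces both sides to vanish; the surviving homogeneous solution $f=c(\epsilon)x^{k-\frac m2}\bmod\Cal I^{n+1}$ is non-analytic for $m$ odd, so $f\in\Cal I^{n+1}$. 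This is where the parametric rigidity actually comes from, and it works verbatim for formal series. To repair your proof you would either need to carry out the global extension and monodromy-matching argument in full (and separately treat the formal case), or replace your fixed-$\epsilon$ step by an inductive argument in powers of $\epsilon$ of the above type.
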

 
\begin{proof}

	Let $x'=\phi(x,\epsilon)$ be a transformation between the differentials
	$\frac{x^m(1+\sqrt{\mu}x^{k-\frac{m}{2}})^2}{\left(x^{k+1}+p(x,\epsilon)\right)^2}(\d x)^2$ and $\frac{x'^m(1+\sqrt{\mu}x'^{k-\frac{m}{2}})^2}{\left(x'^{k+1}+p'(x',\epsilon)\right)^2}(\d x')^2$, preserving the parameter $\epsilon$.
	If $\phi(x;0)=\zeta x+\ldots$ for some $\zeta\neq 0$ then necessarily $\zeta^{2k-1}=1$. Up to precomposition with a map $x\mapsto \zeta x$,
	we can assume that $\zeta=1$ and $\phi(x;0)=x+\ldots$ is tangent to identity.
		
	We want to prove that $\phi\equiv \id$. This is done by an infinite descent. 
	Denote $\Cal I$ the ideal of analytic functions of $(x,\epsilon)$ that vanish when $\epsilon=0$.
	Since $\phi$ must fix the origin, we can write  $\phi(x,\epsilon)=x+xf(x,\epsilon)$, and we want to show that  $f\in\Cal I^n$ for all $n\geq 0$. 
	Suppose that $f\in \Cal I^n$.
	Developing the transformation equation 
	\begin{equation*}
	\tfrac{x^{k+1}+p}{x^{\frac{m}{2}}(1+\sqrt{\mu}x^{k-\frac{m}{2}})}\tdd{x}\phi=\tfrac{\phi^{k+1}+p'\circ\phi}{\phi^{\frac{m}{2}}(1+\sqrt{\mu}\phi^{k-\frac{m}{2}})},
	\end{equation*} 
	modulo $\Cal I^{n+1}$ and multiplying by $x^{\frac{m}{2}}(1+\sqrt{\mu}x^{k-\frac{m}{2}})$ we get
	\[x^{k+1}+p+ x^{k+1}\tdd{x}(xf)
	=x^{k+1}+p'+x^{k+1}\Big(k+1-\tfrac{m}{2}-\tfrac{(k-\frac{m}{2})\sqrt{\mu}x^{k-\frac{m}{2}}}{1+\sqrt{\mu}x^{k-\frac{m}{2}}}\Big)f  \mod \Cal I^{n+1},\]
	from which
	\[p-p'=x^{k+1}\Big(\tfrac{k-\frac{m}{2}}{1+\sqrt{\mu}x^{k-\frac{m}{2}}}f-x\tdd{x}f\Big) \mod \Cal I^{n+1}.  \]
	The left side is a polynomial of order $\leq k$ in $x$, while the right side develops into a formal power series of vanishing order (valuation) $\geq k+1$, which means that both sides are null modulo $\Cal I^{n+1}$. 
	Therefore on the left side $p=p'\mod \Cal I^{n+1}$, while the right side 
	$\tfrac{k-\frac{m}{2}}{1+\sqrt{\mu}x^{k-\frac{m}{2}}}f-x\tdd{x}f =0\mod \Cal I^{n+1}$ means $f=c(\epsilon)\frac{x^k}{x^{\frac{m}{2}}(1+\sqrt{\mu}x^{k-\frac{m}{2}})}\mod \Cal I^{n+1}$. Since by assumption $m$ is odd, necessarily $c(\epsilon)=0\mod \Cal I^{n+1}$.
\end{proof}

\goodbreak
\footnotesize

\end{document}